\numberwithin{equation}{section}
\newtheorem{theorem}{Theorem}[section]
\newtheorem{definition}[theorem]{Definition}
\newtheorem{corollary}[theorem]{Corollary}
\newtheorem{lemma}[theorem]{Lemma}
\newtheorem{remark}[theorem]{Remark} 
\newtheorem{prop}[theorem]{Proposition}
\newtheorem{conjecture}[theorem]{Conjecture}
\newtheorem{assumption}[theorem]{Assumption}
\newtheorem{def-thm}[theorem]{Definition-Theorem}
\newcommand{\CM}{\mathcal M}
\newcommand{\R}{\mathbb{R}}
\newcommand{\RR}{\mathbb{R}}
\newcommand{\C}{\mathbb{C}}
\newcommand{\CC}{\mathbb{C}}
\newcommand{\Z}{\mathbb{Z}}
\newcommand{\CP}{\mathbb{C}P}
\newcommand{\ZZ}{\mathbb{Z}}
\newcommand{\Q}{\mathbb{Q}}
\newcommand{\bx}{\boldsymbol{x}}
\newcommand{\dbar}{\overline{\partial}}
\newcommand{\CL}{\mathcal{L}}
\newcommand{\CA}{\mathcal{A}}
\newcommand{\WT}[1]{\widetilde{#1}}
\newcommand{\WH}[1]{\widehat{#1}}
\newcommand{\OL}[1]{\overline{#1}}
\newcommand{\UL}[1]{\underline{#1}}
\newcommand{\Hom}{{\rm Hom}}
\newcommand{\CR}{\textit{crit}}
\newcommand{\AI}{A_\infty}
\newcommand{\KI}{{\rm{Ker}} I_{\eta}}
\newcommand{\KIO}{{\rm{Ker}} I_{\eta}^{orb}}
\begin{document}
\author[Cho]{Cheol-Hyun Cho}
\address{Department of Mathematical Sciences, Research institute of Mathematics\\ Seoul National University\\ San 56-1, 
Shinrimdong\\ Gwanakgu \\Seoul 47907\\ Korea}
\email{chocheol@snu.ac.kr}
\author[Hong]{Hansol Hong}
\address{Department of Mathematical Sciences \\ Seoul National University\\ San 56-1, 
Shinrimdong\\ Gwanakgu \\Seoul 47907\\ Korea}
\email{hansol84@snu.ac.kr}
\title{Finite Group actions on Lagrangian Floer theory}
\begin{abstract}
We construct finite group actions on Lagrangian Floer theory when symplectic manifolds have finite group actions and Lagrangian submanifolds have  induced group actions. We first define finite group actions on Novikov-Morse theory. We introduce the notion of a {\em spin profile } as an obstruction class of extending the group action on Lagrangian submanifold to the one on its spin structure, which is a group cohomology class in $H^2(G;\Z/2)$.  For a class of Lagrangian submanifolds which have the same spin profiles, we define a finite group action on their Fukaya category.
In consequence, we obtain the $s$-equivariant Fukaya category as
well as the $s$-orbifolded Fukaya category for each group cohomology class $s$.
We also develop a version with $G$-equivariant bundles on Lagrangian submanifolds, and
explain how character group of $G$ acts on the theory.
 As an application, we define an orbifolded Fukaya-Seidel category of
a $G$-invariant Lefschetz fibration, and also discuss homological mirror symmetry conjectures with group actions.

\end{abstract}
\maketitle

\tableofcontents

\section{Introduction}
A group action is one of the most important tool in the study of mathematical objects. Finite group actions in symplectic geometry appear
in various contexts, such as mirror symmetry, singularity theory, etc. In this paper, we will construct finite group actions on Lagrangian Floer theory.
Lagrangian Floer homology (developed by Floer\cite{Fl}, Oh\cite{Oh1}, Fukaya-Oh-Ohta-Ono \cite{FOOO}) has been a powerful tool, which 
revolutionized the study of symplectic topology. Furthermore, its categorical versions, Fukaya category (Fukaya \cite{Fuk1}) and Fukaya-Seidel
category (Seidel \cite{Se}, \cite{Se2}), are the main players
of homological mirror symmetry as envisioned by Kontsevich \cite{K}.

When a symplectic manifold as well as a Lagrangian submanifold admits a finite group action, it is  indeed quite natural to expect that such an action
exists on Lagrangian Floer theory. But there are several technical difficulties to define such an action as we will explain below. 
In the case of $\Z/2$-actions on exact symplectic manifolds, Seidel has defined 
$\Z/2$-equivariant Fukaya categories \cite{Se}. Another known case is when a group
acts freely on the set of connected components of a given exact Lagrangian submanifold in an exact symplectic manifold. In such a case, it is relatively easy to define a group action, since one can use pull-back data (such as spin structure, perturbation data, etc) to make Floer theory compatible with the given group action.
Such construction was proposed by Seidel \cite{Se2} in his study of mirror symmetry of quartic surfaces in an exact setting.

Let us now describe our main results. We will first explain the exact case, and later the case of general closed symplectic manifolds.
For an exact symplectic manifold $(M,\omega)$, consider a $G$-invariant exact Lagrangian submanifold $L$, which means that a $G$-action preserves the Lagrangian submanifold.
Suppose that $L$ is spin and that its spin structure is $G$-invariant. We show that the $G$-action on $L$ gives rise to
a group cohomology class in $H^2(G,\Z/2)$, ``the spin profile of $L$", which is the obstruction to extending the group action
to the spin bundle of $L$. The main observation is that if two Lagrangian submanifolds have the same spin profile, then it is possible to define a group
action on the Lagrangian Floer cohomology of the pair in the exact case.

Thus for each cohomology class $s  \in H^2(G,\Z/2)$,
an $s$-equivariant brane is defined as a $G$-invariant exact Lagrangian submanifold
whose associated spin profile is $s$, with a $G$-invariant grading.
\begin{theorem}[see Theorem \ref{thm:exeqfuk}]
Let $(M,\omega)$ be an exact symplectic manifold. For $s \in H^2(G,\Z/2)$, an {\em $s$-equivariant
Fukaya category} can be defined as follows.
An object of $s$-equivariant Fukaya category is given by an $s$-equivariant brane.
Morphism between two $s$-equivariant branes $L_0^\sharp, L_1^\sharp$ are given by $CF^*(L_0^\sharp, L_1^\sharp)$,
which has a $G$-action.
There are $\AI$-operations
$$m_k : CF^*(L_0^\sharp, L_1^\sharp) \times \cdots \times CF^*(L_{k-1}^\sharp, L_k^\sharp) \to CF^*(L_0^\sharp, L_k^\sharp),\quad k=1,2, \cdots$$ 
which are compatible with the $G$-action. i.e. for $k=1,2, \cdots$
\begin{equation}\label{strictGAinfty1}
m_k(gx_1,\cdots, gx_k) = gm_k(x_1,\cdots,x_k).
\end{equation}
\end{theorem}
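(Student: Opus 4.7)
The plan is to construct the $s$-equivariant Fukaya category in three stages: first organize $G$-equivariant choices of Floer data so that $G$ acts on all relevant moduli spaces, then define the $G$-action on each Floer cochain complex so that it respects orientations via the spin profile, and finally verify that the usual $\AI$-operations are \emph{strictly} $G$-equivariant. Throughout I assume that the standard exact-setting machinery (transversality via Hamiltonian perturbation, compactness, gluing, coherent orientations from spin structures) is in place, together with the notion of spin profile developed in the earlier sections.

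First, for each $(k{+}1)$-tuple of objects $L_0^\sharp, \ldots, L_k^\sharp$, I would fix Floer data (perturbing Hamiltonians, families of almost complex structures, and perturbations over polygon domains) to be $G$-equivariant. Since $G$ is finite, averaging produces such choices while retaining transversality by working with the $G$-quotient moduli problem. Consequently $G$ acts on the moduli spaces $\CM_k(L_0^\sharp,\ldots,L_k^\sharp;x_1,\ldots,x_k;x_0)$ by pushforward of maps, permuting intersection points $x_i \mapsto g x_i$ at the punctures and preserving virtual dimensions because the area functional is $G$-invariant.

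The next step is to define the $G$-action on $CF^*(L_0^\sharp, L_1^\sharp)$ by combining the permutation of intersection points with a sign correction coming from the spin structures. By definition of the spin profile, one cannot in general lift $G$ to an action on $\mathrm{Spin}(L_i)$; any set-theoretic lift yields a $2$-cocycle representative $s_{L_i} \in Z^2(G,\Z/2)$ whose class equals $s$. The orientation of the Cauchy--Riemann operator at an intersection point $x \in L_0 \cap L_1$ is built from the spin structures of $L_0$ and $L_1$ along the two boundary arcs, so acting by $g$ on the generator $x$ comes equipped with a sign twist controlled by the ratio $s_{L_0}/s_{L_1}$ of the chosen representatives. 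Because both represent the same class $s$, one can choose a $1$-cochain trivializing this difference and modify the set-theoretic lifts so that the induced twists on $CF^*(L_0^\sharp, L_1^\sharp)$ cancel pairwise, producing a genuine linear $G$-action; $G$-invariance of the gradings then makes it preserve the $\Z$-grading.

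The main obstacle, and the heart of the proof, is to organize these cochain-level choices \emph{coherently} across all tuples of objects simultaneously so that the $m_k$ are $G$-equivariant on the nose, rather than only up to chain homotopy. For a rigid pseudo-holomorphic polygon $u$ contributing to $m_k(x_1,\ldots,x_k)$, the translate $g\cdot u$ contributes to $m_k(gx_1,\ldots,gx_k)$, and its sign relative to $u$ is a product of per-vertex contributions determined by the same cocycle discrepancies fixed above. Since the two consecutive boundary arcs $L_{i-1}^\sharp$ and $L_i^\sharp$ meeting at the vertex $x_i$ share the common spin profile $s$, the two contributions at that vertex cancel in pairs and the total sign is $+1$. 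Combining this with $G$-equivariance of the Floer data and $G$-invariance of the area functional yields the identity $m_k(gx_1,\ldots,gx_k) = g\, m_k(x_1,\ldots,x_k)$. The $\AI$-relations themselves follow from the usual codimension-one boundary analysis of one-dimensional moduli spaces, and the $G$-equivariant setup transports this conclusion directly to the equivariant structure.
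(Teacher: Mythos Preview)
Your treatment of the spin-profile cancellation is essentially correct in spirit and matches the paper's Lemma~\ref{lem:transassume}: once both branes have the same spin profile $s$, the orientation discrepancies at each vertex cancel in pairs, and one obtains a genuine $G$-action on $CF^*(L_0^\sharp,L_1^\sharp)$.

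The genuine gap is in your first step. You write that ``since $G$ is finite, averaging produces [$G$-equivariant Floer data] while retaining transversality by working with the $G$-quotient moduli problem.'' This is precisely the equivariant transversality problem, and it does \emph{not} go through by averaging. When an $s$-equivariant brane has nontrivial isotropy (e.g.\ $g(L)=L$ for some $g\neq 1$, which is the typical case here), $G$-invariant Hamiltonians and almost complex structures are constrained at fixed loci, and a generic $G$-invariant choice need not be regular; there is no quotient manifold on which to run the usual genericity argument. The paper flags this explicitly and does \emph{not} attempt direct equivariant perturbation in the exact case.

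Instead, the paper's proof proceeds by Seidel's algebraic trick (Lemma~\ref{hompert}). One first builds an auxiliary $\AI$-category $\mathcal{D}$ whose objects are pairs $(L^\sharp,g)$ with $g\in G$; now $G$ acts \emph{freely} on objects, so one can choose perturbation data for one representative in each orbit and push it forward, obtaining an honest $G$-action on $\mathcal{D}$ (Lemma~\ref{lem:gactionond}) without any equivariant transversality hypothesis. Separately, one constructs the Donaldson--Fukaya category $C$ (Definition~\ref{Donald}) and equips its morphism spaces $HF^*$ with a $G$-action via continuation maps, showing this is well-defined on cohomology (Lemma~\ref{condhomopert}). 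The homological perturbation lemma then transfers the $\AI$-structure from $\mathcal{D}$ to a strictly $G$-equivariant $\AI$-category $\mathcal{C}$ with $H(\mathcal{C})\cong C$. Your argument would be repaired by replacing the averaging step with this detour through $\mathcal{D}$ and Lemma~\ref{hompert}.
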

To be more precise, we should equip each $s$-equivariant brane with a $G$-equivariant flat complex vector bundle as in Section \ref{sec: orbibundle}.
The $s$-orbifolded Fukaya category is defined to have the same set of objects as the $s$-equivariant Fukaya category, but we take $G$-invariant parts of morphisms of the $s$-equivariant category. Then, the $s$-orbifolded Fukaya category naturally inherits an $\AI$-structure from the $s$-equivariant category from \eqref{strictGAinfty1}.

As an application, we define both equivariant and orbifolded Fukaya-Seidel categories of a $G$-invariant Lefschetz fibration.
Namely, for  each $s  \in H^2(G,\Z/2)$, we  define {\em $s$-equivariant Fukaya-Seidel category of $\pi :M \to \C$} (See Definition \ref{def:FS}, \ref{def:FS2}).  By taking $G$-invariant
parts on morphisms, and the induced $\AI$-structure, we obtain the {\em $s$-orbifolded Fukaya-Seidel} category of $(\pi,G)$.

Let us turn our discussion to the case of a general closed symplectic manifold (which is not exact) with an effective action of a finite group $G$. Consider a Lagrangian submanifold $L_i$ ($i=0,1$), which has an induced $G$-action. Suppose that $L_i$ has a $G$-invariant spin structure, with a spin profile $s_i \in H^2(G;\Z/2)$
for $i=0,1$. Fix a base path $l_0:[0,1] \to M$ such that $l_0(i) \in L_i$ for $i=0,1$.
Let $G_\alpha$ be the energy zero subgroup $G_{\alpha}$ of $l_0$ as in Definition \ref{def:galpha}. 

\begin{theorem}[see Definition \ref{def:aact}, Proposition \ref{prop:galphacf}, \ref{prop:12.4}]
If $s_0=s_1$, then the Lagrangian Floer filtered cochain complex of $L_0, L_1$ admits a $G_\alpha$-action.
More generally, filtered $\AI$-bimodule of the pair $(L_0, L_1)$ admits a natural $G_\alpha$-action.
\end{theorem}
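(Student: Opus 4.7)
The plan is to define the $G_\alpha$-action in stages: first on the generators of $CF^*(L_0, L_1)$ and on the Kuranishi moduli spaces that govern the $\AI$-bimodule operations, and then to verify strict equivariance of the structure maps, with the orientation check being the place where the hypothesis $s_0 = s_1$ is essential.

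First I would describe the action on generators. In the filtered Bott--Morse framework of \cite{FOOO}, generators of $CF^*(L_0, L_1)$ are modeled on critical points of a Morse function on $L_0 \cap L_1$ (assumed clean after a small Hamiltonian perturbation that can be made $G$-invariant), each decorated with a lift to the Novikov covering of the path space $\Omega(L_0, L_1; l_0)$ based at $l_0$. The energy-zero subgroup $G_\alpha$ is designed precisely so that for every $g \in G_\alpha$ the path $g \cdot l_0$ is homotopic to $l_0$ through a family bounding a disk of zero symplectic area; this lifts $g$ to a filtration-preserving self-map of the Novikov covering, and hence yields a bijection on the set of generators.

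Next I would choose a $G$-invariant family of compatible almost complex structures together with $G$-equivariant Kuranishi perturbation data for every moduli space involved (existence by averaging over the finite group). The moduli space $\mathcal{M}_{k_0, k_1}(\vec\beta)$ of pseudo-holomorphic disks with $k_0$ boundary marked points on $L_0$, $k_1$ on $L_1$, together with input/output marked points connecting Floer generators, then carries a set-theoretic $G_\alpha$-action $u \mapsto g \circ u$. For $g \in G_\alpha$ this preserves the homotopy class at the output marked point and hence the energy, so the action descends compatibly to the filtered bimodule structure. The heart of the proof is the orientation comparison: following the conventions of \cite{FOOO}, the orientation on $\mathcal{M}_{k_0, k_1}(\vec\beta)$ is determined from the spin structures on $L_0$ and $L_1$ along the respective boundary arcs of each disk. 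For $g \in G_\alpha$, comparing the orientation of a Kuranishi chart with that of its image under $g$ produces a sign that measures the failure of $g$ to lift to the spin bundles; by the definition of the spin profile recalled in the earlier section, the obstruction is captured by the cocycle difference $s_0 - s_1$ on $G_\alpha \times G_\alpha$. When $s_0 = s_1$ this difference is identically zero (or at worst a coboundary that can be absorbed into a rescaling of the action on generators), so the set-theoretic bijection upgrades to an orientation-preserving action that commutes strictly with the bimodule structure maps $n_{k_0, k_1}$.

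The main obstacle will be this orientation step, in which one must track the spin-profile cocycle through the Kuranishi orientation formulas for every topological type of disk and every bubbling configuration, simultaneously with the choice of $G$-equivariant multi-sections. In particular, one must verify that the invariant Kuranishi data chosen in the previous step can be arranged coherently for all $(k_0, k_1)$ and all homotopy classes $\vec\beta$ so that both the moduli spaces and their orientations transform compatibly under $G_\alpha$; this is the filtered analogue of what happens in the exact setting of Theorem \ref{thm:exeqfuk}, but with the added complications of energy filtration and disk/sphere bubbling that are characteristic of the non-exact situation.
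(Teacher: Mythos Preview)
Your outline has the right two ingredients---the $G_\alpha$-action on the Novikov covering and the $G$-equivariant Kuranishi perturbations---but it misplaces where the spin-profile hypothesis does its work, and this is a genuine gap rather than a matter of presentation.

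In the paper the condition $s_0=s_1$ is \emph{not} used to compare orientations of Kuranishi charts of moduli spaces under a single $g$. It is used one level earlier, in the very definition of the $G_\alpha$-action on the generators. A generator is not merely a point $[w,l_p]$ of the Novikov covering but the one-dimensional orientation space $\Theta^-_{[w,l_p]}=\det(\bar\partial_{\lambda_w,Z_-})$ attached to it. The map $\Phi^\Theta_g$ (Definition \ref{def:aact}) is the composite of (i) the naive push-forward $A^\Theta_g:\Theta^-_{[w,l_p]}\to\Theta^-_{[g(w),l_{g(p)}]}$, (ii) a determinant-line gluing with the energy-zero surface $w_g$, and (iii) a sign correction $(-1)^{sp(w_g)}$ recording whether the canonical spin structure on $\lambda_{w_g}$ is trivial. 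Step (iii) is what your proposal is missing: without it the maps $\Phi^\Theta_g$ are not even well defined (they would depend on the choice of $w_g$), and with it the cocycle identity $\Phi^\Theta_g\circ\Phi^\Theta_h=\Phi^\Theta_{gh}$ reduces via Proposition \ref{prop:compadm} to
\[
sp(w_g)+sp(w_h)\equiv sp(w_{gh})+\textnormal{spf}_{L_0}(g,h)+\textnormal{spf}_{L_1}(g,h)\pmod 2,
\]
which holds precisely when $\textnormal{spf}_{L_0}=\textnormal{spf}_{L_1}$. So the spin profile enters as the obstruction to the action on orientation spaces being a \emph{group} action, not as an orientation sign on moduli strata.

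Once $\Phi^\Theta$ is a genuine $G_\alpha$-action on the graded module $CF^*_{R,l_0}(L_0,L_1)$, the equivariance of $\delta=n_{0,0}$ and of the higher $n_{k_0,k_1}$ follows without any further sign chase: the orientation of each moduli space $\mathcal M(L_1,L_0;[w,l_p],[w',l_q])$ is canonically a map $\Theta^-_{[w,l_p]}\to\Theta^-_{[w',l_q]}$, and the $G$-equivariant Kuranishi structure of Section \ref{sec:Ku} makes the virtual counts intertwine these maps with $\Phi^\Theta$. Your plan to ``track the spin-profile cocycle through the Kuranishi orientation formulas for every topological type of disk and every bubbling configuration'' is therefore unnecessary---and would in any case not succeed as stated, because for a fixed $g$ the naive action on a moduli space \emph{is} orientation-preserving (the spin structures are $G$-invariant); the failure is purely in the composition law, and that has already been absorbed into the definition of the action on generators.
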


\begin{prop}[see Section \ref{sec:12}]
Suppose that $L_0$ is connected 
and let $L_1=\phi_H^1 (L_0)$.
Then $G_\alpha = G$, and we have  a $G$-equivariant filtered $\AI$-bimodule quasi-isomorphism
 $$\Omega(L) \widehat{\otimes} \Lambda_{nov} \to CF_{R,l_0}^\ast(L,L_1) \otimes_{\Lambda (L, L_1;l_0)}\Lambda_{nov}.$$
\end{prop}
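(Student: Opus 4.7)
The plan is to reduce the statement to the (non-equivariant) Fukaya--Oh--Ohta--Ono comparison between the de Rham model of $L$ and the Floer complex of a Hamiltonian-isotopic pair, and then to promote every choice in that construction to a $G$-equivariant one so that the resulting filtered $\AI$-bimodule homomorphism intertwines the $G$-actions automatically.

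First I would verify $G_\alpha=G$. Take the canonical base path $l_0(t)=\phi_H^t(p)$ for some $p\in L$. For any $g\in G$, construct a $2$-cell $u_g$ bounding $l_0$ and $g\cdot l_0$ by concatenating (i) a path $\gamma$ in $L_0=L$ from $p$ to $gp$ (using connectedness of $L$), (ii) its Hamiltonian image $(s,t)\mapsto \phi_H^t(\gamma(s))$, and (iii) a correction path in $L_1$ between $\phi_H^1(gp)$ and $g\,\phi_H^1(p)$. A standard computation of the symplectic area via the Hamiltonian action functional, together with the vanishing contributions from the two Lagrangian segments, shows the area of $u_g$ lies in the period group of $(L,L_1;l_0)$; hence $g\in G_\alpha$ by Definition \ref{def:galpha}.

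Next, recall from \cite{FOOO} the construction of a filtered $\AI$-bimodule homomorphism
$$\mathcal{P}:\Omega(L)\widehat{\otimes}\Lambda_{nov}\longrightarrow CF^*_{R,l_0}(L,L_1)\otimes_{\Lambda(L,L_1;l_0)}\Lambda_{nov},$$
defined by counting $J$-holomorphic strips with boundary on $(L,L_1)$ carrying boundary marked points on the $L$-side constrained by the supports of the input differential forms, with the evaluation of those forms yielding coefficients in the target. For $C^2$-small $H$ this specialises to the PSS-type Morse/Floer comparison, and for general $H$ the quasi-isomorphism follows by filtered homotopy invariance of $\AI$-bimodules under Hamiltonian isotopy.

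For $G$-equivariance I would make every piece of data invariant: pick a $G$-invariant $\omega$-compatible $J$, and note that the moduli spaces of strips then carry a strict $G$-action $g\cdot u=g\circ u$ intertwining the $G$-actions on $L$ and on $L_1$, with the relevant evaluation maps to $L$ equivariant on the nose. Hence $\mathcal{P}$ is $G$-equivariant as soon as the perturbation data defining it are $G$-invariant. The main obstacle is precisely this: one must inductively build $G$-equivariant Kuranishi structures and multisections compatible with every codimension-one boundary stratum encoding the bimodule operations. This is carried out by the standard averaging over the finite group $G$, using that the spin-profile agreement $s_0=s_1$ (automatic since $L_1$ inherits its spin structure from $L$ via the Hamiltonian isotopy) makes the orientation signs $G$-compatible, and checking at each inductive stage that averaging preserves both transversality of the multisections and the boundary matching with perturbations already fixed on lower-dimensional strata. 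Once this equivariant perturbation scheme is in place, both the filtered $\AI$-bimodule relations for $\mathcal{P}$ and the quasi-isomorphism property are inherited directly from the non-equivariant case.
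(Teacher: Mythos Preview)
Your overall strategy matches the paper's: establish $G_\alpha=G$ by constructing an explicit energy-zero surface, then observe that the Fukaya--Oh--Ohta--Ono bimodule comparison can be promoted to a $G$-equivariant one using the equivariant Kuranishi machinery already in place (the paper in fact omits the details of this second step, simply deferring to \cite{FOOO}).

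However, your argument for $G_\alpha=G$ is both more complicated than necessary and incomplete. Since $H$ is assumed $G$-invariant, the flow $\phi_H^t$ is $G$-equivariant, so $\phi_H^1(gp)=g\,\phi_H^1(p)$ and your correction path (iii) collapses to a point. More importantly, the paper shows that the explicit surface $w_g(t,s)=\phi_H^t(\gamma_g(s))$ has symplectic area \emph{exactly zero}: pulling back $\omega$ and integrating in $s$ first gives $\widetilde H(\gamma_g(1))-\widetilde H(\gamma_g(0))$, where $\widetilde H(x)=\int_0^1 H_t(\phi_H^t(x))\,dt$ is $G$-invariant and $\gamma_g(1)=g\cdot\gamma_g(0)$. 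Your weaker claim that the area ``lies in the period group'' does not by itself produce an element of $G_\alpha$: Definition \ref{def:galpha} demands a surface with \emph{both} vanishing area and vanishing Maslov index, and correcting the area by concatenating a loop may alter the Maslov index. The paper's direct zero-area computation sidesteps this.

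One further remark on the perturbation scheme: the paper does not obtain $G$-equivariant transversality by averaging multisections (averaging need not preserve transversality). Rather, it passes to the quotient Kuranishi structure on $\CM/G$ and pulls back multisections chosen there (Lemmas \ref{lemxg} and \ref{lem:fukaya}). Averaging enters only at the algebraic level, for $\AI$-homomorphisms between already $G$-equivariant $\AI$-algebras (Lemma \ref{avgAhom}).
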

The method developed in this paper,  together with the work of Fukaya
in \cite{Fuk} should define equivariant Fukaya categories in the general case.

More detailed introduction and description are now in order.
\subsection{Spin profiles} 
In this paper, we introduce the notion of a {\em spin profile} of a Lagrangian submanifold with a $G$-action.  Such a notion is deeply related to the orientation of the moduli space of  $J$-holomorphic discs and
the $G$-action on it.  Recall that if a  Lagrangian submanifold is spin, a choice of spin structure
determines canonical orientations of the moduli spaces of $J$-holomorphic discs (see \cite{FOOO}).
Hence, we need to analyze the relation between the group action on $L$ and that of its spin structure.

We assume that the related spin structure is $G$-invariant, which means that for a principal spin
bundle $P \to L$, the pull-back bundle $g^*P$ via the action of $g \in G$ is isomorphic to the original bundle $P$.
In fact, the precise isomorphism between $g^*P$ and $P$ (as in $G$-equivariant sheaves),
which can be considered as a lift of the $g$-action to the one on spin bundle, will be necessary to
determine  the sign of the $g$-action on the Floer complex. In general, these isomorphisms are not necessarily
compatible with each other, and the $G$-action on $L$ does {\em not} lift to the one on the spin bundle.
Hence, one can not expect that
any orientation preserving action of a symplectic manifold gives rise to an action on its Fukaya category.

A {\em spin profile} is defined as the group cohomology class
in $H^2(G,\Z/2)$ which measures the obstruction to extending the action of $G$ on a manifold to the one on its spin structure. We show that if spin profiles of two Lagrangian submanifolds coincide, then we are able to define a group action on Lagrangian Floer theory of a pair (we need to develop $G$-Novikov theory additionally for non-exact symplectic manifolds). The main point is that the exact signs from failures of group actions on spin bundles cancel out for Lagrangians with the same spin profiles.
This means that the equivariant Fukaya category is defined for each choice of spin profiles in $H^2(G,\Z/2)$.
Construction of the $\Z/2$-equivariant Fukaya category by Seidel \cite{Se}, in fact, corresponds to the non-trivial class in $H^2(\Z/2,\Z/2)$.
We explain several properties of spin profiles, and also the change of spin profiles under the change of spin structures.


\subsection{Equivariant Fukaya categories for exact cases and $G$-Lefschetz fibrations}
Let $(M,\omega)$ be an exact symplectic manifold  with an effective action of a finite group $G$. We fix a class $s \in H^2(G,\Z/2)$, and consider $s$-equivariant branes as explained above. It may happen that some of $g$-action send $L$ to $g(L)$ which is disjoint from $L$, in which case we can take $\sqcup_{g \in G, g(L) \neq L} g(L)$ to be the new $G$-invariant Lagrangian submanifold. We do not assume that $L$ is connected in the exact case.

In general, the equivariant Fukaya category of $M$ should also have $G$-equivariant Lagrangian immersions to $M$.
In the sequel of this paper \cite{CH2}, we will investigate extensively such $G$-equivariant Lagrangian immersions, and
explore the relationship  of $G \times G$-equivariant immersions into $M \times M$ with  Chen-Ruan orbifold cohomology of $[M/G]$. 
In fact, one should consider a more general notion of orbifold embeddings \cite{ALR}, but they were shown to be equivalent to  $G$-equivariant immersions for
global quotients in \cite{CHS}, and hence it is enough to study the latter.

There is another technical issue, which are equivariant transversalities of the related moduli spaces of holomorphic discs.
Namely, we would like to choose perturbation data equivariantly, so that we have
strict $G$-equivariant $\AI$-operations.
In the exact case, we follow Seidel \cite{Se2} to achieve it with aid of an algebraic machinery.
In the general
case later, Kuranishi structure (as in \cite{FO}, \cite{FOOO}, \cite{Fuk}) will be used, which is in fact a device to achieve this kind of equivariant transversality

We construct a version of Donaldson-Fukaya categories ($C$ in Definition \ref{Donald}),
and produce a $G$-action on it from the weak $G$-action on the related Floer complex (see \cite{CH}
for the weak $G$-actions on  Morse complexes). Then, the homological perturbation type lemma of Seidel (Lemma 4.3 of \cite{Se2}) enables
us to define the $s$-equivariant Fukaya category for an exact symplectic manifold. By taking
$G$-invariant parts on morphism spaces, the $s$-orbifolded Fukaya category of $[M/G]$ is obtained. This should be regarded as a first approximation of orbifold theory, 
as we should also add bulk deformation by twisted sectors as in \cite{CP}.

 As an application, we define {\em the directed equivariant (and the orbifolded) Fukaya-Seidel category} of
 a $G$-invariant Lefschetz fibration  $\pi:M \to \C$, where $M$ is equipped with a $G$-action, and $\pi$ is $G$-invariant. 
 In mirror symmetry, orbifolds and orbifolded Landau-Ginzburg models appear
 naturally, and have been actively investigated. Our work provides the necessary framework to
 state homological mirror symmetry in these cases by defining appropriate orbifold Fukaya categories.
 We only deal with $G$-invariant Lefschetz fibrations in this paper and leave the case of an orbifolded $LG$-model
 which is not Lefschetz fibration and of a $G$-equivariant $LG$-model to the future investigation. 
 
\subsection{$G$-Novikov theory}
For a closed symplectic manifold which is not exact, the Lagrangian Floer cohomology is the Morse-Novikov homology on path spaces between two Lagrangian submanifolds. Hence, we need to develop $G$-Novikov theory.

We first recall $G$-Morse theory from our previous work \cite{CH}.
 Given a Morse-Smale function $f:M \to \R$ on a compact manifold $M$, one can define the Morse-Smale-Witten complex (Morse complex for short) by counting gradient flow lines between critical points of $f$. If $f$ is $G$-invariant,  we studied how to define a finite group action on the  Morse complex, whose homology of the $G$-invariant part
is isomorphic to the singular homology of  the quotient orbifold.  One of the main observation of \cite{CH} is that one should consider not only the action on critical points, but also on the orientation spaces (defined from their unstable manifolds).

Now,  Morse-Novikov theory (Novikov theory for short) generalizes Morse theory to the
case of a general closed Morse one form $\eta$, instead of $df$. Novikov theory is a kind of Morse theory associated to a Morse function on a certain Novikov covering (so that the pull-back of the  given closed one form becomes exact), together with the action of the group ring of its deck transformation group.

In this paper, we define finite group actions on Novikov theory.
The main difficulty is that the base point chosen for  Novikov theory is {\em not} preserved by the group action.
We resolve this issue by introducing the subgroup $G_\eta$ of energy zero elements of $G$, where $g \in G$ is said to be energy zero if there is an energy zero  path (with respect to the action functional)
from the base point to its $g$-action image. And we will construct an action of energy zero subgroup on the Novikov complex using this energy zero path. Together with an appropriate orbifold Novikov ring coefficient, taking $G_\eta$-invariant part defines $G$-Novikov theory

\subsection{Group actions in general Lagrangian Floer theory}
By using spin profiles and $G$-Novikov theory, we can define group actions on Lagrangian Floer cohomology groups for non-exact cases as follows.
Let $(M,\omega)$ be a closed symplectic manifold with an effective $G$-action. 
Fix $s \in H^2(G,\Z/2)$ and consider a Lagrangian submanifold $L_i$ (for $i=0,1$) whose spin profile is $s$.
For a base path $l_0$ between $L_0$ and $L_1$, take an energy zero surface $w_g$ connecting $l_0$ and $g(l_0)$ for
$g$ in the energy zero subgroup $G_{\alpha}$ (Definition \ref{def:galpha}) as an analogue of $G$-Novikov theory.
We assume that $G = G_\alpha$ for simplicity. For each generator of the Floer complex of a pair $(L_0, L_1)$, one can consider its orientation space,
which is the determinant bundle of the associated Cauchy-Riemann operator. This construction already appeared in \cite[3.7.5]{FOOO}, and Seidel also used orientation spaces in exact cases. Then, the actual group action is defined to
be an action on orientation spaces, where the action image is obtained by gluing the determinant bundles of $w_g$ (more precisely, $(-1)^{sp(w_g)}w_g$)
to the naive $g$-action image of a generator of the Floer complex (see Definition \ref{def:aact}). Here the Lagrangian bundle data along
the boundary $\partial w_g$ will be given a specific spin structure, and $sp(w_g)$ is defined to be 0 (resp. 1) if such spin structure
is trivial (resp. non-trivial). 
Then, the following identity from Proposition \ref{prop:compadm}
$$sp(w_g) \cdot sp(w_h) = (-1)^{\textnormal{spf}_{L_0}(g,h)+\textnormal{spf}_{L_1}(g,h)}sp(w_{gh}),$$
guarantees the cocycle condition of the group action, provided that the spin profiles of $L_0$ and $L_1$ agree with each other.

In this way, we get a well-defined action of $G$ on the Floer cochain complex $CF_{R,l_0} (L_0,L_1)$ between $L_0$ and $L_1$. With equivariant Kuranishi perturbations, we show that the Floer differential  as well as the $\AI$-bimodule structure on $CF_{R,l_0} (L_0,L_1)$ is compatible with the $G$-action. Therefore, the bimodule structure descends to the $G$-invariant part of $CF_{R,l_0} (L_0,L_1)$, which should be interpreted as a morphism space between $[L_0 /G]$ and $[L_1/G]$ in the Fukaya category of the orbifold $[M/G]$.

Invariance of the Floer cohomologies under $G$-invariant Hamiltonians can be proven as well. That is, if $L_1$ is the Hamiltonian deformation of $L_0$ by a $G$-invariant Hamiltonian function on $M$, then the assumption $G_\alpha=G$ always holds and moreover, $CF(L_0,L_0)$ and $CF(L_0, L_1)$ are $G$-equivariantly isomorphic as $CF(L_0,L_0)$-modules.

\subsection{Mirror symmetry}
For the purpose of mirror symmetry, we need to consider complex flat line bundles on Lagrangian submanifolds. With $G$-action, $G$-equivariant structures of such bundles should be also taken into account. 
The constructions in this paper can be adapted to the cases  with $G$-equivariant line bundles by following the standard constructions.
Such a $G$-equivariant bundle may be considered as an orbi-bundle on the orbi-Lagrangian $[L/G]$.
Hence an object of an orbifolded category $\mathcal{F}uk_G (X)$ is given by a  $G$-invariant Lagrangian (or a $G$-equivariant immersion) equipped with  a $G$-equivariant line bundle on it. Here morphisms of $\mathcal{F}uk_G^s (X)$ are obtained by taking the $G$-invariant part of morphisms in the $G$-equivariant category.

Recall that $B$-branes of mirror symmetry are given by  derived categories of coherent sheaves. When there is a group action, it is natural to work with $G$-equivariant sheaves. 
\begin{remark}
The standard naming convention for equivariant sheaves are unfortunately different from ours. 
Namely, what we call an orbifolded category (taking $G$-invariant parts on morphisms of an equivariant category), is called an equivariant
category for the case of sheaves.
\end{remark}
We will see later that if $X$ is given by a Lagrangian torus fiber bundle equipped with an action of $G$  compatible with bundle structure, then its dual torus bundle $Y$ naturally admits a $G$-action. In general, if $Y$ is given as a SYZ (Strominger-Yau-Zaslow) mirror of $X$, we conjecture that
the (derived) orbifold category $D\mathcal{F}uk_G (X)$ is equivalent the derived category of $G$-equivariant sheaves on $Y$. 
When the mirror of $X$ is a Landau-Ginzburg model $W : Y \to \CC$, it turns out that $W$ is invariant under the $G$-action on $Y$. We conjecture that homological mirror symmetry is the equivalence between the $G$-equivariant matrix factorization category of $W$ and orbifolded Fukaya category $\mathcal{F}uk_G (X)$. 
(see Section \ref{MirrorSymm} for the precise statements including spin profile data)

Character group of $G$ acts on both sides of mirror by twisting the equivariant structures on bundles. Namely, given an orbifold bundle (or $G$-equivariant  bundle) for $\mathcal{F}uk_G(X)$, one can change $G$-equivariant structure by multiplying a character.
We expect that these actions are compatible with homological mirror symmetry.
We will look into toric cases, and present  more conjectures and examples.

 Let us also mention that in a forthcoming work with Siu-Cheong Lau, 
we will analyze mirror symmetry phenomena for $G$-equivariant immersions into finite group quotients, and construct a {\em localized homological mirror  functor} for immersed Lagrangian submanifolds.\\

\subsection*{Notations and Conventions}
We will assume throughout the paper that $G$ is a finite group.
And $G$-action are always assumed to be effective, proper, and orientation preserving, unless stated otherwise.
A submanifold $L \subset M$ preserved by the $G$-action of $M$ is called $G$-invariant. Here, the action of $g \in G$ sends $L$ to $L$, but it may not fix $L$ pointwisely. 
In fact, if $L$ is a Lagrangian submanifold of $M$ and $g$ fixes every point  of $L$, then it is easy to see that $g$ fixes a neighborhood of $L$ in $M$. Since the action on $M$ is assumed to be effective, such $g$ should be the identity element of $G$.

Let $R$ be a field containing $\Q$, and we will use $R = \R$ or $\C$ for Floer theory.
We use the following $R$-normalization convention due to Seidel \cite[Section (12e)]{Se}.
This is because, roughly speaking, we are only interested in the signs, rather than magnitudes in orientations spaces.
For any $1$-dimensional real vector space  $\Theta$,  we denote by
$|\Theta|_R$ the $1$-dimensional $R$-vector space generated by the two orientations of $\Theta$,
with the relation that the sum of two generators is zero. An $R$-linear isomorphism
$\phi:\Theta_1 \to \Theta_2$ induces a $R$-linear isomorphism $|\Phi|_R:|\Theta_1|_R \to
|\Theta_2|_R$, called the $R$-normalization of $\Phi$.
Normalization will be also used in families, turning real line bundles into local coefficient systems with fiber $R$. It can be thought of as passing to the associated bundle via the homomorphism given by
$$\R^\times \to \{\pm 1\} \subset R^{\times}, \qquad  t \to t/|t|.$$

To simplify notations, we denote also by $g\in G$ the diffeomorphism of $M$ induced
by the action of $g$ (sometimes written as $A_g$). For $\phi:M \to N$, we denote by $\phi_\sharp$ the
induced map on the fundamental group $\phi_\sharp : \pi_1(M,x_0) \to \pi_1(N, \phi(x_0))$.
We denote by $\gamma_1 \star \gamma_2$, the concatenation of two paths $\gamma_1$ and $\gamma_2$.\\

\subsection*{Acknowlegements} 
We have benefited very much from Paul Seidel's work \cite{Se}, \cite{Se2}. We also thank him for helpful comments to our work. We are indebted to Kenji Fukaya for Lemma \ref{lem:fukaya} and valuable advices. We are grateful to Kaoru Ono for the kind explanations of orientations in \cite{FOOO}, and to Siu-Cheong Lau for several helpful discussions especially on character group actions. 
We would like to express our gratitude to Yong-Geun Oh,  Kwokwai Chan, Hsian-hua Tseng, Mainak Poddar, Nick Sheridan,  Hiroshige Kajiura, Gyoung-Seog Lee, Hyung-Seok Shin, Sangwook Lee  for their help. 
This work was supported by the National Research Foundation of Korea (NRF) grant funded by the Korea Government (MEST) (No. 2012R1A1A2003117)

\section{$G$-Novikov theory in the easiest case}
In this section, we review Novikov theory by considering  $G$-Novikov theory under the assumption \ref{assum:x_0fixed}. In this case, $G$-Novikov theory is given by an equivariant version of the standard one. Also, we will review $G$-Morse theory from our previous work \cite{CH},
which is needed for the proper definition of  the group action on a Novikov complex.
\begin{assumption}\label{assum:x_0fixed}
There exists a point $x_0 \in M$ such that $g \cdot x_0 = x_0$ for all $g \in G$.
\end{assumption}
This assumption enables us to have a natural $G$-action on the fundamental group $\pi_1(M,x_0)$. We will fix a base point to be $x_0$, and simply write $\pi_1(M)$ for $\pi_1 (M, x_0)$ in this section.
Note that the $G$-action on $\pi_1 (M)$ is compatible with the group operations of $\pi_1 (M)$. i.e. the $G$-action gives a group homomorphism $G \to {\rm{Aut}} (\pi_1 (M))$.

\subsection{Novikov coverings}
 For a closed $1$-form $\eta$ on $M$, we call $\eta$ a {\em Morse $1$-form} if the graph of $\eta$ in $T^\ast M$ is transversal to the zero section. Assume further that $\eta$ is $G$-invariant. $\eta$ gives rise to a homomorphism $I_\eta : \pi_1 (M) \to \RR$,
\begin{equation}
I_\eta (\alpha) = \int_{S^1} \alpha^\ast \eta,
\end{equation}
for $\alpha \in \pi_1 (M)$. 
 Note that $I_\eta$ is $G$-invariant as well, i.e. $I_\eta(g \cdot \alpha) = I_\eta(\alpha)$.

Let  $\pi : \WT{M} \to M$ be the covering space associated to the kernel of $I_\eta$, denoted by ${\rm{Ker}} I_\eta$. (i.e. for the universal covering  $\WT{M}_{uni}$ of $M$, we have
$\WT{M} = \WT{M}_{univ} / {\rm{Ker}} I_\eta$).
We fix a base point $\WT{x}_0$ in $\WT{M}$ such that $\pi(\WT{x}_0) = x_0$.

\begin{lemma}\label{lem:coveraction}
There is an induced $G$-action on $\WT{M}$ and $\pi: \WT{M} \to M$ is $G$-equivariant.
\end{lemma}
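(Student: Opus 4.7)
The plan is to build the $G$-action on $\WT{M}$ by lifting each symmetry $A_g : M \to M$ through the covering map $\pi$, exploiting the basepoint $x_0$ fixed by $G$ and the $G$-invariance of $I_\eta$.

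First, I would verify that $\mathrm{Ker}\, I_\eta \subset \pi_1(M,x_0)$ is preserved by the $G$-action on $\pi_1(M,x_0)$. This is immediate from $I_\eta(g\cdot\alpha)=I_\eta(\alpha)$, which was already noted in the text: if $\alpha \in \mathrm{Ker}\, I_\eta$, then $I_\eta(g_\sharp\alpha)=0$ as well. In particular, $A_{g\,\sharp}(\mathrm{Ker}\, I_\eta)\subset \mathrm{Ker}\, I_\eta$, which is the precise condition needed to apply the standard lifting criterion for covering spaces.

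Next, since $A_g$ is a self-homeomorphism of $M$ with $A_g(x_0)=x_0$ and $A_{g\,\sharp}(\mathrm{Ker}\, I_\eta)\subset \mathrm{Ker}\, I_\eta = \pi_\sharp \pi_1(\WT{M},\WT{x}_0)$, the lifting criterion produces a unique continuous map
\[
\WT{A}_g : \WT{M} \longrightarrow \WT{M}, \qquad \WT{A}_g(\WT{x}_0)=\WT{x}_0, \qquad \pi\circ \WT{A}_g = A_g\circ \pi.
\]
Uniqueness of basepoint-preserving lifts will then be used to show the assignment $g\mapsto \WT{A}_g$ is a group homomorphism: both $\WT{A}_{gh}$ and $\WT{A}_g\circ \WT{A}_h$ are lifts of $A_{gh}=A_g\circ A_h$ sending $\WT{x}_0$ to $\WT{x}_0$, so they must coincide. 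Applying this with $h=g^{-1}$ gives $\WT{A}_g\circ \WT{A}_{g^{-1}}=\mathrm{id}_{\WT{M}}$, showing each $\WT{A}_g$ is a homeomorphism, and smoothness follows because $\pi$ is a local diffeomorphism and $A_g$ is smooth.

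The $G$-equivariance of $\pi$ is built into the lifting relation $\pi\circ\WT{A}_g=A_g\circ\pi$. As a concrete sanity check, one can describe $\WT{A}_g$ in path-space terms: a point of $\WT{M}$ over $y\in M$ is an equivalence class $[\gamma]$ of paths from $x_0$ to $y$ modulo homotopy rel endpoints composed with loops in $\mathrm{Ker}\, I_\eta$, and the formula $\WT{A}_g[\gamma]=[g\circ\gamma]$ (well-defined precisely because $\mathrm{Ker}\, I_\eta$ is $G$-invariant) gives the same map. There is really no hard step here; the only subtle point is that the construction genuinely requires Assumption \ref{assum:x_0fixed} so that $A_g$ induces an honest endomorphism of $\pi_1(M,x_0)$ and a canonical basepoint lift exists. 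This is exactly why the more delicate framework of Section 12 with the energy-zero subgroup $G_\alpha$ will be needed when no global fixed point is available.
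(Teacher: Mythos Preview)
Your proof is correct and follows essentially the same route as the paper: show that $\mathrm{Ker}\, I_\eta$ is $G$-invariant and then invoke the lifting criterion with the fixed basepoint $\WT{x}_0$. You have simply unpacked the phrase ``follows from the standard covering theory'' by spelling out the uniqueness argument for the group homomorphism property and adding the path-space description as a sanity check.
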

\begin{proof}
Since $I_\eta$ is $G$-invariant, ${\rm{Ker}} I_\eta$ is preserved by $g_\sharp:\pi_1(M) \to \pi_1(M)$. Hence for any $g \in G$, 
$$g_\sharp(\pi_{\sharp} \pi_1 (\WT{M}) ) = g_\sharp ({\rm{Ker}} I_\eta ) ={\rm{Ker}} I_\eta$$
Thus, the composition map $ g \circ \pi : \WT{M} \to M$ has a unique lift
$\WT{g}:\WT{M} \to \WT{M}$ such that $(g \circ \pi )(\WT{x}_0) = \WT{x}_0$.
The lemma, then, follows from the standard covering theory.
\end{proof}

The deck transformation group $\Gamma$ of the covering $\WT{M} \to M$ is given by
$\pi_1 (M) / {\rm{Ker}} I_\eta$, and the Novikov ring $\Lambda_{[\eta]}$ is a 
completion of the group algebra of $\Gamma$:
\begin{equation}\label{ring}
\Lambda_{[\eta]} = \left\{ \left. \sum a_i h_i \, \right| \, a_i \in R, \, h_i \in \Gamma,\,\,  |\{i : a_i \neq 0, I_\eta (h_i) <c\} | < \infty \,\, \forall c \in \RR \right\},
\end{equation}
where $R$ is a commutative ring with unity.

\begin{lemma}
The induced $G$-action on $\Gamma= \pi_1 (M) / {\rm{Ker}} I_\eta$ is trivial.
\end{lemma}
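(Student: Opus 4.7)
The plan is to factor $I_\eta$ through $\Gamma$ and use injectivity of the induced map, together with the $G$-invariance of $\eta$, to conclude that every $g \in G$ acts as the identity on $\Gamma$.

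First I would note that, by the first isomorphism theorem, $I_\eta: \pi_1(M) \to \RR$ descends to an injective group homomorphism $\bar{I}_\eta: \Gamma \to \RR$, realizing $\Gamma$ as a subgroup of the additive group $\RR$. The $G$-action on $\pi_1(M, x_0)$ (which makes sense thanks to Assumption \ref{assum:x_0fixed}) descends to an action on $\Gamma$ because ${\rm{Ker}}\, I_\eta$ is preserved by each $g_\sharp$, a fact already used in the proof of Lemma \ref{lem:coveraction}.

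Next, for $g \in G$ and $\alpha \in \pi_1(M, x_0)$, I would verify by a one-line change-of-variables calculation
\[
\bar{I}_\eta(g \cdot [\alpha]) = I_\eta(g_\sharp \alpha) = \int_{S^1}(g \circ \alpha)^\ast \eta = \int_{S^1} \alpha^\ast (g^\ast \eta) = I_\eta(\alpha) = \bar{I}_\eta([\alpha]),
\]
where the penultimate equality uses the $G$-invariance $g^\ast \eta = \eta$. Injectivity of $\bar{I}_\eta$ then forces $g \cdot [\alpha] = [\alpha]$, establishing the claim.

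There is essentially no obstacle: once $\Gamma$ is identified with its image in $\RR$ via $\bar{I}_\eta$, triviality of the $G$-action on $\Gamma$ is just a restatement of the $G$-invariance of $\eta$. The only point meriting care is the well-definedness of the $G$-action on $\pi_1(M, x_0)$, which relies on Assumption \ref{assum:x_0fixed} to supply a global fixed basepoint; outside of that assumption (which is relaxed in later sections), basepoints would have to be chosen coherently and the argument would require the energy-zero subgroup machinery developed later in the paper.
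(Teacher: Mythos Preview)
Your proof is correct and essentially the same as the paper's. The paper computes $I_\eta\big((g\cdot\alpha)\star\alpha_-\big)=I_\eta(g\cdot\alpha)-I_\eta(\alpha)=0$ directly to conclude $(g\cdot\alpha)\star\alpha_-\in\ker I_\eta$, while you phrase this via injectivity of the induced map $\bar I_\eta:\Gamma\hookrightarrow\RR$; both rest on the identical observation that $I_\eta$ is $G$-invariant.
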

\begin{proof}
Since $I_\eta$ is $G$-invariant, $G$ acts on $\Gamma= \pi_1 (M) / {\rm{Ker}} I_\eta$. However, for $\alpha \in \pi_1 (M)$,
\begin{eqnarray*} 
I_{\eta} \left((g \cdot \alpha) \star \alpha_- \right) &=& I_\eta (g \cdot \alpha) - I_\eta (\alpha) \\
&=& I_\eta (\alpha) - I_\eta (\alpha) =0,
\end{eqnarray*}
where $\alpha_- (t) = \alpha (1-t)$.
\end{proof}
We have two group actions on $\WT{M}$, which are the action of deck transformation group $\Gamma$ and the induced action of $G$. The following lemma
will play an important role later.
\begin{lemma}\label{GGamma}
 The $\Gamma$-action and the $G$-action on $\WT{M}$ commute with each other. Namely
 for each $g \in G$, and $h \in \Gamma$, we have
 $$\WT{g} \circ h = h \circ \WT{g}.$$
\end{lemma}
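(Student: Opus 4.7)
The plan is to use the uniqueness of lifts in covering space theory, reducing the global identity $\WT{g} \circ h = h \circ \WT{g}$ to a check at the single base point $\WT{x}_0$, which in turn will follow from the preceding lemma that $G$ acts trivially on $\Gamma$.

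First I would verify that both composites are lifts of the same map $g \circ \pi : \WT{M} \to M$. Indeed, since $h$ is a deck transformation we have $\pi \circ h = \pi$, while $\WT{g}$ was constructed so that $\pi \circ \WT{g} = g \circ \pi$. Therefore
\begin{equation*}
\pi \circ (\WT{g} \circ h) = g \circ \pi \circ h = g \circ \pi = \pi \circ \WT{g} = \pi \circ (h \circ \WT{g}),
\end{equation*}
so both $\WT{g} \circ h$ and $h \circ \WT{g}$ are continuous lifts of $g \circ \pi$ along $\pi$. By the unique lifting property, it suffices to show that they agree at one point, and the natural choice is $\WT{x}_0$.

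Next I would compute both sides at $\WT{x}_0$. Since $\WT{g}(\WT{x}_0) = \WT{x}_0$ by the construction of the lift, we immediately get $h(\WT{g}(\WT{x}_0)) = h(\WT{x}_0)$. For the other side, represent $h \in \Gamma$ by a loop $\alpha$ based at $x_0$, so that $h(\WT{x}_0)$ is the endpoint of the lift $\WT{\alpha}$ of $\alpha$ starting at $\WT{x}_0$. Applying $\WT{g}$, the path $\WT{g} \circ \WT{\alpha}$ is the lift of $g \circ \alpha$ starting at $\WT{g}(\WT{x}_0) = \WT{x}_0$, so $\WT{g}(h(\WT{x}_0))$ is the deck transformation associated to the class $g_\sharp[\alpha]$ applied to $\WT{x}_0$. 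The previous lemma tells us that the induced $G$-action on $\Gamma = \pi_1(M)/{\rm Ker}\, I_\eta$ is trivial, so $g_\sharp[\alpha] = [\alpha]$ in $\Gamma$, and therefore $\WT{g}(h(\WT{x}_0)) = h(\WT{x}_0)$ as well.

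Combining the two steps gives $(\WT{g} \circ h)(\WT{x}_0) = (h \circ \WT{g})(\WT{x}_0)$, and unique lifting finishes the proof. The only non-formal ingredient is the triviality of the $G$-action on $\Gamma$, which I expect to be the main (albeit mild) obstacle in a setting where the action were not assumed trivial; here it has already been established, so the proof is essentially a diagram chase with base points.
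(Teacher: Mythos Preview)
Your proof is correct. Both you and the paper reduce the commutation to the previously established fact that $G$ acts trivially on $\Gamma$, but the executions differ. The paper works upstairs on the universal cover $\WT{M}_{univ}$, proves the conjugation identity $g(\alpha_h) = \alpha_{\WT{g} \circ h \circ \WT{g}^{-1}}$ by an explicit path-lifting computation on the fiber over $x_0$, and then projects down to $\WT{M}$ using the triviality lemma. You instead work directly on $\WT{M}$ and invoke the unique lifting property: both $\WT{g}\circ h$ and $h\circ\WT{g}$ cover $g\circ\pi$, so it suffices to match them at $\WT{x}_0$, which the triviality lemma handles immediately. Your route is shorter and avoids the detour through the universal cover; the paper's route is more explicit and, as the authors note, sets up the more geometric reproof given later in Subsection~\ref{decktrans} (Lemma~\ref{lem:deccomm}), where the same conjugation picture is needed without a $G$-fixed base point.
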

\begin{proof} 
We first work over the universal cover $\WT{M}_{univ}$. Note that we can lift the $G$-action on $M$ to the $G$-action on the universal cover $\WT{M}_{univ}$ similar as in Lemma \ref{lem:coveraction}.  
For a 
deck transformation $h$ of $\WT{M}_{univ}$,  we denote the corresponding loop in $\pi_1(M)$ by
$\alpha_h$.

The action of $g$ takes the loop $\alpha_h$ to the loop $g(\alpha_h)$, and
we claim that 
\begin{equation}\label{ghg}
g(\alpha_h) = \alpha_{\WT{g} \circ h \circ \WT{g}^{-1}}
\end{equation}

For the expression $\alpha_{\WT{g} \circ h \circ \WT{g}^{-1}}$ to make sense, we first check that the composition $\WT{g} \circ h \circ \WT{g}^{-1}$ is a deck transformation of $\pi_{univ} : \WT{M}_{univ} \to M$:
\begin{eqnarray*}
 \pi_{univ} \circ (\WT{g} \circ h \circ \WT{g}^{-1}) &=& g \circ \pi_{univ} \circ h \circ \WT{g}^{-1} \\
 &=& g \circ \pi_{univ} \circ \WT{g}^{-1} \\
 &=& g \circ g^{-1} \circ \pi_{univ} =\pi_{univ}.
\end{eqnarray*}

To prove the claim, it is enough to compare actions of both sides in \eqref{ghg} on the fiber $\pi_{univ}^{-1}(x_0)$ since the covering is regular. Take any point $y$ in  the fiber $\pi_{univ}^{-1}(x_0)$.
 We consider the path $\WT{\alpha_h}$ which is the lift of the loop $\alpha_h$ starting from $y$. Then, its end point gives
$$h (y )  = \WT{\alpha_h} (1).$$
Hence, $g (\alpha_h)$ is a loop based at $x_0$, and we get a corresponding deck transformation, say
$g(h)$. Let  $\WT{g (\alpha_h)}$ be the lift of $g(\alpha_h)$ starting from $y$. Then, $\alpha_{g(h)}= g(\alpha_h)$ sends $y$  to $(\WT{g(\alpha_h)})(1)$. To get \eqref{ghg}, we have to show that
$$(\WT{g(\alpha_h)})(1)= \WT{g}( h(\WT{g}^{-1} (y))) .$$

Let $\WT{\alpha_h}'$ be the lifting of $\alpha_h$ which begins at $\WT{g}^{-1} (y)$ instead of $y$.  $\WT{g} (\WT{\alpha_h}')$ is the lifting of $g (\alpha_h)$ which starts from $y$ because
$$\WT{g} ( \WT{\alpha_h}') (0) = \WT{g} \cdot \left( \WT{g}^{-1} (y)  \right)=y,$$
and
$$\pi \circ \left(\WT{g} ( \WT{\alpha_h}' )\right) = g ( \pi \circ \WT{\alpha_h}') = g (\alpha_h).$$
Now
$$(\WT{g (\alpha_h}))(1) = \WT{g}( \WT{\alpha_h}' (1)) = \WT{g}( h (\WT{g}^{-1} (y))),$$
which proves the claim \eqref{ghg}.

By the previous lemma, 
$g(\alpha_h)$ and $\alpha_h$ are the same elements modulo ${\rm Ker} I_\eta$.
Projecting down \eqref{ghg} from $\WT{M}_{univ}$ to $\WT{M}$, we obtain for $h \in \Gamma$ that
$$\WT{g} \circ h \circ \WT{g}^{-1} = h.$$
on $\WT{M}$.
\end{proof}
We will give more geometric proof of Lemma \ref{GGamma} in Subsection \ref{decktrans}. The $G$-action on $\Gamma$ induces the trivial $G$-action on $\Lambda_{[\eta]}$ as well.

Note that $\pi^\ast \eta$ is exact by construction. So, we can find a smooth function  $\WT{f}:\WT{M} \to \R$ such that
$$d \WT{f} = \pi^\ast \eta.$$
From the transversality assumption on $\eta$, $\WT{f}$ is a Morse function on $\WT{M}$.

\begin{lemma}\label{invMorse}
$\WT{f}$ is $G$-invariant.
\end{lemma}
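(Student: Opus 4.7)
The plan is to show that $\WT{f} \circ \WT{g}$ differs from $\WT{f}$ by a locally constant function, and then pin down the constant via the base point.

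First, I would differentiate $\WT{f} \circ \WT{g}$ and use the defining relations of the objects involved. Since $\pi \circ \WT{g} = g \circ \pi$ by Lemma \ref{lem:coveraction}, and since $\eta$ is $G$-invariant by hypothesis, we get
$$d(\WT{f} \circ \WT{g}) = \WT{g}^{\ast} d \WT{f} = \WT{g}^{\ast} \pi^{\ast} \eta = (\pi \circ \WT{g})^{\ast} \eta = (g \circ \pi)^{\ast} \eta = \pi^{\ast}(g^{\ast} \eta) = \pi^{\ast} \eta = d \WT{f}.$$
Hence $\WT{f} \circ \WT{g} - \WT{f}$ has vanishing differential on $\WT{M}$. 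Since $\WT{M}$ is connected (being the covering of the connected manifold $M$ associated to a subgroup of $\pi_1(M)$), this difference is a constant $c_g \in \R$.

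Second, I would evaluate at the base point $\WT{x}_0$ to identify $c_g$. The lift $\WT{g}$ constructed in Lemma \ref{lem:coveraction} is chosen so that $\WT{g}(\WT{x}_0) = \WT{x}_0$ (this is possible because $g$ fixes $x_0$ by Assumption \ref{assum:x_0fixed}, so $g \circ \pi$ sends $\WT{x}_0$ into $\pi^{-1}(x_0)$, and among lifts we pick the one fixing $\WT{x}_0$). Therefore
$$c_g = \WT{f}(\WT{g}(\WT{x}_0)) - \WT{f}(\WT{x}_0) = \WT{f}(\WT{x}_0) - \WT{f}(\WT{x}_0) = 0,$$
and so $\WT{f} \circ \WT{g} = \WT{f}$ for every $g \in G$, which is the desired $G$-invariance.

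There is no real obstacle: the only subtle point is that the statement depends on the particular normalization of the lift $\WT{g}$ (namely, $\WT{g}(\WT{x}_0) = \WT{x}_0$) together with the normalization of $\WT{f}$ (an additive constant). If one chose a different primitive $\WT{f} + c$ or a different lift $h \circ \WT{g}$ with $h \in \Gamma$, the function would only be $G$-invariant up to shifts by $I_\eta$-periods; the assumption fixes these ambiguities consistently. Thus the proof reduces to the two-line computation above.
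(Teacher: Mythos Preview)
Your proof is correct. The first step (showing $d(\WT f\circ\WT g-\WT f)=0$) is identical to the paper's, and your second step---evaluating at the fixed base point $\WT x_0$---is exactly the shortcut the paper mentions in its final sentence. The paper's \emph{main} argument for $c_g=0$ is slightly different: instead of using the base point, it exploits the finite order of $g$ by writing $c_g=\WT f(g^{i+1}\cdot x)-\WT f(g^i\cdot x)$ for $i=0,\dots,|g|-1$ and summing to get $|g|\,c_g=0$. This buys independence from Assumption~\ref{assum:x_0fixed}, which matters because the same lemma is invoked again in the general setting of Section~\ref{sec:general base} where no $G$-fixed base point is available; your argument, while cleaner here, would not transfer to that context.
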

\begin{proof}
Since $\eta$ is $G$-invariant and $\pi$ is $G$-equivariant, we have for each $g \in G$,
$$d \left( \WT{g}^\ast \WT{f} - \WT{f} \right) = \WT{g}^\ast d \WT{f} - d \WT{f} 
= \WT{g}^\ast (\pi^\ast \eta) - d \WT{f} 
= \pi^\ast (g^\ast \eta) - d \WT{f} 
= \pi^\ast \eta - d \WT{f} =0.$$

As we assumed that $M$ is connected,  there is a constant $c_g$ such that for all $x \in M$,
\begin{equation}\label{const}
c_g = \WT{f}(g \cdot x) - \WT{f}(x).
\end{equation}
Putting $g^{i} \cdot x$ in the place of $x$, we get
\begin{equation}\label{ind}
\left\{
\begin{array}{rll}
c_g &=& \WT{f}(g^2 \cdot x) - \WT{f} (g \cdot x) \\
c_g &=& \WT{f}(g^3 \cdot x) - \WT{f} (g^2 \cdot x) \\
&\vdots&\\
c_g &=& \WT{f}(x) - \WT{f} (g^{|g|-1} \cdot x)\\
\end{array} \right.
\end{equation}
since $g^{|g|} = 1$ where $|g|$ is the order of $g \in G$. By summing up, we have $|g| \, c_g =0$ and, hence $c_g=0$. We conclude that $g^\ast \WT{f} = \WT{f}$. In fact, with the assumption \ref{assum:x_0fixed}, we have $c_g = \WT{f}(g \cdot x_0)  - \WT{f}(x_0)=0$. But the above argument works without the assumption  \ref{assum:x_0fixed}.
\end{proof}

\begin{lemma}\label{lem:isolocalgroup}
The isotropy group  
$G_{\WT{x}}$ is the same as $G_x$ for all $x \in M$.
\end{lemma}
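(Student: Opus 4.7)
The plan is a straightforward inclusion-both-ways argument, where the hard direction uses the $G$-invariance of the primitive $\widetilde{f}$ from Lemma \ref{invMorse}.

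First I would note the easy inclusion $G_{\WT{x}} \subseteq G_x$: if $\WT{g} \cdot \WT{x} = \WT{x}$, then applying $\pi$ and using $G$-equivariance of $\pi$ (Lemma \ref{lem:coveraction}) gives $g \cdot x = x$.

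For the reverse inclusion, take $g \in G_x$. Since $\pi(\WT{g}\cdot\WT{x}) = g\cdot x = x = \pi(\WT{x})$, the two points $\WT{g}\cdot\WT{x}$ and $\WT{x}$ lie in the same fiber of $\pi$. Hence there exists a deck transformation $h \in \Gamma$ with
$$\WT{g}\cdot\WT{x} = h\cdot\WT{x}.$$
The task reduces to showing $h = e$. By Lemma \ref{invMorse}, $\WT{f}$ is $G$-invariant, so
$$\WT{f}(h\cdot\WT{x}) = \WT{f}(\WT{g}\cdot\WT{x}) = \WT{f}(\WT{x}).$$
On the other hand, choose any path $\WT{\gamma}$ in $\WT{M}$ from $\WT{x}$ to $h\cdot\WT{x}$, whose projection $\gamma := \pi\circ\WT{\gamma}$ is a loop in $M$ representing the class $\alpha_h \in \pi_1(M)/{\rm{Ker}}I_\eta = \Gamma$ corresponding to $h$. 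Then
$$\WT{f}(h\cdot\WT{x})-\WT{f}(\WT{x}) = \int_{\WT{\gamma}} d\WT{f} = \int_{\WT{\gamma}} \pi^{\ast}\eta = \int_\gamma \eta = I_\eta(\alpha_h).$$
Combining the two displays gives $I_\eta(\alpha_h) = 0$.

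The last step is to observe that the homomorphism $I_\eta$ descends to an \emph{injective} homomorphism $\overline{I_\eta} : \Gamma = \pi_1(M)/{\rm{Ker}}I_\eta \to \R$ by construction of the Novikov covering. Therefore $\alpha_h$ is trivial in $\Gamma$, i.e.\ $h = e$, and hence $\WT{g}\cdot\WT{x} = \WT{x}$, proving $g \in G_{\WT{x}}$. The main subtlety to watch out for is the distinction between the universal cover (where the analogous statement would fail, since nontrivial deck transformations with $I_\eta = 0$ exist) and the Novikov cover; the whole argument hinges on having quotiented out precisely $\mathrm{Ker}\,I_\eta$ so that $\overline{I_\eta}$ is injective on $\Gamma$.
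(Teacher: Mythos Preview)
Your proof is correct and arrives at the same conclusion as the paper's, but the packaging is a bit different. The paper works directly in the path model of $\WT{M}$: regarding $\WT{x}$ as (the class of) a path from $x_0$ to $x$, it observes that for $g\in G_x$ the loop $\WT{x}\star (g\WT{x})_-$ lies in $\mathrm{Ker}\,I_\eta$ (by $G$-invariance of $\eta$), so $g\cdot\WT{x}$ and $\WT{x}$ represent the same point of $\WT{M}=\WT{M}_{univ}/\mathrm{Ker}\,I_\eta$. You instead phrase things in terms of deck transformations and invoke the $G$-invariance of the primitive $\WT{f}$ (Lemma \ref{invMorse}) to force $I_\eta(\alpha_h)=0$, then use injectivity of $\overline{I_\eta}$ on $\Gamma$. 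Both arguments are really the same computation---showing the ``difference'' has zero $\eta$-period---and your closing remark about why the argument would fail on the universal cover is exactly the point.
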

\begin{proof}
It is easy to see that $G_{\WT{x}} \subset G_x$.
For $g \in G_x$, $g \cdot \WT{x}$ and $\WT{x}$ may not be homotopic (regarding $\WT{x}$ as a path from $x_0$ to $x$), but
they differ by the $\KI$-action. More precisely $\WT{x} \star  (g \WT{x})_- \in \KI$ sends $g \cdot \WT{x}$ to $\WT{x}$.
\end{proof}

\subsection{Morse theory for Global quotients}
Before constructing the $G$-Novikov complex, we briefly recall the construction of Morse complexes 
for global quotients from \cite{CH} (using the language of orientation spaces in \cite{Se}).

For the global quotient orbifold $[M/G]$, consider a $G$-invariant Morse-Smale
function $f:M \to \R$, and denote by $\OL{f}:M/G \to \R$, the induced map on the quotient.
$G$-action sends a critical point of $f$ to another critical point, but this naive group action on $crit(f)$ is not
the right action to consider as observed in  \cite{CH}. 

 For the critical point $p$, consider the unstable manifold $W^u(p)$ of the negative gradient flow of $f$, and its tangent space $T_pW^u(p)$. In \cite{CH}, we observed that $G$ should act on the determinant bundle of ``unstable directions'',
$\wedge^{top}(T_pW^u(p))$  of each critical point. 
(We remark that such determinant line bundles also appear for Morse complexes
of Morse-Bott functions.)
\begin{definition}
Orientation space $\Theta_{p}^-$ at $p$ is defined as
$$\Theta_{p}^- := \Lambda ^{top} T_{p} W_{f}^u (p).$$
\end{definition}

Without a group action, such additional data are always trivial. However when there is a group action, $G$ may act on orientation spaces in a non-trivial way. This is a crucial difference, in the sense that the critical point $p$ is fixed by the $G_p$-action, but $p$ together with  its orientation space may not be fixed by the $G_p$-action, as $G_p$ may reverse the orientation of unstable directions (such a critical point was called non-orientable in \cite{CH}). Hence, when we take the $G$-invariant part, non-orientable critical points disappear, and rightfully so since the topology of the sublevel sets of the orbifold does not change at such a critical point (see \cite{LT}). We refer readers to \cite{CH} for more details.

Orientation spaces naturally arise also when we consider the assignment of
signs for each isolated gradient flow in the definition of the boundary operator of Morse complexes.
Consider two critical points $p$ and $q$ with Morse indices $i(p) = i(q)+1$.
Then, the space $\WT{\CM}(p,q)$ of gradient flows between $p$ and $q$ can be identified with
\begin{equation}\label{orimorsefor}
\WT{\CM}(p,q) = W^u(p) \cap W^s(q),
\end{equation} 
where $W^s(q)$ is  the stable manifold of $q$ for $-\nabla f$.  

In terms of orientations, \eqref{orimorsefor} can be written as follows:
For $u \in \WT{\CM}(p,q)$, 
\begin{equation}\label{eq:moropq}
 \wedge^{top} T_u \WT{\CM}(p,q) \cong   \Theta_{p}^- \otimes   (\Theta_{q}^-)^\vee,
\end{equation}
where $(\Theta_{q}^-)^\vee$ is the dual space with respect to the canonical pairing $(\Theta_{q}^-)^\vee \otimes (\Theta_{q}^-) \to \R$. In the isomorphism \eqref{eq:moropq}, two copies of $\wedge^{top}TM$ should appear but cancelled. These copies come from
the relation of $W^s(q)$ and $W^u(q)$, and from taking the intersection of them in $M$.
(see (12.2) of \cite{Se} for the Floer theoretic analogue). Therefore, this also explains why
orientation spaces $\Theta_{p}^-$'s are needed to define a $G$-action.

Here, $\Theta_{p}^-$  is  a one dimensional real vector space, and we will be only
concerned about the sign of the elements of these vector spaces, not the actual magnitudes when identified with $\R$. Hence, we use the normalization convention, following \cite{Se} as explained in the introduction.
We set up the Morse complex of $f$ as follows.
\begin{definition}
Define
\begin{equation}
C_k(M,f) = \bigoplus_{{\rm ind}(p) =k} |\Theta_{p}^-|_R
\end{equation}
\end{definition}
We recall that the differential of the Morse complex can be understood as follows.
The time translation of $\WT{\CM}$ gives an element  in $\wedge^{top} T_u \WT{\CM}(p,q)$,
and from the equality \eqref{eq:moropq}, this determines an isomorphism
$c_u:\Theta_{q}^- \to   (\Theta_{p}^-)$.
We use $|c_u^{-1}|_R$ to define a boundary map 
\begin{equation}\label{eq:morsebdy}
\partial^{p,q}: \sum_u |c_u^{-1}|_R: |\Theta_{p}^-|_R \to |\Theta_{q}^-|_R.
\end{equation}
This is a coordinate-free way of writing the usual boundary operator of the Morse complex.
Namely, a trajectory has an induced orientation from the transversal intersection $W^u(p) \cap W^s(q)$ and we assign $+1$ if it agrees with the direction of the flow itself (the time translation) and $-1$ otherwise.

Now, we can consider a natural $G$-action on the Morse complex $C_*(M,f)$.
It is not hard to see that the $G$-action commutes with the Morse differential in this new setting.
Hence, we can take the $G$-invariant part  $C_*(M,f)^G$, which is
the Morse complex for $[M/G]$. Its homology is isomorphic to the
singular homology of the quotient space of $[M/G]$ with $R$-coefficients for $R \supset \Q$. Here $\Q$ is needed since
over $\Z$, local Morse data are much more complicated.

One difficulty of building up $G$-Morse theory is the equivariant transversality problem.
Namely, it is hard to make a generic perturbation of either the Morse
function or the metric so that it becomes both Morse-Smale and  $G$-invariant.
There are two ways to overcome this issue. In \cite{CH}, we introduced the notion of weak group actions on Morse-complexes, 
for a generic Morse-Smale function $f$, which is not $G$-invariant.  It can be also applied
in this setting, and later we will use this idea to define the equivariant Fukaya category for an exact symplectic manifold.

Another way is to work with a $G$-invariant Morse function which is not necessarily Morse-Smale, and
use the multi-valued perturbation scheme such as Kuranishi structure(\cite{FO}). We will use this approach in
the general case of Floer theory to achieve equivariant transversality.
We also need $\Q \subset R$ for this method. From now on, we assume that $\WT{f}$ on $\WT{M}$
is of Morse-Smale type, and leave more general case to one of the above two methods.

\subsection{Constructions of $G$-Novikov complexes}
We spell out how to set up the $G$-equivariant Novikov chain complex of $\eta$ under Assumption 
\ref{assum:x_0fixed}.
 We denote by
$crit(\WT{f})$ the set of critical points of $\WT{f}$. We will use the normalization $|\Theta|_R$
defined in the introduction.

\begin{definition}\label{def:chain}
We define the Novikov chain complex of $M$ with respect to $\eta$, $CN_\ast(M;\eta)$ as follows:
the graded $R$-vector space $CN_\ast (M;\eta)$ is defined by
$$CN_\ast (M;\eta) := \left\{  \left. \sum_i  x_{\WT{p}_i}  \right|  x_{\WT{p}_i} \in  |\Theta_{\WT{p_i}}^- |_R \;\; \textrm{satisfying}\;\;  \Diamond \right\},$$
where $\Diamond$ is the condition that 
for each $c \in \RR$, the following set is finite: 
$$ \{ i : x_i \neq 0, \, \WT{f} (\WT{p}_i ) < c\}.$$
The boundary operator $\delta$ on $CN_\ast (M;\eta)$ is defined by counting gradient flow lines of $\WT{f}$ in $\WT{M}$ as in $G$-Morse theory \eqref{eq:morsebdy}. 
\end{definition}
Note that the deck transformation group $\Gamma$ preserves the set $\CR(\WT{f})$, and this  gives $CN_\ast (M ; \eta)$ a $\Lambda_{[\eta]}$-module structure,
\begin{equation}\label{eq:modulecn}
\phi : \Lambda_{[\eta]} \times CN_\ast ( M;\eta) \to CN_\ast ( M;\eta).
\end{equation}
More precisely, consider a generator $h$ of $\Lambda_{[\eta]}$. i.e. an element $h$ of $\Gamma = \pi_1 (M) / {\rm{Ker}} I_{\eta}$. Then the scalar multiplication of $h$ to $\WT{p} \in \CR(\WT{f})$ is simply the action image $h (\WT{p})$. Then, $\Lambda_{[\eta]}$-module structure on $CN_\ast (M;\eta)$ is given as follows. As $\Theta_{\WT{p}}^-$ is defined by $\Lambda ^{top} T_{\WT{p}} W_{\WT{f}}^u (\WT{p})$ and $h$ sends $W_{\WT{f}}^u (\WT{p})$
to  $W_{\WT{f}}^u (h(\WT{p}))$, we have an isomorphism (denoted by the same letter)
$$h : \Theta_{\WT{p}}^- \to \Theta_{h(\WT{p})}^-.$$

\begin{lemma}
We have an induced $G$-action on $CN_\ast (M;\eta)$, which is $\Lambda_{[\eta]}$-linear. 
The differential $\delta$ is $G$-equivariant.
\end{lemma}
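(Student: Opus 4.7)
The plan is to build the $G$-action generator-by-generator on $CN_\ast(M;\eta)$, check that it respects the Novikov support condition $\Diamond$, then use the commutativity result of Lemma \ref{GGamma} to get $\Lambda_{[\eta]}$-linearity, and finally use $G$-invariance of the gradient flow (with respect to a $G$-invariant metric on $\WT{M}$) to get $G$-equivariance of $\delta$.

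First, I would define the $G$-action on generators. Since $\WT{f}$ is $G$-invariant by Lemma \ref{invMorse}, the set $\CR(\WT{f})$ is preserved by $G$, and for any critical point $\WT{p}$ the diffeomorphism $\WT{g}$ carries $W^u_{\WT{f}}(\WT{p})$ isomorphically onto $W^u_{\WT{f}}(\WT{g}\cdot\WT{p})$. Taking top exteriors of tangent spaces at $\WT{p}$ and $\WT{g}\cdot\WT{p}$, the differential $d\WT{g}$ induces an $\R$-linear isomorphism $\Theta^-_{\WT{p}} \to \Theta^-_{\WT{g}\cdot\WT{p}}$, whose $R$-normalization (in the sense recalled in the introduction) gives a map $|\Theta^-_{\WT{p}}|_R \to |\Theta^-_{\WT{g}\cdot\WT{p}}|_R$. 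Extending $R$-linearly across the direct sum, we get the desired $g$-action on the graded vector space underlying $CN_\ast(M;\eta)$. The support condition $\Diamond$ is preserved because $\WT{f}(\WT{g}\cdot\WT{p}) = \WT{f}(\WT{p})$ by $G$-invariance of $\WT{f}$, so any formal sum with $\Diamond$-finite support below level $c$ gets sent to a sum with the same support condition.

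Next, I would verify $\Lambda_{[\eta]}$-linearity. It suffices to check that for $h\in \Gamma$ and $g\in G$ the isomorphisms $h_\ast:\Theta^-_{\WT{p}} \to \Theta^-_{h\cdot\WT{p}}$ and $g_\ast:\Theta^-_{\WT{p}}\to \Theta^-_{\WT{g}\cdot\WT{p}}$ satisfy
\begin{equation*}
g_\ast\circ h_\ast = h_\ast\circ g_\ast : \Theta^-_{\WT{p}} \to \Theta^-_{\WT{g}h\cdot \WT{p}} = \Theta^-_{h\WT{g}\cdot\WT{p}}.
\end{equation*}
But both sides are just the pushforward on top exteriors of unstable tangent spaces induced by the diffeomorphism $\WT{g}\circ h = h\circ \WT{g}$ of Lemma \ref{GGamma}, so the identity holds before normalization and hence after. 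This yields $g\cdot(h\cdot x) = h\cdot(g\cdot x)$ on every generator, and since $G$ acts trivially on $\Lambda_{[\eta]}$, this is $\Lambda_{[\eta]}$-linearity.

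Finally, for $G$-equivariance of $\delta$, fix a $G$-invariant Riemannian metric on $\WT{M}$ (which exists because the Morse--Smale assumption was already imposed on $\WT{f}$ together with a chosen metric, which we may average over $G$ without losing transversality in the setting treated here). Then $-\nabla \WT{f}$ is $G$-equivariant, so $\WT{g}$ sends the moduli space $\WT{\CM}(\WT{p},\WT{q})$ bijectively onto $\WT{\CM}(\WT{g}\cdot\WT{p},\WT{g}\cdot\WT{q})$, and the time-translation vector field is preserved. Under the identification \eqref{eq:moropq}, the orientation data on $\wedge^{top}T_u\WT{\CM}(\WT{p},\WT{q})$ is carried by $d\WT{g}$ to that on $\wedge^{top}T_{\WT{g}\cdot u}\WT{\CM}(\WT{g}\cdot\WT{p},\WT{g}\cdot\WT{q})$, which means $c_{\WT{g}\cdot u}\circ g_\ast = g_\ast\circ c_u$ as maps $\Theta^-_{\WT{q}}\to \Theta^-_{\WT{g}\cdot\WT{p}}$. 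Normalizing and summing over $u$ yields $g\cdot \partial^{\WT{p},\WT{q}} = \partial^{\WT{g}\cdot\WT{p},\WT{g}\cdot\WT{q}}\circ g$, i.e.\ $G$-equivariance of $\delta$.

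The main obstacle I expect is the bookkeeping around the orientation-space isomorphism \eqref{eq:moropq}: one must be careful that the two copies of $\wedge^{top}TM$ that get cancelled in that identification cancel $G$-equivariantly, so that the sign comparison $c_{\WT{g}\cdot u}\circ g_\ast = g_\ast\circ c_u$ really holds. Once one sets this up coordinate-freely as above, the equivariant transversality issue flagged by the authors is the only genuine concern, and it is sidestepped by the Morse--Smale assumption already in force for $\WT{f}$ (with the understanding that the harder non-invariant case is handled elsewhere via weak actions or Kuranishi multi-sections).
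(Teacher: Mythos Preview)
Your proposal is correct and follows essentially the same approach as the paper: define the $G$-action on orientation spaces via the differential of $\WT{g}$, invoke Lemma \ref{GGamma} for $\Lambda_{[\eta]}$-linearity, and deduce $G$-equivariance of $\delta$ from the coordinate-free description \eqref{eq:moropq}. The paper's own proof is a two-sentence sketch of exactly these steps; you have simply filled in the details (support condition, explicit pushforward on determinant lines, bijection of trajectory spaces) that the paper leaves implicit.

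One small wording issue: your parenthetical ``which we may average over $G$ without losing transversality'' is not literally justified---averaging a metric can destroy the Morse--Smale condition---but you immediately self-correct in the final paragraph by noting that equivariant transversality is being \emph{assumed} here (as the paper does just before this subsection) and that the genuinely hard case is deferred to weak actions or Kuranishi methods. It would be cleaner to start with a $G$-invariant metric on $M$, lift it to $\WT{M}$, and impose Morse--Smale on that fixed invariant pair, rather than averaging after the fact.
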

\begin{proof}
As $\WT{f}$ being $G$-invariant, $G$ acts on the set $\CR(\WT{f})$. Thus, each $g$ induces an isomorphism 
$$g : \Theta_{\WT{p}}^- \to \Theta_{g(\WT{p})}^-.$$
as before. Hence, we obtain a $G$-action on generators of $CN_\ast (M;\eta)$ over $\Lambda_{[\eta]}$. Note that $G$ and $\Gamma$ actions commute (since their actions on $\WT{M}$ commute) and the lemma follows. The proof of $G$-equivariancy follows from \eqref{eq:moropq}
and the definition of $\delta$.
 \end{proof}

\begin{definition}
We call the complex $CN_\ast (M;\eta)$ equipped with the above $G$-action, the $G$-equivariant Novikov complex of $(M, \eta)$.
Its $G$-invariant part $CN_\ast^G (M;\eta)$ (which
is a $\Lambda_{[\eta]}$-module)
 is called the Novikov complex of $([M/G],\OL{\eta})$ where $\OL{\eta}$ is the induced 1-form on $[M/G]$.   
 \end{definition}
Here, the $\Lambda_{[\eta]}$-module structure of $CN_\ast^G (M;\eta)$ is given by taking the $G$-invariant part of \eqref{eq:modulecn} (recall that the $G$-action is $\Lambda_{[\eta]}$-linear):
$\phi^G : \Lambda_\eta \times CN_\ast^G ( M;\eta) \to  CN_\ast^G ( M;\eta)$, 
and the boundary $\delta$ is $\Lambda_{[\eta]}$-linear(\cite{On}).
In conclusion, we get
$$HN_\ast^G (M, \eta) := H_\ast \left(CN_\ast^G (M;\eta), \delta  \right),$$
which is a module over $\Lambda_{[\eta]}$.


What actually happens in taking the $G$-invariant part is as follows.
For a critical point $\WT{p}$ of $\WT{f}$, consider the isotropy group $G_p (=G_{\WT{p}})$. If the $G_p$-action on $\Theta_{\WT{p}}^- $ is orientation preserving, then this provides
a non-trivial element in the $G$-invariant part. If some element $g$ of $G_p$ reverses the orientation of $\Theta_{\WT{p}}^- $, then exactly the half of $G_p$ reverses
the orientations of $W_{\WT{f}}^u (\WT{p})$. Thus
the sum of $G$-action images of $( \WT{p}, \Theta_{\WT{p}}^- )$ becomes
zero by the cancellation, and hence do not contribute to the $G$-invariant part of the Novikov complex. 
 We refer readers to \cite{CH} for more details on related phenomenon.

\section{General $G$-Novikov theory}\label{sec:general base}
In this section, we construct a $G$-Novikov complex in the general case without assumption \ref{assum:x_0fixed}.
The main difficulty is that the construction of covering spaces (as spaces of homotopy classes of paths from a chosen base point) is not compatible with the $G$-action. Indeed, the $G$-action may move the chosen base point to another point. 

For this, we will introduce the notion of  {\it energy zero subgroup} $G_\eta$, and define its action on Novikov complexes.
If $G =G_\eta$, we can define equivariant and orbifolded theory from this $G_\eta$ action.
But in the general case that $G \neq G_\eta$, we need to use a different Novikov ring, what is called $\Lambda^{orb}$.
In fact, the most natural way to view it is to  consider the orbifold fundamental group of the global quotient orbifold $[M/G]$ and the orbifold analogue of the Novikov covering construction. We will explain how $G_\eta$, and $\Lambda^{orb}$ arise in this orbifold setting.
Hence, if $G \neq G_\eta$, we cannot define $G$-equivariant theory, but we can define $G$-orbifolded theory, 
by taking  $G_\eta$-invariant parts of the Novikov complex, with Novikov ring given by $\Lambda^{orb}$.

\subsection{Energy zero subgroups and their actions on Novikov coverings}
We first take a generic point $x_0 \in M$ such that the isotropy group at $x_0$ of the $G$-action is trivial. That is, $g \cdot x_0 \neq x_0$ for all $g \neq 1 \in G$. As the $G$-action on $M$ is effective, such $x_0$ always exists. Let $\eta$ be a $G$-invariant Morse 1-form $\eta$ on $M$ and 
consider the universal covering space $\WT{M}_{univ}$ and Novikov covering $\pi:\WT{M} \to M$ obtained by $\WT{M} = \WT{M}_{univ}/\textrm{Ker} I_\eta$ where $I_\eta(\alpha) = \int \alpha^\ast \eta$ for $\alpha \in \pi_1(M,x_0)$.

Now, we take the group action into account. Note that the $G$-action on $M$ does not induce a $G$-action on $\WT{M}$:
consider $\WT{x} \in \WT{M}$ with $\pi(\WT{x}) =x \in M$. Then, $\WT{x}$ is an equivalent class of paths from $x_0$ to $x$.
The naive $G$-action sends $\WT{x}$ to  $g(\WT{x})$, which is a homotopy class of paths from $g \cdot x_0$ to $g \cdot x$. This path is not an element of $\WT{M}$ since it does not start from $x_0$.

\begin{definition}
A subgroup $G_\eta$ of $G$ consists of an element $g \in G$ such that
there exist a path $\gamma_g$ from $x_0$ to $g \cdot x_0$, with 
$$\int \gamma_g^\ast \eta=0.$$
We call $G_\eta$ the {\em energy zero} subgroup of $G$ for $\eta$ and a path $\gamma_g$ an energy zero path.
\end{definition}
To see that $G_\eta$ indeed forms a subgroup, consider $g,h \in G_\eta$ with energy zero paths $\gamma_g$ and $\gamma_h$ respectively. Then, we can take an energy zero path for $gh \in G_\eta$ to be the concatenation 
$$\gamma_{hg}=\gamma_g \star g(\gamma_h).$$

\begin{remark}
A priori, such energy zero paths
may not always exist (i.e. $G_\eta \neq G$ in general).
For example, consider $G=\Z/3$-action defined by the rotation on $S^1$ and $d \theta$ a $G$-invariant $1$-form on $S^1$. Then, $G_\eta =\{id\} \neq G$.
\end{remark}

By considering the energy zero subgroup $G_\eta$, we can define a $G_\eta$-action on the Novikov covering $\WT{M}$ as below.
\begin{definition}\label{GactionNovcov}
Given $\WT{x} \in \WT{M}$ (regarded as a class of paths in $M$) and $g \in G_\eta$ with an energy zero path $\gamma_g$,
we define the action of $g$ on $\WT{x}$ as the concatenation of $\gamma_g$ with the naive $g$-action image of $\WT{x}$:
\begin{equation}\label{star}
g \cdot \WT{x} := \gamma_g \star g(\WT{x}).
\end{equation}
\end{definition}
\begin{lemma}
This gives a well-defined $G_\eta$-action on $\WT{M}$.
\end{lemma}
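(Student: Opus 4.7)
The plan is to verify three standard well-definedness conditions for the formula $g \cdot \WT{x} := \gamma_g \star g(\WT{x})$: (i) independence of the auxiliary choice of energy zero path $\gamma_g$, (ii) independence of the representative for $\WT{x}$ (viewed as a homotopy class of paths from $x_0$), and (iii) the group action axioms. All three reduce to the single observation that $\WT{M} = \WT{M}_{univ}/\KI$, so two paths from $x_0$ with the same endpoint represent the same point of $\WT{M}$ precisely when the loop obtained by concatenating one with the reverse of the other lies in $\KI$, i.e.\ has zero $\eta$-energy.

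First I would verify (i). If $\gamma_g$ and $\gamma_g'$ are two energy zero paths from $x_0$ to $g \cdot x_0$, then $\gamma_g \star (\gamma_g')_-$ is a loop at $x_0$ with $I_\eta\bigl(\gamma_g \star (\gamma_g')_-\bigr) = 0$, hence lies in $\KI$. Consequently $\gamma_g \star g(\WT{x})$ and $\gamma_g' \star g(\WT{x})$ represent the same point of $\WT{M}$.

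Next, for (ii), suppose $\WT{x}$ is represented by two paths $\alpha, \alpha'$ from $x_0$ to $x$ with $\alpha \star (\alpha')_- \in \KI$. Applying $g$ and using that $\eta$ is $G$-invariant yields
$$I_\eta\bigl(g(\alpha) \star g(\alpha')_-\bigr) = I_\eta\bigl(g(\alpha \star (\alpha')_-)\bigr) = I_\eta\bigl(\alpha \star (\alpha')_-\bigr) = 0,$$
so $\gamma_g \star g(\alpha)$ and $\gamma_g \star g(\alpha')$ define the same element of $\WT{M}$.

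Finally, (iii) is essentially formal: the constant path at $x_0$ serves as an energy zero path for $1 \in G_\eta$, so $1 \cdot \WT{x} = \WT{x}$; and for $g, h \in G_\eta$ a direct manipulation gives
$$g \cdot (h \cdot \WT{x}) = \gamma_g \star g\bigl(\gamma_h \star h(\WT{x})\bigr) = \bigl(\gamma_g \star g(\gamma_h)\bigr) \star (gh)(\WT{x}),$$
while $\gamma_g \star g(\gamma_h)$ is itself an energy zero path from $x_0$ to $(gh) \cdot x_0$ (as already noted in the paragraph preceding the lemma), hence a legitimate choice for $\gamma_{gh}$; by (i) this equals $(gh) \cdot \WT{x}$. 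No step is truly hard; the only slightly subtle ingredient is (ii), which uses $G$-invariance of $\eta$ in an essential way to ensure that the naive $g$-action on $\WT{M}_{univ}$ descends compatibly with the equivalence relation defining $\WT{M}$.
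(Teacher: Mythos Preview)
Your proof is correct and follows essentially the same approach as the paper's. The paper's proof is terser: it explicitly verifies only your point (i) and dismisses (iii) with ``it is easy to check,'' omitting your point (ii) entirely; your version is simply a more complete write-up of the same argument.
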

\begin{proof}
For a different choice of an energy zero path $\gamma_g'$ for $g$, the concatenation $\gamma_g \star (\gamma_g')_-$ 
defines a loop in $\KI$ where $\gamma_- (t) = \gamma (1-t)$ for a path $\gamma$. Then, two elements
$\gamma_g \star ( g (\WT{x}))$, and $\gamma'_g \star ( g (\WT{x}))$ differ by
an action of $\KI$, hence the action of $G_\eta$ on $\WT{M}$ is well-defined. Also it is easy to check that
$g \cdot (h \cdot \WT{x}) =gh \cdot \WT{x}$ for $g,h \in G_\eta$.
\end{proof}
If one tries to define a $g$-action similarly for an element $g \in G \setminus G_\eta$
(by taking a path $\gamma_g$ from $x_0$ to $g(x_0)$), then it is straightforward to see that the relation $g^{|g|} = id$ cannot hold for the action of $g$. Indeed, attaching $\gamma_g$ keeps increasing (or decreasing) the energy.

Now let us closely look into the $G_\eta$-action $\WT{M}$. From the construction, the projection $\pi : \WT{M} \to M$ is a $G_\eta$-equivariant map:
$$\pi(g \cdot \WT{x}) = g\cdot \pi(\WT{x}).$$ 
The pull-back $\pi^\ast \eta$ is exact by the definition of $\WT{M}$ and hence we can choose $\WT{f}:\WT{M} \to \R$
so that $ d \WT{f} =\pi^\ast \eta$ as before. We can proceed as in Lemma \ref{invMorse} to obtain
\begin{lemma}
$\WT{f} : \WT{M} \to \RR$ is $G_\eta$-invariant. 
\end{lemma}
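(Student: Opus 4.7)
The plan is to mimic Lemma \ref{invMorse} almost verbatim, with the energy zero path $\gamma_g$ replacing the role previously played by the global fixed point assumption. First I would observe that for any $g \in G_\eta$, since $\pi: \WT{M} \to M$ is $G_\eta$-equivariant (this is immediate from Definition \ref{GactionNovcov}, as $\pi(g \cdot \WT{x}) = \pi(\gamma_g \star g(\WT{x})) = g\cdot x$) and $\eta$ is $G$-invariant,
$$d(g^\ast \WT{f}) = g^\ast(d\WT{f}) = g^\ast \pi^\ast \eta = \pi^\ast (g^\ast \eta) = \pi^\ast \eta = d\WT{f}.$$
Hence $g^\ast \WT{f} - \WT{f}$ has vanishing differential, and since $\WT{M}$ is connected (as a covering of the connected manifold $M$), it is a constant $c_g \in \RR$.

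The next step is to identify $c_g = 0$. I would evaluate at a lift $\WT{x}_0 \in \pi^{-1}(x_0)$, viewed as the homotopy class of the constant path at $x_0$. By Definition \ref{GactionNovcov}, the action image $g \cdot \WT{x}_0$ is represented by the concatenation $\gamma_g \star g(\text{const}_{x_0}) = \gamma_g$, i.e., the homotopy class of $\gamma_g$ itself. Since $d\WT{f} = \pi^\ast \eta$, integrating along the lift of $\gamma_g$ starting at $\WT{x}_0$ gives
$$c_g = \WT{f}(g \cdot \WT{x}_0) - \WT{f}(\WT{x}_0) = \int_0^1 \gamma_g^\ast \eta = 0,$$
where the last equality is the defining energy zero property of $\gamma_g$. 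This forces $c_g = 0$ and therefore $g^\ast \WT{f} = \WT{f}$.

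As a sanity check for robustness, one can alternatively run the summation argument of Lemma \ref{invMorse}: the relation $g^{|g|} = \mathrm{id}$ holds on $\WT{M}$ because iterating the action produces the concatenated path $\gamma_g \star g(\gamma_g) \star \cdots \star g^{|g|-1}(\gamma_g)$, which is an energy zero loop at $x_0$ and hence lies in $\KI$, acting trivially on $\WT{M} = \WT{M}_{univ}/\KI$. Summing the telescoping identities $c_g = \WT{f}(g^{k+1} \cdot \WT{x}) - \WT{f}(g^k \cdot \WT{x})$ over $k=0,\ldots,|g|-1$ yields $|g| c_g = 0$, recovering the conclusion.

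There is no real obstacle here; the only subtlety is verifying that the choice of $\gamma_g$ is immaterial, but this was already settled when the action on $\WT{M}$ was shown to be well-defined modulo $\KI$. The content of the lemma is essentially that $G_\eta$ was defined to be exactly the subgroup for which this computation succeeds.
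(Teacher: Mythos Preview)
Your proof is correct and follows exactly the approach the paper intends: the paper simply says ``we can proceed as in Lemma \ref{invMorse}'' without giving details, and you have filled those in faithfully, supplying both the direct energy-zero computation of $c_g$ and the telescoping-sum alternative. One minor quibble: the parenthetical ``as a covering of the connected manifold $M$'' is not a valid justification for $\WT{M}$ being connected (covers of connected spaces can be disconnected); the correct reason is that $\WT{M}$ is a quotient of the connected universal cover $\WT{M}_{univ}$.
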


As we assumed that $M$ is connected, we can find interesting properties of $G_\eta$:

\begin{lemma}
$G_\eta$ is a normal subgroup of $G$.
\end{lemma}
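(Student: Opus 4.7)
The plan is to show normality directly by constructing, for any $g \in G_\eta$ and any $h \in G$, an explicit energy zero path from $x_0$ to $hgh^{-1}\cdot x_0$. The key observation is that the $G$-invariance of $\eta$ means applying an element of $G$ to a path preserves its $\eta$-energy: for any path $\sigma$ in $M$ and any $k \in G$, we have $\int (k \circ \sigma)^\ast \eta = \int \sigma^\ast k^\ast \eta = \int \sigma^\ast \eta$.

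With this in mind, I would proceed as follows. Fix $g \in G_\eta$ with an energy zero path $\gamma_g$ from $x_0$ to $g\cdot x_0$, and fix an arbitrary $h \in G$. Choose any path $\delta$ from $x_0$ to $h\cdot x_0$ (no energy condition required), and let $E := \int \delta^\ast \eta$. Then form the concatenation
\begin{equation*}
\gamma_{hgh^{-1}} \;:=\; \delta \,\star\, h(\gamma_g) \,\star\, (hgh^{-1})(\delta_-),
\end{equation*}
where $\delta_-$ is $\delta$ traversed backwards. By construction this path goes $x_0 \to h\cdot x_0 \to hg\cdot x_0 \to hgh^{-1}\cdot x_0$, since $(hgh^{-1})\circ h = hg$ acts by sending $h^{-1}\cdot x_0$ to $g\cdot x_0 \cdot$ no wait, $(hgh^{-1})(h \cdot x_0) = hg \cdot x_0$ and $(hgh^{-1})(x_0) = hgh^{-1}\cdot x_0$, so $(hgh^{-1})(\delta_-)$ indeed runs from $hg\cdot x_0$ to $hgh^{-1}\cdot x_0$.

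It then remains to compute the $\eta$-energy of $\gamma_{hgh^{-1}}$. By the $G$-invariance remark, the middle segment $h(\gamma_g)$ still has energy zero, and the final segment $(hgh^{-1})(\delta_-)$ has energy $-E$, so the three pieces contribute $E + 0 - E = 0$. Thus $hgh^{-1} \in G_\eta$ and normality follows. The construction is entirely elementary; the only step that required any thought was realizing that one should cancel the unavoidable energy $E$ of the auxiliary path $\delta$ by inserting its $(hgh^{-1})$-translated reverse at the end, and I do not foresee any obstacle in writing this up rigorously.
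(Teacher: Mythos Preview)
Your proof is correct and is essentially the same as the paper's. The paper phrases it as first reducing to finding an energy zero path from $y = h^{-1}\cdot x_0$ to $g\cdot y$ (in your notation) and then writing down $\delta \star \gamma_g \star g(\delta)_-$ with $\delta$ a path from $y$ to $x_0$; after applying $h$ to this path, one obtains exactly your concatenation $\delta' \star h(\gamma_g) \star (hgh^{-1})(\delta'_-)$ with $\delta' = h(\delta)$.
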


\begin{proof}
For $h \in G_\eta$ and $g \in G$, we want to show that $g h g^{-1}$ lies in $G_\eta$. So, we need to find an energy zero path from a base point $x$ to $(g h g^{-1}) \cdot x$. Set $y= g^{-1} \cdot x$. Then, it suffices to find such a path from $y$ to $h \cdot y$. Choose any path $\delta$ from $y$ to $x$ and let $\gamma_h$ be an energy zero path from $x$ to $h \cdot x$. Then, it is easy to check that $\delta \star \gamma_h \star h (\delta)_-$ from $y$ to $h \cdot y$ has energy zero.
\end{proof}

\begin{lemma}\label{locgpinG}
For any $y \in M$, the isotropy group $G_y$ is a subgroup of $G_\eta$. 
\end{lemma}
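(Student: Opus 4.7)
The plan is to construct an explicit energy zero path from $x_0$ to $g \cdot x_0$ for any $g \in G_y$, using a chosen path to $y$ together with its $g$-translate, and exploiting the $G$-invariance of $\eta$ to make the two halves of the path cancel in the action integral.

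First I would fix $g \in G_y$, so that $g \cdot y = y$. Since $M$ is connected, choose any smooth path $\delta : [0,1] \to M$ from $x_0$ to $y$. The $g$-translate $g \cdot \delta$ runs from $g \cdot x_0$ to $g \cdot y = y$, so its reverse $(g \cdot \delta)_-$ runs from $y$ back to $g \cdot x_0$. The concatenation
$$\gamma_g := \delta \star (g \cdot \delta)_-$$
is therefore a path from $x_0$ to $g \cdot x_0$, which is the natural candidate for an energy zero path witnessing $g \in G_\eta$.

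To finish, I would verify $\int \gamma_g^\ast \eta = 0$. By additivity of the integral under concatenation and reversal of paths,
$$\int \gamma_g^\ast \eta \;=\; \int \delta^\ast \eta \;-\; \int (g \cdot \delta)^\ast \eta.$$
The $G$-invariance $g^\ast \eta = \eta$ gives $\int (g \cdot \delta)^\ast \eta = \int \delta^\ast (g^\ast \eta) = \int \delta^\ast \eta$, so the two terms cancel. Hence $\gamma_g$ has zero energy and $g \in G_\eta$. Since $G_y$ is itself a subgroup of $G$, this shows $G_y \subset G_\eta$ as subgroups.

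There is essentially no serious obstacle here: the $G$-invariance of $\eta$ forces the contributions of $\delta$ and $g \cdot \delta$ to cancel, independently of the homotopy class of $\delta$. This is the same mechanism used earlier in the paper to prove that $I_\eta$ is $G$-invariant on $\pi_1(M,x_0)$, now applied at the level of paths rather than closed loops, and it parallels the construction $\gamma_{hg} = \gamma_g \star g(\gamma_h)$ used just above to check that $G_\eta$ is closed under multiplication.
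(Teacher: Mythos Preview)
Your proof is correct and follows essentially the same approach as the paper: both construct the energy zero path as $\delta \star (g \cdot \delta)_-$ using a path $\delta$ from $x_0$ to $y$, and both use the $G$-invariance of $\eta$ to make the two halves cancel.
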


\begin{proof}
We fix $y \in M$, and $g \in G_y$.
Since $M$ is connected, we can choose a path $\alpha$ from $x_0$ to $y$. Then, $g (\alpha)$ defines a path from $g \cdot x_0$ to $g \cdot y(=y)$.
Then, we get a path  from $x_0$ to $g \cdot x_0$ by
$$ \gamma := \alpha \star (g \cdot \alpha)_-.$$
Note that 
$$I_\eta(\gamma)= I_\eta(\alpha) + I_\eta((g \cdot \alpha)_-) =
I_\eta( \alpha) - I_\eta(\alpha) =0$$
Thus, $\gamma$ has zero energy and realizes $g$ as an element of $G_\eta$.  This
implies that $G_y \subset G_\eta$ for all $y$.
\end{proof}
\begin{corollary}\label{cor:getag}
If $M$ has a point, say $x_1$, which is fixed by the whole $G$-action, then
$G_\eta = G$.
\end{corollary}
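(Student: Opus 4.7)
The plan is to deduce this as an immediate corollary of Lemma \ref{locgpinG}. The hypothesis states that $x_1 \in M$ is a global fixed point of the $G$-action, so its isotropy subgroup satisfies $G_{x_1} = G$. Applying Lemma \ref{locgpinG} with $y = x_1$ gives $G = G_{x_1} \subset G_\eta$, and since $G_\eta$ is defined as a subgroup of $G$ the reverse inclusion is automatic. Hence $G_\eta = G$.

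If a reader prefers an explicit construction of the energy zero paths, I would also record the following direct argument, which is essentially the one used inside the proof of Lemma \ref{locgpinG}. For any $g \in G$, choose an arbitrary path $\alpha$ from $x_0$ to $x_1$. Since $g \cdot x_1 = x_1$, the translated path $g(\alpha)$ runs from $g \cdot x_0$ to $x_1$, and therefore the concatenation
\begin{equation*}
\gamma_g \;:=\; \alpha \star (g(\alpha))_-
\end{equation*}
is a path from $x_0$ to $g \cdot x_0$. Its $\eta$-energy is
\begin{equation*}
I_\eta(\gamma_g) \;=\; \int \alpha^\ast \eta \;-\; \int \alpha^\ast(g^\ast \eta) \;=\; 0,
\end{equation*}
where the last equality uses the $G$-invariance of $\eta$. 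Thus $g \in G_\eta$.

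There is no genuine obstacle here; the whole content is recognizing that the inclusion $G_y \subset G_\eta$ established in Lemma \ref{locgpinG} specializes to an equality once $y$ has full isotropy. I would include the explicit path formula only to make the statement self-contained for the reader.
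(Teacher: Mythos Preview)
Your proof is correct and matches the paper's approach exactly: the paper's proof is simply ``Take $y$ to be $x_1$, and apply Lemma \ref{locgpinG}.'' Your additional explicit construction of $\gamma_g$ just unwinds that lemma's proof and is not needed, though it does no harm.
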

\begin{proof}
Take $y$ to be $x_1$, and apply Lemma \ref{locgpinG}.
\end{proof}

\subsection{Deck Transformations and $G_\eta$-actions on Novikov complexes}\label{decktrans}
\begin{lemma}\label{lem:deccomm}
The $G_\eta$-action on $\WT{M}$ commutes with the action of the deck transformation
group $\Gamma = \pi_1(M) / {\rm{Ker}} I_{\eta}$.
\end{lemma}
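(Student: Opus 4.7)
The plan is to unpack both actions as path concatenations, compute the composites $g\cdot(h\cdot\WT{x})$ and $h\cdot(g\cdot\WT{x})$, and check that they differ by a loop at $x_0$ whose $I_\eta$-value vanishes; since $\WT{M}=\WT{M}_{univ}/\KI$, this forces the two composites to coincide in $\WT{M}$.

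First I would fix representatives. Represent $\WT{x}\in\WT{M}$ by a path $\beta$ from $x_0$ to $x=\pi(\WT{x})$; then the deck transformation corresponding to a loop $\alpha_h$ based at $x_0$ acts by $h\cdot\WT{x}=[\alpha_h\star\beta]$, while for $g\in G_\eta$ with a chosen energy-zero path $\gamma_g$ from $x_0$ to $g\cdot x_0$, Definition \ref{GactionNovcov} gives $g\cdot\WT{x}=[\gamma_g\star(g\circ\beta)]$. Using $g\circ(\alpha_h\star\beta)=(g\circ\alpha_h)\star(g\circ\beta)$, a direct computation yields
$$g\cdot(h\cdot\WT{x}) = [\gamma_g\star(g\circ\alpha_h)\star(g\circ\beta)],\qquad h\cdot(g\cdot\WT{x}) = [\alpha_h\star\gamma_g\star(g\circ\beta)].$$
Both are paths from $x_0$ to $g(x)$, and after homotoping away the common tail $(g\circ\beta)\star(g\circ\beta)^{-1}$, their discrepancy is the based loop
$$\ell \;=\; \gamma_g\star(g\circ\alpha_h)\star\gamma_g^{-1}\star\alpha_h^{-1}$$
at $x_0$.

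The final step invokes the $G$-invariance of $\eta$: since $g^\ast\eta=\eta$, one has $I_\eta(g\circ\alpha_h)=\int\alpha_h^\ast g^\ast\eta=I_\eta(\alpha_h)$, while the two $\gamma_g$-terms cancel, so $I_\eta(\ell)=0$ and $[\ell]\in\KI$; thus the two composites agree in $\WT{M}$. No serious obstacle arises; the whole argument is a direct calculation, with the essential input being the $G$-invariance of $\eta$. Note that the energy-zero property of $\gamma_g$ plays no role in this final cancellation---it was already used to make the $G_\eta$-action well defined on $\WT{M}$---so the proof is essentially an extension of the $x_0$-fixed argument of Lemma \ref{GGamma} to the setting where the auxiliary path $\gamma_g$ has been inserted on both sides and cancels symmetrically.
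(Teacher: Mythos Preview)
Your proof is correct and follows essentially the same approach as the paper's own proof: both compute the two composites as explicit path concatenations, form the difference loop $\gamma_g\star(g\cdot\alpha_h)\star(\gamma_g)_-\star(\alpha_h)_-$, and conclude it lies in $\KI$ using the $G$-invariance of $\eta$. Your observation that the energy-zero property of $\gamma_g$ is not used in the final cancellation is also accurate.
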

\begin{proof}
The proof of Lemma \ref{GGamma} (for the case with a $G$-fixed base point) extends to this
general case. But here, we give a geometric
proof of it using the $G_\eta$-action \eqref{star}.

Let $h \in \Gamma$ be represented by a loop $\alpha_h$ at $x_0$. If $\gamma_g$ realizes $g \in G_\eta$ (i.e. $\gamma_g (1) = g \cdot x_0$ and $I_{\eta} (\gamma_g) =0$), then $\gamma_{g^{-1}} = g^{-1} \cdot (\gamma_g)_-$ realizes $g^{-1}$. Any point $\WT{x}$ in $\WT{M}$ can be thought of as a (homotopy class of) path from $x_0$ modulo $\KI$-action. Then, we have path representations of elements in $\WT{M}$ as follows:
\begin{figure}[h]
\begin{center}
\includegraphics[height=2in]{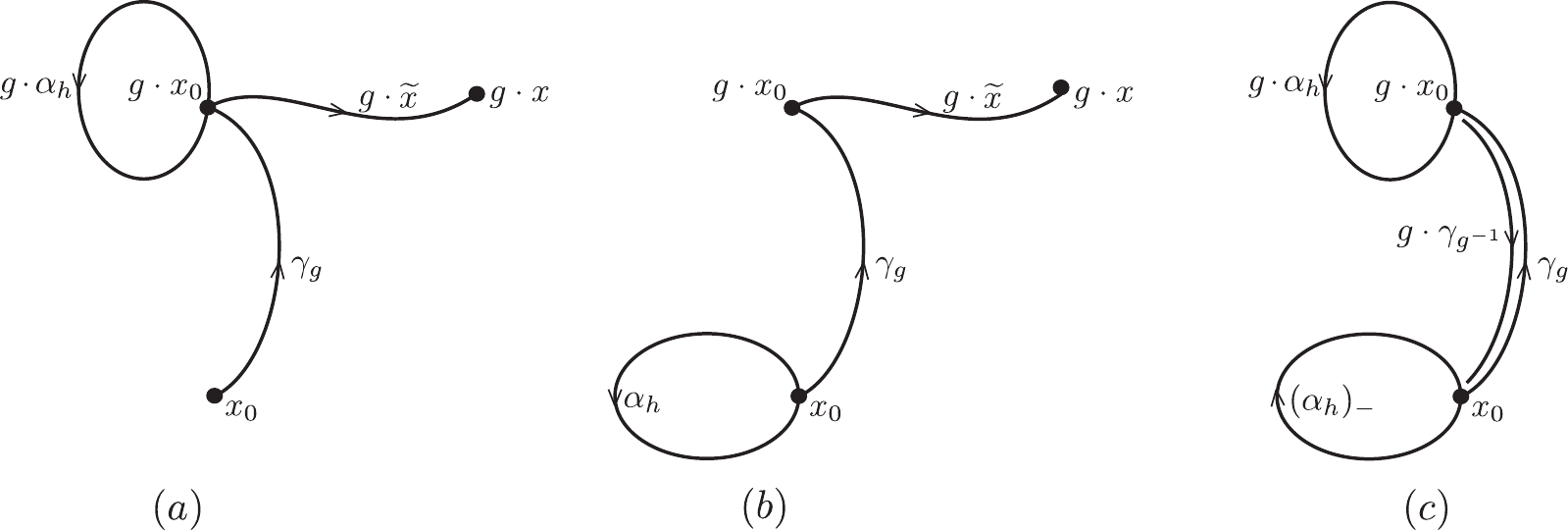}
\caption{(a) $g\cdot (h\cdot \WT{x})$ \,  (b) $h \cdot (g \cdot \WT{x})$  \, (c) $g\cdot (h\cdot \WT{x})- h \cdot (g \cdot \WT{x})$}
\end{center}
\end{figure}

\begin{enumerate}
\item[(a)] $g \cdot (h \cdot \WT{x}) $ is represented by the concatenation $\gamma_g \star (g \cdot \alpha_h) \star (g \cdot \WT{x})$.
\item[(b)] $h \cdot (g \cdot \WT{x}) $ is represented by the concatenation $\alpha_h \star \gamma_g \star (g \cdot \WT{x})$
\item[(c)] The difference between (a) and (b) is represented by $\gamma_g \star (g \cdot \alpha_h) \star (\gamma_g)_- \star (\alpha_h)_-$.\\
\end{enumerate}

Since $\eta$ is $g$-invariant 
$$\int_{\alpha_h} \eta + \int_{g \cdot (\alpha_h)_-} \eta =0,$$
We conclude that (c) has energy zero, or equivalently 
$$g\cdot (h\cdot \WT{x})=h \cdot (g \cdot \WT{x}).$$
\end{proof}

Since the $G$-action on $M$ is orientation preserving, it is easy to see that $G_\eta$ preserves the induced orientation on $\WT{M}$.
Here is a simple lemma on the relationship between isotropy groups in $M$ and $\WT{M}$
analogous to Lemma \ref{lem:isolocalgroup}
\begin{lemma}\label{lem:isolocalgroup2}
Consider $\WT{x} \in \WT{M}$ with $\pi(\WT{x}) =x$.
Then the isotropy group $(G_\eta)_{\WT{x}}$ at $\WT{x}$ in $G_\eta$ is given by
\begin{equation}\label{isotropyGeta}
(G_\eta)_{\WT{x}}=G_x \cap G_\eta = G_x.
\end{equation}
\end{lemma}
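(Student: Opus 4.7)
The plan is to establish both equalities in the chain $(G_\eta)_{\WT{x}}=G_x \cap G_\eta = G_x$ separately. The rightmost equality is immediate from Lemma \ref{locgpinG}, which asserts $G_x \subset G_\eta$ for every $x \in M$; so $G_x \cap G_\eta = G_x$. The real content is the leftmost equality, which I would split into the two obvious inclusions.

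For the inclusion $(G_\eta)_{\WT{x}} \subset G_x \cap G_\eta$, I would simply invoke $G_\eta$-equivariance of $\pi$. If $g \cdot \WT{x} = \WT{x}$ then applying $\pi$ gives $g \cdot x = x$, so $g \in G_x$, and of course $g \in G_\eta$ by hypothesis.

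For the reverse inclusion $G_x \cap G_\eta \subset (G_\eta)_{\WT{x}}$, I would work directly with Definition \ref{GactionNovcov}. Fix $g \in G_x \cap G_\eta$ with an energy zero path $\gamma_g$ from $x_0$ to $g \cdot x_0$, and represent $\WT{x}$ by a path $\alpha$ from $x_0$ to $x$. Then $g \cdot \WT{x}$ is represented by the concatenation $\gamma_g \star g(\alpha)$, which is a path from $x_0$ to $g\cdot x = x$ since $g \in G_x$. To verify that $g \cdot \WT{x} = \WT{x}$ in $\WT{M} = \WT{M}_{univ}/\KI$, it suffices to show that the loop
\[
\beta := \gamma_g \star g(\alpha) \star \alpha_-
\]
at $x_0$ lies in $\KI$, i.e. $I_\eta(\beta) = 0$. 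The key computation is
\[
I_\eta(\beta) = I_\eta(\gamma_g) + \int_{g(\alpha)} \eta - \int_\alpha \eta = 0 + \int_\alpha g^\ast\eta - \int_\alpha \eta = 0,
\]
where the middle equality uses $I_\eta(\gamma_g)=0$ and the last uses $G$-invariance of $\eta$. Hence $\beta \in \KI$, and therefore $g \cdot \WT{x} = \WT{x}$ in $\WT{M}$, giving $g \in (G_\eta)_{\WT{x}}$.

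There is no substantial obstacle here; the only subtlety is making sure the argument is independent of the choice of $\gamma_g$ and of the path representative $\alpha$, but this is already subsumed by the well-definedness of the $G_\eta$-action on $\WT{M}$ established after Definition \ref{GactionNovcov}. The essential point is just that $G$-invariance of $\eta$ forces the naive $g$-translate of any path to have the same $\eta$-integral, so composing with an energy zero path $\gamma_g$ produces a closed loop of zero energy.
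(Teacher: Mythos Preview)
Your proof is correct and follows essentially the same approach as the paper: both invoke Lemma \ref{locgpinG} for the second equality, and for the first equality both show the two inclusions by (i) projecting via $\pi$ and (ii) verifying that the loop $\gamma_g \star g(\alpha) \star \alpha_-$ lies in $\KI$ using $G$-invariance of $\eta$ and $I_\eta(\gamma_g)=0$. Your version is slightly more explicit in the energy computation, but the argument is the same.
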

\begin{proof}
The second equality in \eqref{isotropyGeta} follows from the lemma \ref{locgpinG}. We only prove the first equality, here. For $g \in G_\eta$, we choose an energy zero path $\gamma_g$ from $x_0$ to $g \cdot x_0$ in $M$. Then,  if $g \in  (G_\eta)_{\WT{x}}$, then 
$ g \cdot \WT{x}$ (which is represented by $\gamma_g \star g(\WT{x})$) has the same end point as $\WT{x}$.
This means that $g$ is an element of $G_x$.

Conversely, if $g$ fixes $x$, and admits an energy zero path $\gamma_g$, 
then both $\gamma_g \star g(\WT{x})$ and $\WT{x}$ is a path from $x_0$ to $x$ in $M$,
and $I_\eta$ vanishes on $\gamma_g \star g(\WT{x}) \star \WT{x}_-$. Thus
$g \cdot \WT{x}$ and $\WT{x}$ differ by an action of $\KI$, which proves that
$g \in (G_\eta)_{\WT{x}}$.
\end{proof}

To define a $G$-equivariant Novikov complex, let us first assume that $G=G_\eta$,
and postpone the study of general case to the next subsection.
We define the Novikov chain complex as in Definition \ref{def:chain}, and Novikov ring $\Lambda_{[\eta]}$ as in \eqref{ring}. By proceeding as before, we obtain
\begin{prop}\label{thm:novchaincpx}
Assume that the induced $G (=G_\eta)$-invariant function  $\WT{f}:\WT{M} \to \R$ is of Morse-Smale type.
Then, we have an induced $G$-action on the Novikov complex $(CN_*(M;\eta),\delta)$ which is $\Lambda_{[\eta]}$-linear
(i.e.  $G$-action  commutes with $\Lambda_{[\eta]}$).
\end{prop}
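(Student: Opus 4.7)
The plan is to closely mirror the construction in the easy case (Section 2), using the $G_\eta$-action on $\WT{M}$ from Definition \ref{GactionNovcov} together with the commutativity property proved in Lemma \ref{lem:deccomm}. Since we assume $G = G_\eta$ and $\WT{f}$ is Morse--Smale, the Novikov complex $CN_*(M;\eta)$ is well defined as in Definition \ref{def:chain}. The task is to upgrade the naive permutation of critical points to a chain-level $G$-action on the orientation spaces and verify compatibility with both $\delta$ and the $\Lambda_{[\eta]}$-module structure.

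First I would construct the $G$-action on generators. Because $\WT{f}$ is $G$-invariant, each $g \in G$ permutes the critical set $\CR(\WT{f})$, sending $\WT{p}$ to $g\cdot \WT{p}$ (using the action of Definition \ref{GactionNovcov}). Choosing a $G$-invariant Riemannian metric on $M$ (which exists by averaging) and pulling it back to $\WT{M}$, the lifted metric is $G_\eta$-invariant since $\pi$ is $G_\eta$-equivariant, so $g$ sends $W^u_{\WT{f}}(\WT{p})$ diffeomorphically to $W^u_{\WT{f}}(g\cdot \WT{p})$. Taking top exterior powers of the differential $dg$ gives a canonical isomorphism $g_*:\Theta_{\WT{p}}^- \to \Theta_{g\cdot \WT{p}}^-$, and applying $R$-normalization yields $|g_*|_R:|\Theta_{\WT{p}}^-|_R \to |\Theta_{g\cdot\WT{p}}^-|_R$. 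The Novikov finiteness condition $\Diamond$ is preserved because $\WT{f}(g\cdot \WT{p}) = \WT{f}(\WT{p})$, so $G$ acts on the module $CN_*(M;\eta)$. Functoriality $(gh)_* = g_* \circ h_*$ follows from the chain rule and associativity of the $G_\eta$-action on $\WT{M}$.

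Next I would check compatibility with $\delta$. Since the lifted metric is $G$-invariant, the negative gradient flow of $\WT{f}$ is $G$-equivariant, so $g$ induces a bijection on the moduli spaces $\WT{\CM}(\WT{p},\WT{q}) \to \WT{\CM}(g\cdot \WT{p}, g\cdot \WT{q})$ intertwining the time-translation vector field. Under the identification \eqref{eq:moropq}, the isomorphism $c_u:\Theta_{\WT{q}}^-\to \Theta_{\WT{p}}^-$ attached to a flow line $u$ is mapped by $g$ to the analogous isomorphism $c_{g\cdot u}$ attached to $g\cdot u$, i.e.~$g_*\circ c_u = c_{g\cdot u}\circ g_*$. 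Summing over $u$ in the definition \eqref{eq:morsebdy} of $\delta$ and applying $R$-normalization gives $\delta\circ g = g \circ \delta$.

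Finally, $\Lambda_{[\eta]}$-linearity reduces to checking that the $G$-action commutes with the deck transformation action on $\WT{M}$, which is precisely Lemma \ref{lem:deccomm}. Passing to tangent spaces of unstable manifolds, this commutativity lifts to a commutativity of the induced maps on orientation spaces, hence to $\Lambda_{[\eta]}$-linearity of the $G$-action on the whole complex. The only genuinely subtle point — and the step I would regard as the main technical obstacle — is ensuring that the various sign conventions work out coherently, namely that the isomorphisms on $\Theta_{\WT{p}}^-$ coming from $g$, from $h \in \Gamma$, and from composition with a flow line $u$ all respect the identification \eqref{eq:moropq} on the nose; but this is bookkeeping already carried out in the $G$-Morse setting of \cite{CH} and extends verbatim since the lifted structures on $\WT{M}$ are locally indistinguishable from those on $M$.
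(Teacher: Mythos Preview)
Your proposal is correct and follows exactly the approach the paper intends. In fact the paper does not give an explicit proof of this proposition at all: it simply writes ``By proceeding as before, we obtain'' and states the result, referring back to the Section~2 construction (the easy case with a $G$-fixed base point) now carried out using the $G_\eta$-action of Definition~\ref{GactionNovcov} together with the commutativity Lemma~\ref{lem:deccomm}. Your write-up is precisely the unpacking of that phrase, and the individual steps you list---action on orientation spaces via $dg$, $G$-equivariance of the gradient flow giving $\delta\circ g=g\circ\delta$, and $\Lambda_{[\eta]}$-linearity from Lemma~\ref{lem:deccomm}---are the ones the reader is meant to supply.
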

This defines the $G$-equivariant Novikov complex of $(M,\eta)$. 
By taking the $G$-invariant part of $CN_*(M;\eta)$,
we get the chain complex 
$$(CN_*^{G}(M;\eta), \delta),$$
which should be considered as Novikov homology of the orbifold $([M/G],\OL{\eta})$ with respect to the induced 1-form $\OL{\eta}$.
This construction agrees with the previous construction of Novikov complex with a $G$-fixed point by Corollary \ref{cor:getag}, and we leave the detailed check as an exercise.

When $G \neq G_\alpha$, proper definition of orbifolded Novikov complex is more involved as we need
to introduce orbifold Novikov ring $\Lambda^{orb}_{[\eta]}$.

\subsection{Orbifold  Novikov ring $\Lambda^{orb}_{[\eta]}$ and orbifold Novikov complex}\label{subsec:OrbFund}
First, we explain how the energy zero subgroup $G_\eta$ naturally arises from the orbifold setting. We give a brief review on the orbifold fundamental group $\pi_1^{orb} \left( [M/G]\right)$.

We call a continuous map $\alpha:[0,1] \to M$ a generalized loop based at $x_0$ if
$\alpha(0)=x_0$ and $\alpha(1) = g_\alpha \cdot \alpha(0)$. i.e. it is a loop up to $G$-action.
 Note that a genuine loop at $x_0$ is obviously a generalized loop. Since $x_0$ is generic, $g_\alpha$ is uniquely determined from $\alpha$.

Two generalized loops  $\alpha$ and $\beta$ (based at $x_0$) are homotopic  if
\begin{itemize}
\item $\alpha (1) = \beta(1)$.
\item There is a homotopy $H$ between $\alpha$ and $\beta$ relative to end points.
\end{itemize} 
We write $\alpha \sim \beta$ for such homotopy relation. 

Two generalized loops can be multiplied as follows.
\begin{equation}\label{newactconcate}
\alpha \cdot \beta = \alpha \, \star  \, g_\alpha ( \beta).
\end{equation}
where $\star$ denotes the concatenation of two paths as before.

The set of homotopy classes of generalized loops (based at $x_0$), denoted by $\pi_1^{orb} \left( [M/G],\OL{x}_0\right)$,
has a group structure induced by \eqref{newactconcate}.
 One can also show that $\alpha^{-1} = g_\alpha^{-1} (\alpha_-)$ gives the inverse of $[\alpha]$, where $\alpha_- (t) = \alpha (1-t)$. 
 
We have a group homomorphism $e : \pi_1^{orb} \left( [M/G]\right) \to G$  defined by 
$$e([\alpha]) = g_\alpha.$$
It is easy to see that $${\rm{Ker}} \, e = \pi_1 (M, x_0).$$

Thus, we have the following  short exact sequence of groups which is non-split in general.
\begin{equation}\label{ses}
1 \longrightarrow \pi_1 (M) \longrightarrow \pi_1^{orb} \left( [M/G]\right) \stackrel{e}{\longrightarrow} G \longrightarrow 1.
\end{equation}
The map $e$ is surjective since we assumed that $M$ is connected.

\begin{lemma}\label{lem:orbfix}
If $M$ has a point, say $x_1$, which is fixed by the whole $G$-action, then
orbifold fundamental group can be written as a semi-direct product:
$$\pi_1^{orb} ([M/G]) \cong G \ltimes \pi_1 (M).$$
\end{lemma}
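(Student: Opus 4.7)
The plan is to exhibit an explicit group-theoretic splitting $s\colon G \to \pi_1^{orb}([M/G])$ of the short exact sequence \eqref{ses}; once such a splitting exists, the isomorphism $\pi_1^{orb}([M/G]) \cong G \ltimes \pi_1(M)$ is standard.

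To build $s$, I would fix once and for all a path $\alpha$ in $M$ from $x_0$ to the global fixed point $x_1$. For each $g \in G$, the reversed path $\alpha_-$ runs from $x_1$ to $x_0$, and since $g$ fixes $x_1$, its translate $g(\alpha_-)$ is a path from $x_1$ to $g\cdot x_0$. Define
$$s(g) := \bigl[\,\alpha \star g(\alpha_-)\,\bigr] \in \pi_1^{orb}([M/G]),$$
which is a generalized loop based at $x_0$ with endpoint $g\cdot x_0$, so $e(s(g)) = g$ by construction.

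The key step is to check that $s$ is a homomorphism. Using the product formula \eqref{newactconcate},
$$s(g)\cdot s(h) \;=\; \alpha \star g(\alpha_-) \star g\bigl(\alpha \star h(\alpha_-)\bigr)
\;=\; \alpha \star g(\alpha_-) \star g(\alpha) \star (gh)(\alpha_-).$$
The middle concatenation $g(\alpha_-)\star g(\alpha)$ is a path at $x_1$ followed by its own reverse (note $g(\alpha_-) = g(\alpha)_-$ as both equal $t\mapsto g(\alpha(1-t))$), hence it is null-homotopic rel endpoints. Therefore $s(g)\cdot s(h) \sim \alpha \star (gh)(\alpha_-) = s(gh)$, as required.

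With the splitting $s$ in hand, every element $\gamma \in \pi_1^{orb}([M/G])$ factors uniquely as $s(e(\gamma))\cdot \bigl(s(e(\gamma))^{-1}\gamma\bigr)$, where the second factor lies in $\ker e = \pi_1(M,x_0)$. This identifies $\pi_1^{orb}([M/G])$ with $G \ltimes \pi_1(M)$, where $G$ acts on $\pi_1(M)$ by conjugation through $s$; concretely, $g\cdot[\beta] = [\alpha \star g(\beta) \star \alpha_-]$ for a loop $\beta$ at $x_0$, which is the natural action induced by $G$ on $\pi_1(M,x_1)$ transported to $x_0$ via $\alpha$. The only nontrivial ingredient is the existence of the fixed point $x_1$; without it, the contraction of $g(\alpha_-)\star g(\alpha)$ to a constant at a $G$-fixed endpoint fails and the sequence generally does not split, which is exactly why the energy-zero subgroup $G_\eta$ is needed in the non-fixed-point setting.
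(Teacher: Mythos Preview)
Your proof is correct and follows essentially the same approach as the paper: both construct the splitting $s(g)=[\alpha\star g(\alpha_-)]$ using a path $\alpha$ from $x_0$ to the fixed point $x_1$. You supply the explicit verification that $s$ is a homomorphism (which the paper leaves as ``easy to check'') and spell out the resulting conjugation action, but the argument is the same.
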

\begin{proof}
Recall that the statement of the lemma is equivalent to the
existence of a splitting of the extension \eqref{ses}. Thus, we need to find a homomorphism $\phi:G \to \pi_1^{orb} ([M/G])$ such that
$e \circ \phi = id$ on $G$.
Now, if $G$ fixes $x_1$, we can construct $\phi$ as follows.
Take a path $\gamma:[0,1] \to M$ such that $\gamma(0)=x_0$ and $\gamma(1) = x_1$.
For each $g \in G$, $g(\gamma)$ is a path from $g(x_0)$ to $x_1$.
Thus, consider the concatenation of two paths
$$ \phi(g) := \gamma \star g(\gamma_-),$$
which is a generalized loop starting from $x_0$ ending at $g(x_0)$.
It is easy to check that $\phi$ is a homomorphism, and $e \circ \phi =id$.
\end{proof}

 
It is well-known that the most of the covering theory extends to the case of orbifolds if one considers orbifold fundamental group, orbifold covering and orbifold universal covering. (see for example
Takeuchi \cite{T}). The orbifold universal covering for $[M/G]$ is given by the universal covering
$\WT{M}_{univ}$  of $M$, with the projection maps
$\WT{M}_{univ} \to M \to M/G$. And $\pi_1^{orb} ([M/G])$ becomes
the deck transformation group of  $\WT{M}_{univ} \to M/G$, whose action can be written as follows.

\begin{definition}\label{def:pi1act}
We define the (left) action of 
$\pi_1^{orb} ([M/G])$ on $\WT{M}_{univ}$ as the concatenation of paths:
for $\gamma \in \pi_1^{orb}([M/G])$ with $\gamma(1) = g_{\gamma}(x_0)$, and for a
path $\WT{x}$ from $x_0$ to $x$, as an element of $\WT{M}_{univ}$, 
we define
$$\gamma \cdot \WT{x} := \gamma \star g_{\gamma}(\WT{x}).$$
\end{definition}

Recall $\eta$ is a Morse 1-form on $M$, which is $G$-invariant. We denote by $\overline{\eta}$  the induced 1-form  on the quotient $M/G$.
\begin{definition}
We define $I_{\eta}^{orb} : \pi_1^{orb} ([M/G]) \to \RR$ as follows:
 $$   I_{\eta}^{orb} (\alpha) := \int_0^1 \alpha^*{\eta} \quad \textrm{for}\; \alpha \in \pi_1^{orb} ([M/G]).$$
\end{definition}
This defines a group homomorphism since $\eta$ is $G$-invariant.

We consider the subgroup $\KIO$, the kernel of $I_{\eta}^{orb}$, which acts on the orbifold universal covering $\WT{M}_{univ}$,
and consider the quotient orbifold $[\WT{M}_{univ}/\KIO]$.
An analogue of Novikov covering $ \WT{M} \to M$ in section 2 would be the orbifold covering 
$$[\WT{M}_{univ}/\KIO]  \to [M/G].$$ 
But it is cumbersome to work directly with $[\WT{M}_{univ}/\KIO]$. For example, $\KIO$ is discrete but may not be finite in general. Alternatively, we find another presentation of the orbifold
$[\WT{M}_{univ}/\KIO]$, which is much easier to understand.

\begin{remark}
Similarly to \ref{lem:orbfix}, if there exists a $G$-fixed point, then we have
$$ {\rm{Ker}} I_\eta^{orb} \left( [M/G]\right) \cong G \ltimes {\rm{Ker}} I_\eta.$$
\end{remark}

Before we give the actual construction, it may be helpful to first explain the formalism.
Consider the group $B$ acting effectively on a manifold $X$, and let $A$ be a normal subgroup of $B$.
Then, $A$, $B$, and the quotient group $C = B/A$ form an exact sequence
$ 1 \to A \to B \to C \to 1.$
Note that the $B$-action on $X$ induces the $B/A$-action on the quotient space $X/A$ because $A$ is a normal subgroup. Also the quotient satisfies
\begin{equation}\label{abc}
 (X/A) / (B/A) \cong X/B.
 \end{equation}
Thus instead of $X/B$, we will consider $X/A$ and its quotient by the group $C = B/A$,
and the benefit is that  $X/A$, $B/A$ are simpler than $X$ and $B$
(also in our case below $X/A$ will be still a manifold).

Now, we put $B = \KIO$, and $A = \KI$ and consider 
the following diagram of subgroups of fundamental groups.
\begin{equation}\label{fundgpdia}
\xymatrix{ 1  \ar[r] & {\rm{Ker}} I_{\eta} \ar[d]_{\leq} \ar[r] & {\rm{Ker}} I_{\eta}^{orb} \ar[d]_{\leq} \ar[r]^{\qquad e_{\eta}} & G_{\eta} \ar[d]_{\leq} \ar[r] & 1 \\
1  \ar[r] & \pi_1 (M, x_0)  \ar[r] \ar[d] & \pi_1^{orb} ([M/G])  \ar[r]^{\qquad e} \ar[d] & G  \ar[r]  \ar[d] & 1 \\ 
1 \ar[r] &\Gamma \ar[r]&  \Gamma^{orb}  \ar[r] & G /G_\eta  \ar[r] & 1 }
\end{equation}


\begin{definition}
We define $G_\eta$ to be the quotient group $\KIO/\KI$.
$G_\eta$ can be identified with  a subgroup of $G$ via the map
$$\KIO / \KI \to \pi^{orb}_1 ([M/G]) / \pi_1 (M) =G.$$
\end{definition}
Hence, we have $C = \KIO/\KI$ in the formalism above. From the diagram \eqref{fundgpdia}, we can correspondingly construct a diagram of covering spaces as follows:
\begin{equation}\label{psi}
\xymatrix{ 
   \WT{M}_{univ} \ar[r]^{id} \ar[d]  & \WT{M}_{univ} \ar[d] \\
   \WT{M}=\WT{M}_{univ} / {\rm{Ker}} I_{\eta} \ar[r]^{/G_\eta } \ar[d]    & [\WT{M}_{univ} /{\rm{Ker}} I_{\eta} ^{orb} \ar[d]]  \\
   M  \ar[r]_{/G} & [M/G]   }
\end{equation}
Here $\WT{M} = \WT{M}_{univ} / {\rm{Ker}} I_{\eta}$, which is the Novikov covering space
considered in the previous section. From the discussion above, the identity \eqref{abc} becomes
\begin{equation}
[\WT{M}/ G_\eta] \cong [\WT{M}_{univ}/\KIO].
\end{equation}

Note that $\WT{M}$ is a manifold (it is a covering of $M$), and we have a presentation 
of the orbifold which involves simpler group $G_\eta$ than $\KIO$. This provides an explanation of $G_\eta$, and should extend to the general orbifold case.

In the case when $G \neq G_\eta$, there is a subtle problem with Novikov rings as examined from now on.
The deck transformation group for the Novikov cover $[\WT{M}_{univ}/\KIO] \to [M/G]$ is
given by the quotient group 
$$ \Gamma^{orb} :=  \pi_1^{orb} ([M/G]) / {\rm{Ker}} I_{\eta}^{orb},$$
while the deck transformation group for $\WT{M} \to M$ is given  by
$$\Gamma :=  \pi_1([M/G]) / {\rm{Ker}} I_{\eta}.$$
From the diagram \eqref{fundgpdia}, we see that the difference of two groups is exactly given by  $G/G_\eta$.
We write the associated Novikov rings as $\Lambda_{[\eta]}^{orb}$ and $\Lambda_{[\eta]}$ respectively as in \eqref{ring}. The inclusion $\Gamma \to \Gamma^{orb}$ gives rise to a map $\Lambda_{[\eta]} \to \Lambda_{[\eta]}^{orb}$
between the (compeletions of) group rings.

The Novikov complex is defined in the following way.
From the previous construction, we have the Novikov complex $(CN_*(M;\eta),\delta)$ for 
$\WT{M}$ with Novikov coefficients $\Lambda_{[\eta]}$ and the $G_\eta$-action on it.
Now, we can replace the Novikov coefficients by
\begin{equation}\label{def:novcpx2}
 CN_*(M;\eta) \otimes_{\Lambda_{[\eta]}} \Lambda_{[\eta]}^{orb},
\end{equation}
which still has a $G_\eta$-action.
\begin{definition}\label{def:obnoveta}
We define the orbifolded Novikov complex of $([M/G], \OL{\eta})$ to be
\begin{equation}\label{def:novcpx1}
 \big( CN_*(M;\eta) \otimes_{\Lambda_{[\eta]}} \Lambda_{[\eta]}^{orb} \big)^{G_\eta}
\end{equation}
\end{definition}

For example, let us consider the case that $G$-action is free.
The following proposition shows that the complex in Definition \ref{def:obnoveta} is the right Novikov complex
for the quotient space of the free action.
\begin{prop}
If $G$ acts on $M$ freely, then we have $\Lambda^{orb}_{[\eta]} \cong \Lambda_{[\OL{\eta}]}$ and
the following isomorphism of $\Lambda_{[\OL{\eta}]}$-modules
$$ \big( CN_*(M;\eta) \otimes_{\Lambda_{[\eta]}} \Lambda_{[\eta]}^{orb} \big)^{G_\eta} \cong CN_\ast (M/G; \bar{\eta})$$
\end{prop}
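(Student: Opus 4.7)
The plan is to proceed in two steps: first identify the two Novikov rings, then construct the chain-level isomorphism via the natural quotient map $\tilde{M} \to \tilde{M}/G_\eta$.

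\textbf{Ring identification.} Since $G$ acts freely, $M/G$ is a smooth manifold, and the orbifold fundamental group satisfies $\pi_1^{orb}([M/G]) = \pi_1(M/G)$. Under this identification, a generalized loop at $x_0$ projects to a genuine loop in $M/G$, and the homomorphism $I_\eta^{orb}$ corresponds to $I_{\bar\eta}$ because $\eta$ is $G$-invariant. Hence $\ker I_\eta^{orb}$ corresponds to $\ker I_{\bar\eta}$, and $\Gamma^{orb} = \pi_1^{orb}([M/G])/\ker I_\eta^{orb}$ is precisely the deck group of the Novikov cover of $(M/G,\bar\eta)$. Taking the Novikov completions gives $\Lambda^{orb}_{[\eta]} \cong \Lambda_{[\bar\eta]}$.

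\textbf{Covering identification.} By Lemma \ref{lem:isolocalgroup2}, $(G_\eta)_{\tilde x} = G_x$, so if $G$ acts freely on $M$ then $G_\eta$ acts freely on $\tilde{M}$. Combined with diagram \eqref{psi}, the quotient $\tilde{M}/G_\eta \cong [\tilde{M}_{univ}/\ker I_\eta^{orb}]$ is a genuine manifold, and under the identification of the previous paragraph it is exactly the Novikov cover $\widetilde{M/G}$ of $(M/G,\bar\eta)$. Thus we have a regular $G_\eta$-covering $\pi: \tilde{M} \to \widetilde{M/G}$. Since $\tilde{f}$ is $G_\eta$-invariant it descends to $\tilde{\bar f}$ on $\widetilde{M/G}$ with $d\tilde{\bar f} = \tilde{\pi}^\ast \bar\eta$. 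Moreover, $\pi$ is a local diffeomorphism, so its differential provides canonical isomorphisms $\pi_\ast: \Theta_{\tilde p}^- \to \Theta_{\pi(\tilde p)}^-$, and critical points of $\tilde{\bar f}$ are in bijection with $G_\eta$-orbits of critical points of $\tilde{f}$.

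\textbf{Chain isomorphism.} Define
$$\Psi: CN_\ast(M;\eta) \otimes_{\Lambda_{[\eta]}} \Lambda^{orb}_{[\eta]} \longrightarrow CN_\ast(M/G;\bar\eta), \qquad x_{\tilde p} \otimes h \longmapsto h \cdot |\pi_\ast|_R(x_{\tilde p}),$$
where $h \in \Gamma^{orb}$ acts on $\widetilde{M/G}$ as a deck transformation. This is well-defined because the inclusion $\Gamma \hookrightarrow \Gamma^{orb}$ is compatible with $\pi$, and the Novikov finiteness condition transfers because $\tilde f$ and $\tilde{\bar f}$ agree under $\pi^\ast$. For $g \in G_\eta$, using $\pi \circ g = \pi$ we compute
$$\Psi(g \cdot (x_{\tilde p} \otimes 1)) = |\pi_\ast|_R (g_\ast x_{\tilde p}) = |(\pi \circ g)_\ast|_R (x_{\tilde p}) = \Psi(x_{\tilde p} \otimes 1),$$
so $\Psi$ factors through $G_\eta$-coinvariants, and surjectivity follows since every critical point of $\tilde{\bar f}$ lifts.

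\textbf{Invariants versus coinvariants.} Since $G_\eta$ is finite and $\Q \subset R$, the averaging operator $e = \frac{1}{|G_\eta|}\sum_{g \in G_\eta} g$ is an idempotent whose image is exactly the invariants and identifies them with the coinvariants. Restricting $\Psi$ to the $G_\eta$-invariant part therefore yields a $\Lambda_{[\bar\eta]}$-linear isomorphism, since on the basis of $G_\eta$-orbit-averages of critical points the map is a bijection onto the critical points of $\tilde{\bar f}$, with orientation spaces matched by $|\pi_\ast|_R$. The main obstacle is the bookkeeping around orientation spaces under the quotient and verifying compatibility with both the $\Gamma$- and $G_\eta$-actions; once the covering $\tilde M \to \widetilde{M/G}$ is set up $G_\eta$-equivariantly, everything else is formal.
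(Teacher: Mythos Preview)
Your proof is correct and follows essentially the same approach as the paper: identify $\pi_1^{orb}([M/G])$ with $\pi_1(M/G)$ using freeness, observe that $G_\eta$ acts freely on $\tilde M$ so that $\tilde M/G_\eta$ is the Novikov cover of $(M/G,\bar\eta)$, and conclude. The paper's argument is a terse sketch; you have filled in details the paper omits, in particular the explicit chain map via $\pi_\ast$ on orientation spaces and the invariants-versus-coinvariants step using the averaging idempotent (which the paper leaves implicit).
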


\begin{proof}
Since $G$ acts freely on $M$
\begin{equation}\label{freeNovqout}
\pi_1^{orb} ([M/G]) \cong \pi_1 (M/G), \quad {\rm Ker} I_{\eta}^{orb} \cong {\rm Ker} I_{\bar{\eta}}.
\end{equation}
This gives the desired identification of $\Lambda^{orb}_{[\eta]}$ and $\Lambda_{[\OL{\eta}]}$.

Note that the $G_\eta$-action on $\WT{M}$ is also free. One can see from the diagram \eqref{psi} that the Novikov covering space of $M/G$ with respect to the Morse $1$-form $\OL{\eta}$ is exactly the quotient of $\WT{M}$ by the free $G_\eta$-action. Therefore, $ \big( CN_*(M;\eta) \otimes_{\Lambda_\eta} \Lambda_\eta^{orb} \big)^{G_\eta}$ is naturally isomorphic to $CN_\ast (M/G; \bar{\eta})$ which is essentially the Morse chain complex of $\bar{f} : \WT{M} / G_\eta \to \RR$ with the Novikov coefficients.
\end{proof}

\subsection{Generalizations}
$\left. \right.$\\
{\bf(1)} Let us first drop the assumption that $M$ is connected from theorem \ref{thm:novchaincpx}. 
Denote connected components of $M$ by $M_i$ $(1\leq i \leq k)$. i.e. $M= \sqcup_{i=1}^k M_i$.
Let us call $M_i$ and $M_j$, {\em $G$-related} if there exist  $x \in M_i$ and $g \in G$ such
that $g(x) \in M_j$. This  defines an equivalence relation. 
We divide the index set $\{1,\cdots, k\}$ into 
$$\{i_{11},\cdots, i_{1a_1}\} \sqcup \cdots \sqcup \{i_{j1},\cdots, i_{ja_j}\} $$
using this equivalent relation so that  connected components $M_{i_{l1}}, \cdots, M_{i_{la_l}}$ are $G$-related for any $l =1,\cdots, j$.

To define a group action, we need to make the following assumption.
\begin{assumption}
If we denote by $G_{ab}$ the subgroup of $G$ whose elements preserve the connected component 
$M_{i_{ab}}$, then we require that
$$(G_{ab})_\eta = G_{ab}.$$
\end{assumption}
Hence, for any $g\in G_{ab}$ and $x \in M_{i_{ab}}$, there
exists an energy zero path $\gamma$ connecting $x$ and $g(x)$.

As an initial step, consider the construction of the Morse-Novikov complex for the subcollection $M_{i_{l1}}, \cdots, M_{i_{la_l}}$ for a fixed $l$.
Now, we choose base points $x_{0,i_{lb}} \in M_{i_{lb}}$ so that
they lie in a single $G$-orbit. We can consider Novikov-Morse theory on each connected
component,
with base points given by $x_{0,i_{lb}}$'s.

Then, from the assumption, any $g$-image of $x_{0,i_{lb}}$ for $g\in G$
can be connected to one of $x_{0,i_{lb'}} \in M_{i_{lb'}}$ by energy zero path, say $\gamma_g$.
Then, we can define the group action as before. Namely, we concatenate $\gamma_g$ to
the front of the image under the naive group action. Also, one can 
see that the deck transformation group of one component is isomorphic (by conjugation) to that of the other component, hence the resulting Novikov rings can be identified, which we denote as $\Lambda_l$.

One can take the direct sum of Novikov complexes $(\oplus_{s=1}^{a_l} CN_*(M_{i_{ls}};\eta|_{M_{i_ls}}), \oplus \delta)$ so that it admits a natural $G$-action.

Now, let us illustrate how to add various complexes for different $l$'s. 
For this, we need to introduce the universal Novikov ring to properly compare various Novikov rings
of different subcollections. Then, we will take completed direct sum over all different subcollections
to define the Novikov complex (This is similar  to what has been done for
Novikov Floer theory in \cite{FOOO}, where the case of several connected components of
a path space are considered).

The universal Novikov ring is defined by
$$\Lambda= \left\{ \left. \sum a_i T^{\lambda_i} \right| a_i \in \R, \lambda_i \in \R, \lim_{i \to \infty} \lambda_i = \infty \right\},$$
and the ring homomorphism from $\Lambda_{[\eta]} \to \Lambda$ by
$$ \sum a_i h_i \mapsto \sum a_i T^{I_{\eta}(h_i)}.$$

For each Novikov complex $CN_*(M_{i_{l1}};\eta)$ with $G$-action, we can introduce the new Novikov complex of $M$ as
$$  \WH{\oplus}_{l=1}^j CN_*(M_{i_{l1}};\eta) \otimes_{\Lambda_{[\eta]}} \Lambda $$
which admits a $G$-action also.\\


\noindent{\bf(2)} We have considered a global quotient orbifold $[M/G]$, and have seen how to
set up its Novikov complex by working on $\WT{M}$ observing $G_\eta$-actions.
We can generalize the construction to any effective orbifold if we just use orbifold terms, but
it will not be explicit as in the global quotient case.
As we will not use it in this paper, we briefly outline the construction.
Consider a compact connected oriented orbifold $\chi$ with a closed Morse 1-form $\eta$ on $\chi$.
It is straight forward that $\eta$ defines a homomorphism from $I_\eta: \pi_1^{orb}(\chi) \to \R$. Then, as before its kernel ${\rm{Ker}} I_{\eta}^{orb} \left( \leq \pi_1^{orb}(\chi)\right)$ corresponds to the orbifold covering 
$\pi : \WT{\chi} \to \chi$ on which $\pi^*\eta$ is an exact $1$-form. Note that
$\WT{\chi}$ is no longer a smooth manifold, but rather an orbifold.

Consider the deck transformation group
$\Gamma^{orb} = \pi_1^{orb}(\chi)/\KIO$, and take the (one-sided) completion of the group ring of
$\Gamma^{orb}$ to define $\Lambda^{orb}_{[\eta]}$. 
Let $f:\WT{\chi} \to \R$ be a Morse function on the (orbifold) Novikov cover which integrates $\pi^\ast \eta$. If $f$ is of Morse-Smale type,
we apply the construction in \cite{CH} (for general orbifolds) to obtain the orbifold Morse complex of $f:\WT{\chi} \to \R$ with $\Lambda^{orb}_{[\eta]}$-coefficients. $CN_\ast (\WT{\chi};\eta)$ has a natural $\Lambda^{orb}_{[\eta]}$-module structure and the boundary operator $\delta$ which counts gradient trajectories is $\Lambda^{orb}_{[\eta]}$-linear. We may call the resulting complex the Novikov complex of $(\chi,\eta)$.

\section{Floer-Novikov theory and orientation spaces}\label{sec:FNtheory}
We recall the setup of Floer-Novikov theory briefly (following \cite{FOOO}), and also the notion of orientation spaces
as well as some gluing formulas for them, which are  well-known constructions.  In the last subsection, we will introduce
the spin $\Gamma$-equivalence (refining $\Gamma$-equivalence) to associate the canonical spin
structure to each generator of Floer-Novikov complexes.

\subsection{Floer-Novikov theory}\label{subsec:FNtheory}
Let $L_0$ and $L_1$ be Lagrangian submanifolds of a symplectic manifold $(M,\omega)$, transversally intersecting with each other. Let us assume that both $L_0$ and
$L_1$ are connected, compact, oriented and also spin for simplicity.
Consider the space of paths
$$\Omega(L_0, L_1) = \{ l : [0,1] \to M \,\, | \,\, l(0) \in L_0, l(1) \in L_1\}.$$
We denote by
$\Omega(L_0, L_1;l_0)$ its connected component containing $l_0 \in \Omega(L_0, L_1)$. 

We define an $1$-form $\alpha$ on $\Omega(L_0,L_1;l_0)$ as follows: for $\xi \in l^\ast TM = T_l \Omega (L_0,L_1;l_0)$, 
\begin{equation}\label{def:alpha}
\alpha (\xi) = \int_{[0,1]} \omega(l'(t), \xi(t)) dt.
\end{equation}
The $1$-form $\alpha$ is closed but not necessarily exact and hence, one needs to find a suitable  Novikov covering for $\alpha$.

First let us consider the universal covering $\WT{\Omega}_{univ}(L_0,L_1;l_0)$ of  $\Omega (L_0, L_1;l_0)$. A path in $\Omega (L_0, L_1 ;l_0)$ is a continuous map $w : [0,1] \times [0,1] \to M$ satisfying $w_0 (t) =l_0 (t)$ and $w_s \in \Omega(L_0,L_1;l_0)$ where $w_s(t) = w(s,t)$.
If $w_1(t) =l(t)$ for $l \in \Omega (L_0, L_1 ; l_0)$, the homotopy class of such $w$ gives a point in the  universal covering space $\WT{\Omega}_{univ}(L_0,L_1;l_0)$ lying over $l$.
 
Denote by $\OL{w}(s,t) := w(1-s,t)$.
Given two elements $(w,l)$ and $(w',l')$ of $\Omega (L_0, L_1;l_0)$, we can glue these to get a loop $C:=w \star \OL{w'}$ in $\Omega(L_0, L_1;l_0)$. So we have a loop
$$C : S^1 \times [0,1] \to M$$
which satisfies the Lagrangian boundary conditions
\begin{equation}\label{bdycond}
C(s,0) \in L_0, \,\, C(s,1) \in L_1.
\end{equation}
We can define its symplectic area $I_\omega(C) = \int_C \omega$ and Maslov index $I_{\mu}(C)$. The Maslov index $I_{\mu}$ is defined as usual by the difference of the Maslov indices of the loops in the Lagrangian Grassmannian, where the loops are obtained from two boundary components of $C$ after a symplectic trivialization of the pull-back bundle over $C$.
This gives  a homomorphism 
\begin{equation}\label{eq:alphamu}
(I_{\omega}, I_\mu) :\pi_1 (\Omega (L_0, L_1;l_0)) \to \RR \times \ZZ.
\end{equation}

The following definition of the $\Gamma$-equivalence between two paths in $\Omega(L_0, L_1;l_0)$ is due to \cite{FOOO}. 
\begin{definition}\label{def:gammaeq}
$(w,l)$ and $(w',l)$ are said to be  $\Gamma$-equivalent if 
$$I_{\omega} (w \star \OL{w'}) = 0 = I_{\mu} (w \star \OL{w'}).$$
 \end{definition}

The set of $\Gamma$-equivalence classes forms a covering $\WT{\Omega} ( L_0, L_1;l_0)$ associated to the the following subgroup  
of $\pi_1 (\Omega (L_0, L_1;l_0))$:
$${\rm{Ker}} (I_{\omega},I_\mu) = {\rm{Ker}} I_{\omega} \cap {\rm{Ker}} I_\mu.$$ 
Hence the deck transformation group $\Pi( L_0, L_1;l_0)$ of the covering 
\begin{equation}\label{def:pi}
\pi :\WT{\Omega} ( L_0, L_1;l_0) \to \Omega ( L_0, L_1;l_0),
\end{equation}
is given by the quotient $\pi_1 (\Omega (L_0, L_1;l_0))/{\rm{Ker}} (I_{\omega},I_\mu)$.

By the definition of $\Gamma$-equivalences, the homomorphisms $I_{\omega}$ and $I_{\mu}$ give rise to 
\begin{equation}\label{Emuabel}
(E, \mu) : \Pi( L_0, L_1;l_0) \to \RR \times \ZZ
\end{equation}
defined by $E(g) = I_\omega (C)$ and $\mu(g) = I_{\mu} (C)$. Here, the element $g \in \Pi(L_0, L_1;l_0)$ is represented by a map $C: S^1 \times [0,1] \to M$ satisfying \eqref{bdycond} above.
Note that $\Pi(L_0, L_1;l_0)$ is an abelian group. Indeed \eqref{Emuabel}
 gives  an injective homomorphism onto its image by the definition of $\Gamma$-equivalence.
Define $\Lambda^k(L_0, L_1 ; l_0)$ whose element is a formal sum
$$\sum_{\substack{\beta \in \Pi (L_0,L_1;l_0) \\ \mu_{l_0} (\beta)=k }} a_\beta [\beta]$$
with $a_{\beta} \in R$. It should satisfy the condition that the set $ \{ \beta | a_\beta \neq 0, E(\beta) < C \}$ is finite for any $C$.

From the construction, $\pi^\ast \alpha$ is exact (where $\pi$ is the covering map in \eqref{def:pi}). The Floer action functional $\mathcal{A}$ on $\WT{\Omega} ( L_0, L_1;l_0)$ is defined by
$$\mathcal{A} ([w,l]) = \int w^{\ast} \omega$$
for an element $[w,l]$ in $\WT{\Omega} ( L_0, L_1;l_0)$. Then, the direct computation shows that
$$\pi^\ast \alpha = - d \mathcal{A}.$$
i.e. the Floer action functional is a Morse function on the Novikov cover which integrates $- \pi^\ast \alpha$. 
In short, Floer-Novikov theory is Morse theory of $\mathcal{A}$ on $\WT{\Omega} ( L_0, L_1;l_0)$
with Novikov coefficients $\Lambda^R (L_0, L_1 ; l_0)$. Critical points of $\mathcal{A}$
are given by $[w, l_p] \in \WT{\Omega} ( L_0, L_1;l_0)$ where $l_p$
is a constant path at $p \in L_0 \cap L_1$. 


Like in \cite{FOOO}, we work with the universal Novikov ring $\Lambda_{nov}$ and its subring $\Lambda_{0,nov}$ as coefficient rings for Floer cohomology groups: 
$$\Lambda_{nov} (R) = \left\{ \sum_{i=0}^{\infty} a_i T^{\lambda_i} \mid a_i \in R, \lambda_i \in \RR, \lim_{i \to \infty} \lambda_i =\infty \right\},$$
$$\Lambda_{0,nov} (R) = \left\{\sum_{i=0}^{\infty} a_i T^{\lambda_i} \in \Lambda_{nov} (R) \mid \lambda_i \in \RR_{\geq 0 } \right\}.$$
Recall for a single Lagrangian $L$, the associated Novikov ring is defined by
$$\Lambda(L) = \left\{ \left. \sum_{\beta \in \Pi (L)} a_\beta [\beta]  \, \right| \, \# \{ \beta | a_\beta \neq 0, E(\beta) < C \} \,\,\mbox{is finite for each} \,\,C \right\} .$$
The homomorphism $I_L : \Lambda(L) \to \Lambda_{nov}$ given by
\begin{equation}\label{formalsum}
I_L \left( \sum_{\beta \in \Pi(L)} a_\beta [\beta] \right) = \sum_{\beta} a_\beta T^{\int_\beta \omega}
\end{equation}
identifies $\Lambda(L)$ with a subring of the universal Novikov ring. We use the same identification as \eqref{formalsum} to transfer $\Lambda^k (L_0, L_1;l_0)$ into the universal Novikov coefficients. 

\subsection{Orientation spaces for Lagrangian Floer theory}\label{oriline}
We recall orientation spaces for Lagrangian Floer theory.
The Cauchy-Riemann (CR for short) operators associated to
generators of the Floer-Novikov complex were introduced in \cite{FOOO} for Bott-Morse Lagrangian Floer theory.
The index spaces (Definition \ref{def:orspace}) of these operators will be called orientation spaces.
The orientation spaces are keys to analyze Fredholm indices, and orientations in the Lagrangian Floer theory. As so, it is natural to expect that they play a important role in understanding group actions on Lagrangian Floer theory.

We fix a reference path $l_0$, and specify pre-fixed choices of data in what follows.
For the chosen reference path $l_0:[0,1] \to M$, we write $l_0(0) =p_0 \in L_0$, $l_0(1) = p_1 \in L_1$.
Along the chosen reference path, we choose 
a fixed path $\lambda_{l_0}$($t \mapsto \lambda_{l_0}(t)$), where $\lambda_{l_0}(t)$ is an element of oriented Lagrangian subspace in $T_{l_0 (t)} M$ whose end points are given by
$\lambda_{l_0}(0) = T_{p_0}L_0$ and
$\lambda_{l_0}(1)=T_{p_1}L_1$.
We denote
$$\widetilde{\lambda}_{l_0}:=\bigcup_{t \in [0,1]} \{t\} \times \lambda_{l_0}(t) \to [0,1].$$
Before we explain additional choices needed to specify spin structure of the Lagrangian bundle,
we recall the definition of the associated CR operator. Consider a critical point  $[w, l_p] \in Cr(L_0, L_1 ; l_0)$ for $p$ a point in the Lagrangian intersection $L_0 \cap L_1$.  We choose a representative $(w, l_p)$ (instead of its $\Gamma$-equivalence class). We want to define the orientation space $\Theta^-_{(w, l_p)}$ at the end.

The map $w$ produces a path $\lambda_{w,\lambda_{l_0}}$ of Lagrangian subspaces in $TM$ 
from $T_{p} L_1$ to $ T_{p} L_1$ by concatenating $\lambda_{l_0}$ and Lagrangian paths in top and bottom edge of $\partial w$. More precisely, for
$$(s,t) \in (\{0\} \times [0,1] ) \cup ([0,1] \times \{0,1\}) \stackrel{homeo}{\cong} [0,1],$$
we get a path $\lambda=\lambda_{w, \lambda_{l_0}}$ such that,
\begin{equation}\label{eq:deflambda}
\lambda(0,t)= \lambda_{l_0} (t), \qquad \lambda (i,s) = T_{w(i,s)} L_i \;\; \textrm{for}\; i=0,1,
\end{equation}
as drawn in Figure \ref{[0,1]}. 

\begin{figure}[h]
\begin{center}
\includegraphics[height=1.6in]{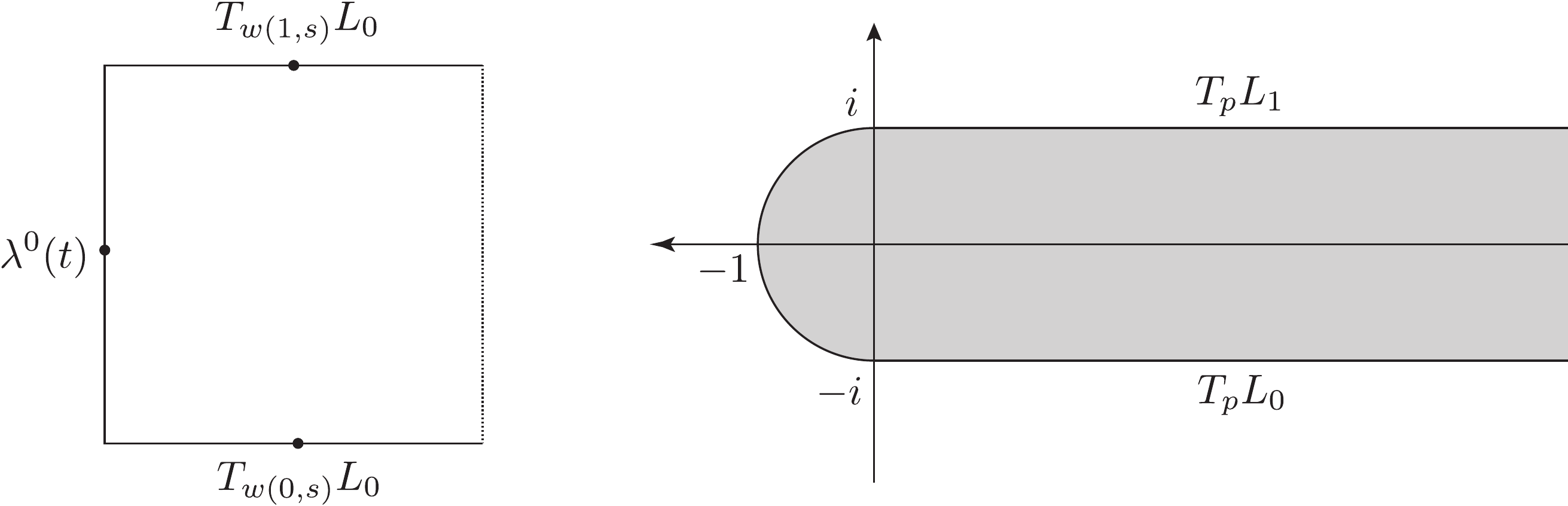}
\caption{path $\lambda_{w, \lambda^0}$}\label{[0,1]}
\end{center}
\end{figure}

Since $[0,1]^2$ is contractible, we can trivialize $w^\ast TM$
$$w^{\ast} TM \cong [0,1]^2 \times T_p M$$
so that $\lambda$ is a path in the oriented Lagrangian Grassmannian of $T_p M$. Put
$$Z_- = \{ z \in \CC \, | \, |z| \leq 1 \} \cup \{ z \in \CC \, | \, {\rm{Re}} \, z \geq 0, \, |{\rm{Im}} \, z| \leq 1 \}.$$
For $\lambda=\lambda_{w, \lambda_{l_0}}$, one defines a CR operator by
$$\bar{\partial}_{\lambda, Z_-}  : W^{1,p}_\lambda (Z_- ; T_p M) \to L^p (Z_- ; T_p M \otimes \Lambda^{0,1} (Z_-)),$$
whose Lagrangian boundary condition comes from $\lambda$. Here, $W^{1,p}_\lambda (Z_- ; T_p M)$ is the Banach space of  $L^{1,p}$ maps $\zeta_- :  Z_- \to T_p M$ with weighted norms:
\begin{enumerate}\label{def:zminusbdy}
\item $\zeta_- (\tau, i ) \in T_p L_i$, where we use $z=\tau + it$
\item $\zeta_- (z_-) \in \lambda (t)$, where $z_- (t) = e^{\pi i (-1/2 +t)} \in \partial Z_-$
\item $\displaystyle\int_{Z_-} e^{\delta |\tau|} ( | \nabla \zeta_- |^p + |\zeta_-|^p ) d \tau dt < \infty$.
\end{enumerate}
It is well known that $\bar{\partial}_{\lambda, Z_-} $ is a Fredholm operator. See \cite{FOOO} for more details.

 Finally, one assigns an orientation space to each generator $[w,l_p]$ as follows.
 \begin{definition}\label{def:orspace}
The orientation space  $\Theta_{(w,l_p)}^-$ for a pair $(w,l_p)$ is defined as
$$\Theta_{(w,l_p)}^- :=  {\rm{det}} (\bar{\partial}_{\lambda, Z_-}) = (\wedge^{top}  {\rm{Coker}} \, \bar{\partial}_{\lambda, Z_-})^* \otimes \wedge^{top} {\rm{Ker}} \,  \bar{\partial}_{\lambda, Z_-}.$$
\end{definition}

The canonical orientation on the moduli space associated to the CR operators with Lagrangian boundary conditions is determined from the spin structures of
the Lagrangian sub-bundles.  Later,
we will deal with isomorphisms between various orientation spaces, and
to find a canonical isomorphism, we need to also specify spin structures of the Lagrangian bundles
on the boundary components. Without these, there would be an ambiguity of signs whenever we try to compare, identify and glue orientation spaces. Signs are indeed very crucial information (for example different signs for the group action change the invariant sets).

As in chapter 8 of \cite{FOOO}, we fix the following choices of spin data.
Also we choose and fix a trivialization $\sigma:[0,1] \times \mathbb{R}^n \to \widetilde{\lambda}_{l_0} $, which
also provides a trivialization of $T_{p_0}L_0$ and $T_{p_1}L_1$.
The latter trivialization $\sigma_{t=i}: \mathbb{R}^n \to T_{p_i}L_i$
gives an embedding $SO(n)$ into the fiber of $P_{SO}(L_i)$ at $p_i$.
We choose one of its lifts $\iota_i : Spin(n) \to P_{spin}(L_i)$ between
two possible choices.

The trivialization $\sigma$ of $ \widetilde{\lambda}_{l_0} $ also
can be lifted, and we make a choice
\begin{equation}\label{spinl0}
\widetilde{\sigma}: [0,1] \times Spin(n) \cong P_{spin}(\lambda_{l_0}).
\end{equation}
Now, we can use $\widetilde{\sigma}, \iota_0, \iota_1$ to
glue $P_{spin}(L_0), P_{spin}(\lambda_{l_0}),  P_{spin}(L_1)$
at $p_0$ and $p_1$.
Namely, we glue $P_{spin}(L_i)$  and $P_{spin}(\lambda_{l_0}),$
using $\iota_i \circ \widetilde{\sigma}^{-1}$ at $p_i$ for $i=0,1$.

We emphasize that with these pre-fixed data, we have the canonical spin structure of $\lambda$ for the generator $(w,l_p)$:
for $T_{w(\gamma(t))}L_0$ with $0 \leq t \leq \frac{1}{3}$, we have $P_{spin}(L_0)$
and for $T_{w(\gamma(t))}L_1$ with $\frac{2}{3} \leq t \leq 1$, we have $P_{spin}(L_1)$,
and for $\lambda_{l_0}(t)$ for $\frac{1}{3} \leq t \leq \frac{2}{3}$, we have $P_{spin}(\lambda_{l_0})$. At $t=\frac{1}{3}$ (resp. $t = \frac{2}{3}$),
we glue $P_{spin}(\lambda_{l_0})$ and $P_{spin}(L_0)$ (resp. $P_{spin}(L_1)$)
by $\iota_0 \circ \WT{\sigma}^{-1}$ (resp. $\iota_1 \circ \WT{\sigma}^{-1}$).

\subsection{Gluing theorems}\label{subsec:Gluingthm}
We need to recall  some elementary facts on determinant (index) spaces of CR operators. See \cite[Section (11c)]{Se} for more explanations.

First, we consider the gluing of CR operators on $Z_-$ and the one on holomorphic strips, and investigate the relation between orientation spaces before and after the gluing. Let $S_1$ be given by $Z_-$, and let $S_2$ be a strip $\R \times [0,1]$.
Consider a $J$-holomorphic strip connecting two intersection points $p$ and $q$ and the induced Lagrangian sub-bundle data $\lambda'$ along $\partial S_2 = \R \times \{0,1\}$, which give
rise to a weighted CR operator $\bar{\partial}_{\lambda', S_2}$.

The standard gluing theorem for the weighted CR operators $\bar{\partial}_{\lambda, S_1}$ for $S_1$, and $\bar{\partial}_{\lambda', S_2}$ for $S_2$ says that 
\begin{equation}
 {\rm{det}} (\bar{\partial}_{\lambda,S_1})  \otimes  {\rm{det}}(\bar{\partial}_{\lambda', S_2})
\cong {\rm{det}} (\bar{\partial}_{\lambda \sharp \lambda', S_3}),
\end{equation}
where $S_3 = Z_-$ and by $\lambda \sharp \lambda'$, we mean the Lagrangian sub-bundle
data along $\partial S_3$ obtained by concatenating that of $\R \times \{1\} \subset \partial S_2$ backwards,
that of $S_1$, and that of $\R \times \{0\} \subset S_2$. (See (a) of Figure \ref{fig:SSS}.)

Hence, the determinant line bundle ${\rm{det}}(\bar{\partial}_{\lambda', S_2})$ for
the holomorphic strip is obtained as a tensor product  (compare with \eqref{eq:moropq})
$${\rm{det}} (\bar{\partial}_{\lambda,S_1}) \otimes {\rm{det}} (\bar{\partial}_{\lambda \sharp \lambda', S_3}).$$

\begin{figure}[h]
\begin{center}
\includegraphics[height=1.7in]{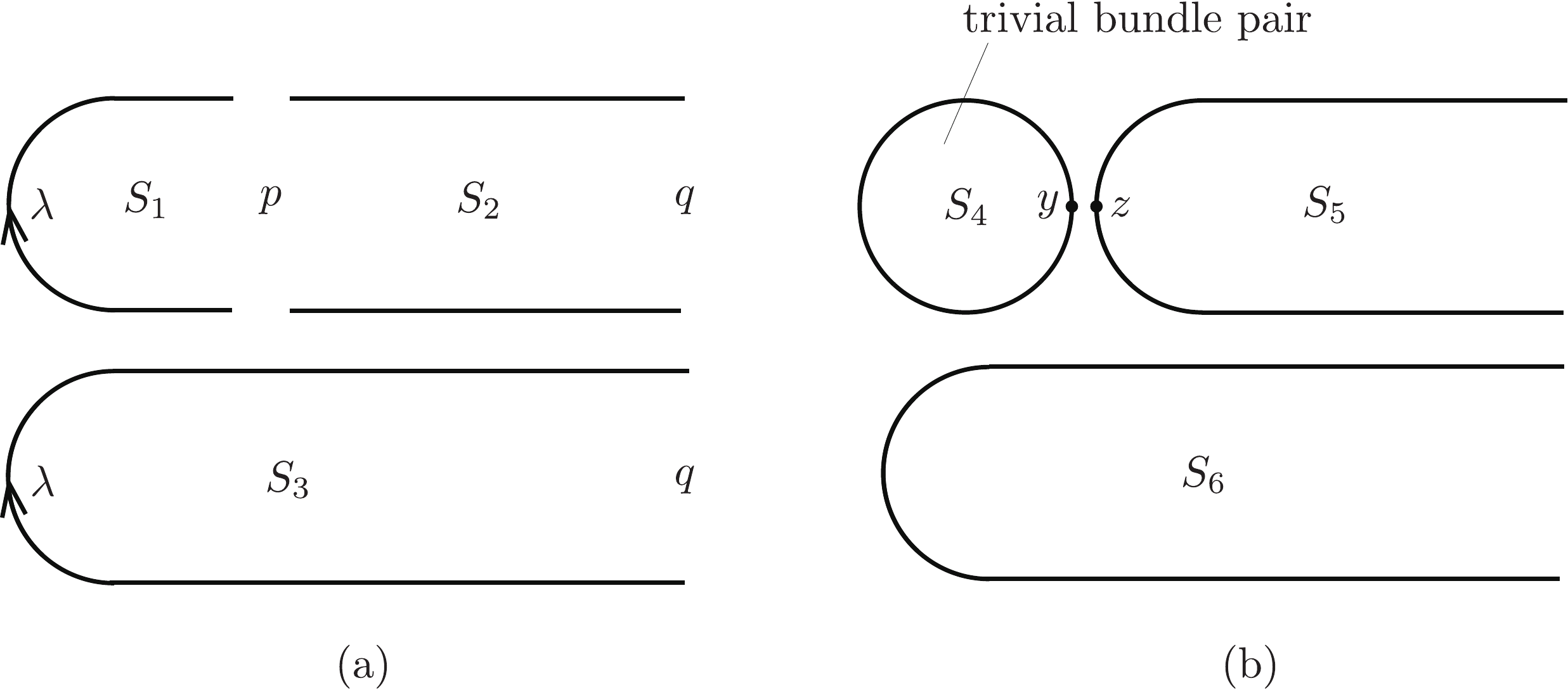}
\caption{(a) Gluing of $S_1$ and $S_2$ (b) Gluing of $S_4$ and $S_5$}\label{fig:SSS}
\end{center}
\end{figure}

In the discussion below we need another type of gluing result.
Let $S_4$ be a disc with a trivial symplectic bundle $E_4$ and a Lagrangian sub-bundle $F_4$
(which is not necessarily trivial) along $\partial S_4$
and consider $S_5 = Z_-$ and a trivial symplectic bundle $E_5$ and a Lagrangian sub-bundle $F_5$
along $\partial S_5$
which is fixed sub-bundle for $\{z\in Z_- \mid {\rm Re}\, (z) \geq 0\}$ as in \eqref{def:zminusbdy}.

Consider boundary marked points $y \in \partial S_4$, and $z \in \partial S_5$ and
an identification $E_{4,y} \cong E_{5,z}$ which identifies $F_{4,y}$ with $F_{5,z}$.
Then determinant spaces of the associated CR operators satisfy
\begin{equation}\label{zindexformula2}
 {\rm{det}} (\bar{\partial}_{S_4})  \otimes  {\rm{det}}(\bar{\partial}_{S_5})
\cong {\rm{det}} (\bar{\partial}_{S_6}) \otimes \wedge^{top} F_{5,z},
\end{equation}
where $S_6 = Z_-$ with a Lagrangian sub-bundle  data along $\partial S_6$ obtained by 
concatenating that of $\R \times \{1\} \subset \partial S_5$ backwards,
that of $S_4$ counter-clockwise, and that of $\R \times \{0\} \subset S_5$. (See (b) of Figure \ref{fig:SSS}.) Here, the  new term $\wedge^{top} F_{5,z}$ appears because the elements of the kernel
of Cauchy-Riemann problem on $S_4$ and $S_5$ should have the same boundary value in $F_{4,y} = F_{5,z}$ at $y=z$. 

Later, we will need the following lemma of Seidel.
\begin{lemma}\cite[Lemma 11.17]{Se}\label{lem:se11}
Let $\rho$ be a loop in the Lagrangian Grassmannian. Then a choice of pin structure determines
an isomorphism $$det(\OL{\partial}_{D,\rho}) \cong \wedge^{top}\rho(0).$$
\end{lemma}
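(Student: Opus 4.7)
The plan is to reduce to the case of a constant loop, where the isomorphism is tautological, and then extend to arbitrary loops via a deformation argument in which the pin structure resolves the remaining sign ambiguity.

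For the base case, let $\rho \equiv \Lambda$ be a constant loop at a Lagrangian $\Lambda \subset \C^n$. A Schwarz-reflection argument shows that $\OL{\partial}_{D,\Lambda}$ is surjective, with kernel equal to the constant $\Lambda$-valued maps on $D$. Hence $\det(\OL{\partial}_{D,\Lambda}) = \wedge^{\mathrm{top}}\ker \cong \wedge^{\mathrm{top}}\Lambda$ canonically, and the preferred pin structure on the constant loop reproduces this canonical identification.

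For a general loop $\rho$, set $\mathcal{L}_\rho := \det(\OL{\partial}_{D,\rho}) \otimes (\wedge^{\mathrm{top}}\rho(0))^{\vee}$. The assignment $\rho \mapsto \mathcal{L}_\rho$ defines a real line bundle $\mathcal{L}$ over the loop space $\Omega\mathrm{Lag}(n)$, trivialized over the constant loops by the base case. I would argue that $w_1(\mathcal{L})$ is the transgression to $\Omega\mathrm{Lag}(n)$ of $w_2(E_{\mathrm{taut}})$, where $E_{\mathrm{taut}} \to \mathrm{Lag}(n)$ is the tautological Lagrangian bundle. A pin structure on $\rho$ is, by definition, a trivialization of the $w_2$-obstruction pulled back along $\rho:S^1 \to \mathrm{Lag}(n)$, and hence supplies exactly the data needed to trivialize $\mathcal{L}_\rho$, i.e.\ the desired isomorphism $\det(\OL{\partial}_{D,\rho}) \cong \wedge^{\mathrm{top}}\rho(0)$.

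The main technical obstacle is the identification of $w_1(\mathcal{L})$ with the pin obstruction. By the additivity of determinant lines under direct sums and stabilization -- and the fact that $\mathrm{Lag}(n)$ stabilizes to $U/O$, whose low-dimensional homotopy is generated by Maslov-type loops -- this reduces to a single model calculation in $\mathrm{Lag}(1)$: one verifies by explicit index theory along the half-rotation family $\rho_s = e^{i\pi s}\cdot \R \subset \C$, $s\in [0,1]$, (a generator of $\pi_1(\mathrm{Lag}(1)) = \Z$) that $\mathcal{L}$ acquires a Möbius twist, matching the fact that the pullback of $E_{\mathrm{taut}}$ along this loop is the unique nontrivial rank-one real bundle on $S^1$, which admits pin but not spin structures. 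Once this generator is controlled, the gluing and stabilization formalism recalled in Section \ref{subsec:Gluingthm} propagates the identification to arbitrary loops in arbitrary $\mathrm{Lag}(n)$.
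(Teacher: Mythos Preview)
Your proposal is correct and its base case coincides exactly with the paper's treatment. The paper, however, does not give a full proof: it cites Seidel and only reviews the Maslov index zero case, arguing that any such loop can be homotoped to a constant one (so the determinant line is identified with $\wedge^{\mathrm{top}}\rho(0)$ along the homotopy), and then declares that passing to the non-trivial spin structure flips the sign of this identification.

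Your route is genuinely different and more systematic. Rather than restricting to index zero and invoking a single homotopy, you globalize the problem as a real line bundle $\mathcal{L}$ over $\Omega\mathrm{Lag}(n)$ and identify its $w_1$ with the transgression of $w_2$ of the tautological bundle, so that a pin structure on $\rho$ is precisely a trivialization of $\mathcal{L}_\rho$. This buys you the statement for loops of arbitrary Maslov index in one stroke, and makes transparent why the two pin structures give identifications differing by a sign. The paper's sketch is shorter and sufficient for its purposes (it only ever applies the lemma to index zero loops arising from $\Gamma$-equivalent generators), but it leaves the ``reversing the sign'' step as an assertion; your characteristic-class argument, together with the explicit $\mathrm{Lag}(1)$ computation, actually justifies it. The one point you should be careful to spell out is why the $\mathrm{Lag}(1)$ generator suffices: you appeal to stabilization and additivity of determinant lines, which is correct, but you should also note that $\pi_1(\mathrm{Lag}(n))\cong\Z$ is detected by the Maslov index, so the half-rotation loop really does generate.
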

One way to interpret this lemma is to say that the choice of pin structure gives rise to a canonical orientation of the determinant space
(relative to $\rho(0)$).
This lemma will be used in  \eqref{zindexformula2} for a Lagrangian loop $\rho$ of $\partial S_4$ so that we
have a canonical isomorphism between ${\rm{det}}(\bar{\partial}_{S_5})$ and ${\rm{det}} (\bar{\partial}_{S_6})$.


Let us briefly review the argument in \cite{Se} for Lemma \ref{lem:se11} in the case of a Maslov index zero loop. First
consider $\rho$ to be a constant loop with a trivial spin structure. Then,
the kernel is identified with $\rho(0)$ whereas the cokernel is trivial. For non-constant $\rho$ of index zero
with a trivial spin structure, we consider a homotopy from $\rho$ to a trivial one to obtain the identification $det(\OL{\partial}_{D,\rho}) \cong \wedge^{top}\rho(0).$
If we equip $\rho$ with a non-trivial spin structure, the identification is changed by reversing the sign.

\subsection{Spin-$\Gamma$-equivalences and Novikov theory}\label{subsec:SpinGamma}
We have defined the closed $1$-form $\alpha$ \eqref{def:alpha} on $\Omega(L_0,L_1;l_0)$,
and considered the Novikov covering
$\WT{\Omega}(L_0,L_1;l_0)$ by using $\Gamma$-equivalences on
bounding surfaces of paths to define action functional $\CA$. Now, we introduce a new notion of spin-$\Gamma$-equivalences.
This will be a finer equivalence relation than the $\Gamma$-equivalence, and hence it will have a possibly bigger deck transformation group, which results in the bigger Novikov ring $\Lambda^{spin}$.
But later, we will reduce the coefficient ring to the usual universal Novikov ring with help of a homomorphism $\Lambda^{spin}$ to $\Lambda$. In this process, $\Gamma$-equivalent, but
not spin-$\Gamma$ equivalent elements will be identified with opposite signs. 
 Hence, the Floer complex
is essentially the same complex as that of \cite{FOOO}.

Spin structures are essentially taken into account also in the definition of
Floer complexes in \cite{FOOO}, but the usage of spin $\Gamma$-equivalence here gives more canonical and explicit way of merging the spin structures of Lagrangian bundle data at critical points into the Floer complexes. 
Namely, $\Gamma$-equivalences are not very natural to assign orientations in the sense that $\Gamma$-equivalent generators with different
spin structures on the associated Lagrangian bundle should rather be thought to give the same generator {\em with opposite sign}. In \cite{FOOO}, the choice of spin structures is carried as extra data, but we would like to combine it with the $\Gamma$-equivalence by defining a finer equivalence relation. We emphasize that what to be done is a reformulation of orientation analysis of \cite{FOOO} which is written very carefully in Section 8.1.3.
Introduction of spin-$\Gamma$-equivalences provides a way to understand the Novikov complex and its orientation scheme at once, and we explain how to adapt orientation scheme of \cite{FOOO} into this setting.

Detailed constructions are to be given from now on.
We first define the spin-$\Gamma$-equivalence. If two pairs
$(w,l)$ and $(w',l)$ have the same end points $l$, then induced Lagrangian paths
$\lambda_{w,l}$ and $\lambda_{w',l}$ have the same end values. 
They can be glued to each other by first inverting $\lambda_{w',l}$ and identifying $\lambda_{w,l}(i)$ with $\lambda_{w',l}(i)$ for $i=0,1$. As a result, we obtain a Lagrangian bundle  over
a circle $S^1$, which we denote by $\lambda \sharp \lambda'_-$.
Their spin structures also can be glued too. 


Note that $\lambda \sharp \lambda'_-$ is a Lagrangian sub-bundle over a rectangle. On the horizontal edges, it restricts to the tangent bundle of Lagrangian submanifolds with
 given spin structures, $P_{spin}L_0$ and $P_{spin}L_1$. On vertical edges, it becomes $\lambda_{l_0}$ over the
base path $l_0$ with the spin structure $P_{spin}(\lambda_{l_0})$. How to glue these spin structures at vertices by $\iota_i \circ \widetilde{\sigma}^{-1}$ were explained 
in the paragraph after \eqref{spinl0}. This provides the desired canonical spin structure 
on $\lambda \sharp \lambda'_-$.

If $(w,l)$ and $(w',l)$ are $\Gamma$-equivalent, the Maslov index of $\lambda \sharp \lambda'_-$ vanishes, and hence the trivial spin structure of $\lambda \sharp \lambda'_-$ makes sense (see the explanation before Assumption \ref{as:spin} for the definition of trivial spin structure).

\begin{definition}\label{def:spingamma}
$(w,l)$ and $(w',l)$ are said to be {\em  spin-$\Gamma$-equivalent} if 
they are $\Gamma$-equivalent, and the induced  spin structure on $\lambda \sharp \lambda'_-$
over $S^1$ is a {\em trivial spin structure}.
The set of spin-$\Gamma$-equivalence classes is denoted by $\WT{\Omega} ( L_0, L_1;l_0)^{sp}$. 
\end{definition}

The spin-$\Gamma$ equivalence is linked to the covering $\WT{\Omega} ( L_0, L_1;l_0)^{sp}$ in the same way that we had the $\Gamma$-equivalence from a covering $\WT{\Omega} ( L_0, L_1;l_0)$ \eqref{def:pi}. In addition to $(I_{\omega}, I_{\mu})$ \eqref{eq:alphamu}, we have a homomorphism
$sp: {\rm Ker}\; \mu \to \Z/2$, which is defined by 0 for the trivial spin structure 
on $C \in \pi_1 \left( \Omega (L_0,L_1;l_0) \right)$ with $I_\mu (C) =0$, and $1$ otherwise. 
In fact, by using Lemma \ref{lem:se11}, we can extend $sp$ to the following map
\begin{equation}\label{def:sploop}
sp:  \pi_1(\WT{\Omega} ( L_0, L_1;l_0)) \to \Z/2.
\end{equation}
Let $C:S^1 \times [0,1] \to M$ be a cylinder representing an element of $\pi_1(\WT{\Omega} ( L_0, L_1;l_0))$ i.e. the boundary $S^1 \times \{0\}$
maps to $L_0$ and the boundary $S^1 \times \{1\}$ to $L_1$ by $C$. We cut the cylinder $S^1 \times [0,1]$ along $\{-1\} \times [0,1]$
to obtain a disc $D:D^2 \to M$, and choose a path of Lagrangian subspaces from $T_{C(-1,0)}L_0$ to $T_{C(-1,1)}L_1$ which
connects corresponding spin structures also. In this way, we get a Lagrangian loop along $\partial D^2$ with a spin structure from $C$.

With help of Lemma \ref{lem:se11}, we can determine whether the canonical orientation of the associated determinant space equals the
given orientation of $T_{C(1,0)}L_0$ or not. If it is equal, then we define $sp$ of $C$ to be zero, and one otherwise. Note that $sp (C)$ does not depend on the choice of paths connecting $T_{C(-1,0)}L_0$ and $T_{C(-1,1)}L_1$ since this path contributes to $sp$ twice after cutting.

The covering  $\WT{\Omega} ( L_0, L_1;l_0)^{sp}$ corresponds to the following subgroup  
of  $ {\rm{Ker}} I_\omega \cap {\rm{Ker}} I_\mu \subset \pi_1 (\Omega (L_0, L_1;l_0))$:
$${\rm{Ker}} (I_{\omega}, I_\mu, sp) = {\rm{Ker}} I_{\omega} \cap {\rm{Ker}}\, sp. $$ 
We denote by $\Pi( L_0, L_1;l_0)^{sp}$ the group of deck transformations of the covering 
\begin{equation}\label{def:pi2}
\pi^{sp} :\WT{\Omega} ( L_0, L_1;l_0)^{sp} \to \Omega ( L_0, L_1;l_0).
\end{equation}
It is not hard to check that 
${\rm{Ker}} (I_{\omega} ,I_\mu, sp)$ is a normal subgroup of $\pi_1 \left( \Omega (L_0,L_1;l_0) \right)$ and hence, the deck transformation group is given by the quotient group $\pi_1 (\Omega (L_0, L_1;l_0))/{\rm{Ker}} (I_{\omega},I_\mu,sp)$.
Note that $\Pi(L_0, L_1;l_0)^{sp}$ is also an abelian group since it is a quotient group of the abelian group $\Pi (L_0,L_1;l_0)$ \eqref{Emuabel}.

The Novikov ring for the coverings $\pi^{sp}$ is given as below.
Here, $R$ is a commutative ring with unity which includes $\Q$
(we need rational coefficients for multisections).

\begin{definition}\label{def:novl01}
$\Lambda_k^R (L_0, L_1 ; l_0)^{sp}$ is the set of all formal sums
$$\sum_{\substack{h \in \Pi(L_0, L_1 ; l_0)^{sp} \\ \mu(h) = k}} a_h [h],$$
such that $a_h \in R$ and 
$$| \{ h \in \Pi (L_0, L_1;l_0)^{sp} \,| \, E(g) \leq C, a_h \neq 0\}| = \infty.$$
We write
$$\Lambda^R (L_0, L_1 ; l_0)^{sp} = \bigoplus_k \Lambda_k^R (L_0, L_1 ; l_0)^{sp}.$$

\end{definition}
Compare it with the Novikov ring $\Lambda^R (L_0, L_1 ; l_0)$ defined in Subsection \ref{subsec:FNtheory}.

\begin{definition}
The homomorphism $j$
\begin{equation}\label{defflcmx4}
j:\Lambda^R (L_0, L_1 ; l_0)^{sp} \to \Lambda^R (L_0, L_1 ; l_0),
\end{equation}
is defined as the map induced from the  natural projection $\Pi (L_0,L_1; l_0)^{sp} \to \Pi (L_0,L_1;l_0)$ with an additional multiplication of $sp$ 
defined in \eqref{def:sploop}.
\end{definition}

Later, we will use the coefficient ring $\Lambda^R (L_0, L_1 ; l_0)$ ( or the universal Novikov ring)
using this homomorphism $j$.

Now, new (naive) generators of the Floer complex are given by
\begin{equation}\label{defflcmx3}
Cr (L_0, L_1;l_0)^{sp} = \left\{ [w,l_p] \in \left. \WT{\Omega} (L_0, L_1;l_0)^{sp} \, \right| \, p \in L_0 \cap L_1 \right\},
\end{equation}
where $l_p$ is the constant path at $p$.
If an element $c \in \Pi (L_0, L_1;l_0)^{sp}$ is represented by the loop $C$ of $\pi_1(\Omega(L_0, L_1;l_0)^{sp})$, then the action of $\Pi (L_0, L_1;l_0)^{sp}$ on $Cr (L_0, L_1;l_0)$ 
is defined by 
$$c \cdot [w,l_p] =[C \star w,l_p].$$
From this action, one can naturally define a $R[\Pi (L_0, L_1;l_0)^{sp}]$-module which is generated by elements in $Cr (L_0, L_1;l_0)^{sp}$. The actual definition of the cochain complex should also involve the orientation spaces associated to the generators in \eqref{defflcmx3}.
 More precisely, consider the formal sum
$$\mathfrak{r} = \sum_{\mu([w,l_p])=k} a_{[w,l_p]} x_{[w,l_p]}$$
where $a_{[w,l_p]} \in R$ and $x_{[w,l_p]} \in \left|\Theta_{[w,l_p]}^- \right|_R $. 

\begin{remark}
For the precise definition of $\Theta_{[w,l_p]}^-$, see Subsection \ref{oriline}.  Spin-$\Gamma$-action on orientation spaces
is defined using the isomorphism of orientation spaces in \eqref{zindexformula2}. We omit the details, and
refer readers to Section \ref{GactFloerNov} where similar operation on orientation spaces are explained carefully.
\end{remark}

Let 
$$supp(\mathfrak{r}):=\left\{ [w,l_p] \in \left. Cr(L_0, L_1;l_0)^{sp} \, \right| \, a_{[w,l_p]} \right\} \neq 0.$$
We call the sum $\mathfrak{r}$ a Floer cochain of degree $k$ if
$$\# (supp(\mathfrak{r} \cap \{ [w,l_p] \,|\, \mathcal{A} ([w,l_p]) \leq \lambda \})) < \infty$$
for any $\lambda \in \RR$.

 We denote the set of all Floer cochains of degree $k$ by $CF_R^k (L_0, L_1;l_0)^{sp}$ and we define 
\begin{equation}\label{defflcmx2}
CF_{R,l_0}^{sp,\ast}(L_0,L_1) := \bigoplus_{k} CF_R^k (L_0, L_1; l_0)^{sp}.
\end{equation}
From the $\Pi (L_0, L_1;l_0)^{sp}$-action $Cr (L_0, L_1;l_0)^{sp}$, we get a natural $\Lambda^R(L_0, L_1;l_0)^{sp}$-module structure of $CF_{R,l_0}^{sp,\ast}(L_0,L_1)$.
 
\begin{definition}
 The  Floer complex of the pair $(L_0, L_1)$ is defined as
\begin{equation}\label{defflcmx1}
CF_{R,l_0}^{\ast}(L_0,L_1) :=CF_{R,l_0}^{sp,\ast}(L_0,L_1) \otimes_{\Lambda^R(L_0, L_1;l_0)^{sp}} \Lambda^R(L_0, L_1;l_0)
\end{equation}
 using the homomorphism $j$ of \eqref{defflcmx4}.
\end{definition}
We will use the universal Novikov coefficients by considering ring homomorphism
$${\Lambda^R(L_0, L_1;l_0)^{sp}} \to \Lambda^R(L_0, L_1;l_0) \to \Lambda_{nov} (R)$$
in many applications such as morphism spaces of Fukaya categories. This will enable us to compare various of modules arising from Lagrangian Floer theory over the single coefficient field.

We will revisit the $\Lambda^R(L_0, L_1;l_0)^{sp}$-module structure on $CF_{R,l_0}^{sp,\ast}(L_0,L_1)$ with more details in Subsection \ref{Galpha} and discuss the new Novikov ring which fits into orbifold Floer cohomology in Subsection \ref{subsec:newNov}.

\section{Spin profiles}
Consider $G$-invariant Lagrangian submanifolds and suppose further that they admit $G$-invariant spin structures. Although spin structures are $G$-invariant, group actions on the frame bundles of Lagrangians do not lift in general to the one on spin bundles. 
For example, consider a reflection $A \in O(n)$, and its lift $\WT{A} \in Pin(n)$.
Then, $A^2=Id$, but we have $\WT{A}^2 = (-1)^{r(r-1)/2}e$ where $r$ is the rank of $Ker(Id + A)$ \cite[Lemma 11.4]{Se}. Hence, the $\Z/2$-action given by $A$ on $\R^n$ induces the action on $Pin(n)$ only up to signs, and the sign error indeed has an important geometric origin. For the general group actions, we will regard these sign differences as a group cohomology class, and call them spin profiles.

\subsection{Spin structures and spin profiles}\label{sec:spins}
Before presenting the definition of spin profiles, we briefly recall basic facts on spin structures for reader's convenience.

Let $Cl(\R^n)$ be the Clifford algebra. There is a natural embedding $\R^n \to Cl(\R^n)$ and vectors satisfy the relation $v^2 = ||v||^2 e$ for the unit $e \in Cl(\R^n)$.
Then, $Pin(n) \subset Cl(\R^n)^\times$ is defined as the multiplicative subgroup generated by $S^{n-1} \subset \R^n$, and $Spin(n) \subset  Cl(\R^n)^\times$ the subgroup of $Pin(n)$ whose elements are given by products of even number of generators in $S^{n-1}$. Both of homomorphisms $Pin(n) \to O(n)$ and $Spin(n) \to SO(n)$ are induced by the twisted adjoint action on $\R^n$, sending $w \mapsto -vwv$ for each generator $v \in S^{n-1}$.

A spin structure on $L$ is a principal $Spin(n)$ bundle
$\pi:P_{spin}(L) \to L$ which is a fiberwise double cover of the principal orthonormal frame bundle
$\pi':P_{SO}(L) \to L$. Let $pr:P_{spin}(L) \to P_{SO}(L)$ be the covering map such that $\pi = \pi' \circ pr$. $H^1(L;\Z/2)$ acts on the set of isomorphism classes of spin structures on $L$ and the action is free and transitive. Thus, it is an affine space modeled on $H^1(L;\Z/2)$.
In fact, Milnor \cite{Mi} has shown that a spin structure of a vector bundle on a simplicial complex can be understood as a trivialization of the vector bundle on the 1-skeleton, which can be extended to the 2-skeleton. Then, the action of $H^1 (L;\Z/2)$ can be interpreted as changing the trivialization of the vector bundle along the non-trivial loops in the 1-skeleton.
For example, homotopy classes of trivializations of a rank $n$ vector bundle on $[0,1]$ (with fixed trivialization at the end points) could differ by a loop of $\pi_1(SO(n))$, but trivial and non-trivial loops have different lifts into $Spin(n)$. 

We are particularly interested in the vector bundle on a circle. If the vector bundle $F \to S^1$ is spin, then there are two choices of spin structures. However, there is no preferred choice unless we know that the
vector bundle $F$ can be extended to $F' \to D^2$ with $\partial D^1 =S^1$. If $F$ has extension, there is a unique spin structure on $F \to S_1$ which extends to $F' \to D^2$. We call this preferred choice the {\em trivial spin structure}.
In our setting, any loop in the Lagrangian Grassmannian of Maslov index zero is indeed contractible. For such a loop, we have a choice of the trivial spin structure.

Let us first make the following necessary assumption.
\begin{assumption}\label{as:spin}
Let $L$ be a connected spin (orientable) Lagrangian submanifold with a choice of spin structure and suppose the $G$-action on $L$ is orientation preserving. We assume $G$ preserves the isomorphism class of the given spin structure. i.e. a pulled-back spin structure by $g$ is isomorphic to the original one for each $g \in G$. Such spin structures are said to be $G$-invariant.
\end{assumption}

\begin{remark}
We do not consider the case that spin structure is not $G$-invariant.
\end{remark}
\begin{remark}
Recall that $L$ is said to be relatively spin, if there exist a class $w \in H^2(M,\Z/2)$ such that
the restriction of $(w + w_2(L))$ to $L$ is zero, and hence there exist a bundle $V$ on 3-skeleton of $M$,
such that $(V \oplus TL)|_L$ becomes spin. Floer homology between two relatively spin Lagrangian
submanifolds can be defined only if they become relatively spin with the same $V$.

With $G$-actions, we can work with $w \in H^2_G(M,\Z/2)$ so that there exist
a $G$-bundle $V$ on 3 skeleton of $M$ with $(V \oplus TL)|_L$ being spin and
$G$-invariant.  We leave further details to the readers, and consider the case that $L$ is spin from now on.
\end{remark}


Given a $G$-action on $L$, let us denote by $A_g:L \to L $ its action $A_g(x) = g \cdot x$ for
$g\in G$ and $x \in L$, and let a $G$-invariant metric be given on $M$.
We denote the orthonormal frame bundle of $L$ by $P_{SO}(L)$, which 
has an induced $G$-action denoted by $A_g^{SO}:P_{SO}(L) \to P_{SO}(L)$.
In fact, $A_{g}^{SO}$ provides an isomorphism 
\begin{equation}\label{eq:agso}
(A_{g^{-1}})^*P_{SO}(L) \to P_{SO}(L).
\end{equation}

Let us fix a spin structure of $L$ satisfying the above assumption \ref{as:spin}. 
We would like to lift  $A_g^{SO}:P_{SO}(L) \to P_{SO}(L)$ to a map  $A_g^{spin}$ between spin bundles
\begin{equation}\label{eq:agspin}
A_g^{spin}:P_{spin}(L) \to P_{spin}(L).
\end{equation}
Such a lift exists by the assumption \ref{as:spin} and the standard covering theory argument. \eqref{eq:agspin}
Indeed, an explicit isomorphism of bundles over $P_{SO(n)}$
$$T_g : (A_{g^{-1}})^\ast P_{spin} (L) \stackrel{\cong}{\to} P_{spin} (L)$$
from \eqref{eq:agspin} can be viewed as a lift of $A_g^{SO}$ in
\eqref{eq:agso}. Note that the lift is not unique, as it can be multiplied by $(-1) \in Spin(n)$ to produce another one. We always choose the lift $A_e^{spin}$ for the identity element $e \in G$ to be the identity map on $P_{spin} (L)$.

%

For $g,h \in G$, we have a natural map given by the composition
\begin{equation}
(A_{(gh)^{-1}})^* P_{spin}(L)  = (A_{g^{-1}})^*(A_{h^{-1}})^* P_{spin}(L)
\stackrel{(A_{g^{-1}})^* T_h}{\to} 
(A_{g^{-1}})^* P_{spin}(L) \stackrel{T_g}{\to} P_{spin}(L).
\end{equation}
But it does not necessarily equal $T_{gh}$, but possibly differs by multplication of $\pm 1 \in Spin(n)$ if the group action on the frame bundle does not lift to the one on the spin bundle.  
Note that this difference $\pm 1$ is locally constant function on
$x \in L$.

\begin{definition}\label{def:spf}
The function $\textnormal{spf}_L:G \times G \to Z/2$ is defined implicitly by the relation
\begin{equation}\label{eq:spf}
T_g \circ (A_{g^{-1}})^*T_h = (-1)^{\textrm{spf}_L(g,h)}T_{gh},
\end{equation}
where  $\pm1$ means the multiplication by $\pm 1 \in Spin(n)$.
\end{definition}

If $L$ has several connected components, we make the following assumption which enables us to get a constant $\textnormal{spf}_L$, not depending on connected components. See (ii) of Subsection \ref{subsec:genspfs} below for more discussions on this issue.
\begin{assumption}\label{ass:Ldisconnspf}
If $L$ is not connected (but $L/G$ is connected), then we assume that component-preserving subgroups of $G$ are the same for every connected components of $L$ and normal in $G$. We assume further that each connected component has the same spin profile with respect to the action of the component-preserving subgroup.
\end{assumption}

We will show that $\textnormal{spf}_L:G \times G \to Z/2$ defines an second group cohomology class which is independent of choices of $T_g$'s. Thus, for example, if we have an induced group action on spin bundles, $\textnormal{spf}_L$ is cohomologous to $0$.

\begin{prop}
$\textnormal{spf}_L$ defines  group cohomology class in $H^2(G;\Z/2)$, which
we call a {\em spin profile} of  $L$.
The function $\textnormal{spf}_L$ depends on the choices of  the lifts $A_g^{spin}, \forall g \in G$,
but the resulting group cohomology class $[\textnormal{spf}_L]$ is independent of these choices.
\end{prop}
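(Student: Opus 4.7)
The plan is to verify the two assertions directly from the defining relation \eqref{eq:spf}. For the cocycle condition, the idea is to compute the triple composition
$$T_g \circ (A_{g^{-1}})^* T_h \circ (A_{(gh)^{-1}})^* T_k$$
in two different associations and compare the resulting scalars in $\{\pm 1\} \subset Spin(n)$. The functoriality of pullback gives the identification
$$(A_{g^{-1}})^* T_h \circ (A_{(gh)^{-1}})^* T_k = (A_{g^{-1}})^*\bigl(T_h \circ (A_{h^{-1}})^* T_k\bigr),$$
so applying \eqref{eq:spf} to the inner pair $(h,k)$ and then to the outer pair $(g,hk)$ yields a scalar $(-1)^{\textnormal{spf}_L(h,k)+\textnormal{spf}_L(g,hk)}$ times $T_{ghk}$. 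Associating the other way and applying \eqref{eq:spf} to $(g,h)$ and then $(gh,k)$ yields $(-1)^{\textnormal{spf}_L(g,h)+\textnormal{spf}_L(gh,k)}$ times $T_{ghk}$. Equating the two expressions gives exactly the $2$-cocycle identity
$$\textnormal{spf}_L(h,k)-\textnormal{spf}_L(gh,k)+\textnormal{spf}_L(g,hk)-\textnormal{spf}_L(g,h)=0 \pmod{2}.$$

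For the independence of choices, I would consider any other collection of lifts $\{T_g'\}$ satisfying $T_e' = \mathrm{id}$. Since the two lifts of a fixed map between connected principal $Spin(n)$-bundles differ by multiplication by an element of the kernel $\{\pm 1\}$ of $Spin(n) \to SO(n)$, there is a well-defined function $c: G \to \Z/2$ with $c(e)=0$ such that $T_g' = (-1)^{c(g)} T_g$. Plugging this into \eqref{eq:spf} and using that the scalars $\pm 1$ are central and commute with pullback, a straightforward bookkeeping gives
$$\textnormal{spf}_L'(g,h) = \textnormal{spf}_L(g,h) + c(g) + c(h) - c(gh) \pmod{2},$$
which is precisely $\textnormal{spf}_L + \delta c$ for the group coboundary $\delta c$. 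Hence the cohomology class $[\textnormal{spf}_L] \in H^2(G;\Z/2)$ is unchanged.

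The only subtlety I anticipate is the bookkeeping under pullback: one needs to observe that a scalar multiplication by $\pm 1$ on $P_{spin}(L)$ pulls back to the same scalar multiplication on $(A_{g^{-1}})^* P_{spin}(L)$, which is clear because the action of $\{\pm 1\} \subset Spin(n)$ is defined fiberwise and commutes with any bundle morphism. Under assumption \ref{ass:Ldisconnspf} the resulting function $\textnormal{spf}_L$ is the same on every connected component of $L$, so the argument above — carried out on a single component — globalizes without change, and the class $[\textnormal{spf}_L]$ is well-defined.
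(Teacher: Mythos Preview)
Your proof is correct and essentially follows the same approach as the paper. The paper frames the construction via the group extension $1 \to \Z/2 \to G' \to G \to 1$ (where $G'$ is the group of all lifts of the $A_g^{SO}$), identifies $\textnormal{spf}_L$ with the factor set of the set-theoretic section $g \mapsto A_g^{spin}$, and then cites the standard fact that factor sets are cocycles; you instead verify that cocycle identity directly from associativity of the triple composition, which amounts to inlining the proof of that cited lemma. Your coboundary argument for independence of lifts is identical to the paper's.
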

\begin{proof}
We first show that $\textnormal{spf}_L$ is indeed a cocycle for the
group cohomology $H^*(G,\Z/2)$. 

Let $G'$ be the set of all liftings of $A_g^{SO}$ for every $g \in G$. Clearly, $G'$ forms a group and the map $G' \to G$ sending $\pm A_g^{spin} \to g$ is $2$ to $1$. The kernel of $G' \to G$ consists of the liftings of the identity, which are $\pm 1 \in spin(n)$. Therefore we get the following exact sequence of groups
\begin{equation}\label{GextG'}
0 \to \ZZ / 2\ZZ \to G' \to G \to 1.
\end{equation}
The choice of lifts $A_g^{spin} \in G'$ for each $g\in G$
defines a section $\sigma:G \to G'$ by setting $\sigma(g)=A_g^{spin}$), which may {\em not} be a group homomorphism. Recall that $\sigma(e) = A_e^{spin}$ is assumed to be the identity.

As $\ZZ / 2 \ZZ$ is abelian, it naturally admits a $G$-module structure by conjugation: for $h \in \ZZ / 2\ZZ$
we write this conjugation action as $h^g = \sigma(g) h \sigma(g)^{-1}$, which is independent of the choice of $\sigma$. 
From general theory of group extension, extensions $G'$ as in \eqref{GextG'} are classified by a cohomology class in $H^2 (G, \ZZ / 2 \ZZ)$  given by so called the factor set 
$$[g,h] = \sigma(g) \sigma(h) \sigma(gh)^{-1}.$$
Since $[g,h] : P_{spin} (L) \to P_{spin} (L)$ covers the identity from $P_{SO} (L)$ to itself, it should be either 
$\pm 1 \in spin(n)$. Thus, the factor set is indeed a map
$$[\, , \,] : G \times G \to \ZZ / 2\ZZ$$
which exactly agrees with $\textnormal{spf}_L$ in the definition \ref{def:spf}.

The following lemma is standard, whose proof is omitted. (see \cite[section 6.6]{W}.)
\begin{lemma} We have
\begin{enumerate}
\item $[g,e]=[e,g] =0$ for all $g \in G$,
\item $[g,h]^f - [fg,h]  +[f,gh] - [f,g] = 0$ for all $f,g,h \in G$.
\end{enumerate}
\end{lemma}
Here, we used additive notation for $\ZZ / 2\ZZ$ to emphasize that it is abelian.
%

From the lemma we get a cohomology class $[\, , \,]$ in $H^2 (G; \ZZ /2\ZZ)$. Now we show that the class does not depend on the choice of a section $\sigma$. Let $\sigma'$ be another section (which is also regarded as a lifting of each $g$ in $G$ to an isomorphism $P_{spin} (L) \to P_{spin} (L)$ covering $g : L \to L$). Then, there exists $\beta : G \to \ZZ / 2 \ZZ$ such that
$$ \sigma'(g) = \beta(g) \sigma(g)$$
for all $g \in G$. The factor set $[\, , \,]'$ corresponding to $\sigma'$ is
\begin{eqnarray*}
[g,h]'&=& \beta(g) \sigma(g) \beta(h) \sigma(h) \sigma(gh)^{-1} \beta(gh)^{-1}\\
&=& \beta(g) + \sigma(g) \beta(h) \sigma(g)^{-1} + \sigma(g) \sigma(h) \sigma(gh)^{-1} + \beta(gh)^{-1}\\
&=& [g,h] + \beta(h)^g - \beta(gh) + \beta(g).
\end{eqnarray*}
Therefore, $[\, , \,]$ and $[\, , \,]'$ are cohomologous and give the same cohomology class $[{\rm spf}_L] \in H^2 (G; \ZZ / 2\ZZ)$.

\end{proof}

\subsection{Comparison of ${\rm spf_L}$ for two different spin structures on $L$}
Once a $G$-invariant spin structure (satisfying assumption
\ref{as:spin}) is fixed, the spin profile of $L$ is uniquely determined as seen in the previous subsection. We now analyze the dependence of the spin profiles $[{\rm spf}_L]$ on the choice of spin structures on $L$.

\begin{prop}\label{prop:diffspinspf}
Let $s$ and $t$ be two $G$-invariant spin structures and denote the
resulting spin profiles by $[{\rm spf_L^s}]$ and $[{\rm spf_L^t}]$, respectively. Then,
there exists an exact sequence
\begin{equation}
H^1 (L;\ZZ /2\ZZ)^G  \to H^2 (G;\ZZ /2\ZZ) \to H^2 ( \pi_1^{orb} [L /G ] ; \ZZ /2)
\end{equation}
such that the first map sends
the difference of two spin structures $s-t \in H^1 (L;\ZZ /2\ZZ)^G$ to the
 difference of two
spin profiles $[{\rm spf_L^s}]- [{\rm spf_L^t}]$.
\end{prop}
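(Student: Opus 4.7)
The strategy is to obtain the exact sequence from the Lyndon--Hochschild--Serre (LHS) spectral sequence applied to the group extension
\[
1 \to \pi_1(L) \to \pi_1^{orb}([L/G]) \to G \to 1,
\]
and then to identify the first map in the sequence with the difference-of-spin-profiles map by a direct computation. With trivial $\Z/2$-coefficients (and noting $H^1(\pi_1(L);\Z/2)=H^1(L;\Z/2)$), the $5$-term exact sequence of this extension reads
\[
0 \to H^1(G;\Z/2) \to H^1(\pi_1^{orb}[L/G];\Z/2) \to H^1(L;\Z/2)^G \xrightarrow{\tau} H^2(G;\Z/2) \to H^2(\pi_1^{orb}[L/G];\Z/2),
\]
where $\tau$ is the transgression $d_2^{0,1}$. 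This already gives the desired exact sequence; what remains is to show that $\tau(s-t) = [\textnormal{spf}_L^s] - [\textnormal{spf}_L^t]$.

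For this identification, I will use the following concrete description of how two $G$-invariant spin structures are related. The difference $s-t \in H^1(L;\Z/2)^G$ corresponds to a $G$-invariant principal $\Z/2$-bundle $N \to L$, or equivalently a $G$-invariant homomorphism $\alpha : \pi_1(L) \to \Z/2$, with $P_{spin}^s(L) \cong P_{spin}^t(L) \otimes_{\Z/2} N$. Since $N$ is $G$-invariant we may choose (non-canonical) lifts $N_g : (A_{g^{-1}})^* N \to N$ of the $G$-action and set $T_g^s := T_g^t \otimes N_g$. Plugging into Definition \ref{def:spf} yields
\[
\textnormal{spf}_L^s(g,h) = \textnormal{spf}_L^t(g,h) + c(g,h) \pmod 2,
\]
where $c \in Z^2(G;\Z/2)$ is the cocycle measuring the failure of the $N_g$'s to satisfy the group law: $N_g \circ (A_{g^{-1}})^* N_h = (-1)^{c(g,h)} N_{gh}$. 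Thus $[\textnormal{spf}_L^s] - [\textnormal{spf}_L^t] = [c]$.

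It remains to match $[c]$ with $\tau(\alpha)$. The group $\widetilde{G} := \{(g, N_g)\}$ of lifts of the $G$-action to $N$ fits into an extension $1 \to \Z/2 \to \widetilde{G} \to G \to 1$ whose extension class is precisely $[c]$. On the other hand, viewing $N$ as the cover of $L$ associated to $\ker\alpha \leq \pi_1(L)$ and $[N/\widetilde{G}] = [L/G]$ orbifold-theoretically yields a canonical identification $\widetilde{G} \cong \pi_1^{orb}([L/G])/\ker\alpha$, so this extension is exactly the pushout of the $\pi_1^{orb}([L/G])$-extension by $\alpha : \pi_1(L) \to \pi_1(L)/\ker\alpha = \Z/2$. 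By the standard identification of the transgression $d_2^{0,1}$ in the LHS spectral sequence with the pushout-of-extension construction, this class equals $\tau(\alpha)$, completing the proof.

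\textbf{Main obstacle.} The nontrivial step is the third paragraph: carefully checking that the tensor-product decomposition $P_{spin}^s = P_{spin}^t \otimes_{\Z/2} N$ is compatible with the chosen lifts and unwinds the cocycle $\textnormal{spf}_L^s$ additively into $\textnormal{spf}_L^t + c$, and that the extension-theoretic $[c]$ coincides with the cohomological transgression $\tau(\alpha)$. The latter identification is classical (it appears as the obstruction-theoretic interpretation of the transgression, e.g.\ in Brown's \emph{Cohomology of Groups}, Ch.~IV), but the former requires being careful about signs coming from the $\pm 1 \in Spin(n)$ central extension. Once the tensor-product formalism for principal $Spin(n)$- vs.\ $\Z/2$-bundles is set up cleanly, everything is forced by the explicit cocycle formula \eqref{eq:spf}.
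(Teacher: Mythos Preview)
Your proof is correct and follows essentially the same global architecture as the paper's --- both obtain the exact sequence from the Lyndon--Hochschild--Serre five-term sequence of the extension $1 \to \pi_1(L) \to \pi_1^{orb}([L/G]) \to G \to 1$ --- but you take a genuinely different route to identify the transgression with the difference-of-spin-profiles map.

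The paper works through the oriented frame bundle: it interprets a spin structure as a section $s:\pi_1(L)\to\pi_1(P_{SO}(L))$, builds a large commutative diagram involving $\pi_1^{orb}[P_{SO}(L)/G]$, and then quotients by $s(\pi_1(L))$ to obtain an extension $1\to\Z/2\to G'_s\to G\to 1$ whose class is $[\textnormal{spf}_L^s]$. The key observation is that under the inflation map $H^2(G;\Z/2)\to H^2(\pi_1^{orb}[L/G];\Z/2)$, both $[\textnormal{spf}_L^s]$ and $[\textnormal{spf}_L^t]$ land on the \emph{same} class (the one coming from $\pi_1^{orb}[P_{SO}(L)/G]$, which is independent of the spin structure), so exactness forces their difference into the image of the transgression. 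Only at the very end does the paper give an explicit cocycle-level formula $[g,h]_s = (s-t)(l_{g,h}) + [g,h]_t$ via a choice of paths $\gamma_g$ and loops $l_{g,h}$, confirming that the preimage is exactly $s-t$.

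You bypass the frame bundle entirely by using the tensor-product realization $P_{spin}^s = P_{spin}^t \otimes_{\Z/2} N$ with $N$ the $G$-invariant $\Z/2$-bundle encoding $s-t$, and reading off the cocycle difference directly from the lifts $T_g^s = T_g^t\otimes N_g$. This is more economical and makes the additivity $\textnormal{spf}_L^s = \textnormal{spf}_L^t + c$ transparent; the identification $[c]=\tau(s-t)$ via $\widetilde{G}\cong\pi_1^{orb}([L/G])/\ker\alpha$ is then the standard pushout interpretation of $d_2^{0,1}$. The paper's approach has the advantage of exhibiting a single invariant (the class of the $\pi_1^{orb}[P_{SO}(L)/G]$ extension) to which every spin profile inflates, which is conceptually appealing; your approach gets to the answer faster and avoids the auxiliary Lemmas on $P_{SO}(L)$.
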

The proof will occupy the remainder of this subsections.

If $L$ is spin,  the fibration $SO(n) \to P_{SO} (L) \to L$ induces the following short exact sequence
\begin{equation}\label{sesSO}
1 \to \ZZ / 2\ZZ \to \pi_1 (P_{SO} (L)) \stackrel{p}{\to} \pi_1 (L) \to 1.
\end{equation}
The inclusion of the fiber induces the first injection $\pi_1(SO(n)) \to \pi_1(P_{SO} (L))$. One can check
that this is in fact a central extension.

Let $s:  \pi_1 (L) \to \pi_1 (P_{SO} (L))$ be a homomorphism, which is
a section of $p$, i.e $p\circ s = id$.
\begin{lemma}\label{lem:spinseciso}
A choice of spin structures on $L$ is equivalent to a choice of a section $s$ of the map $p$
of \eqref{sesSO}.
\end{lemma}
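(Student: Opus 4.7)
The plan is to interpret both sides via covering space theory and exhibit a direct bijection. A spin structure on $L$ is, by definition, a connected double cover $P_{spin}(L) \to P_{SO}(L)$ whose restriction to each fiber is the nontrivial double cover $Spin(n) \to SO(n)$. By the Galois correspondence of covering spaces (after fixing a basepoint in $P_{SO}(L)$), such a double cover is determined by an index-$2$ subgroup $K \leq \pi_1(P_{SO}(L))$. The fiberwise nontriviality condition translates precisely into the requirement that $K$ not contain the central $\Z/2$ coming from $\pi_1(SO(n))$, i.e.\ $K \cap \Z/2 = \{1\}$.

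Next I would observe that these two conditions together force the restriction $p|_K : K \to \pi_1(L)$ to be an isomorphism: it is injective because $K \cap \ker p = K \cap \Z/2 = \{1\}$, and then a count of cosets (using $[\pi_1(P_{SO}(L)) : K] = 2 = |\Z/2|$) shows it is surjective. Inverting this isomorphism produces a set-theoretic section $s := (p|_K)^{-1} : \pi_1(L) \to \pi_1(P_{SO}(L))$, which is automatically a group homomorphism because $K$ is a subgroup. Thus every spin structure determines a section $s$ of $p$.

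For the converse, given a section $s$, the image $K := s(\pi_1(L))$ is a subgroup of $\pi_1(P_{SO}(L))$ isomorphic to $\pi_1(L)$ under $p$, hence has index $2$, and satisfies $K \cap \Z/2 = \{1\}$ (because any element of $K \cap \Z/2$ would project to the identity under $p$, forcing it to be $s(1) = 1$). Hence $K$ defines a double cover of $P_{SO}(L)$ which is nontrivial on each fiber, i.e.\ a spin structure. The fact that the extension \eqref{sesSO} is central ensures that $K$ and the central $\Z/2$ together generate all of $\pi_1(P_{SO}(L))$ as an internal direct product, so the two constructions are mutually inverse.

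The only genuine bookkeeping is the basepoint: to make the correspondence between spin structures and \emph{subgroups} (rather than conjugacy classes of subgroups) sharp, I would fix once and for all a basepoint $\tilde{b} \in P_{SO}(L)$ lying over a basepoint $b \in L$, and understand spin structures as being equipped with a basepoint lift. This is not really an obstacle, only a choice that needs to be stated. Compatibility with the $H^1(L;\Z/2)$-torsor structure on both sides (sections differ by elements of $\mathrm{Hom}(\pi_1(L),\Z/2) = H^1(L;\Z/2)$, and so do spin structures) then follows formally from the construction, although this compatibility is not needed for the statement of the lemma itself.
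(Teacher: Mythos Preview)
Your proof is correct and takes essentially the same approach as the paper: both arguments use the Galois correspondence to identify spin structures with index-$2$ subgroups of $\pi_1(P_{SO}(L))$ that meet the central $\Z/2$ trivially, and then translate this into a splitting of the sequence. The paper packages the forward direction via the commutative diagram of fibration exact sequences (defining $s = \pi \circ q^{-1}$ where $q: \pi_1(P_{spin}(L)) \to \pi_1(L)$ is the isomorphism coming from simple connectivity of $Spin(n)$) and cites \cite[Theorem 1.4]{LM} for the converse, whereas you argue the group theory directly; but the content is the same.
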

\begin{proof}
If we are given a spin structure $P_{spin} (L)$ on $L$, then from the diagram
\begin{equation*}
\xymatrix{ & 1 \ar[r]\ar[d] & \pi_1(P_{spin} (L)) \ar[r]^{\quad q} \ar[d]^{\pi} & \pi_1 (L)\ar[r]\ar[d]^{=} &1 \\
1 \ar[r]& \ZZ /2 \ZZ \ar[r]& \pi_1 (P_{SO} (L)) \ar[r]^{\quad p} & \pi_1 (L) \ar[r] & 1,
}
\end{equation*}
$s:=\pi \circ q^{-1}$ is a desired section. Note that $p \circ s = p \circ \pi \circ q^{-1} = q \circ q^{-1} =id.$

Conversely, suppose there exists a section $s : \pi_1 (L) \to \pi_1 (P_{SO} (L))$ of \eqref{sesSO}. Then, the image of $s$ is an index 2 subgroup of $\pi_1 (P_{SO} (L))$, which determines a 2-fold covering of $P_{SO} (L)$. Denote this covering by $P_{spin} (L)$ and let $F$ be a general fiber of $P_{spin} (L) \to L$.
\begin{equation*}
\xymatrix{ & \pi_1 (F) \ar[r]\ar[d] & \pi_1(P_{spin} (L)) \ar[r]^{\quad q} \ar[d]^{\pi} & \pi_1 (L)\ar[r]\ar[d]^{=} & \pi_0 (F) \\
1 \ar[r]& \ZZ /2 \ZZ \ar[r]& \pi_1 (P_{SO} (L)) \ar[r]^{\quad p} & \pi_1 (L) \ar[r] & 1
}
\end{equation*}
From the construction, $q$ in the diagram above is an isomorphism and hence $\pi_1 (F) = \pi_0 (F) =1$. Then, \cite[Theorem 1.4]{LM} tells us that $P_{spin} (L)$ is indeed a spin structure on $L$.
\end{proof}

A choice of such a section geometrically means a choice of trivialization of $TL$ along each based loop in $\pi_1 (L)$. If $t : \pi_1 (L) \to \pi_1 (P_{SO} (L))$ is another section, then their difference $s-t$ is a group homomorphism from $\pi_1(L)$ to $\ZZ / 2\ZZ$. i.e an element of
$$\Hom (\pi_1(L), \ZZ /2 \ZZ) = \Hom (H_1(L), \ZZ / 2\ZZ) = H^1 (L;\ZZ/2\ZZ).$$

Now, the bundle  $P_{SO} (L) \to L$ is $G$-equivariant, and hence we can consider an induced map
of orbifold fundamental groups 
$$p^{orb} :\pi_1^{orb} \left[ P_{SO} (L) / G\right] \to \pi_1^{orb} [L/G].$$
Note that $p^{orb}$ is surjective. Indeed given any path $\gamma$ from $x_0$ to $g(x_0)$ for $g \in G$,
$P_{SO}(L)|_{\gamma}$ is a trivial $SO(n)$-bundle over $\gamma$. Thus, if  $\WT{x}_0$ is a base point in $P_{SO} (L)$ lying over $x_0$,
we can find a path from $\WT{x}_0$ to $g\WT{x}_0$ which covers $\gamma$.

The following lemma is straightforward.
\begin{lemma}\label{lem:spgpdia}
We have the commutative diagram of group homomorphisms as below.
\begin{equation*}
\xymatrix{  & 1 \ar[d] & 1 \ar[d] & 1 \ar[d] & \\
1   \ar[r] & \ZZ /2 \ZZ \ar[r]\ar[d] & \ZZ /2 \ZZ \ar[r]\ar[d] & 1 \ar[r]\ar[d] & 1\\
1 \ar[r] & \pi_1 (P_{SO} (L)) \ar[r]\ar[d]_{p} & \pi_1^{orb} \left[ P_{SO} (L) / G \right] \ar[r]\ar[d]_{p^{orb}} & G\ar[r]\ar[d]^{=} & 1\\
1 \ar[r] & \pi_1 (L) \ar[r]\ar[d] \ar@/_/[u]_{s} & \pi_1^{orb} [L/G] \ar[r]\ar[d]  & G \ar[r]\ar[d] & 1\\
& 1  & 1& 1 & \\
}
\end{equation*}
\end{lemma}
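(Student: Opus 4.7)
The plan is to identify each row and column of the diagram as a known exact sequence, then verify commutativity of each square and exactness of the central column by a direct diagram chase.

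First, I would recognize the exact sequences along the edges. The bottom row is \eqref{ses} applied to $L$. The left column is \eqref{sesSO} coming from the $SO(n)$-fibration $SO(n) \to P_{SO}(L) \to L$. The right column $1 \to 1 \to G \to G \to 1$ is trivially exact. The middle row is \eqref{ses} applied to $P_{SO}(L)$, where the $G$-action on $L$ is lifted to a $G$-action on $P_{SO}(L)$ via the isomorphism $A_g^{SO}$ of \eqref{eq:agso} (using a $G$-invariant Riemannian metric, which exists by averaging); this lifted action makes the projection $P_{SO}(L) \to L$ a $G$-equivariant map. The top row records the fact that $\ZZ/2\ZZ = \pi_1(SO(n))$ lies in the fiber direction and is untouched by the $G$-action on the base.

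Next, commutativity of each square is immediate from definitions: the vertical inclusions $\pi_1(P_{SO}(L)) \hookrightarrow \pi_1^{orb}[P_{SO}(L)/G]$ and $\pi_1(L) \hookrightarrow \pi_1^{orb}[L/G]$ send an ordinary loop $\alpha$ to the generalized loop with $g_\alpha = e$ (as in Definition \ref{def:pi1act}), while the horizontal maps $p$ and $p^{orb}$ are induced by the $G$-equivariant projection $P_{SO}(L) \to L$. The identity on $G$ on the right makes the right-hand squares commute tautologically, and the section $s$ is exactly the section of Lemma \ref{lem:spinseciso} associated to a chosen spin structure.

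The remaining substantive point is exactness of the middle column, i.e.\ $\ker p^{orb} = \ZZ/2\ZZ$. Given $[\gamma] \in \pi_1^{orb}[P_{SO}(L)/G]$ mapping to the identity in $\pi_1^{orb}[L/G]$, commutativity of the right square forces the $G$-component $g_\gamma \in G$ to be the identity. Hence $\gamma$ is an honest loop in $P_{SO}(L)$, and its projection to $L$ is null-homotopic; by exactness of the left column, $[\gamma]$ lies in $\pi_1(SO(n)) = \ZZ/2\ZZ$. The reverse inclusion is immediate from the commutativity of the upper-left square together with the injection $\ZZ/2\ZZ \hookrightarrow \pi_1^{orb}[P_{SO}(L)/G]$ through the middle row. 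Surjectivity of $p^{orb}$ has already been observed in the discussion preceding the lemma.

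The main obstacle I anticipate is the functorial passage from the $G$-equivariant bundle $P_{SO}(L) \to L$ to the induced diagram on orbifold fundamental groups, in particular checking that the generalized-loop component $g_\gamma$ is preserved under $p^{orb}$ and that the $\ZZ/2\ZZ$ in the middle row really equals the $\ZZ/2\ZZ$ in the left column. Once this bookkeeping is verified, the remaining assertions reduce to a standard $3\times 3$ diagram chase.
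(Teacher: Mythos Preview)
Your proposal is correct and is precisely the straightforward verification the paper has in mind; the paper states the lemma without proof, and your argument---identifying each row and column as a known exact sequence, checking commutativity from the $G$-equivariance of $P_{SO}(L)\to L$, and deducing exactness of the middle column by the standard diagram chase---is exactly how one fills in the omitted details.
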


Analogously to Lemma \ref{lem:spinseciso}, we show that the splitting of the second column in the above diagram is equivalent to having a $G$-action on the spin bundle induced by the section $s$ of \eqref{sesSO}. In other words, the obstruction to lift the $G$-action on $P_{SO} (L)$ to $P_{spin} (L)$  lies in the second group cohomology class determining the group extension in the second column.

\begin{lemma}\label{obst_porb}
Fix a spin structure on $L$ which is induced by a section 
$$ s: \pi_1(L) \to \pi_1 (P_{SO} (L) )$$
of \eqref{sesSO}. 
The $G$-action on $L$ (and $P_{SO} (L)$) can be lifted to $P_{spin} (L)$ if and only if there exists a section $s^{orb}$ of $p^{orb}$ which fits into the following diagram.
\begin{equation*}
\xymatrix{ \pi_1 (L) \ar[d]^{s} \ar[r] &  \pi_1^{orb} [L/G] \ar[d]^{s^{orb}}\\
\pi_1 (P_{SO} (L)) \ar[r]& \pi_1^{orb} \left[ P_{SO} (L) / G \right] }
\end{equation*}
where horizontal maps come from natural exact sequences
$$ 1 \to \pi_1 (L) \to \pi_1^{orb} [L/G] \to G \to 1$$
and
$$ 1 \to \pi_1 (P_{SO} (L)) \to \pi_1^{orb} \left[ P_{SO} (L) / G \right] \to G \to 1.$$
\end{lemma}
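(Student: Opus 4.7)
The plan is to adapt the argument of Lemma \ref{lem:spinseciso} to the orbifold setting, using the orbifold covering theory recalled in Subsection \ref{subsec:OrbFund} together with the commutative diagram of Lemma \ref{lem:spgpdia}. The two directions will be handled separately, and the key is that the middle $\Z/2$ in the diagram is the common kernel of both $p$ and $p^{orb}$.

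For the forward direction, suppose the $G$-action on $P_{SO}(L)$ lifts to a $G$-action on $P_{spin}(L)$. Then $[P_{spin}(L)/G]$ makes sense as an orbifold and the map $P_{spin}(L)\to L$ is $G$-equivariant, so it induces an orbifold map $[P_{spin}(L)/G]\to[L/G]$. Repeating the diagram of Lemma \ref{lem:spinseciso} in the orbifold setting, and using that $\pi_1(F)=\pi_0(F)=1$ for the $Spin(n)$-fiber $F$ (since $n\geq 2$), one obtains
\[
\xymatrix{1\ar[r] & \pi_1(P_{spin}(L))\ar[r]\ar[d]^{\pi^{orb}} & \pi_1^{orb}[P_{spin}(L)/G]\ar[r]^{\quad\, q^{orb}}\ar[d]^{\pi^{orb}} & \pi_1^{orb}[L/G]\ar[r]\ar@{=}[d] & 1 \\ 1\ar[r] & \pi_1(P_{SO}(L))\ar[r] & \pi_1^{orb}[P_{SO}(L)/G]\ar[r]_{\quad p^{orb}} & \pi_1^{orb}[L/G]\ar[r] & 1}
\]
with $q^{orb}$ an isomorphism. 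Then $s^{orb}:=\pi^{orb}\circ(q^{orb})^{-1}$ is a section of $p^{orb}$. Compatibility with $s=\pi\circ q^{-1}$ is immediate from the naturality of the orbifold and manifold constructions (restricting the rightmost square to the $\pi_1(L)$ subgroup recovers the original construction).

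For the converse, suppose a section $s^{orb}$ of $p^{orb}$ exists with the stated compatibility. Since $\ker(p^{orb})=\Z/2$ (from the middle column), the image of $s^{orb}$ is an index-$2$ subgroup of $\pi_1^{orb}[P_{SO}(L)/G]$. By orbifold covering theory this subgroup classifies a connected $2$-fold orbifold cover $\widetilde Y\to[P_{SO}(L)/G]$. Pulling back along $P_{SO}(L)\to[P_{SO}(L)/G]$ yields a $2$-fold (ordinary) cover $\widetilde X\to P_{SO}(L)$, classified by the subgroup $s^{orb}(\pi_1^{orb}[L/G])\cap\pi_1(P_{SO}(L))$. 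By the commutativity assumption $s^{orb}\circ(\pi_1(L)\hookrightarrow\pi_1^{orb}[L/G]) = (\pi_1(P_{SO}(L))\hookrightarrow\pi_1^{orb}[P_{SO}(L)/G])\circ s$, this intersection coincides with $s(\pi_1(L))$, which by Lemma \ref{lem:spinseciso} is precisely the fundamental group of the spin structure $P_{spin}(L)$. Hence $\widetilde X\cong P_{spin}(L)$, and the fact that $\widetilde X$ is the pullback of the orbifold cover $\widetilde Y$ means exactly that the $G$-action on $P_{SO}(L)$ lifts through $P_{spin}(L)\to P_{SO}(L)$.

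The main obstacle is making the last identification rigorous: one must verify that the $2$-fold cover of $P_{SO}(L)$ obtained by pulling back $\widetilde Y$ is indeed connected and matches the spin structure $P_{spin}(L)$ associated to $s$ (as opposed to the other spin structure or a disconnected cover), and this is where the explicit compatibility $s^{orb}|_{\pi_1(L)}=s$ is essential. Once this identification is in place, uniqueness issues for the lift disappear because both directions are existence statements (distinct sections $s^{orb}$ differing by an element of $H^1(G;\Z/2)$ correspond to different choices of lift of the $G$-action on $P_{spin}(L)$, consistent with the ambiguity in the signs of the $A_g^{spin}$ used to define $\mathrm{spf}_L$).
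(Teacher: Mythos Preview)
Your proof is correct and follows essentially the same strategy as the paper. The direction ``$G$-action lifts $\Rightarrow$ section exists'' is identical (form $[P_{spin}(L)/G]$ and use that $q^{orb}$ is an isomorphism); for the converse, the paper works directly with the universal cover $\Omega$ of $P_{SO}(L)$, observing that $s^{orb}(\pi_1^{orb}[L/G])$ acts on $\Omega$ and that $\Omega/s(\pi_1(L))=P_{spin}(L)$ then carries the residual $G\cong s^{orb}(\pi_1^{orb}[L/G])/s(\pi_1(L))$-action, which is the same construction as your orbifold $2$-fold cover and pullback, just phrased without the intermediate object $\widetilde{Y}$.
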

\begin{proof}
Firstly, suppose there is a section $s^{orb}$. We consider the universal cover $\Omega$ of $P_{SO} (L)$ which also can be regarded as a covering of $[P_{SO} (L)/G]$. Since $\Omega$ is simply connected, it is the orbifold universal cover of $\left[ P_{SO} (L) /G \right]$ and hence $\pi_1^{orb} \left[ P_{SO} (L) /G \right]$ acts on it. $ \pi_1^{orb} [L/G]$ and its subgroup $\pi_1(L)$ also act on $\Omega$ via $s^{orb}\left( \pi_1^{orb} [L/G] \right) \leq \pi_1^{orb} \left[ P_{SO} (L) /G \right]$ and $s^{orb} (\pi_1 (L)) = s (\pi_1 (L)) \leq \pi_1^{orb} \left[ P_{SO} (L) /G \right]$. We take the quotient of $\Omega$ by $\pi_1 (L)$-action so that the resulting quotient space is exactly the spin bundle $P_{spin} (L)$ associated to $s$. Then, clearly $\pi_1^{orb} [L/G] / \pi_1 (L) \cong G$ acts on $P_{spin} (L)$.


On the other hand, if we have a $G$-action on $P_{spin} (L)$ which makes $P_{spin} (L) \to L$ and $P_{spin} (L) \to P_{SO} (L)$ $G$-equivariant, then we get a group homomorphism $\pi_1^{orb} \left[ P_{spin} (L) \right] \to \pi_1^{orb} [L/G]$ and $\pi_1^{orb} \left[ P_{spin} (L) /G \right] \to \pi_1^{orb} \left[ P_{SO} (L) /G \right]$. The first map should be an isomorphism since the only element possibly contained in the kernel is a loop in the fiber and the fiber is simply connected. Thus,
\begin{equation}
\xymatrix{  \pi_1^{orb} [L/G] \ar[dr]^{\cong} \ar[rr] && \pi_1^{orb} \left[ P_{SO} (L) /G \right] \ \\
&\pi^{orb} \left[ P_{spin} (L)/G \right] \ar[ur] & } 
\end{equation}
gives a desired section.
\end{proof}

To prove Proposition \ref{prop:diffspinspf}, we need a slightly different commutative diagram, which is obtained from the
above. The following lemma is given for this purpose, whose proof is elementary and omitted.
\begin{lemma} 
Let $s(\pi_1(L))$ be the image of the composition of inclusions  $\pi_1(L) \stackrel{s}{\to} \pi_1(P_{SO}L) \to \pi_1^{orb} \left[P_{SO} (L) /G\right]$.
If the section $s$ corresponds to the $G$-invariant spin structure on $L$(see Lemma \ref{lem:spinseciso}), then $s(\pi_1(L))$ is
a normal subgroup of  $\pi_1^{orb} \left[P_{SO} (L) /G\right]$.
\end{lemma}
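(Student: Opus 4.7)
The plan is to verify normality directly: I want to show that for every $\tilde\gamma \in \pi_1^{orb}[P_{SO}(L)/G]$, conjugation by $\tilde\gamma$ sends $s(\pi_1(L))$ to itself. The two inputs I will use are the middle row of the diagram in Lemma \ref{lem:spgpdia}, which exhibits $\pi_1(P_{SO}(L))$ as a normal subgroup of $\pi_1^{orb}[P_{SO}(L)/G]$ with quotient $G$, and the hypothesis that $s$ corresponds to a $G$-invariant spin structure on $L$, which I will convert into an invariance statement for the subgroup $s(\pi_1(L))$.

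First I would observe that $s(\pi_1(L))$ is automatically normal inside $\pi_1(P_{SO}(L))$. Indeed, as a section of the map $p$ in \eqref{sesSO}, its image has index two, and any index-two subgroup is normal. This handles conjugation by elements of $\pi_1(P_{SO}(L))$. Next, for each $g \in G$ pick any lift $\tilde g \in \pi_1^{orb}[P_{SO}(L)/G]$ of $g$; normality of $\pi_1(P_{SO}(L))$ ensures that $\tilde g\, s(\pi_1(L))\,\tilde g^{-1}$ is again an index-two subgroup of $\pi_1(P_{SO}(L))$. Under the bijection of Lemma \ref{lem:spinseciso} between such index-two subgroups (equivalently, sections of $p$) and spin structures on $L$, this conjugated subgroup corresponds to the pulled-back spin structure $g^{\ast}P_{spin}(L)$. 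The $G$-invariance Assumption \ref{as:spin} says precisely that $g^{\ast}P_{spin}(L) \cong P_{spin}(L)$, and since the bijection is injective this forces $\tilde g\, s(\pi_1(L))\,\tilde g^{-1} = s(\pi_1(L))$. The choice of lift $\tilde g$ is immaterial, since any two lifts differ by an element of $\pi_1(P_{SO}(L))$, which we already know preserves $s(\pi_1(L))$ by conjugation.

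To conclude, an arbitrary $\tilde\gamma \in \pi_1^{orb}[P_{SO}(L)/G]$ decomposes as $\eta\cdot\tilde g$ for some $\eta \in \pi_1(P_{SO}(L))$ and some lift $\tilde g$ of its image $g\in G$, and chaining the two preceding invariances gives
\[
\tilde\gamma\, s(\pi_1(L))\,\tilde\gamma^{-1} \;=\; \eta\bigl(\tilde g\, s(\pi_1(L))\, \tilde g^{-1}\bigr)\eta^{-1} \;=\; \eta\, s(\pi_1(L))\,\eta^{-1} \;=\; s(\pi_1(L)),
\]
as required. The only step that warrants real care is the translation of Assumption \ref{as:spin} into an equality of subgroups of $\pi_1(P_{SO}(L))$, which amounts to reading Lemma \ref{lem:spinseciso} in a way that is compatible with the $G$-action on $P_{SO}(L)$; once this identification is in place, the remainder is formal group theory.
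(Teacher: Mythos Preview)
Your argument is correct and is precisely the kind of elementary verification the authors had in mind: the paper actually omits the proof of this lemma entirely, stating only that it ``is elementary and omitted.'' Your decomposition into (i) normality inside $\pi_1(P_{SO}(L))$ via the index-two observation, and (ii) invariance under conjugation by lifts $\tilde g$ of $g\in G$ via the $G$-invariance of the spin structure, is exactly the natural route.

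The one point you flag yourself---that conjugation by $\tilde g$ on $s(\pi_1(L))$ corresponds to passing from $P_{spin}(L)$ to $g^{\ast}P_{spin}(L)$---is indeed the only nontrivial step, but it follows from the standard fact that in the extension $1\to \pi_1(P_{SO}(L))\to \pi_1^{orb}[P_{SO}(L)/G]\to G\to 1$ the outer action of $G$ on the kernel is the one induced by the $G$-action on $P_{SO}(L)$, together with the covering-space interpretation of spin structures in Lemma~\ref{lem:spinseciso}. Since the image of a section is an index-two (hence normal) subgroup, ``isomorphic coverings'' upgrades to ``equal subgroups,'' and your conclusion goes through.
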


Consider the central column of the diagram in Lemma \ref{lem:spgpdia}, and divide the
2nd and the 3rd rows by $s(\pi_1(L))$ and $\pi_1(L)$ respectively to 
obtain the following diagram
\begin{equation}\label{diaspgp}
\xymatrix{  & 1 \ar[d] & 1 \ar[d] & 1 \ar[d] & \\
1   \ar[r] & 1 \ar[r]\ar[d] & \ZZ /2 \ZZ \ar[r]\ar[d] &  \ZZ /2 \ZZ \ar[r]\ar[d] & 1\\
1 \ar[r] & s(\pi_1 (L)) \ar[r]\ar[d]_{\cong} & \pi_1^{orb} \left[ P_{SO} (L) / G \right] \ar[r]\ar[d]_{p^{orb}} & G_s' \ar[r]\ar[d] & 1\\
1 \ar[r] & \pi_1 (L) \ar[r]\ar[d]  & \pi_1^{orb} [L/G] \ar[r]\ar[d]  & G \ar[r]\ar[d] & 1\\
& 1  & 1& 1 & .\\
}
\end{equation}
Note that the 2nd column is independent of the choice of spin structure $s$. However, the 3rd column
depends on the choice of $s(\pi_1(L))$, and only makes sense if $s$ is a $G$-invariant spin structure
(so that $s(\pi_1(L))$ is a normal subgroup). 

Recall that the obstruction to lift $G$-action to the spin bundle lies in the existence of a section of $p^{orb}$. The bottom left square diagram of \eqref{diaspgp} shows that such a section exists at least on the subgroup $\pi_1 (L)$ of $\pi_1^{orb} [L/G]$. Therefore, the obstruction descends to a group extension in the last column. (See \cite[(Ex 6.6.4)]{W}.) 
Therefore, the extension of $G$ in the last column gives rise to the second cohomology class $[{\rm spf}_L^s] \in H^2(G,\Z/2)$, the spin profile of the spin bundle determined by $s$. For a different $G$-invariant spin structure $t : \pi_1(L) \to \pi_1 (P_{SO} (L))$, we get a similar diagram
for the extension $G_t'$, and a possibly different spin profile $[{\rm spf}_L^t] \in H^2(G,\Z/2)$.

\begin{remark}
The discussion in the first paragraph of the proof of Lemma \eqref{obst_porb} shows that $G_s'$ genuinely acts on $P_{spin} (L)$, but its quotient $G$ may not.
\end{remark}

We recall from \cite[(Ex 6.6.4)]{W}  that from the 2nd and the 3rd columns of the diagram \eqref{diaspgp} canonically induce a homomorphism $ H^2 (G;\ZZ /2\ZZ) \to H^2 ( \pi_1^{orb} [L /G ] ; \ZZ /2)$
which sends the cohomology class of the extension of the 3rd column to the one of the 2nd column.  It implies that 
the cohomology class $[{\rm spf}_L^s]$ maps to the extension class of the 2nd column in 
$ H^2 ( \pi_1^{orb} [L /G ] ; \ZZ /2)$ under this induced homomorphism. But, the image of $[{\rm spf}_L^s]$ is independent of the choice of spin structure $s$.
This proves that the image of  $[{\rm spf}_L^s]$  and  $[{\rm spf}_L^t]$ coincide under
this homomorphism.

From \cite[(6.8.3)]{W}, we have the following exact sequence called {\em Lyndon-Hochschild-Serre spectral
sequence} from the 3rd row of the diagram \eqref{diaspgp}:
\begin{equation}\label{eq:LHSspec}
H^1 (L;\ZZ /2\ZZ)^G =H^1 (\pi_1 (L) ; \ZZ /2\ZZ)^G \to H^2 (G;\ZZ /2\ZZ) \to H^2 ( \pi_1^{orb} [L /G ] ; \ZZ /2).
\end{equation}
Therefore, the difference of two spin profiles $[{\rm spf}_L^s]- [{\rm spf}_L^t]$ which maps to zero under
the last homomorphism, is in the image of $H^1 (L;\ZZ /2\ZZ)^G$. As both $s$ and $t$ are $G$-invariant
spin structures, the difference of two spin structures $s-t$ lies in $H^1 (L;\ZZ /2\ZZ)^G$. The first map in \eqref{eq:LHSspec} sends it
to the difference of two spin profiles $[{\rm spf}_L^s]- [{\rm spf}_L^t]$. This proves the proposition \ref{prop:diffspinspf}.

In addition, we explain how $s-t$ associates ${\rm spf}^s_L$ and ${\rm spf}^t_L$ in a more geometric way.
For each $g \in G$, choose a path $\gamma_g$ from $x_0$ to $g(x_0)$ and a trivialization of $TL|_{\gamma_g}$ i.e. an orthonormal frame of $TL$ along $\gamma_g$. Then, the frame gives a path connecting $\WT{x_0}$ and $g (\WT{x_0})$ in $P_{SO}(L)$. The composition 
$$l_{g,h} :=h(\gamma_g ) \star \gamma_h \star \overline{\gamma_{gh}} \in \pi_1 (L)$$
(representing the element $[\gamma_g][\gamma_h][\gamma_{gh}]^{-1}$ in $\pi_1^{orb} \left[P_{SO} (L) /G \right]$) is a loop based at $x_0$ on which a trivialization of $TL$ is fixed. If this trivialization agrees with $t(l_{g,h})$, then we define $[g,h]_t=1$ and $[g,h]_t =0$ otherwise. $[\, , \, ]_t$ is exactly the factor set representing $t$ in the cohomology. Now, it is not difficult to check that the factor set corresponding to $s$ is given by 
$$[g,h]_s = (s-t)(l_{g,h})+  [g,h]_t$$
where $(s-t) (l_{g,h})$ is the evaluation of an $1$-cocycle $s-t \in H^1 (L;\ZZ /2\ZZ)$ at $l_{g,h}$.

\subsection{Generalizations}\label{subsec:genspfs}
\noindent{\bf (i)}
We finish the section with a short discussion on orientation reversing actions.
Suppose that $L$ is oriented, and a half of $G$-action on $L$ is orientation reversing.
Then, $(A_g)^* TL$ has opposite orientation compared to $TL$ for an orientation reversing $g$.

We recall from \cite{KT} the following well-known fact on 
an isomorphism between a vector bundle and itself with the reversed orientation.
Let $F \to B$ be an oriented vector bundle with transition functions $g_{ij}$ on a cover $\{U_i\}_{i \in I}$
of $B$. Choose maps $h_i:U_i \to O(n)\setminus SO(n)$, and define the new transition maps by $h_i \circ g_{ij} \circ h_j^{-1}$. Then a new vector bundle $F^{op}$ is obtained which is isomorphic to $F$ via the maps $h_i$. Different choices of $h_i$ yield isomorphic bundles.
Similarly, by choosing $h_i :U_i \to Pin(n) \setminus Spin(n)$, we can find an isomorphism 
from spin structures $P_{spin}(F)$ to $P_{spin}(F^{op})$. This provides
an isomorphism between sets of spin structures of $F$ and $F^{op}$.

Back to our case, consider an oriented Lagrangian submanifold $L$ and its spin structure $P_{spin}(L)$.
\begin{assumption}\label{as:spin2}
We require that the spin structure $(A_g)^*P_{spin}(L)$ (of $L^{op}$) is isomorphic
to the given spin structure $P_{spin}(L)$(of $L$) for each orientation reversing $g\in L$ in the sense described above.
\end{assumption}
For each $g$, we choose an isomorphism
$$T_g:(A_{g^{-1}})^*\big(P_{spin}L \big) \to P_{spin}L.$$
Then, the spin profile of $L$ is defined by \eqref{eq:spf} as well.
\begin{remark}
Seidel  \cite{Se} used an isomorphism between pin structures
$$ (A_g)^* \big(P_{pin}L \otimes \wedge^{top} TL \big)\cong P_{pin}L.$$
The additional factor $\wedge^{top} TL$ is not necessary for us since we only deal with orientable Lagrangian submanifolds here and hence $\wedge^{top} TL$ is a trivial bundle.\\
\end{remark} 


\noindent{\bf (ii)}
Here, we explain more details on spin profiles when $L$ is not connected (but a $G$-orbit).
In such a case, we have made Assumption \ref{ass:Ldisconnspf}.

As observed before, $\textnormal{spf}_L$ is a locally constant function, but not necessarily a constant function when $L$ is not connected,
since  signs in \eqref{eq:spf} may also depend on connected components of $L$.
We do not know how to handle the general case,  but we add more arguments under the Assumption \ref{ass:Ldisconnspf} together with \ref{as:spin disconn} below. First of all, from Assumption \ref{ass:Ldisconnspf} there exists a normal subgroup $G_0$ of $G$ such that it preserves each connected component of $L$, and $G \setminus G_0$ parametrizes the set of components.
\begin{assumption}\label{as:spin disconn}
The following exact sequence splits
$$1 \to G_0 \to G \to G/G_0 \to 1.$$
\end{assumption}
For the rest of the paper, we will only deal with disconnected Lagrangian submanifolds satisfying the above assumption also. So,one can express $G$ as a semi-direct product $G_1 \ltimes G_0$ where $G_1 \cong G / G_0$. For later use, let $\phi : G_1 \to {\rm Aut} (G_0)$ determine the semi-direct product $G_1 \ltimes G_0$. 
The main advantage here is that the quotient group $G/G_0$ also can be viewed as a subgroup of $G$ and hence, acts on $L$ purely by permuting connected components. In particular, we can label connected components of $L$ by $G_1(\cong G/G_0) \leq G$. 

We fix a $G_0$-invariant spin structure on the connected component $C_0$ corresponding to $1 \in G_1$, and pull it back to the component $C_\alpha$ corresponding to $\alpha (\in G_1)$ by $\alpha^{-1}$. As before we take a lift of $G_0$-action on the spin bundle on $C_0$ which gives rise to a spin profile $\textnormal{spf}_0 \in H^2 (G_0 ; \ZZ/2)$. Likewise, we get a spin profile $\textnormal{spf}_\alpha \in H^2 (G_0 ; \ZZ/2)$ for each connected component $C_\alpha$ associated to $\alpha=(a,1) \in G_1 (\leq G)$.
Assumption \ref{ass:Ldisconnspf} implies that $\textnormal{spf}_\alpha=\textnormal{spf}_0$ for all $\alpha \in G_1$.

For $g=(g_1,g_0) \in G=G_1 \ltimes G_0$ and $x=\alpha x_0 \in C_\alpha$, note that the connected component containing $g \cdot x$ is labeled by $(g_1 a, 1)$. So, $P_{spin} (L)_x = P_{spin} (L)_{x_0}$ and $P_{spin} (L)_{g \cdot x} = P_{spin} (L)_{( a^{-1}g_1^{-1},1) (g_1,g_0) \cdot x} =P_{spin} (L)_{(1, \phi_a (g_0) ) \cdot x_0}$. Now we define a map $P_{spin} (L)_x  \to P_{spin} (L)_{g\cdot x}$ by the following diagram:
\begin{equation*}
\xymatrix{
P_{spin} (L)_x  \ar[r] \ar@{=}[d]& P_{spin} (L)_{g\cdot x}  \ar@{=}[d]\\
P_{spin} (L)_{x_0} \ar[r] & P_{spin} (L)_{\phi_a (g_0) \cdot x_0}
}
\end{equation*}
where we abbreviate $(1, \phi_a (g_0)) \cdot x_0$ to $\phi_a ( g_0) \cdot x_0 (\in C_0)$ to emphasize that the second row only depends on the choice of lift of $G_0$-action to the spin bundle on $C_0$.

\begin{prop}
Under Assumption \ref{ass:Ldisconnspf}, the above choice of lifts produces $\textnormal{spf}_L$ which does not depend on connected components of $L$.
\end{prop}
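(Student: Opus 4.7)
The prescribed lift $T_g$ at a point of any component $C_\alpha$ is forced, via the pullback identification $P_{spin}(L)_{|_{C_\alpha}} \cong (\alpha^{-1})^*P_{spin}(L)_{|_{C_0}}$, to coincide with a specific $G_0$-lift on $C_0$. My plan is to translate the defining cocycle equation \eqref{eq:spf} (for a fixed pair $(g,h)$ and a variable $x\in C_\alpha$) to the identified points in $C_0$, reducing the question of component-independence of $\textnormal{spf}_L$ to the cocycle relation satisfied by $\textnormal{spf}_0$, together with the compatibility between $\textnormal{spf}_0$ and the $G_1$-conjugation action on $G_0$ furnished by Assumption \ref{ass:Ldisconnspf}.

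\textbf{Main computation.} Write $g = (g_1,g_0)$, $h = (h_1,h_0) \in G_1 \ltimes G_0$ and $\alpha = (a,1)$. Unwinding the semi-direct product multiplication exactly as in the displayed diagram preceding the proposition, I will locate the points $x$, $h\cdot x$, $(gh)\cdot x$ in their respective components $C_a$, $C_{h_1 a}$, $C_{g_1 h_1 a}$ and identify them under the prescribed isomorphisms with $x_0$, $\phi_a(h_0)\cdot x_0$ and $\phi_{h_1 a}(g_0)\phi_a(h_0)\cdot x_0$ in $C_0$. Translating the three relevant fiberwise maps through these identifications yields the $G_0$-lifts $T^0_{\phi_a(h_0)}$, $T^0_{\phi_{h_1 a}(g_0)}$ and $T^0_{\phi_{h_1 a}(g_0)\phi_a(h_0)}$ on $C_0$, at the appropriate base points. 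Applying $T^0_u\circ T^0_v = (-1)^{\textnormal{spf}_0(u,v)}T^0_{uv}$ to the composition then gives
\[
\textnormal{spf}_L(g,h)\big|_{x\in C_\alpha} \;=\; \textnormal{spf}_0\!\big(\phi_{h_1 a}(g_0),\,\phi_a(h_0)\big) \;=\; \big(\phi_a^{*}\textnormal{spf}_0\big)\!\big(\phi_{h_1}(g_0),\,h_0\big),
\]
where the second equality uses $\phi_{h_1 a} = \phi_a\circ\phi_{h_1}$ (up to convention) forced by associativity of the semi-direct product. For $\alpha = 1$ the right-hand side reduces to $\textnormal{spf}_0(\phi_{h_1}(g_0),h_0)$, which is the value of the cocycle on $C_0$.

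\textbf{Invoking the hypothesis and main obstacle.} The cocycle $\phi_a^{*}\textnormal{spf}_0$ that appears on $C_\alpha$ is precisely the spin profile produced by the pulled-back spin structure and its induced $G_0$-lift on $C_\alpha$. Assumption \ref{ass:Ldisconnspf} asserts that this profile equals $\textnormal{spf}_0$ in $H^2(G_0;\Z/2)$ for every $\alpha$, so $[\phi_a^{*}\textnormal{spf}_0] = [\textnormal{spf}_0]$. Pushing these $G_0$-classes through the inflation map $H^2(G_0;\Z/2)\to H^2(G;\Z/2)$ induced by the semi-direct product, the above display yields $[\textnormal{spf}_L|_{C_\alpha}] = [\textnormal{spf}_L|_{C_0}]$ in $H^2(G;\Z/2)$, proving the proposition. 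The principal obstacle is the bookkeeping: fixing semi-direct product and pullback conventions consistently, and checking that the $1$-coboundary on $G_0$ realizing $\phi_a^{*}\textnormal{spf}_0 - \textnormal{spf}_0$ can be chosen coherently (e.g.\ $G_1$-equivariantly, using the splitting in Assumption \ref{as:spin disconn}) so that the resulting class on $G$ is genuinely component-independent. Beyond this standard bookkeeping, the result is forced by the single cocycle relation on $C_0$ together with the component-invariance of $[\textnormal{spf}_0]$ postulated in the hypothesis.
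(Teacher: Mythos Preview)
Your computation is essentially the ``direct computation'' the paper alludes to: reducing the defining cocycle relation at $x\in C_\alpha$ to the $G_0$-lifts on $C_0$ and arriving at
\[
\textnormal{spf}_L(g,h)\big|_{C_\alpha}=\textnormal{spf}_0\big(\phi_{h_1 a}(g_0),\phi_a(h_0)\big)=(\phi_a^*\textnormal{spf}_0)\big(\phi_{h_1}(g_0),h_0\big).
\]
This matches the paper's strategy exactly, and the key input---that the hypothesis forces $\textnormal{spf}_0$ to be invariant under the $G_1$-conjugation $\phi_a$---is precisely what the paper isolates.

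The gap is in your final step. First, the map you invoke is not inflation: inflation runs $H^2(G/N;\Z/2)\to H^2(G;\Z/2)$, not $H^2(G_0;\Z/2)\to H^2(G;\Z/2)$, and the assignment $c\mapsto\big((g,h)\mapsto c(\phi_{h_1}(g_0),h_0)\big)$ you actually use does not obviously send $G_0$-coboundaries to $G$-coboundaries. Second, and more importantly, the proposition is about the \emph{function} $\textnormal{spf}_L$ being component-independent (so that it is a single well-defined $2$-cocycle on $G$), not merely about agreement of classes. From your own formula this is equivalent to $\phi_a^*\textnormal{spf}_0=\textnormal{spf}_0$ \emph{as cocycles} for all $a$. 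The paper reads the hypothesis $\textnormal{spf}_\alpha=\textnormal{spf}_0$ in exactly this way: with the pulled-back spin structure and the induced lifts, the cocycle $\textnormal{spf}_\alpha$ \emph{is} $\phi_a^*\textnormal{spf}_0$, so the assumption gives cocycle-level conjugation invariance, and your displayed formula then yields $\textnormal{spf}_L|_{C_\alpha}=\textnormal{spf}_L|_{C_0}$ on the nose. Your detour through cohomology classes and ``inflation'' is both unnecessary and, as written, unjustified; drop it and conclude directly at the cocycle level.
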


\begin{proof}
The condition $\textnormal{spf}_\alpha=\textnormal{spf}_0$ implies that $\textnormal{spf}_0$ is invariant under the conjugation action of $\alpha$ on $G_0$. The rest of the proof follows easily by the direct computation and is omitted.
\end{proof}

Analyzing component-preserving subgroups is subtle in general since this involves a study of conjugacy relation among subgroups in $G$. For example, two component-preserving subgroups $G_1$ and $G_2$ are always conjugate to each other, but possibly conjugated by more than one element in $G$. We have only dealt with the case when $G$ can be decomposed nicely into ``component preserving" and ``permuting" subgroup so that there is no complicated issue on conjugacy classes. We postpone more general discussion to  \cite{CH2}.

\section{Equivariant Fukaya Categories for exact symplectic manifolds}\label{sec:eqfuex}
The Floer action functionals are single-valued for exact symplectic manifolds, and hence Novikov theory is not needed in such cases. Equivariant Fukaya categories can be constructed in a relatively simple way (than the general case), and in fact this only requires a choice of a group cohomology class in $H^2(G,\Z/2)$.
We remark that Seidel defined equivariant Fukaya category for $G=\Z/2$, and it corresponds
to the non-trivial group cohomology class $H^2(\Z/2,\Z/2)$ in our language.

In this section, definitions of equivariant Lagrangian branes and equivariant Fukaya categories for exact symplectic manifolds will be provided. We will use notations and constructions by Seidel in \cite{Se} to which we refer readers for more details.
At the end, we consider $G$-invariant parts of morphisms in these categories, which defines orbifolded Fukaya categories (or, its first approximation as
we should add the theory of bulk deformation by twisted sectors).

Let $(M,\omega)$ be an exact symplectic manifold  with an action of a finite group $G$.
Suppose that we have a $G$-invariant quadratic volume form $\eta^2$ on $M$, and a $G$-invariant almost complex structure $J_M$.
Let $\alpha_M:Gr(TM) \to S^1$ be a square phase map of $\eta$ defined by $\eta(v_1\wedge \cdots \wedge v_n)^2/
|\eta(v_1\wedge \cdots \wedge v_n)|^2$ for any basis of $V$ in the Lagrangian Grassmannian  $Gr(TM)$.

A Lagrangian brane in the sense of \cite{Se} is a triple  
$$L^\sharp  = (L, \alpha^\sharp,P^\sharp),$$
where $L$ is an exact Lagrangian submanifold with a grading $\alpha^\sharp: L \to \R$ satisfying $\alpha_M(T_xL) =exp(2 \pi i \alpha^\sharp)$ and
a pin structure $P^\sharp$ on it. For simplicity, we assume that $L$ is oriented, and $P^\sharp$ is
a spin structure on $L$. 

Let $s$ be a group cohomology class $H^2(G,\Z/2)$.
\begin{definition}\label{def:seqvbrane}
A triple $(L, \alpha^\sharp,P^\sharp)$ is called an {\em $s$-equivariant brane} if
\begin{enumerate}
\item $L$ is preserved by the $G$-action on $M$;
\item $\alpha^\sharp$ is $G$-invariant;
\item The isomorphism class of $P^\sharp$ is $G$-invariant and has a spin profile $s$.
\end{enumerate}
\end{definition}

We review each condition. If $L$ is not preserved by $G$-action, then one can consider
its equivariant family $\cup g \cdot L$ as an equivariant immersion from $|G|$-copies of $L$ to $M$.
For simplicity, we require that each intersection $g(L) \cap L$ is either empty or equal to $L$ for all $g \in G$. 

Condition on $G$-grading can be met as follows. If $L$ is connected, then
 $\alpha^\sharp \circ g - \alpha^\sharp$ is a locally constant function.
 Thus, $\alpha^\sharp \circ g - \alpha^\sharp=c$ on $L$, but since $g^{|g|}=1$ we have $c=0$.
That is, $\alpha$ is automatically $G$-invariant. If $L$ has several connected components, we  first fix a grading on one component which is invariant under the action of the subgroup preserving the given component, and then define the gradings on other components by the group action.

As revealed before, a spin structure $P^\sharp$ has an associated spin profile.
If it is not equal to $s$, one may try to change $P^\sharp$ to another spin structure
and match the associated spin profile with $s$ making use of the Proposition \ref{prop:diffspinspf}. However, it is not possible in general and $(3)$ is, in fact, a critical condition.

The setup of Floer cochain complexes in \cite{Se} briefly goes as follows.
Let $(L_0^\sharp, L_1^\sharp)$ be two exact Lagrangian branes such that $L_0$ and $L_1$ intersect transversely at $y \in L_0 \cap L_1$. (In fact, transversal intersection conditions are not necessary as we are able to choose perturbation data $(H_{01}, J_{01})$ instead. We return to this point in a moment.) Then, the pair of linear Lagrangian branes ($k=0,1$) at $y$ is given by 
$$\Lambda_{k,y}^\sharp = \big( \Lambda_{k,y} = (TL_k)_y, \alpha_{k,y}^\sharp=\alpha_k^\sharp(y), P_{k,y}^\sharp= (P_k^\sharp)_y \big).$$
Their index and orientation space are defined as in \cite[(11.25)]{Se} and denoted by
$$i(y) = i(\Lambda_{0,y}^\sharp, \Lambda_{1,y}^\sharp), o(y) = o(\Lambda_{0,y}^\sharp, \Lambda_{1,y}^\sharp).$$
Given a regular Floer datum, the graded Floer cochain group is defined as
$$CF^k(L_0^\sharp, L_1^\sharp) = \bigoplus_{o(y)=k} | o(y)|_R.$$
(See \cite[(12.16)]{Se}.)
\begin{remark}
If one changes the Pin structure $P_k^\sharp$ by multiplying $(-1)^{\epsilon_k}e$ for $k=0,1$,  then the resulting Pin structure is isomorphic to the original one, but the orientation space $o(y)$ is altered by the multiplication of $(-1)^{\epsilon_1 + \epsilon_2}$.
\end{remark}

A Floer datum in \cite{Se} for a pair of objects $(L_0, L_1)$  consists of
$$H_{01} \in C^\infty([0,1], C^\infty_c(M,\R)), \qquad J_{01} \in C^{\infty}([0,1], \mathcal{J}),$$
where $\mathcal{J}$ is the space of compatible almost complex structures on $M$. They are required to satisfy that $\phi^1 (L_0)$ is transverse to $L_1$ where $\phi^1$ is the time one map of the Hamiltonian vector field of $H_{01}$.
Given  $g \in G$, one can 
push forward the Floer datum to get $g(H_{01}), g(J_{01})$ in an obvious way

We are now ready to define $s$-equivariant Fukaya categories (modifying the construction of Fukaya categories by Seidel).
\begin{def-thm}\label{thm:exeqfuk}
Let $(M,\omega)$ be an exact symplectic manifolds. For $s \in H^2(G,\Z/2)$, we define the {\em $s$-equivariant Fukaya category} as follows.
Objects of the $s$-equivariant Fukaya category are $s$-equivariant branes.
A morphism between two $s$-equivariant branes $L_0^\sharp$ and $L_1^\sharp$ is given by $CF^*(L_0^\sharp, L_1^\sharp)$, on which $G$ acts linearly.

There exist $\AI$-operations
$$m_k : CF^*(L_0^\sharp, L_1^\sharp) \times \cdots \times CF^*(L_{k-1}^\sharp, L_k^\sharp) \to CF^*(L_0^\sharp, L_k^\sharp),\quad k=1,2 \cdots$$ 
which are compatible with the $G$-action. i.e. for $k=1,2 \cdots$
\begin{equation}\label{strictGAinfty}
m_k(gx_1,\cdots, gx_k) = gm_k(x_1,\cdots,x_k).
\end{equation}
\end{def-thm}

\begin{remark}
The definition above should include a $G$-equivariant flat vector bundle for each $s$-equivariant brane, but we omit it for simplicity and postpone it to Section \ref{sec: orbibundle}.
\end{remark}

On the technical level, we should be able to handle the problem of equivariant transversality involved in the $\AI$-operations on Fukaya categories. Assuming transversality for a moment, we show that $G$-actions on morphisms can be defined canonically if both Lagrangian branes have the same spin profile $s$.
\begin{lemma}\label{lem:transassume}
Consider two $s$-equivariant branes $(L_0^\sharp, L_1^\sharp)$. Suppose there exists a $G$-invariant regular Floer datum for these branes. Then, there exists a natural $G$-action on the Floer cochain group $\bigoplus_{o(y)=k} |o(y)|_R$.
\end{lemma}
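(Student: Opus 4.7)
The plan is to construct the $G$-action in three steps: define the action on the set of generators, lift it to orientation spaces using the chosen spin lifts $T_g^{L_k}$, and verify the cocycle condition using the hypothesis $\textnormal{spf}_{L_0}=\textnormal{spf}_{L_1}=s$. For the first step, $G$-invariance of the regular Floer datum $(H_{01},J_{01})$ guarantees that for every intersection point $y$ of $\phi^1(L_0)$ with $L_1$, the translate $g\cdot y$ is again such an intersection point, so $G$ permutes the generators. $G$-invariance of the gradings $\alpha_k^\sharp$ ensures that the index satisfies $i(g\cdot y)=i(y)$, so the action preserves the degree decomposition.

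For the second step, by Assumption \ref{as:spin} we may fix, for $k=0,1$ and each $g\in G$, lifts $T_g^{L_k}:(A_{g^{-1}})^{\ast}P_{spin}(L_k)\to P_{spin}(L_k)$ of $A_g^{SO}$. These induce isomorphisms of linear Lagrangian branes $\Lambda_{k,y}^\sharp \to \Lambda_{k,g\cdot y}^\sharp$ which preserve orientation, grading, and spin data. Transporting the Lagrangian boundary datum $\lambda$ appearing in the CR problem $\bar\partial_{\lambda,Z_-}$ from $y$ to $g\cdot y$ via these isomorphisms yields a canonical isomorphism of the orientation spaces $o(y)\to o(g\cdot y)$, and hence an $R$-linear map $|o(y)|_R\to |o(g\cdot y)|_R$ via the normalization convention.

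The crucial third step is checking that this assignment is a genuine group action, i.e.\ that $(g\cdot)(h\cdot)=(gh\cdot)$ on orientation spaces. By Definition \ref{def:spf} applied to $L_0$ and $L_1$ separately, we have
\[
T_g^{L_k}\circ (A_{g^{-1}})^{\ast}T_h^{L_k} \;=\; (-1)^{\textnormal{spf}_{L_k}(g,h)}\,T_{gh}^{L_k},\qquad k=0,1.
\]
In the construction of $o(y)$ recalled in Subsection \ref{oriline}, the spin structure of $L_0$ enters once along the top boundary arc of $Z_-$ and that of $L_1$ once along the bottom arc, with the corner identifications $\iota_i\circ\widetilde\sigma^{-1}$ being $G$-independent. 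Tracing the two ways of composing through these boundary data, the discrepancy between $g\cdot(h\cdot\,\cdot\,)$ and $(gh)\cdot\,\cdot\,$ on $|o(y)|_R$ is therefore exactly $(-1)^{\textnormal{spf}_{L_0}(g,h)+\textnormal{spf}_{L_1}(g,h)}$. Since $[\textnormal{spf}_{L_0}]=[\textnormal{spf}_{L_1}]=s$ in $H^2(G;\Z/2)$, there exists $\beta:G\to \Z/2$ with $\textnormal{spf}_{L_0}-\textnormal{spf}_{L_1}=\partial\beta$; replacing $T_g^{L_1}$ by $(-1)^{\beta(g)}T_g^{L_1}$, which is still a valid lift of $A_g^{SO}$ since $\pm 1\in Spin(n)$ both cover the identity, changes the factor set of the $L_1$-lifts by $\partial\beta$ and makes $\textnormal{spf}_{L_1}$ agree with $\textnormal{spf}_{L_0}$ pointwise. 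The total sign is then $(-1)^{2\,\textnormal{spf}_{L_0}(g,h)}=+1$, so the cocycle condition holds on the nose.

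The main obstacle is the sign bookkeeping in the third step: verifying that the discrepancy is exactly the sum of the two factor-set values and not some more intricate combination requires a careful pass through the gluing formulas of Subsection \ref{subsec:Gluingthm} and the Lagrangian boundary conventions on $Z_-$. Once this identification is in place, $R$-linearity and well-definedness on the normalization $|o(y)|_R$ are immediate, completing the construction of the $G$-action on $\bigoplus_y |o(y)|_R$.
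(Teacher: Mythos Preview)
Your proposal is correct and follows essentially the same route as the paper: define $o_g:o(y)\to o(g\cdot y)$ via the naive $g$-action together with the chosen spin lifts $T_g^{L_k}$, then observe that $o_g\circ o_h=(-1)^{\textnormal{spf}_{L_0}(g,h)+\textnormal{spf}_{L_1}(g,h)}o_{gh}$, so equal spin profiles force the cocycle condition. Your extra step of adjusting $T_g^{L_1}$ by a cochain $\beta$ to pass from $[\textnormal{spf}_{L_0}]=[\textnormal{spf}_{L_1}]$ to pointwise equality is exactly what the paper spells out later (around Assumption~\ref{as:spin3}); the paper's proof of this lemma simply takes the cocycles to coincide already. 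One cosmetic point: in the exact setting the orientation space $o(y)$ is defined from the short Lagrangian path determined by the gradings (Seidel's convention), not from a bounding strip on $Z_-$ with a base path $l_0$, so your reference to $\iota_i\circ\widetilde\sigma^{-1}$ and top/bottom arcs is borrowed from the non-exact setup of Section~\ref{sec:FNtheory}; the substance of the sign count is unaffected, since in either model the spin datum of each $L_k$ enters exactly once.
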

\begin{proof}
The equivariance of perturbation data implies that for each $g \in G$, the pulled-back of each equivariant brane $L_i^\sharp$ under the $g$-action is
isomorphic to $L_i^\sharp$ itself for $i=0,1$. Hence, it is easy to see that 
for the intersections $y \in L_0 \cap L_1$ and  $y':=g(y)  \in g(L_0) \cap g(L_1) = L_0 \cap L_1$, 
$$i(y) = i(y'), \quad o(y) \cong o(y').$$
The precise isomorphism on determinant lines $o(y) \cong o(y')$ has to be considered carefully.
Recall that determinant line is defined from a Lagrangian path $\lambda$ from $T_yL_0$ to $T_yL_1$ (from the grading of
each Lagrangian) whose spin structure is determined from the spin structure of each Lagrangian.
Since grading is $G$-invariant, $g$-action image of the Lagrangian path $\lambda$ for $o(y)$ can be used
to define $o(y')$. Here $g(\lambda)$ has an induced canonical  spin structure from $L_0$ and $L_1$ (not from the $g$-action). This defines an isomorphism
$$o_g :  o(y) \to o(y').$$
 But the (canonical) spin structure for $o(y')$ from $L_0$ and $L_1$ at $y'$ can be compared to the  spin structure of $o(y)$ via group action, from the choice of  a lift  $T_{g, L_i} : (A_{g^{-1}})^*P_i \to P_i$ for each $g$ ($i=0,1$).
Since the lifts $T_{g, L_i} $ for $g \in G$ do not define a group action due to sign error \eqref{eq:spf}, the maps $\{g \mapsto o_g \, | \, g \in G\}$ 
do not define a group action on $ \bigoplus_{o(y)=k} |o(y)|_R$, either.
Instead, we have
$$o_g \circ o_h = (-1)^{\textrm{spf}_{L_0}(g,h) +\textrm{spf}_{L_1}(g,h)} o_{gh}.$$
However, both $L_0, L_1$ have the same spin profile $s =\textrm{spf}_{L_0}=\textrm{spf}_{L_1}$ by the definition of $s$-equivariant branes. Hence, the sign error cancels out and the maps $\{ g \mapsto o_g\, | \,g \in G\}$ define a $G$-action on the Floer cochain group.
\end{proof}

In what follows, we shall describe how to take care of equivariant transversality issue. Indeed, one can adapt the following algebraic technique introduced by Seidel in the exact case.

\begin{lemma}\cite[Lemma 4.3]{Se2}\label{hompert}
Let $\mathcal{D}$ be an $A_\infty$-category with an action of a finite group $G$, and $D=H(\mathcal{D})$, the associated cohomology level category. Suppose that we have another graded linear category $C$ with a $G$-action, and an equivalence $F: D \to C$, which is $G$-equivariant. Then, there is an $A_\infty$-category $\mathcal{C}$ with $H(\mathcal{C})$ isomorphic to $C$, which carries an action of $G$, and an equivariant $A_\infty$-functor $\mathcal{F} : \mathcal{D} \to \mathcal{C}$ such that $H(\mathcal{F}) =F$.
\end{lemma}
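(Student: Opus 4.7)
The plan is to construct $\mathcal{C}$ by applying the classical homological perturbation (homotopy transfer) lemma to transport the $\AI$-structure of $\mathcal{D}$ to its cohomology, and then to use $F$ to rename that cohomology as $C$. Concretely, I would first define an intermediate $\AI$-category $\widetilde{\mathcal{C}}$ whose objects are the objects of $\mathcal{D}$ and whose morphism spaces are
\begin{equation*}
\widetilde{\mathcal{C}}(X,Y) := C\bigl(F(X), F(Y)\bigr),
\end{equation*}
regarded as a complex with zero differential. Because $F$ is $G$-equivariant, the $G$-action on $C$ induces a $G$-action on $\widetilde{\mathcal{C}}$, and $F$ promotes to a strict isomorphism of graded linear $G$-categories $H(\mathcal{D}) \xrightarrow{\cong} \widetilde{\mathcal{C}}$. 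Once $\widetilde{\mathcal{C}}$ carries an $\AI$-structure and an equivariant $\AI$-functor from $\mathcal{D}$ refining this isomorphism, transport along $F$ yields $\mathcal{C}$ and $\mathcal{F}$.

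Next, for each pair of objects $X,Y$, I would fix a homotopy retract
\begin{equation*}
\bigl(i_{X,Y}\colon \widetilde{\mathcal{C}}(X,Y) \to \mathcal{D}(X,Y),\ p_{X,Y}\colon \mathcal{D}(X,Y) \to \widetilde{\mathcal{C}}(X,Y),\ h_{X,Y}\bigr)
\end{equation*}
with $p \circ i = \mathrm{id}$ and $\mathrm{id} - i \circ p = dh + hd$. Such data exist over any field: split $\mathcal{D}(X,Y)$ as the direct sum of its cohomology and an acyclic complement, and take $i$, $p$, $h$ to be the obvious inclusion, projection, and contracting homotopy. They will fail to be $G$-equivariant in general, so I would symmetrize by forming the averages
\begin{equation*}
i^G_{X,Y} := \tfrac{1}{|G|} \sum_{g \in G} g^{-1} \circ i_{gX, gY} \circ g,
\end{equation*}
and similarly for $p^G$ and $h^G$; this step needs $\Q \subset R$. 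A direct computation confirms that $i^G, p^G$ remain chain maps with $p^G \circ i^G = \mathrm{id}$, that $h^G$ still satisfies the homotopy identity, and that all three are now strictly $G$-equivariant.

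With the equivariant retract in place, I would invoke the sum-over-trees formulas of Kadeishvili and Kontsevich--Soibelman. The transferred operations $\widetilde{m}_k$ on $\widetilde{\mathcal{C}}$ are sums over rooted planar trees with $k$ leaves, decorated by $i^G$ at the leaves, the original $\AI$-operations $m_j$ of $\mathcal{D}$ at the internal vertices, $h^G$ on the internal edges, and $p^G$ at the root. The components $\widetilde{\mathcal{F}}_k$ of an $\AI$-functor $\widetilde{\mathcal{F}}\colon \mathcal{D} \to \widetilde{\mathcal{C}}$ are given by analogous sums with the leaf decoration $i^G$ replaced by the identity, and the linear term $\widetilde{\mathcal{F}}_1 = p^G$ descends on cohomology to the isomorphism $H(\mathcal{D}) \cong \widetilde{\mathcal{C}}$ coming from $F$. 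The $\AI$-relations and functor equations are exactly those of the non-equivariant homotopy transfer theorem, while strict $G$-equivariance of every $\widetilde{m}_k$ and $\widetilde{\mathcal{F}}_k$ is immediate from the equivariance of each tree decoration. Finally, transporting $(\widetilde{\mathcal{C}}, \widetilde{m}_\bullet)$ across the isomorphism $\widetilde{\mathcal{C}} \cong C$ induced by $F$ produces $\mathcal{C}$ and $\mathcal{F}$ with $H(\mathcal{F}) = F$ by construction.

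The hard part will be the equivariant splitting of the cohomology, i.e.\ securing the $G$-equivariance of the retract $(i,p,h)$; once this is achieved, the rest is a formal, if bookkeeping-intensive, consequence of the standard tree formulas. The averaging step is where the hypothesis $\Q \subset R$ is essential, since the projector $\tfrac{1}{|G|}\sum_g g$ must be defined over $R$, and this is the only ingredient needed beyond the classical (non-equivariant) perturbation lemma.
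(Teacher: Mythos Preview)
Your overall strategy---homotopy transfer along a $G$-equivariant deformation retract, followed by the Kadeishvili/Kontsevich--Soibelman tree formulas---is sound and is indeed one way to prove this lemma. However, your averaging step contains a genuine gap. The claim that the averaged triple $(i^G,p^G,h^G)$ still satisfies the homotopy identity $\mathrm{id}-i^Gp^G=dh^G+h^Gd$ is false in general. Averaging yields $dh^G+h^Gd=\mathrm{id}-\frac{1}{|G|}\sum_g g^{-1}(i_{g}\,p_{g})g$, whereas $i^Gp^G=\frac{1}{|G|^2}\sum_{g,h}(g^{-1}i_{g}g)(h^{-1}p_{h}h)$, and these differ in general. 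A small example: take $G=\mathbb{Z}/2$ fixing $X,Y$, $\mathcal{D}^{-1}=\mathbb{C}$, $\mathcal{D}^0=\mathbb{C}^3$, $\mathcal{D}^1=\mathbb{C}$ with $d^{-1}(v)=e_1$, $d^0(e_3)=f$, $d^0(e_1)=d^0(e_2)=0$, the involution acting by $-1$ on $v,e_1,e_3,f$ and trivially on $e_2$; with $i(1)=e_2+\delta e_1$, $p(ae_1+be_2+ce_3)=b+\gamma c$ one computes $\bigl(\frac{1}{2}\sum g^{-1}ipg - i^Gp^G\bigr)(ce_3)=\gamma\delta\,c\,e_1\neq 0$. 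Your identity $p^Gi^G=\mathrm{id}$ \emph{does} hold (since $\widetilde{\mathcal{C}}$ has zero differential and both averaged maps induce $F^{\pm1}$ on cohomology), but the homotopy has to be corrected: the difference $i^Gp^G-\frac{1}{|G|}\sum g^{-1}i_gp_gg$ is a $G$-equivariant chain map inducing zero on cohomology, hence null-homotopic via some $K$, and one must replace $h^G$ by $h^G+K^G$. Once you do this, your tree-formula argument goes through.

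For comparison: the paper does not prove this lemma but quotes it from Seidel, and the surrounding discussion highlights the key simplification actually used---in the application one \emph{arranges} that $G$ acts freely on the set of objects of $\mathcal{D}$ (by labelling each object with an extra element of $G$). With a free action on objects, no averaging is needed at all: one chooses $(i,p,h)$ arbitrarily on one representative pair in each $G$-orbit and transports by the action to obtain strictly equivariant retract data, after which the tree formulas are automatically $G$-equivariant. Your averaging argument is more general (it does not require freeness on objects) at the cost of needing $\mathbb{Q}\subset R$ and the extra correction to the homotopy; the free-action route avoids both issues but requires the preliminary relabelling of objects.
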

Seidel observed that one can make $G$ act freely on the set of objects of $\mathcal{D}$ via introducing additional labels to objects in $\mathcal{C}$ with elements of a group $G$ (possibly making several copies of the same object). Now, it is much easier for $\mathcal{D}$ to become $G$-equivariant. Then, the homological perturbation type lemma above is applied to obtain an equivariant $\AI$-structure on the category $\mathcal{C}$.

Let us spell out how to use this lemma to construct the $\AI$-operations in the $s$-equivariant Fukaya category without a transversality assumption. First, we define the $\AI$-category $\mathcal{D}$ and a free $G$-action on it. 
\begin{definition}
An object of $\mathcal{D}$ is given 
by $(L^\sharp, g)$ for $g \in G$. Here, $L^\sharp = (L, \alpha^\sharp, P^\sharp)$ is an $s$-equivariant brane of $M$ and $(L^\sharp, g) $ denotes an $s$-equivariant brane given by 
$(L, \alpha^\sharp, (g^{-1})^*P^\sharp)$. ($L$ and $\alpha^{\sharp}$ are $G$-invariant as usual.) 
A morphism between $(L_0^\sharp, g)$ and $(L_1^\sharp, h)$ is
$$CF^*((L_0^\sharp, g), (L_1^\sharp, h)).$$
We fix the perturbation datum $(H_{01,g}, J_{01,g})$ of $((L_0^\sharp,1), (L_1^\sharp,g) )$ for each $g \in G$ and use the push forward datum $(g(H_{01,g^{-1}h}), g(J_{01,g^{-1}h}) )$ for an arbitrary pair $((L_0^\sharp, g), (L_1^\sharp, h))$
\end{definition}

$\AI$-operations on $\mathcal{D}$ are defined in a standard way with aid of similarly chosen perturbation data for higher operations \cite{Se}.

\begin{lemma}\label{lem:gactionond}
$\AI$-category $\mathcal{D}$ admits an action of $G$.
\end{lemma}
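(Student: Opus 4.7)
The plan is to build the $G$-action on $\mathcal{D}$ tautologically from the labels attached to objects, using the $G$-compatibility built into the choice of perturbation data, and then to check that the sign computation of Lemma \ref{lem:transassume} carries over verbatim both to each morphism space and to every higher moduli space.

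First I would define the action on objects by $f \cdot (L^\sharp, g) := (L^\sharp, fg)$. This is manifestly a free $G$-action on $\mathrm{Ob}(\mathcal{D})$, and it is compatible with the prescribed spin structures because $((fg)^{-1})^{\ast}P^\sharp$ is the pull-back of $(g^{-1})^{\ast}P^\sharp$ by $A_f$, while $L$ and $\alpha^\sharp$ are untouched.

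Next I would define the action on a morphism space $CF^{\ast}((L_0^\sharp, g_0), (L_1^\sharp, g_1))$. By construction the Floer datum on this pair is $g_0\cdot(H_{01,g_0^{-1}g_1}, J_{01,g_0^{-1}g_1})$, whereas the datum on the $f$-shifted pair $((L_0^\sharp, fg_0), (L_1^\sharp, fg_1))$ is $(fg_0)\cdot(H_{01,g_0^{-1}g_1}, J_{01,g_0^{-1}g_1})$, i.e. exactly the $f$-push-forward of the first. Hence $f$ induces a bijection on the perturbed intersection sets and an isomorphism between the moduli spaces of pseudoholomorphic strips of any fixed energy and index, and on orientation lines it produces an isomorphism $o_f$ assembled exactly as in the proof of Lemma \ref{lem:transassume} from the chosen lifts $T_{f, L_i}: (A_{f^{-1}})^{\ast}P_i^\sharp \to P_i^\sharp$ for $i=0,1$. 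The identity
$$o_f \circ o_g = (-1)^{\mathrm{spf}_{L_0}(f,g)+\mathrm{spf}_{L_1}(f,g)}\, o_{fg}$$
then collapses to $o_f \circ o_g = o_{fg}$ because $L_0$ and $L_1$ share the common spin profile $s$, so the $o_f$'s assemble into a strict $G$-action on morphism spaces.

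For the equivariance of the operations $m_k$, I would extend the perturbation-data prescription to $(k{+}1)$-tuples of objects by the same recipe: fix data on tuples whose first label is $1 \in G$ and define it on an arbitrary tuple by push-forward under the first label. Then $f$-push-forward carries the moduli space of pseudoholomorphic discs governing $m_k$ on $((L_0^\sharp, g_0),\ldots,(L_k^\sharp, g_k))$ bijectively onto the one for the $f$-shifted tuple, and on orientation lines the same pairwise cancellation of spin-profile defects along each consecutive pair $(L_{i-1}, L_i)$ yields the strict equivariance $m_k(fx_1,\ldots,fx_k) = f\,m_k(x_1,\ldots,x_k)$. The main obstacle, and the very reason for introducing the labels in $\mathcal{D}$, is the gap between a $G$-invariant spin structure and an honest lift of $G$ to the spin bundle: the lifts $T_g$ obey the cocycle $\mathrm{spf}_L$ rather than a group law. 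The labels trade the equivariant-transversality problem for a free action on objects, after which the hypothesis that every object has the same spin profile $s$ is precisely what forces the remaining sign defects to cancel in pairs on any $A_\infty$-operation; without that hypothesis the construction would deliver only a projective action twisted by a class in $H^2(G;\Z/2)$.
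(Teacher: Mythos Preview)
Your overall architecture is right (free action on objects, push-forward of Floer data, equivariance of $m_k$), but the core step on orientation lines misreads the situation and imports machinery that is not needed. In the paper's proof the map $o_f: o(y)\to o(f(y))$ is \emph{not} built from the lifts $T_{f,L_i}$; it is simply the naive $f$-push-forward. The reason this suffices is that the spin structure attached to the object $(L_i^\sharp,g_i)$ is by definition $(g_i^{-1})^{\ast}P_i^\sharp$. Hence $f$-push-forward carries the spin data of the source pair $((L_0^\sharp,g_0),(L_1^\sharp,g_1))$ to $((fg_0)^{-1})^{\ast}P_0^\sharp,\,((fg_1)^{-1})^{\ast}P_1^\sharp$, which are \emph{literally} the spin structures of the target pair $((L_0^\sharp,fg_0),(L_1^\sharp,fg_1))$. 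No identification $T_f$ is ever invoked, so no cocycle defect $\textnormal{spf}_{L_i}(f,g)$ appears, and $o_f\circ o_g=o_{fg}$ holds on the nose. This is why the paper remarks immediately after the proof that $\mathcal{D}$ could even be enlarged to contain branes with \emph{different} spin profiles: the $G$-action on $\mathcal{D}$ does not see spin profiles at all.

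Your argument instead transplants the mechanism of Lemma~\ref{lem:transassume}, where source and target carry the \emph{same} spin structure $P_i^\sharp$ and one is forced to compose with $T_{f,L_i}$ to return to it. In $\mathcal{D}$ that composition would land in the wrong orientation space (it would produce spin data $(g_i^{-1})^{\ast}P_i^\sharp$ at $f(y)$ rather than $((fg_i)^{-1})^{\ast}P_i^\sharp$), so the formula $o_f\circ o_g=(-1)^{\textnormal{spf}_{L_0}(f,g)+\textnormal{spf}_{L_1}(f,g)}o_{fg}$ you quote is not the identity governing this situation. The spin-profile cancellation you describe is exactly what is needed later, in Lemma~\ref{condhomopert}, when one builds the equivariant functor $D\to C$ to the unlabeled category; it is not part of the $G$-action on $\mathcal{D}$ itself.
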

\begin{proof}
On the object level, $g \in G$ sends $(L^\sharp, h)$ to $(L^\sharp, gh)$. The $G$-action on the set of objects is obviously free due to the additional index $h$ in $(L^\sharp, h)$. Now we clarify how $G$ acts on the morphism spaces. For simplicity, let us look at the following $g$-action 
$$CF ((L^\sharp,1), (L^\sharp, h)) \to CF ((L^\sharp,g), (L^\sharp, gh)) $$
and other cases can be dealt in the same way.

Let $L_0 \cap L_1$ transversally intersect at $y$ and denote $g(y)  \in g(L_0) \cap g(L_1) = L_0 \cap L_1$ by $y'$. Then, we have $i(y) = i(y')$ from the $G$-invariant gradings. Then, the group action $o_g: o(y) \to o(y')$ is defined in an obvious way. Recall that perturbation data for the left and the right hand sides are $(H_{01,h}, J_{01,h})$ and $(g(H_{01,h}), g(J_{01,h}))$ respectively. Moreover since the spin structure for the brane $(L^\sharp, k)$ is defined as the pulled-back one via $k$, the $g$-action is compatible with $\AI$-operations. Therefore, the map $g \mapsto o_g$ defines a group action on $\mathcal{D}$. No ambiguity arises since $g$ always sends an object to another. 
\end{proof}

\begin{remark}
We do not have to identify $(g^{-1})^*P^\sharp$ and $P^\sharp$ for $\mathcal{D}$ as they are now spin structures for different objects $(L^\sharp,1)$ and $(L^\sharp,g)$ in $\mathcal{D}$. 
Thus, $\mathcal{D}$ can actually be enlarged to include equivariant branes with various spin profiles.  
\end{remark}

According to Lemma \ref{hompert}, it suffices to define a graded linear category $C$  with a $G$-action and a $G$-equivariant equivalence $F:D \to C$ in order to define the $s$-equivariant Fukaya category. The graded linear category $C$ is a refined version of Donaldson-Fukaya category in the sense that objects are given with gradings and pin structures.
\begin{definition}\label{Donald}
Fix $s \in H^2(G,\Z/2)$. The linear category $C$ is defined as follows.
Objects of $C$ are $s$-equivariant branes $L^\sharp$'s. A morphism between $L_0^\sharp$ and $L_1^\sharp$ is given by $HF^*(L_0^\sharp, L_1^\sharp)$. The composition of morphisms is defined by the triangle product as usual.
\end{definition}
Note that morphisms are given by the Floer cohomology groups, rather than Floer cochain complexes. Our next task is to endow $C$ with a group action. 
We carry out the construction of a group action on $C$, making use of the idea of weak group actions on chain complexes. It is employed in \cite{CH} to deal with the Morse complex of a non-invariant function on a global quotient and we, in fact, proved that the weak group action induces a strict group action on the Morse homology. Here, we need a slight generalization since
the product structure is also required to be compatible with the $G$-action at least in a weak sense.

\begin{lemma}\label{condhomopert}
There exist a $G$-action on the linear category $C$ and a $G$-equivariant functor $D \to C$, where $D$ is the cohomology level category of $\mathcal{D}$.
\end{lemma}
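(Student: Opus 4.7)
The plan is to construct $F$ on objects by the forgetful assignment $F((L^\sharp,g)) = L^\sharp$, using the chosen lift $T_{g,L}\colon (A_{g^{-1}})^*P^\sharp_L \to P^\sharp_L$ to identify the pulled-back spin structure carried by $(L^\sharp,g)$ in $\mathcal D$ with the spin structure fixed on $L^\sharp$ in $C$. On morphism spaces, $F$ will be induced by combining this spin-structure identification on orientation spaces with the canonical continuation isomorphism in Floer cohomology, yielding an isomorphism
\[
F_{g_0,g_1}\colon D\left((L_0^\sharp,g_0),(L_1^\sharp,g_1)\right) \xrightarrow{\;\cong\;} C(L_0^\sharp,L_1^\sharp)
\]
for every pair $(g_0,g_1)$. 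This makes $F$ essentially surjective on objects and fully faithful, hence an equivalence of categories as required for the application of Lemma \ref{hompert}.

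Next I would transfer the strict $G$-action on $D=H(\mathcal D)$ from Lemma \ref{lem:gactionond} to $C$ via the distinguished isomorphism $F_{1,1}$: for $\alpha\in C(L_0^\sharp,L_1^\sharp)$, set
\[
g\cdot \alpha := F_{g,g}\left(g\cdot F_{1,1}^{-1}(\alpha)\right).
\]
The cocycle condition $(gh)\cdot=g\cdot h\cdot$ then reduces to the strict $G$-action on $\mathcal D$, but the reduction produces a sign discrepancy: by Definition \ref{def:spf}, $T_{g,L_i}\circ (A_{g^{-1}})^*T_{h,L_i}$ differs from $T_{gh,L_i}$ by $(-1)^{\textnormal{spf}_{L_i}(g,h)}$. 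The sign contributions for $i=0,1$ enter multiplicatively in the orientation space $o(y)$, producing an overall factor $(-1)^{\textnormal{spf}_{L_0}(g,h)+\textnormal{spf}_{L_1}(g,h)}$. Under the hypothesis that both $L_0^\sharp$ and $L_1^\sharp$ are $s$-equivariant branes, the cohomology classes $[\textnormal{spf}_{L_0}]$ and $[\textnormal{spf}_{L_1}]$ both equal $s$, and by rescaling the lifts $T_{g,L_1}$ by a suitable coboundary $\beta\colon G\to \Z/2$ I can arrange the two cochains to agree on the nose, making the sign factor trivial. This is exactly the cancellation already exploited in Lemma \ref{lem:transassume}.

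With the sign factors controlled, $G$-equivariance of $F$ on morphism spaces with arbitrary labels $(h_0,h_1)$ follows from the same cancellation applied separately at the source and the target of the hom-space, and compatibility of the resulting $G$-action on $C$ with the triangle product is inherited from the $G$-equivariance of $m_2$ on $\mathcal D$ (Lemma \ref{lem:gactionond}) together with the naturality of continuation maps. The main obstacle is precisely this bookkeeping of signs arising from the non-cocycle behavior of the lifts $T_g$; the construction hinges on the systematic cancellation between $\textnormal{spf}_{L_0}$ and $\textnormal{spf}_{L_1}$, which is the structural reason that the $s$-equivariant Fukaya category must be organized one spin profile at a time rather than globally over all $G$-invariant branes.
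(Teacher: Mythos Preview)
Your proposal is correct and follows essentially the same approach as the paper: both constructions rest on identifying the hom-spaces of $D$ with $HF^*(L_0^\sharp,L_1^\sharp)$ via the spin-bundle isomorphisms $T_g$ together with continuation isomorphisms in Floer cohomology, and both rely on the spin-profile cancellation of Lemma \ref{lem:transassume} to obtain a strict $G$-action on $C$.

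The organizational difference is that the paper defines the $G$-action on $C$ directly as a \emph{weak} action on the chain level, $\Psi_g = \Phi_{H_{01}^g}\circ g$, and then verifies the chain-homotopy identity $\Psi_g\circ\Psi_h \simeq \Psi_{gh}$ explicitly (so that the action becomes strict on cohomology); you instead transport the strict $G$-action already present on $D=H(\mathcal D)$ through the equivalence $F$. These give the same map on cohomology. For the compatibility with the triangle product, the paper is somewhat more explicit: it invokes the second component $\Phi_2$ of the $A_\infty$-functor between perturbation data to show that continuation maps intertwine $m_2$ up to $m_1$-exact terms, whereas you subsume this under ``naturality of continuation maps.'' That phrase is correct in spirit but hides precisely the $\Phi_2$-identity the paper writes out; if you want a self-contained argument you should state that identity.
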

\begin{proof}
The sign issue arising from spin profiles can be handled as in Lemma \ref{lem:transassume}, so we will discuss the question on perturbation data only. 

Recall that the well-definedness of Donaldson-Fukaya category uses the following canonical isomorphisms. Let $(H_{01}, J_{01}), (H_{01}', J_{01}')$ be two perturbation data for a pair 
$(L_0^\sharp, L_1^\sharp)$. From now on, we omit $J$-terms for notational simplicity. An one parameter family $H_{01}^c$ connecting two such data gives rise to a continuation map $\Phi_{H_{01}^c}$ from the Lagrangian Floer complex for $H_{01}$ to that for $H_{01}'$, which
induces  a canonical isomorphism 
between Floer cohomology groups $HF^*(L_0^\sharp, L_1^\sharp;H_{01} )$
and  $HF^*(L_0^\sharp, L_1^\sharp;H_{01}')$.

Given three perturbation data $H_{01}$, $H_{01}'$ and $H_{01}''$, the continuation map from $H_{01}$ to $H_{01}''$ is homotopic to the composition of two continuation maps, one from $H_{01}$ to $H_{01}'$ and the other from $H_{01}'$ to $H_{01}''$. This implies that the canonical isomorphisms on cohomology groups are compatible with compositions.

We consider a naive $G$-action on Floer complex which is a chain map, and compose it with
the continuation map $\Phi_{H_{01}^g}$ from some family $H_{01}^g$ from $g(H_{01})$ to $H_{01}$:
$$\Psi_g:CF^*(L_0^\sharp, L_1^\sharp;H_{01}) \stackrel{g}{\to}
CF^*(L_0^\sharp, L_1^\sharp;g(H_{01})) \stackrel{\Phi_{H_{01}^g}}{\to} CF^*(L_0^\sharp, L_1^\sharp;H_{01}).$$
Then, $\Psi_g$ is clearly a chain map, and we claim that it defines a weak action on the chain complex
$CF^*(L_0^\sharp, L_1^\sharp;H_{01})$, in the following sense:
\begin{equation}\label{eq:chhtp}
\Psi_g \circ \Psi_h - \Psi_{gh} = \sigma_{g,h} \circ m_1 + m_1 \circ \sigma_{g,h}.
\end{equation}
The proof of the claim goes as follows. 
First, one can check that the family $H_{01}^g \circ h^{-1}$ defines a continuation homomorphism
$h \circ \Phi_{H_{01}^g} \circ h^{-1}$ where $\Phi_{H_{01}^g}$ is a continuation defined by $H_{01}^g$. Thus, 
$$\Psi_g \circ \Psi_h =  \Phi_{H_{01}^g} \circ g \circ  \Phi_{H_{01}^h} \circ h =
 \Phi_{H_{01}^g} \circ (g \circ  \Phi_{H_{01}^h} \circ g^{-1}) \circ (gh) = 
  \Phi_{H_{01}^g} \circ ( \Phi_{H_{01}^h\circ g^{-1}}) \circ (gh).$$
The composition  $\Phi_{H_{01}^g} \circ ( \Phi_{H_{01}^h\circ g^{-1}})$ of
two continuation homomorphisms is chain homotopic to the continuation
map $\Phi_{H_{01}^{gh}}$ via homotopy $\WT{\sigma}_{g,h}$ and
we define $\sigma_{g,h} := \WT{\sigma}_{g,h} \circ (gh)$ to
obtain the formula \eqref{eq:chhtp}.

This shows that the $G$-action is well-defined on morphism spaces of the category $C$. Now, we show that the composition is compatible with the $G$-action.
Naive $G$-action gives the following commutative diagram:
\begin{equation}\label{mkg}
\xymatrix{
CF(L_0, L_1;H_{01}) \times CF(L_{1}, L_2;H_{1,2}) \ar[r]^{\qquad \qquad  m_2} \ar[d]^g & CF(L_0, L_2;H_{o2}) \ar[d]^g \\
CF(L_0, L_1; g(H_{01})) \times CF(L_{1}, L_2; g(H_{1,2}) ) \ar[r]^{\qquad \qquad m_2^g} & CF(L_0, L_2; g( H_{02}) ). 
}
\end{equation}
In order to move back from the second line to the first line, we consider continuation homomorphisms $\Phi_{H_{01}^g}, \Phi_{H_{12}^g}$
and $\Phi_{H_{02}^g}$, all of which we denote by $\Phi_1$ for simplicity. It is not hard to show that  there exist $\Phi_2$ (the second component of $\AI$-homomorphism(functor) between two  different perturbation data) satisfying the following relation:
\begin{equation}
\Phi_1m_2(x, y) + \Phi_2(m_1(x),y) + (-1)^{\deg'x}\Phi_2(x,m_1(y)) =m_2(\Phi_1(x),\Phi_1(y)) + m_1(\Phi_2(x,y)).
\end{equation}
This shows that the product is well-defined in Donaldson-Fukaya categories independent of perturbation data, and also
implies its compatibility with the $G$-action when applied to $(g(x), g(y))$.
The functor from $D \to C$ can be easily constructed using the isomorphisms $\{T_g\}_{g \in G}$ from the pull-back spin bundles to the original one.
\end{proof}

\section{Energy zero subgroups for non-exact cases}
From now on, we drop the exactness assumption and discuss the $G$-Floer-Novikov theory of a general symplectic manifold (see Section \ref{sec:FNtheory}). As observed in $G$-Novikov theory, we
need to find a subgroup of {\em energy zero} elements.
We will call this set $G_\alpha$ of energy zero elements of $G$,  which is an analogue of 
$G_\eta$ in $G$-Novikov theory in Section \ref{sec:general base}. For each energy zero element, we will assign a spin
structure on the Lagrangian bundle data associated to it, using the pre-fixed isomorphisms
$\{T_g\}_{g \in G}$. The composition of
energy zero elements will reveal the necessity of the condition that two Lagrangian should have the same spin profiles to define a group action on their Floer cochain complex.
  
Let $L_0$ and $L_1$ be $G$-invariant Lagrangian submanifolds of $M$, transversally intersecting with each other. Assume that both $L_0$ and
$L_1$ are connected, compact, oriented and they admit spin structures which are preserved by the $G$-action in the sense of Assumption \ref{as:spin}.
In case of orientation reversing action, we assume that both $L_0$ and $L_1$ have
orientation reversing $G$-actions and work under Assumption \ref{as:spin2}.
We set the notations as follows:
\begin{itemize}
\item
$A_{g,L_i}$ : the action of $g \in G$ on $L_i$,
\item
$A_{g,L_i}^{SO}$ : the induced action of $g$ on the frame bundle of $L_i$,
\item
$A_{g,L_i}^{spin}$ : the specified lift of $A_{g,L_i}^{SO}$ on the spin bundle of $L_i$ \\
(or $T_{g,L_i} : \left(A_{g,L_i}\right)^\ast P_{spin} (L_i) \stackrel{\cong}{\to} P_{spin} (L_i)$ ) 
\end{itemize}
for $i=0,1$. The choice of $A_{g,L_i}^{spin}$ above gives rise to spin profiles $\textnormal{spf}_{L_i}$ of $L_i$, correspondingly.

We choose a base path $l_0$ of the space of paths $\Omega(L_0, L_1)$ so that $g \cdot l_0 \neq h \cdot l_0$ for $g \neq h$. (This is not essential, but simplifies expositions). The naive $G$-action on $\Omega(L_0, L_1)$ will be written as 
$g(l)(t) = g( l(t))$ for $l \in \Omega(L_0,L_1)$.
Consider the connected component $\Omega(L_0, L_1;l_0)$ containing $l_0 $. 
Note that $g(l)$ may not lie in $\Omega(L_0, L_1;l_0)$. Hence, we take a subgroup preserving the connected component
as follows.
\begin{definition}\label{def:compreserve}
We define the subgroup $G_{l_0}$ of $G$ as
$$G_{l_0} := \{g \in G \mid  g(l_0) \in \Omega(L_0, L_1;l_0) \}.$$
\end{definition}
We will reduce it  further to the smaller subgroup, $G_\alpha$ of energy zero elements in $G_{l_0}$. For $g \in G_{l_0}$, we have a surface connecting $l_0$ and $g(l_0)$.
Namely, there exists a  surface $w_g:[0,1] \times [0,1] \to M$, such that 
\begin{eqnarray}\label{boundings1}
w_g(0,t) &=& l_0(t)\\
w_g(1,t) &=& g(l_0)(t) \nonumber \\
w_g(s,0) &\in& L_0 \nonumber \\
w_g(s,1) &\in& L_1\nonumber.
\end{eqnarray}
The symplectic area and the Maslov index of $w$ are defined as follows. The symplectic area of $w$ is simply defined by $\CA(w_g, g(l_0))=\int w_g^*\omega$. For the Maslov index of $w$, we construct a loop of Lagrangian subspaces along $w_g(\partial [0,1]^2)$ in a canonical way. Recall that we have the pre-fixed Lagrangian path $\WT{\lambda}_{l_0}$
along $l_0$. Then, $g(\WT{\lambda}_{l_0})$ is a Lagrangian path along $g(l_0)$ from $T_{g(p_0)}L_0$ to $T_{g(p_1)}L_1$. (We use the orientation reversal $g(\WT{\lambda}_{l_0})^{op}$ if $g$ is orientation reversing). Along $w_g(s, i)$ for $i=0, 1$, we consider $T_{w_g(s,i)}L_i$
for $s \in [0,1]$. The concatenation
of the above paths produces a bundle $\CL_{w_g}$ on $\partial [0,1]^2$ which is a Lagrangian sub-bundle of $w_g^*TM$ on $[0,1]^2$
The Maslov index of $\CL_{w_g}$ is well-defined, and denoted by $\mu(w_g, g(l_0))$.

 We will require $w_g$ to
have zero symplectic area and zero Maslov index, but
given a general $g \in G_{l_0}$, such $w_g$
does not necessarily exist. 
\begin{definition}\label{def:galpha}
We define $$G_\alpha = \{ g \in G \mid \exists w_g \,\, \textrm{with} \,\, 
\CA(w_g, g(l_0)) = \mu(w_g, g(l_0))=0\}.$$
Elements of $G_\alpha$ are called {\em energy zero} elements.
\end{definition}
Given $g \in G_\alpha$, there can be several choices of $w_g$ with
zero symplectic area and zero Maslov index. However, all $w_g$'s satisfying
 $\CA(w_g, g(l_0)) = \mu(w_g, g(l_0))=0$ are $\Gamma$-equivalent
 to each other. (See Definition \ref{def:gammaeq} for the $\Gamma$-equivalence relation.)  
 \begin{lemma}
$G_\alpha$ forms a subgroup of $G$.
\end{lemma}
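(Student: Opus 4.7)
The plan is to verify the three group axioms (identity, closure, inverses) by producing, for each group element claimed to lie in $G_\alpha$, an explicit bounding surface of zero symplectic area and zero Maslov index.

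First, the identity $e \in G_\alpha$ is witnessed by the degenerate surface $w_e(s,t) := l_0(t)$, which trivially satisfies all boundary conditions in \eqref{boundings1}. Its image is one-dimensional so $\CA(w_e, l_0) = 0$, and the induced Lagrangian loop $\CL_{w_e}$ along $\partial[0,1]^2$ is a concatenation of $\WT{\lambda}_{l_0}$ with itself (traversed forwards and backwards) so $\mu(w_e, l_0) = 0$.

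For closure under multiplication, given $g, h \in G_\alpha$ with witnessing surfaces $w_g$ and $w_h$, I would define
\[
w_{gh}(s,t) := \begin{cases} w_g(2s,t), & 0 \leq s \leq 1/2, \\ g\bigl(w_h(2s-1,t)\bigr), & 1/2 \leq s \leq 1, \end{cases}
\]
which is the $s$-direction concatenation $w_g \star g(w_h)$. The key point is that $g(w_h)$ connects $g(l_0)$ to $gh(l_0)$ and its top/bottom boundaries still lie in $L_0, L_1$ since $g$ preserves each $L_i$; moreover $g$ is a symplectomorphism so $\CA(g(w_h), gh(l_0)) = \CA(w_h, h(l_0)) = 0$. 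For additivity of the Maslov index, I need the Lagrangian paths along the gluing edge $\{s = 1/2\}$ to match up: the $s=1$ edge of $w_g$ carries $g(\WT\lambda_{l_0})$ by construction, and the $s=0$ edge of $g(w_h)$ is the $g$-image of the $s=0$ edge of $w_h$, which is $\WT\lambda_{l_0}$, producing $g(\WT\lambda_{l_0})$ after applying $g$. Hence the two Lagrangian paths agree on the seam and $\mu(w_{gh}, gh(l_0)) = \mu(w_g, g(l_0)) + \mu(g(w_h), gh(l_0)) = 0 + 0 = 0$.

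For inverses, given $g \in G_\alpha$ with witness $w_g$, I would take
\[
w_{g^{-1}}(s,t) := g^{-1}\bigl(w_g(1-s,t)\bigr),
\]
so that $w_{g^{-1}}$ connects $l_0$ to $g^{-1}(l_0)$ with the correct Lagrangian boundary conditions. Reversal in $s$ together with the symplectomorphism $g^{-1}$ negates the symplectic area and the Maslov index, both of which vanish by hypothesis, so $w_{g^{-1}}$ witnesses $g^{-1} \in G_\alpha$.

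The only real subtlety is the bookkeeping for the Maslov index in the closure step: one has to confirm that the canonical choice of boundary Lagrangian loop for the concatenated surface agrees, up to homotopy rel endpoints, with the concatenation of the two boundary loops of $w_g$ and $g(w_h)$. This is ensured precisely by the fact that the path $\WT\lambda_{l_0}$ at the $s=1$ edge of $w_g$ is by convention $g(\WT\lambda_{l_0})$, which is exactly what appears on the $s=0$ edge of $g(w_h)$; the remaining pieces are Lagrangian paths along $L_0$ and $L_1$ themselves, so they automatically concatenate continuously. Closure under multiplication for $G_{l_0}$ (namely $gh(l_0) \in \Omega(L_0,L_1;l_0)$) follows as a byproduct since $w_{gh}$ exhibits a homotopy from $l_0$ to $gh(l_0)$.
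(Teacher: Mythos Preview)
Your proof is correct and follows essentially the same approach as the paper: the paper's proof verifies closure by defining $w_{gh} := w_g \star g(w_h)$ and invoking $G$-invariance of $\mathcal{A}$ and $\mu$, which is exactly your closure argument. Your proposal is simply more thorough, explicitly checking the identity and inverses (which the paper omits) and spelling out why the Maslov indices add under concatenation.
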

\begin{proof}
Let $g$ and $h$ be elements of $G_\alpha$. Then, there are $(w_g, g( l_0))$ and $(w_h, h( l_0))$ whose energies and Maslov indices are both zero. We make a concatenation 
\begin{equation}\label{eq:conwg}
w_{gh} := w_g \star (g( w_h)).
\end{equation} 
Then, $(w_{gh} , (gh)(l_0))$ connects $l_0$ and $(gh)( l_0)$ and hence, $gh$ is accompanied with the path $w_{gh}$ satisfying 
$$\mathcal{A} ((w_{gh} , (gh)( l_0)) = \mu (w_{gh}, (gh)( l_0 )) =0$$
since $\mathcal{A}$ and $\mu$ are invariant under $G$-action. 
\end{proof}
 
 Now, we will associate a canonical spin structure of $\CL_{w_g}$ to each $w_g$ so that we can define the spin-$\Gamma$-equivalence relation among $w_g$'s as in \ref{def:spingamma}.
We will see that there may be at most two spin-$\Gamma$-equivalence classes for a single $\Gamma$-equivalence class.

Recall that $\CL_{w_g}|_{ \{0\} \times [0,1]} = \lambda_{l_0}$ has a preferred trivialization
$\sigma:[0,1] \times \mathbb{R}^n \to \widetilde{\lambda}_{l_0} $
and that the spin structure $P_{spin}(\lambda_{l_0})$ for $\lambda_{l_0}$ is fixed by one of its lifting $\widetilde{\sigma}: [0,1] \times Spin(n) \cong P_{spin}(\lambda_{l_0})$ \eqref{spinl0}.  
Over $[0,1] \times \{0\}$ (resp. $[0,1] \times \{1\}$)
we have $P_{spin} (L_0)$ (resp. $P_{spin} (L_1)$). 
Gluing of two spin structures at $(0,i)$ is done by $\iota_i \circ \WT{\sigma}^{-1}$ for $i=0,1$ as explained in \eqref{spinl0}.
It remains to specify spin structure of $\CL_{w_g}$ along $\{1\} \times [0,1]$
and how to glue it to $P_{spin} (L_i)$'s.

Let us assume for a moment that the $g$-action preserves orientations of $L_0$ and $L_1$. By $g$-action, the reference path $l_0$ is mapped to $g(l_0)$. Then, the Lagrangian path $g(\widetilde{\lambda}_{l_0})$ over $g(l_0)$ from $T_{g(p_0)}L_0$ to $T_{g(p_1)}L_1$ can be trivialized by $g \circ \sigma$:
$$[0,1] \times \mathbb{R}^n \cong g(\widetilde{\lambda}_{l_0}).$$
We take $(A_{g^{-1}})^* \big( P_{spin}(\widetilde{\lambda}_{l_0}) \big)$ as a spin structure $P_{spin}(g(\widetilde{\lambda}_{l_0}))$. From $\widetilde{\sigma}$, we get an induced isomorphism
\begin{equation}\label{eq:isogl0}
\widetilde{g\sigma}:[0,1] \times Spin(n) \cong P_{spin}(g(\widetilde{\lambda}_{l_0})).
\end{equation}
%
%
%

We need to define an analogue of $\iota_i$ at $g(p_i)$ to identify two spin bundles there. ($l_0 (i) = p_i$ for $i=0,1$.) In fact, $T_{g, L_i}: (A_{g^{-1}})^* P_{spin}L_i \to P_{spin}L_i$ comes into play to glue $P_{spin}(g(\widetilde{\lambda}_{l_0}))$ with $P_{spin}(L_i)$ at $g(p_i)$.
Note that $\iota_i:Spin(n) \to P_{spin}(L_i)_{p_i}$ defines a
natural embedding, which we again denote by 
$$\iota_i: Spin(n) \to   (A_{g^{-1}})^* P_{spin}L_i |_{g(p_i)}.$$
Consider its composition with $T_{g,L_i}$:
$$ T_{g,L_i} \circ \iota_i : Spin(n) \to  P_{spin} (L_i)_{g(p_i)}.$$
We see that $T_{g,L_i} \circ \iota_i \circ (\widetilde{g\sigma})^{-1}$ glues $P_{spin}(L_i)$ and $P_{spin}(g(\widetilde{\lambda}_{l_0}))$ at $g(p_i)$. (See Figure \ref{spinwg}) Therefore, we have described the canonical spin structure for $\CL_{w_g}$, which depends on the pre-fixed choice of $T_{g, L_i}$.

If $g$ is an orientation reversing action for both $L_0$ and $L_1$,
we instead use the orientation reversal $(A_{g})_*\WT{\lambda}_{l_0}^{op} = g(\WT{\lambda}_{l_0})^{op}$ along $g(l_0)$, whose end points can be identified with $T_{g(p_0)} L_0$
and  $T_{g(p_1)} L_1$. Let $\sigma_{SO} : [0,1] \times SO(\R^n) \to \WT{\lambda}_{l_0}$ be the trivialization of the frame bundle of $\WT{\lambda}_{l_0}$  induced from $\sigma$. If $(\R^n)^{op}$ denotes $\R^n$ with the opposite orientation, then $\sigma_{SO}$ leads to the trivialization ${\sigma}_{SO}^{op}:[0,1] \times SO((\R^n)^{op}) \to  \WT{\lambda}_{l_0}^{op}$. Take an isomorphism between $SO(\R^n)$ and $SO((\R^n)^{op})$, given
by an element $O \in O(n) \setminus SO(n)$.  
\begin{figure}[h]
\begin{center}
\includegraphics[height=1in]{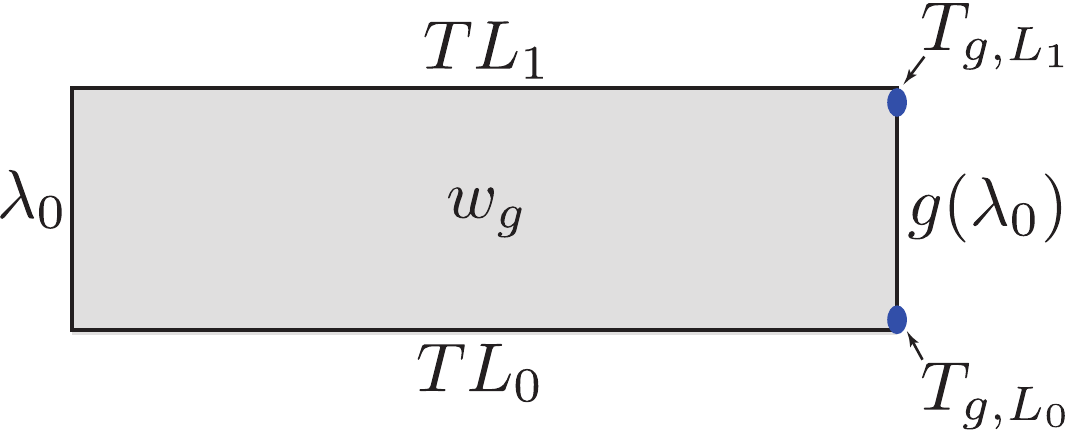}
\caption{The canonical spin structure for $\CL_{w_g}$}\label{spinwg}
\end{center}
\end{figure}

From the extension
$\WT{\sigma}:[0,1] \times Spin(\R^n) \stackrel{\cong}{\to} P_{spin}(\WT{\lambda}_{l_0})$, we have
$\WT{\sigma}^{op}:[0,1] \times Spin((\R^n)^{op}) \stackrel{\cong}{\to} P_{spin}(\WT{\lambda}_{l_0}^{op})$
which  are isomorphic to the former by multiplying a lift $\WT{O} \in Pin(n) \setminus Spin(n)$.
We also have $\WT{g\sigma}^{op}:[0,1] \times Spin((\R^n)^{op}) \stackrel{\cong}{\to} P_{spin}(g\WT{\lambda}_{l_0}^{op})$. We next take
$A_{g^{-1}}^*(P_{spin}(\WT{\lambda}_{l_0})^{op}))$ as the spin structure of 
$g(\WT{\lambda}_{l_0})^{op}$, and glue $P_{spin}(L_i)$ and $P_{spin}(g(\widetilde{\lambda}_{l_0})^{op})$ using 
$T_{g,L_i} \circ \iota_i \circ (\widetilde{g\sigma} \circ \WT{O})^{-1}$.

%

So far, we have described the canonical spin structure associated to the Lagrangian sub-bundle $\CL_{w_g}$ over the boundary of $w_g$.
As the bundle lies over $\partial [0,1]^2 \cong S^1$, there are only two possible spin structures. 
\begin{definition}
For each $w_g$, we associate a number $sp(w_g) \in \Z/2$ which is zero if the canonical spin
structure is trivial and $1$ otherwise.
\end{definition}

To handle orientation spaces in defining a $G_\alpha$-action in the next section, we should not only consider $w_g$ but also with its $sp$ value, i.e. $(w_g, sp(w_g))$ (or $(-1)^{sp(w_g)}w_g$ for short).



Lastly, we discuss the relation between group operations and $sp$.
We want to compare  the product $sp(w_g) \cdot sp(w_h)$ and  $sp(w_{gh})$ of the
concatenation $w_{gh}$ \eqref{eq:conwg}.
\begin{prop}\label{prop:compadm}
We have
$$sp(w_g) \cdot sp(w_h) = (-1)^{\textnormal{spf}_{L_0}(g,h)+\textnormal{spf}_{L_1}(g,h)}sp(w_{gh}).$$

\end{prop}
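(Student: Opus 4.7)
The plan is to compare two spin structures on the same Lagrangian sub-bundle along $\partial w_{gh}$: one obtained by concatenating the canonical spin structures on $\CL_{w_g}$ and $\CL_{g(w_h)}$ along their common middle edge over $g(l_0)$, and the other the canonical spin structure on $\CL_{w_{gh}}$ defined directly from the reference data of $w_{gh}$. The discrepancy between the resulting $\Z/2$-classes will be identified with $\textnormal{spf}_{L_0}(g,h) + \textnormal{spf}_{L_1}(g,h) \pmod 2$, which is the content of the formula once one reads $sp(w_g)\cdot sp(w_h)$ as the product of signs $(-1)^{sp(w_g)}(-1)^{sp(w_h)}$.

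First I would take $w_{gh} := w_g \star g(w_h)$ as in the proof that $G_\alpha$ is a group, and transport the canonical spin structure on $\CL_{w_h}$ to one on $\CL_{g(w_h)}$ by the $g$-action, using the isomorphisms $T_{g,L_i}$ on $P_{spin}(L_i)$-fibers; by diffeomorphism invariance one has $sp(g(w_h)) = sp(w_h)$. Along the common middle edge over $g(l_0)$, both the right edge of $\CL_{w_g}$ and the left edge of $\CL_{g(w_h)}$ carry the same spin bundle $(A_{g^{-1}})^{\ast} P_{spin}(\lambda_{l_0})$ with matching corner data $T_{g,L_i} \circ \iota_i \circ (\widetilde{g\sigma})^{-1}$, so the two pieces glue to a well-defined spin structure on $\partial(w_g \star g(w_h))$. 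Extendibility of a boundary spin structure across a rectangle is additive under gluing two rectangles along a common interior edge (extensions on each side glue uniquely there), so this glued spin structure has $sp$-value equal to $sp(w_g) + sp(w_h) \pmod 2$.

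The main step is the corner-by-corner comparison with the canonical spin structure on $\CL_{w_{gh}}$. Along every edge and at the two left corners $p_0, p_1$, where both use the gluing $\iota_i \circ \widetilde{\sigma}^{-1}$, the two structures agree tautologically. The only possible discrepancy is at the right corners $(gh)(p_i)$, $i=0,1$: the glued structure uses the map obtained by composing the right-corner gluing of $w_g$ with the $g$-translate of the right-corner gluing of $w_h$, namely
\[
(T_{g,L_i})_{(gh)(p_i)} \circ (T_{h,L_i})_{h(p_i)} \circ \iota_i \circ (\widetilde{(gh)\sigma}|_{p_i})^{-1},
\]
whereas the canonical $w_{gh}$-gluing is
\[
(T_{gh,L_i})_{(gh)(p_i)} \circ \iota_i \circ (\widetilde{(gh)\sigma}|_{p_i})^{-1}.
\]
Evaluating the defining relation \eqref{eq:spf} of $\textnormal{spf}_{L_i}$ at the fiber over $(gh)(p_i)$, and noting that $((A_{g^{-1}})^{\ast}T_{h,L_i})_{(gh)(p_i)} = (T_{h,L_i})_{h(p_i)}$, yields
\[
(T_{g,L_i})_{(gh)(p_i)} \circ (T_{h,L_i})_{h(p_i)} = (-1)^{\textnormal{spf}_{L_i}(g,h)}\,(T_{gh,L_i})_{(gh)(p_i)},
\]
so the two corner gluings differ by multiplication by $(-1)^{\textnormal{spf}_{L_i}(g,h)} \in Spin(n)$.

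To conclude, I would invoke Milnor's description of spin structures via trivializations on the 1-skeleton (recalled in Section \ref{sec:spins}): multiplying a single corner gluing by $-1 \in Spin(n)$ toggles the isomorphism class of the boundary spin structure. Summing the independent contributions from the two right corners $i = 0, 1$ therefore produces a total sign discrepancy of $(-1)^{\textnormal{spf}_{L_0}(g,h) + \textnormal{spf}_{L_1}(g,h)}$ between the glued and the canonical spin structures on $\partial w_{gh}$; combining this with the glued $sp$-value computed above yields the claimed identity. The principal technical obstacle is to keep pullback and corner-gluing conventions consistent throughout the concatenation, and in the orientation-reversing case to carry the lift $\widetilde{O} \in Pin(n)\setminus Spin(n)$ through the argument; its contributions appear symmetrically on $L_0$ and $L_1$ under Assumption \ref{as:spin2} and cancel in the sum, so the formula extends unchanged.
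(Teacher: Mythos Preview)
Your proposal is correct and follows essentially the same approach as the paper: both transport the canonical spin structure on $\CL_{w_h}$ to $\CL_{g(w_h)}$ via $T_{g,L_i}$, glue along the common edge over $g(l_0)$, and then compare with the canonical spin structure on $\CL_{w_{gh}}$ at the right corners $(gh)(p_i)$, where the discrepancy $T_{g,L_i}\circ (A_{g^{-1}})^*T_{h,L_i}$ versus $T_{gh,L_i}$ produces exactly $(-1)^{\textnormal{spf}_{L_i}(g,h)}$. Your write-up is slightly more explicit than the paper about the additivity of $sp$ under rectangle gluing and about why a $-1$ toggle at a single corner flips the boundary spin class, but the underlying argument is the same.
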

\begin{figure}[h]
\begin{center}
\includegraphics[height=2in]{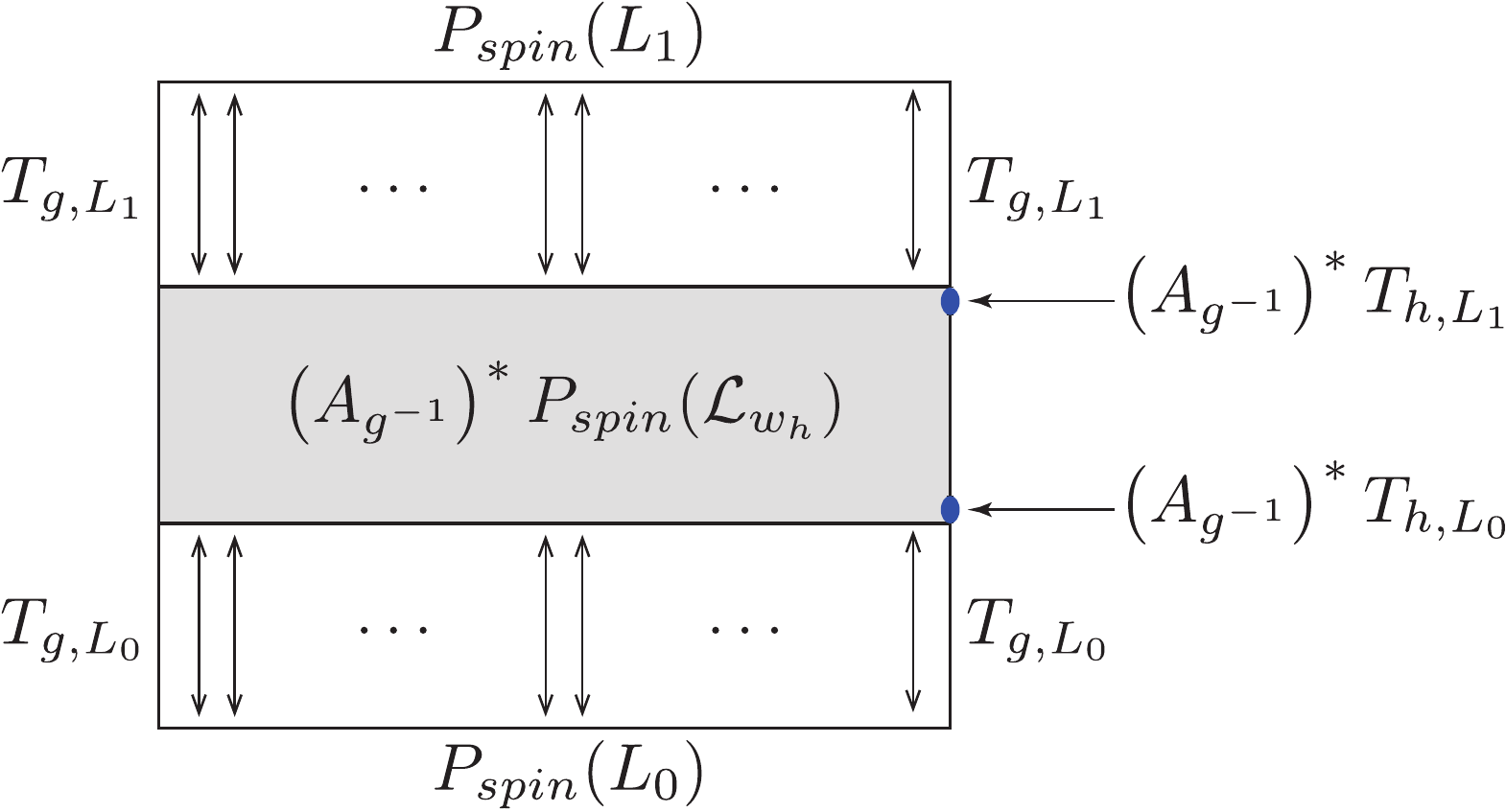}
\caption{Construction of $ P_{spin}(g(\CL_{w_h}) )$ }\label{cocycsp}
\end{center}
\end{figure}
\begin{proof}
For $g$ and  $h$ in $G_\alpha$, we choose paths $w_g$ and $w_h$ of energy and index zero, respectively. They are equipped with Lagrangian bundles $\CL_{w_g}$ and $\CL_{w_h}$ on their boundaries and also the spin structures $P_{spin}(\CL_{w_g}), P_{spin}(\CL_{w_h})$ obtained by $T_g$ and $T_h$ respectively. 

The concatenation of $w_g$ and $w_h$ gives $w_{gh}(= w_g \star g( w_h) )$ up to $\Gamma$-equivalence. $w_{gh}$ has two canonical spin structures. One is the canonical spin structure associated to the surface $w_{gh}$ which is used to define
$sp(w_{gh})$. The other is induced from spin structures of $w_g$ and $w_h$ described as follows.

Consider the image $g(w_h)$ of $w_h$ and the pull-back bundle $(A_{g^{-1}})^*P_{spin}(\CL_{w_h})$ (on $g(w_h)$) with respect to the $g$-action. If we restrict  $(A_{g^{-1}})^*P_{spin}(\CL_{w_h})$ to horizontal edges, it coincides with $(A_{g^{-1}})^*P_{spin}(L_i)$. We apply the isomorphism $T_{g,L_i}$ over the whole horizontal edges (Figure \ref{cocycsp})
 so that we obtain a new spin structure over the boundary of $g(w_h)$ which is isomorphic to $(A_{g^{-1}})^*P_{spin}(\CL_{w_h})$.  
 Let us denote the resulting spin structure
as $P_{spin}(g(\CL_{w_h}))$. Note that the restriction of $P_{spin}(g(\CL_{w_h}))$ to horizontal edges
is given by the restriction of the spin structure $P_{spin}L_i$ on $L_i$. Since $P_{spin}(g(\CL_{w_h}))$ is isomorphic to $(A_{g^{-1}})^*P_{spin}(\CL_{w_h})$ on $g(w_h)$ 
\begin{equation}\label{spactiongh}
sp (P_{spin}(g(\CL_{w_h}))) = sp( P_{spin}(\CL_{w_h})).
\end{equation}

We look into the gluing at each vertical ends of $P_{spin}(g(\CL_{w_h}))$ more carefully.
On the vertical edges of $g(w_h)$, we have $g(l_0)$ on the left and $gh(l_0)$ on the right.
$P_{spin}(g(\CL_{w_h}))$ agrees with $P_{spin}(g(\WT{\lambda_{l_0}}))$ (resp. $P_{spin}(gh(\WT{\lambda_{l_0}}))$) over $g(l_0)$ (resp. $gh(l_0)$) with a trivialization \eqref{eq:isogl0} 
$$\widetilde{g\sigma}:[0,1] \times Spin(n) \cong P_{spin}(g(\widetilde{\lambda}_{l_0}))
$$
(resp. $\widetilde{gh \sigma}:[0,1] \times Spin(n) \cong P_{spin}(gh (\widetilde{\lambda}_{l_0}))
$).
The horizontal and the vertical part of $P_{spin}(g(\WT{\lambda_{l_0}}))$ at the end of $g(l_0)$ is glued by
$T_{g,L_i} \circ \iota \circ \WT{g\sigma}^{-1}$.
On the other hand,
\begin{equation}\label{eq:newglue1}
T_{g,L_i} \circ (A_{g^{-1}})^*T_{h,L_i} \circ \iota \circ \WT{gh\sigma}^{-1}.
\end{equation}
is applied to glue the horizontal and the vertical component of $P_{spin}(g(\WT{\lambda_{l_0}}))$ at the end of $gh(l_0)$. (See the right vertical edge of Figure \ref{cocycsp}.)

The spin structure $P_{spin}(g(\CL_{w_h}))$
can be glued to $P_{spin}(\CL_{w_g})$ along $g(l_0)$ since the spin structure $P_{spin}(\CL_{w_g})$ over the right vertical edge have exactly the same trivializations and gluings as that of $P_{spin}(g(\CL_{w_h}))$ over the left vertical edge. 
Denote the glued spin structure by $P_{spin}(\CL_{w_g}) \star P_{spin}(g(\CL_{w_h}))$. By construction, it coincides with the canonical spin structure for $w_{gh}$ if we replace $T_{g,L_i} \circ (A_{g^{-1}})^*T_{h,L_i}$ in \eqref{eq:newglue1} by $T_{gh, L_i}$ for gluing on the right vertical edge.
Hence, the total difference between these two spin structures are
precisely the product of  $(-1)^{\textnormal{spf}_{L_i}(g,h)}$ for $i=0,1$ (see \eqref{eq:spf}),
and we have proved the claim.
\end{proof}

Note that if $L_0$ and $L_1$ have the same spin profiles, then differences between $P_{spin}(\CL_{w_g}) \star P_{spin}(g(\CL_{w_h}))$ and $P_{spin}( \CL_{w_{gh}} )$ at two vertices of
the right vertical edge cancel out, which will allow us to define group actions in the next section.

\section{Group actions on Floer-Novikov complexes}\label{GactFloerNov}
In this section, we will define $G_\alpha$-action (Definition \ref{def:galpha}) on the Floer Novikov complex of $(L_0,L_1)$ when spin profiles of $L_0$ and $L_1$ coincide.
\begin{assumption}\label{as:spin3}
We assume that 
$$\textnormal{spf}_{L_0}= \textnormal{spf}_{L_1}.$$
\end{assumption}
The assumption can be weakened to only require that $[\textnormal{spf}_{L_0}]= [\textnormal{spf}_{L_1}]$.
In fact, if they define the same class in $H^2 (G;\Z /2)$, then one can modify one of them to fulfill Assumption \ref{as:spin3} by changing the lifts $A_g^{spin}$'s appropriately.
We emphasize again that this condition is necessary to define a group action on the Floer complex.

 For $g \in G_\alpha$, we will use the bounding surface $w_g$ connecting
the base path $l_0$ and $g(l_0)$ of zero symplectic area and zero Maslov index, but such $w_g$ might carry a non-trivial canonical spin structure.
If $sp(w_g)$ is $0$ for all $g$, then we define $G_\alpha$
action on the covering space $\WT{\Omega} ( L_0, L_1;l_0)^{sp}$ without sign corrections, and it gives rise to an action on the Floer complex. However, in general, one should consider ``$(-1)^{sp(w_g)} w_g$".

As the action of ``$(-1)$" on the covering space does not make sense, we cannot define the action of $(-1)^{sp(w_g)} w_g$ directly on the covering
space. We will first make an assumption that $sp (w_g) =0$ for all $g \in G$ (Assumption \ref{ass:sp0}) to give a clear view of the action of $G_\alpha$ on the covering space, and later in Subsection \ref{Galpha}, construct the precise $G_{\alpha}$-action on the Floer-Novikov complex without Assumption \ref{ass:sp0}.
Actual $G_\alpha$ action should be considered as an action between the orientation spaces (see Subsection \ref{oriline}) of the Floer-Novikov complex and these are spaces that actions of the sign factors $(-1)^{sp(w_g)}$ can be made.

\subsection{Group actions on the Novikov covering $\WT{\Omega} ( L_0, L_1;l_0)^{sp}$}
Let us make the following simplifying assumption just in this subsection.
\begin{assumption}\label{ass:sp0}
For each $g \in G_\alpha$, we can find $w_g$ with 
\begin{equation}\label{eq:wgsp0}
sp(w_g)=0(=\CA(w_g, g(l_0)) = \mu(w_g, g(l_0)) ).
\end{equation}
\end{assumption}
Recall that we have defined the universal covering space of the path space $\Omega( L_0, L_1;l_0)$ by $\WT{\Omega} ( L_0, L_1;l_0)$ in Subsection \ref{subsec:FNtheory}, and refined it to get a smaller covering
$\WT{\Omega}( L_0, L_1;l_0)^{sp}$ by introducing spin $\Gamma$-equivalence relation
in Subsection \ref{subsec:SpinGamma}.

The $G$-action on $M$ gives a naive $G$-action on path spaces such that
\begin{equation}\label{def:honestgaction}
g((w,l)) \in \WT{\Omega}_{univ}( L_0, L_1;g(l_0)), \quad g((w,l)) =(g(w),g(l)),
\end{equation}
for $(w,l) \in  \WT{\Omega}_{univ}( L_0, L_1;l_0)$, where $g(w)(s,t) := g(w(s,t))$. Since this naive $G$-action changes the base path from $l_0$ to $g(l_0)$, it does {\em not}
give the correct $G$-action for  Floer-Novikov complex.
Alternatively, we define the $G_\alpha$-action on $\WT{\Omega}(L_0, L_1;l_0)^{sp}$
in the following way. For  $g \in G_\alpha$,  we take $w_g$ satisfying \eqref{eq:wgsp0}. Then, the $G_\alpha$-action on $\WT{\Omega}(L_0, L_1;l_0)^{sp}$ is defined
by first taking the naive $g$-action and then attaching $w_g$.  
\begin{definition}
We define the action of  $G_\alpha$, 
 $G_\alpha  \times \WT{\Omega} (L_0, L_1;l_0)^{sp} \to \WT{\Omega} (L_0, L_1;l_0)^{sp}$, by
\begin{equation}\label{def:galphaaction}
(g,  [w,l] ) \mapsto [w_g \star (g ( w)), g ( l)] =: g \cdot [w,l].
\end{equation}
(See Figure \ref{Fig:Galphaact}.)
\end{definition}
\begin{figure}[h]
\begin{center}
\includegraphics[height=1.8in]{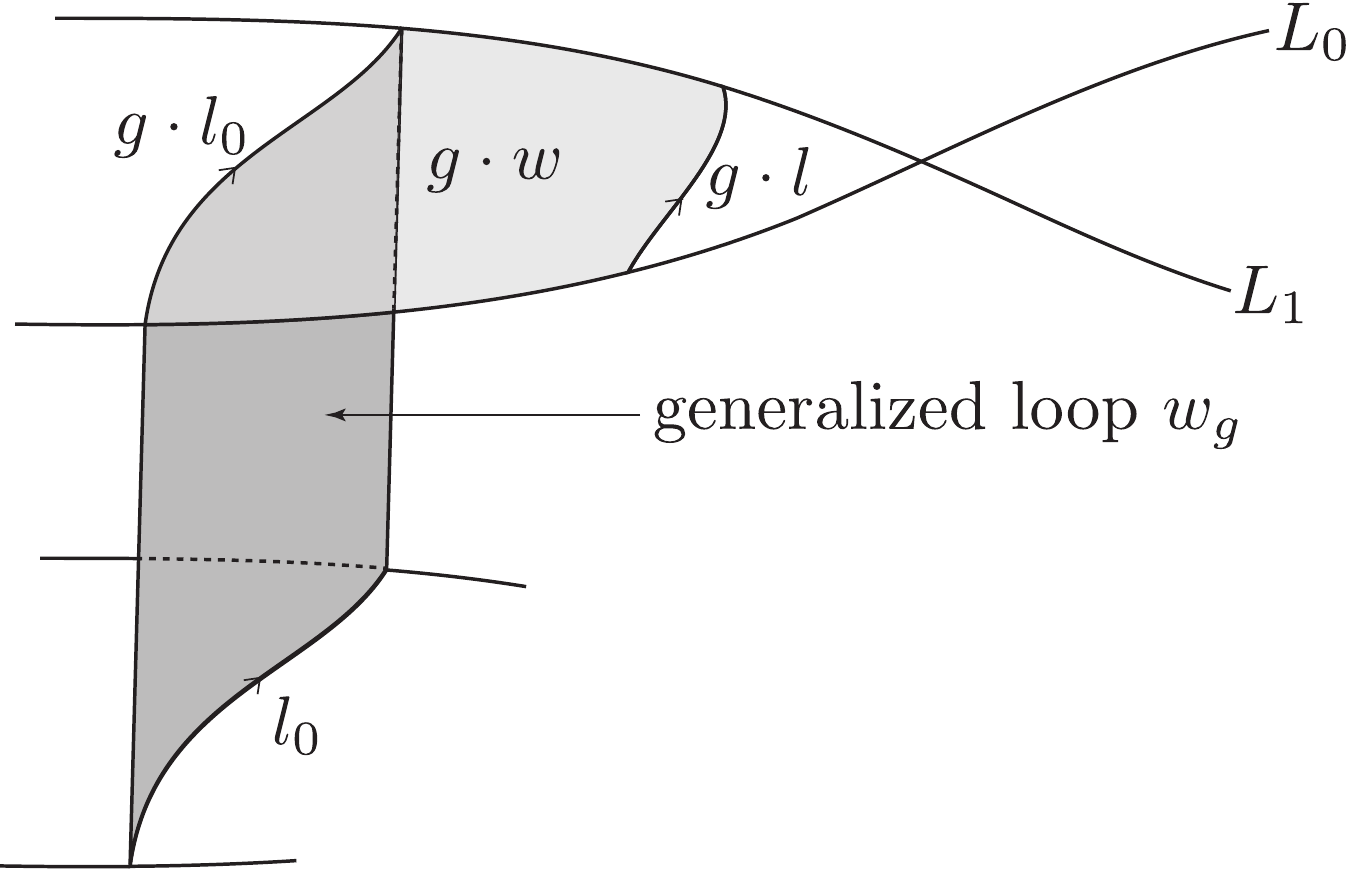}
\caption{$G_\alpha$-action on $\WT{\Omega} (L_0, L_1;l_0)^{sp}$}\label{Fig:Galphaact}
\end{center}
\end{figure}
\begin{lemma}
\eqref{def:galphaaction} provides a well-defined $G_\alpha$-action on $\WT{\Omega} (L_0, L_1;l_0)^{sp}$
\end{lemma}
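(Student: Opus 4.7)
The plan is to verify the three ingredients of a well-defined group action: independence of representatives, the image lying in the correct covering space, and compatibility with group composition. Throughout, the strategy is to translate each claim into a statement about the image in ${\rm Ker}(I_\omega, I_\mu, sp) \subset \pi_1(\Omega(L_0,L_1;l_0))$, so that checking spin-$\Gamma$-triviality reduces to verifying vanishing of $(I_\omega, I_\mu, sp)$ on certain loops.

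First I would check that the formula \eqref{def:galphaaction} does not depend on the choice of $w_g$. Given two surfaces $w_g, w_g'$ satisfying \eqref{eq:wgsp0}, their concatenation $w_g \star \overline{w_g'}$ is a loop in $\Omega(L_0,L_1;l_0)$ on which $I_\omega$, $I_\mu$ and $sp$ all vanish, hence it represents the trivial class in $\Pi(L_0, L_1;l_0)^{sp}$; attaching it before $g(w)$ therefore does not change the spin-$\Gamma$-equivalence class. Independence of the representative $(w,l)$ is similar: if $(w,l)\sim_{sp\text{-}\Gamma}(w',l)$ then $w\star\bar w'$ bounds in the sense that $(I_\omega,I_\mu,sp)(w\star\bar w')=0$, and applying the naive $g$-action preserves each of these invariants (symplectic area and Maslov index because $g$ is a symplectomorphism preserving gradings, and $sp$ because $T_{g,L_i}$ is an isomorphism of spin structures). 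One should also verify that the new endpoint $g(l)$ lies in the same path component as $l_0$; this is automatic because $w_g$ itself connects $l_0$ to $g(l_0)$, and the identity element acts by attaching a constant $w_e$, so $e\cdot[w,l]=[w,l]$.

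The main point, and the step where Assumption~\ref{as:spin3} enters, is associativity $g\cdot(h\cdot[w,l])=(gh)\cdot[w,l]$. Unwinding the definitions, the left-hand side is represented by $w_g \star g(w_h)\star (gh)(w)$ while the right-hand side is represented by $w_{gh}\star (gh)(w)$, so the question reduces to comparing the two bounding surfaces $w_g\star g(w_h)$ and $w_{gh}$ from $l_0$ to $(gh)(l_0)$. Both have vanishing symplectic area and Maslov index (additivity of $I_\omega$ and $I_\mu$ under concatenation plus their $G$-invariance, noting that $gh\in G_\alpha$ because $G_\alpha$ is a subgroup), so they are $\Gamma$-equivalent. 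The only remaining question is whether they are spin-$\Gamma$-equivalent, equivalently whether $sp(w_g\star g(w_h))=sp(w_{gh})$. This is exactly the content of Proposition~\ref{prop:compadm}: under Assumption~\ref{ass:sp0} both sides individually vanish, and more conceptually the relation
\[
sp(w_g)+sp(w_h)-sp(w_{gh})=\textnormal{spf}_{L_0}(g,h)+\textnormal{spf}_{L_1}(g,h) \pmod 2
\]
has its right-hand side annihilated by Assumption~\ref{as:spin3}, so the two surfaces carry the same canonical spin structure.

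The hardest step is the associativity check, precisely because it is the unique place where the obstruction theory of spin profiles becomes visible. The well-definedness and identity axioms are bookkeeping once spin-$\Gamma$-equivalence is in place, but associativity forces one to confront the fact that the lifts $T_{g,L_i}$ do not compose strictly, and the cocycle discrepancy is exactly the sum of the spin profiles of $L_0$ and $L_1$. Thus the equality $\textnormal{spf}_{L_0}=\textnormal{spf}_{L_1}$ is what converts a projective-type action (with $\Z/2$-ambiguity) into an honest $G_\alpha$-action on $\WT{\Omega}(L_0,L_1;l_0)^{sp}$; this is the structural reason the group action is built on spin-$\Gamma$-equivalence classes rather than on $\Gamma$-equivalence classes.
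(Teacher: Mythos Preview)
Your proof is correct and follows essentially the same route as the paper's: well-definedness is reduced to checking that the relevant difference loops have vanishing $(I_\omega,I_\mu,sp)$, and associativity is obtained from Proposition~\ref{prop:compadm} combined with Assumptions~\ref{as:spin3} and~\ref{ass:sp0}, which together force $sp(w_g\star g(w_h))=0$. The only minor slip is a notational overload of $w_{gh}$ (once for a chosen surface, once for the concatenation as in Proposition~\ref{prop:compadm}); the paper avoids this by simply \emph{taking} $w_{gh}:=w_g\star g(w_h)$ and verifying it satisfies \eqref{eq:wgsp0}, rather than comparing two candidates.
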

\begin{proof}
One may choose a different $w_g'$ satisfying \eqref{eq:wgsp0} to define $g$-action, or
choose a different representative $[w',l]$ of $[w,l]$, but it is not hard to
see that these provide the same map \eqref{def:galphaaction} up to spin $\Gamma$-equivalence.
Note that
we have proved Proposition \ref{prop:compadm} under the assumption \ref{as:spin3}, which implies that
if $w_g$ and $w_h$ satisfy \eqref{eq:wgsp0}, $w_{gh}= w_g \star g(w_h)$ also
satisfies \eqref{eq:wgsp0}. This shows that \eqref{def:galphaaction} indeed defines a group action. 
\end{proof}

Now, we prove an analogue of Lemma \ref{lem:deccomm}.
\begin{lemma}
The covering map  $\pi^{sp} : \WT{\Omega} (L_0, L_1;l_0)^{sp} \to \Omega (L_0, L_1;l_0)$ is $G_\alpha$-equivariant.  Moreover the action of $ G_\alpha$ and that of deck transformation group $\Pi (L_0, L_1;l_0)$ on $\WT{\Omega} (L_0, L_1;l_0)^{sp}$ commute with each other. 
\end{lemma}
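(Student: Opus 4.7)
The first assertion is immediate from \eqref{def:galphaaction}: the projection $\pi^{sp}$ forgets the bounding surface and retains only the end-path, and both $\pi^{sp}(g \cdot [w,l])$ and $g \cdot \pi^{sp}([w,l])$ return $g(l)$. The inclusion $G_\alpha \subseteq G_{l_0}$ guarantees $g(l) \in \Omega(L_0, L_1; l_0)$.

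For the commutativity, let $h \in \Pi(L_0, L_1; l_0)^{sp}$ be represented by a loop $C$ at $l_0$, so that, as in the construction after \eqref{defflcmx3}, $h \cdot [w, l] = [C \star w, l]$. Combining this with \eqref{def:galphaaction} gives
\begin{align*}
 g \cdot (h \cdot [w, l]) &= [\, w_g \star g(C) \star g(w),\ g(l)\,], \\
 h \cdot (g \cdot [w, l]) &= [\, C \star w_g \star g(w),\ g(l)\,].
\end{align*}
Both elements lie in the fiber over $g(l)$, so their difference is the deck action of a single loop. After cancelling the common tail $g(w)$, this difference is represented by
\[
 \Delta \,:=\, w_g \star g(C) \star \OL{w_g} \star \OL{C},
\]
a loop in $\Omega(L_0, L_1; l_0)$ based at $l_0$. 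It therefore suffices to show $[\Delta] \in {\rm Ker}(I_\omega, I_\mu, sp)$, so that $\Delta$ acts trivially on $\WT{\Omega}(L_0, L_1; l_0)^{sp}$.

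The vanishing of $I_\omega(\Delta)$ and $I_\mu(\Delta)$ is a direct count. The segments $w_g$ and $\OL{w_g}$ contribute nothing by the very choice of $w_g$. On the remaining two segments, $G$-invariance of $\omega$ gives $\int g(C)^* \omega = \int C^* \omega$, and $G$-invariance of the Lagrangian Grassmannian structure (with the natural Lagrangian path $g(\widetilde{\lambda}_{l_0})$ along $g(l_0)$) gives $\mu(g(C)) = \mu(C)$; the opposite orientations of $g(C)$ and $\OL{C}$ then force the two contributions to cancel.

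For the spin condition, Assumption \ref{ass:sp0} yields $sp(w_g) = sp(\OL{w_g}) = 0$, so the question reduces to comparing $sp(g(C))$ with $sp(\OL{C}) = sp(C)$. But the isomorphism $T_{g, L_i}$ on each Lagrangian boundary component is a well-defined identification of spin structures for a \emph{single} group element --- the obstruction \eqref{eq:spf} is intrinsically a $2$-cocycle and evaluates trivially at a single entry --- so it maps the canonical spin structure of $\CL_C$ to that of $\CL_{g(C)}$. Hence $sp(g(C)) = sp(C)$ and $sp(\Delta) = 2\,sp(C) \equiv 0 \pmod 2$. The main technical point is justifying the additivity of $sp$ across the four concatenated segments of $\Delta$, which I would carry out using the gluing formalism recalled in Section \ref{subsec:Gluingthm}; once that bookkeeping is in hand, every required vanishing reduces to the $G$-invariance of the underlying Floer data.
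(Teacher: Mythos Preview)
Your argument follows essentially the same route as the paper: both compute the two compositions, identify the difference as the loop $\Delta = w_g \star g(C) \star \OL{w_g} \star \OL{C}$, and then verify that $\Delta$ lies in ${\rm Ker}(I_\omega, I_\mu, sp)$. For $I_\omega$ and $I_\mu$ the arguments are identical.

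The one point of divergence is in the $sp$-computation. The paper groups $\Delta$ as the concatenation of two \emph{loops based at $l_0$}, namely $w_g \star g(C) \star \OL{w_g}$ and $\OL{C}$, and asserts they have equal $sp$-values so that the total vanishes. With this grouping, additivity of $sp$ is simply the statement that $sp$ is a homomorphism on $\pi_1(\Omega(L_0,L_1;l_0))$. Your four-piece decomposition mixes a loop $\OL{C}$ at $l_0$, a loop $g(C)$ at $g(l_0)$, and two non-loop strips $w_g, \OL{w_g}$; the quantity $sp(w_g)$ is then the one defined via the canonical spin structure on $\CL_{w_g}$ (with the $T_{g,L_i}$-gluings at the $g(l_0)$-corners), which is a different gadget from $sp$ on loops. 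The additivity you need---that the $T_{g,L_i}$-gluings at the internal $g(l_0)$-seams cancel in pairs when the four pieces are spliced---is true, and is exactly the mechanism behind Proposition \ref{prop:compadm} and the discussion after \eqref{eq:newglue1}, but it is more bookkeeping than the paper's grouping requires. Your observation that $T_{g,L_i}$ is an honest isomorphism for a single $g$ (the 2-cocycle obstruction only enters when composing two group elements) is correct and is precisely what makes $sp(g(C)) = sp(C)$; the paper uses the same fact implicitly when it asserts that $w_g \star g(C) \star \OL{w_g}$ and $C$ have the same $sp$-value. So your proof is correct, just organized in a way that defers more of the verification to the gluing formalism than the paper's two-loop grouping.
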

\begin{proof}
The projection map sends $[w_g \star (g ( w)), g ( l)]$ to $g(l)$,
hence the first claim follows.

Suppose that an element $c \in \Pi (L_0,L_1;l_0)$ is represented by a loop $C$ at $l_0$ in the path space $\Omega$. Then,
$$g \cdot c \cdot [w,l]= [w_g \star (g  (C)) \star (g ( w)), g (l) ].$$ 
On the other hand,
$$c \cdot g \cdot [w,l]= [C \star w_g \star (g ( w)), g(l)].$$
Their difference is represented by the cylinder 
\begin{equation}\label{eq:cywgc}
w_g \star (g (C)) \star \overline{w_g} \star \overline{C}.
\end{equation}
Note that  $w_g\star g (C) \star \OL{w_g}$ and $\overline{C}$ have the opposite energies, Maslov indices, and  $sp$-values. Hence \eqref{eq:cywgc} has the trivial spin structure as
well as vanishing energy and Maslov index,
which  implies 
\begin{equation}\label{gccg}
g \cdot c \cdot [w,l] = c \cdot g \cdot [w,l].
\end{equation}
\end{proof}
Let us review the Floer-Novikov setting briefly.
From the construction, $(\pi^{sp})^\ast \alpha$ is exact for the covering map $\pi^{sp}$ in \eqref{def:pi}.  The Floer action functional $\mathcal{A}$ on $\WT{\Omega} ( L_0, L_1;l_0)^{sp}$ is defined by
$\mathcal{A} ([w,l]) = \int w^{\ast} \omega$ for $[w,l]$ in $\WT{\Omega} ( L_0, L_1;l_0)^{sp}$.
Then, it is not difficult to check that $\mathcal{A}$ is well-defined and $(\pi^{sp})^\ast \alpha = - d \mathcal{A}.$

The following lemma demonstrates that $\CA$ is compatible with the $G_\alpha$-action.

\begin{lemma}$\left. \right.$
\begin{enumerate}
\item
$\CA$ is invariant under $G_\alpha$-action on $\WT{\Omega} (L_0, L_1;l_0)^{sp}$.
\item
The set $Cr(L_0, L_1 ; l_0)$ of critical points of $\mathcal{A}$ consists of the pairs $[w,l_p]$ where $l_p$ is the constant path with $p \in L_0 \cap L_1$. The set $Cr(L_0, L_1 ; l_0)$ is invariant under the action of $\Pi(L_0, L_1;l_0)^{sp}$ and $G_\alpha$. 
\end{enumerate}
\end{lemma}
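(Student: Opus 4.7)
The plan is to verify the two statements by unwinding definitions and invoking the constructions already in place. Nothing deep is required: the main content of the section (compatibility of $\textrm{sp}$ with concatenation via the shared spin profile, well-definedness of the $G_\alpha$-action on the covering) has already been established, so this lemma is a straightforward compatibility check.

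For part (1), I would expand $\CA$ on the definition \eqref{def:galphaaction}. Writing $g \cdot [w,l] = [w_g \star g(w), g(l)]$, the integral splits as
\begin{equation*}
\CA(g \cdot [w,l]) = \int (w_g)^* \omega + \int (g \circ w)^*\omega.
\end{equation*}
The first summand is $\CA(w_g, g(l_0)) = 0$ by the defining condition of $G_\alpha$ (Definition \ref{def:galpha}), and the second equals $\int w^* (g^*\omega) = \int w^*\omega = \CA([w,l])$ since the $G$-action is symplectic. The same symplectic invariance also shows that the value is independent of the choice of representative $w_g$ subject to \eqref{eq:wgsp0} and of the spin-$\Gamma$ representative of $[w,l]$, so the computation descends to $\WT{\Omega}(L_0,L_1;l_0)^{sp}$.

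For the characterization of critical points in (2), I would use the identity $(\pi^{sp})^*\alpha = -d\CA$ recalled just before the lemma. Thus $[w,l]$ is critical for $\CA$ iff $\alpha$ vanishes at $l \in \Omega(L_0,L_1;l_0)$. Testing $\alpha_l(\xi) = \int_0^1 \omega(l'(t),\xi(t))\,dt$ against variations $\xi \in l^*TM$ with $\xi(0)\in T_{l(0)}L_0$, $\xi(1)\in T_{l(1)}L_1$, the non-degeneracy of $\omega$ forces $l'(t) \equiv 0$; together with the boundary conditions this yields exactly the constant paths $l_p$ with $p\in L_0\cap L_1$.

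Invariance of $Cr(L_0,L_1;l_0)$ under the two actions is then immediate. The deck group $\Pi(L_0,L_1;l_0)^{sp}$ acts fiberwise over $\Omega(L_0,L_1;l_0)$, i.e.\ by $c\cdot [w,l_p] = [C\star w, l_p]$, preserving the path coordinate $l_p$ and hence the property of being a constant intersection path. For $g \in G_\alpha$, the formula \eqref{def:galphaaction} gives $g\cdot [w,l_p] = [w_g \star g(w), l_{g(p)}]$, and since $L_0,L_1$ are $G$-invariant we have $g(p)\in L_0\cap L_1$, so the image is again a constant-path critical point. I do not anticipate a genuine obstacle; if anything, the only point requiring minor care is checking that $\CA$ and the notion of being a constant path at an intersection point are well-defined modulo spin-$\Gamma$-equivalence, which follows because both are pulled back from $\Omega(L_0,L_1;l_0)$.
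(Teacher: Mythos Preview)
Your proof is correct and follows essentially the same approach as the paper's own proof, which simply cites $G$-invariance of $\omega$ together with the definition of the $G_\alpha$-action for part (1), and the standard variation argument from \cite{FOOO} for part (2). The only cosmetic difference is that the paper deduces $G_\alpha$-invariance of $Cr(L_0,L_1;l_0)$ directly from the $G_\alpha$-invariance of $\CA$ (critical sets of invariant functions are invariant), whereas you verify it by hand via $g(p)\in L_0\cap L_1$; both are immediate.
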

\begin{proof}
The $G_\alpha$-invariance of $\mathcal{A}$ follows from the $G$-invariance of $\omega$ and the definition of $G_\alpha$-action. 
In particular, the set of critical points of $\mathcal{A}$ is preserved by $G_\alpha$. The standard variation argument  as in \cite{FOOO}  shows that $Cr(L_0, L_1 ; l_0)$ consists of $[w,l_p]$'s which directly implies that it is invariant under $\Pi(L_0, L_1;l_0)^{sp}$.
\end{proof}

Let $p \in L_0 \cap L_1$ and suppose that the constant path $l_p$ lies in the component $\Omega(L_0,L_1;l_0)$.
We consider the relation between $G_\alpha$ and the isotropy group $G_p$,
which is an analogue of Lemma \ref{locgpinG}.
\begin{lemma}\label{locgpinGfl}
For $p \in L_0 \cap L_1$, we have
$$G_p \subset G_\alpha.$$
In general, if there exists a path from $L_0$ to $L_1$ which is fixed by $g \in G$,
then $g$ belongs to $G_\alpha$. 
\end{lemma}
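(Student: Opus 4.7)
The plan is to prove the more general statement first and then deduce the special case by taking $l$ to be the constant path $l_p$, which is fixed by any $g \in G_p$. So I fix $g \in G$ and a path $l \in \Omega(L_0, L_1)$ with $g(l) = l$; the content of the lemma requires that $l$ lie in the component $\Omega(L_0, L_1; l_0)$, which I assume (otherwise $g(l_0)$ need not even be in this component and $G_\alpha$-membership is vacuous). I then need to exhibit a bounding surface $w_g$ from $l_0$ to $g(l_0)$ with zero symplectic area and zero Maslov index. The recipe is to symmetrize a connecting surface: choose any $w:[0,1]^2 \to M$ with $w(0,\cdot) = l_0$, $w(1,\cdot) = l$, $w(s,0) \in L_0$, $w(s,1) \in L_1$ (possible since $l$ and $l_0$ are in the same component), and set
$$w_g := w \star \overline{g(w)},$$
where $\overline{g(w)}(s,t) := g(w(1-s,t))$. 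Then $\overline{g(w)}$ runs from $g(l) = l$ back to $g(l_0)$, so $w_g$ connects $l_0$ to $g(l_0)$ with boundary in $L_0 \cup L_1$.

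The symplectic area calculation is immediate from $G$-invariance of $\omega$:
$$\CA(w_g, g(l_0)) = \int w^*\omega + \int \overline{g(w)}^*\omega = \int w^*\omega - \int (g(w))^*\omega = 0.$$
For the Maslov index, I would set up the Lagrangian loop $\CL_{w_g}$ along $\partial w_g$ by concatenating, on the inner seam corresponding to $l$, a fixed but arbitrary Lagrangian path $\widetilde{\lambda}_l$ from $T_{l(0)} L_0$ to $T_{l(1)} L_1$, and using $g(\widetilde{\lambda}_l)$ for the $\overline{g(w)}$-half; since $g(l) = l$ and $g$ is a symplectomorphism preserving $L_0$ and $L_1$ setwise, these two inner paths agree and cancel after concatenation. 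Using the convention that the Lagrangian path along $g(l_0)$ is $g(\widetilde{\lambda}_{l_0})$, the loop $\CL_{w_g}$ is exactly the concatenation of the loop obtained from $w$ and $\widetilde{\lambda}_l$ with its $g$-image traversed in the opposite orientation. Since $\mu$ is invariant under the symplectomorphism $g$, the two contributions cancel, giving $\mu(w_g, g(l_0)) = 0$. Therefore $g \in G_\alpha$ by Definition \ref{def:galpha}.

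The first assertion follows by applying the construction to $l := l_p$: because $g(p) = p$, $g$ fixes $l_p$ pointwise, and by hypothesis $l_p$ lies in the relevant component of the path space (this is tacit in considering critical points $[w, l_p]$ of $\CA$ on $\WT{\Omega}(L_0, L_1; l_0)^{sp}$). The main point that requires care is the Maslov index cancellation: one must be slightly careful in declaring how the Lagrangian reference paths are chosen along the seams, but the fact that $g$ is a $G$-invariant-path-preserving symplectomorphism makes the two halves exact $g$-translates of one another, so the index (and the symplectic area) must cancel. Note that we make no claim about $sp(w_g)$; indeed this may be nonzero, which is consistent with the general framework developed subsequently, where the spin contribution enters separately through the factor $(-1)^{sp(w_g)}$.
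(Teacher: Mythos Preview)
Your proof is correct and follows essentially the same approach as the paper: both construct the energy zero surface as $w_g = w \star \overline{g(w)}$ for a bounding surface $w$ from $l_0$ to the $g$-fixed path, and deduce vanishing of area and Maslov index from $G$-invariance of $\omega$ and the symplectic structure. The paper's proof is terser (it simply asserts ``vanishing energy and Maslov index'' without elaboration), whereas you supply the Maslov cancellation argument explicitly.

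One small imprecision: you claim the inner Lagrangian paths $\widetilde{\lambda}_l$ and $g(\widetilde{\lambda}_l)$ ``agree''. In general they need not be equal as paths in the Lagrangian Grassmannian; what is true is that, since $g$ fixes the endpoints $T_{l(0)}L_0$ and $T_{l(1)}L_1$ and preserves the Maslov index, the two paths are homotopic rel endpoints. This homotopy is all you need for the index contributions to cancel, so your conclusion stands.
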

\begin{proof}
If $g \in G_p$, then $g \in G_{l_0}$ since it preserves the connected component of $\Omega(L_0,L_1)$ containing
$l_p$. To see that $g \in G_\alpha$, take $(u, l) \in \WT{\Omega}_{univ}(L_0,L_1;l_0)$,
where $u$ is a bounding surface from $l_0$ to $l_p$.
Then, $g(u)$ is a bounding surface from $g(l_0)$ to $g(l_p) = l_p$. Setting $w_g := u \star \OL{g(u)}$, we get a bounding surface  from $l_0$ to $g(l_0)$ which has vanishing energy and Maslov index. The general case can be proved similarly.
\end{proof}

\begin{definition}
We denote by   $(G_{\alpha})_{[w,l]}$
the isotropy group at $[w,l] \in \WT{\Omega} (L_0, L_1;l_0)$ for the $G_\alpha$-action.
Also,  we denote by $(G_{\alpha})_{\pm[w,l]}$ the set of elements of $G_\alpha$ which either act trivially on the spin $\Gamma$-equivalence class $[w,l]$ or send
$[w,l]$ to a $\Gamma$(but not spin $\Gamma$)-equivalent $[w',l]$.
\end{definition}
Obviously $(G_{\alpha})_{[w,l]} \subset (G_{\alpha})_{\pm[w,l]}$.

Analogously to the Lemma \ref{lem:isolocalgroup2}, one might expect that $(G_\alpha)_{[w,l_p]}=G_p$, but unfortunately it is not true in general. In fact, we are not interested in the isotropy group $(G_{\alpha})_{[w,l]}$, but rather
in $(G_{\alpha})_{\pm[w,l]}$.
If $[w,l_p]$ are spin $\Gamma$-equivalent to $[w',l_p]$, their orientation spaces will be identified in a canonical way. (See Lemma \ref{cylinder}.) If they are $\Gamma$-equivalent but not spin $\Gamma$-equivalent, the identification of orientation spaces are changed by reversing the sign. We see that in either case, $(G_{\alpha})_{\pm[w,l]}$ sends the orientation space of $[w,l_p]$ to
itself. In this setting, we have the following lemma.
\begin{lemma}\label{comparelocgp}
We have
$$(G_\alpha)_{\pm[w,l_p]}=G_p.$$
\end{lemma}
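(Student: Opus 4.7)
The plan is to prove the two inclusions $(G_\alpha)_{\pm[w,l_p]} \supseteq G_p$ and $(G_\alpha)_{\pm[w,l_p]} \subseteq G_p$ separately, where the former will require verifying that $\Gamma$-equivalence (but not necessarily spin $\Gamma$-equivalence) holds, and the latter will follow immediately from the fact that the endpoint of the bounding surface must be preserved.

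For the inclusion $G_p \subseteq (G_\alpha)_{\pm[w,l_p]}$, first note that Lemma \ref{locgpinGfl} already gives $G_p \subseteq G_\alpha$, so for $g \in G_p$ we can choose a bounding surface $w_g$ witnessing energy/Maslov vanishing. I will then compare $g \cdot [w,l_p] = [w_g \star g(w), g(l_p)]$ with $[w,l_p]$. Since $g \cdot p = p$, the constant path $g(l_p) = l_p$, so the two elements share their endpoint path, and their difference is the cylinder
\[
C := w_g \star g(w) \star \overline{w}.
\]
I will compute $I_\omega(C) = \CA(w_g) + \int g(w)^*\omega - \int w^*\omega = 0$, using $\CA(w_g) = 0$ and the $G$-invariance of $\omega$. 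Likewise, I will show $I_\mu(C) = \mu(w_g) + \mu(g(w)) - \mu(w) = 0$ using $\mu(w_g) = 0$ and the fact that the $G$-action preserves the Lagrangian Grassmannian structure so that $\mu(g(w)) = \mu(w)$. This proves that $g \cdot [w,l_p]$ and $[w,l_p]$ are $\Gamma$-equivalent, hence $g \in (G_\alpha)_{\pm[w,l_p]}$. Note that whether $g$ lies in the smaller isotropy $(G_\alpha)_{[w,l_p]}$ depends on the spin structure of $C$, which is governed by $sp(w_g)$; this is exactly why we must enlarge to $(G_\alpha)_{\pm[w,l_p]}$.

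For the converse inclusion $(G_\alpha)_{\pm[w,l_p]} \subseteq G_p$, I will argue that if $g \cdot [w,l_p] = [w_g \star g(w), g(l_p)]$ is $\Gamma$-equivalent to $[w,l_p]$, then in particular the underlying paths in $\Omega(L_0,L_1;l_0)$ must coincide, so $g(l_p) = l_p$, which forces $g \cdot p = p$, i.e.\ $g \in G_p$. Combined with the already-assumed condition $g \in G_\alpha$, this yields $g \in G_p$ (and incidentally also confirms $G_p \subseteq G_\alpha$).

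The main obstacle will be the careful verification that the Maslov index of the difference cylinder vanishes. Intuitively this is clear from $G$-equivariance of the symplectic and Lagrangian structures, but one must track the pre-fixed Lagrangian path $\lambda_{l_0}$ along the vertical edges and check that concatenating $w_g$ does not introduce a nonzero Maslov contribution --- this is precisely the content of $\mu(w_g,g(l_0)) = 0$ built into the definition of $G_\alpha$. Once this is established, the energy and index computations are symmetric and routine, and the proof is complete.
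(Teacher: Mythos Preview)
Your proposal is correct and follows essentially the same two-inclusion strategy as the paper. The one notable difference is in the choice of $w_g$ for the inclusion $G_p \subseteq (G_\alpha)_{\pm[w,l_p]}$: you take an arbitrary energy/Maslov-zero $w_g$ and then verify directly that the difference cylinder $C = w_g \star g(w) \star \overline{w}$ has $I_\omega(C)=I_\mu(C)=0$, whereas the paper takes the specific choice $w_g := w \star \overline{g(w)}$, so that $w_g \star g(w) = w \star \overline{g(w)} \star g(w)$ is manifestly (spin) $\Gamma$-equivalent to $w$ by cancelling the middle two factors. The paper's choice makes the Maslov argument trivial and cleanly isolates the spin discrepancy as exactly $sp(w_g)$; your approach is slightly more computational but equally valid, and your remark that the additivity of $\mu$ under concatenation (together with $\mu(w_g)=0$) is what is needed is correct. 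The reverse inclusion is handled identically in both.
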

\begin{proof}
For $g \in G_p$, take $w_g: = w \star \OL{g(w)}$ as in the proof of lemma \ref{locgpinGfl}.
We claim that $g \in (G_\alpha)_{[w,l_p]}$ if $sp(w_g)=0$.
Observe that the $g$-action on $[w,l_p]$ is nothing but $[w_g \star g(w), l_p]$. Since $w_g \star g(w) = w \star \OL{g(w)} \star g(w)$
which is spin $\Gamma$-equivalent to $w$, we have proved the claim.

If $sp(w_g)=1$, we should use $(-1) \cdot w_g$ to define an action of $g (\in G_\alpha)$, but
we can not define such an action directly. From Assumption \ref{ass:sp0},
there is certain $w_g'$ satisfying \eqref{eq:wgsp0} which, then, is
$\Gamma$-equivalent but not spin-$\Gamma$-equivalent to $w_g$.
Note that $w_g' \star g(w)$ is $\Gamma$-equivalent to $w_g \star g(w)$,
and hence to $w$. Thus, $g \in (G_\alpha)_{\pm[w,l_p]}$.

The other direction $(G_\alpha)_{\pm[w,l_p]}\subset G_p$ is easy to check.
\end{proof}

\subsection{$G_\alpha$-action on the critical points and their orientation spaces}\label{Galpha}
The $G_\alpha$-action on the orientation spaces are crucial ingredient for defining a group action on Floer theory of Lagrangian intersections. This signifies that as in the Morse case, we do {\em not} want the group action on the set of
critical points of the Novikov-Floer complex, but rather on the set of the orientation spaces associated to critical points.

So far, we have considered the $G_\alpha$-action on the Novikov covering $\WT{\Omega}(L_0,L_1;l_0)^{sp}$ under Assumption \ref{ass:sp0}. From now on, we drop the assumption, and define $G_\alpha$ action directly on the complex $CF_{R,l_0}^{sp, *}(L_0,L_1)$ in
\eqref{defflcmx2}. 
The key part of the construction is how to define an action on the orientation spaces
associated to critical points.

We first go back to the definition \ref{def:orspace}, and show that the orientation space $\Theta_{[w,l_p]}^-$ can be assigned to each spin $\Gamma$-equivalence class $[w,l_p] \in  Cr(L_0, L_1 ; l_0)^{sp}$ without ambiguity.
\begin{lemma}\label{cylinder}
Orientation bundles $\Theta_{[w,l_p]}^-$ for different representatives of $[w,l_p]$ can
be canonically identified. Hence, the assignment 
$\Theta_{[w,l_p]}^-$ to each $[w,l_p] \in  Cr(L_0, L_1 ; l_0)^{sp}$ is
well-defined.
\end{lemma}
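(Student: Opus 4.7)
The plan is to construct the canonical identification by capping the ``difference loop'' $\lambda \sharp \lambda'_-$ with a disc and then combining the gluing formula \eqref{zindexformula2} with Seidel's Lemma \ref{lem:se11}. Given two representatives $(w,l_p)$ and $(w',l_p)$ of the same spin-$\Gamma$-equivalence class, I form the Lagrangian paths $\lambda=\lambda_{w,\lambda_{l_0}}$ and $\lambda'=\lambda_{w',\lambda_{l_0}}$, whose concatenation $\lambda \sharp \lambda'_-$ is a loop in the Lagrangian Grassmannian of $T_pM$ (after a fixed symplectic trivialization of $w^{\ast}TM$ and $w'^{\ast}TM$). This loop is equipped with the canonical spin structure assembled from $P_{spin}(L_0)$, $P_{spin}(L_1)$ and $P_{spin}(\lambda_{l_0})$ using the pre-fixed gluings $\iota_0,\iota_1,\widetilde{\sigma}$ of Subsection \ref{oriline}. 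By Definition \ref{def:spingamma} and $\Gamma$-equivalence, this loop has Maslov index zero and carries the trivial spin structure.

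Next, I cap $\lambda \sharp \lambda'_-$ by a disc $S_4$ with trivial symplectic bundle (which exists because the Maslov index is zero), and let $S_5 = Z_-$ carry the boundary condition $\lambda'$. I glue $S_4$ to $S_5$ at a boundary marked point lying over $p$ so that the glued surface $S_6 = Z_-$ carries boundary data homotopic rel endpoints to $\lambda$. The gluing formula \eqref{zindexformula2} then yields
\begin{equation*}
\det(\bar{\partial}_{S_4}) \otimes \det(\bar{\partial}_{\lambda', Z_-}) \;\cong\; \det(\bar{\partial}_{\lambda, Z_-}) \otimes \wedge^{top}T_pL_0.
\end{equation*}
On the other hand, Lemma \ref{lem:se11} applied to the Maslov-zero loop $\lambda \sharp \lambda'_-$ with its \emph{trivial} spin structure produces a canonical isomorphism $\det(\bar{\partial}_{S_4}) \cong \wedge^{top}T_pL_0$. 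Substituting this into the displayed isomorphism and cancelling the common $\wedge^{top}T_pL_0$ factor furnishes the desired canonical isomorphism $\Theta^-_{(w',l_p)} \cong \Theta^-_{(w,l_p)}$.

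To check well-definedness, I verify independence of the auxiliary choices. Two capping discs $S_4$ and $\widetilde{S}_4$ for the same trivially-spin loop differ by an $S^2$ with trivial bundle, on which the associated CR operator has canonically trivialised determinant; so the ensuing identifications agree. Different symplectic trivializations of $w^{\ast}TM$ and $w'^{\ast}TM$ realising $\lambda$ and $\lambda'$ in a fixed Grassmannian lie in contractible spaces. Transitivity under a chain of spin-$\Gamma$-equivalences $(w,l_p)\sim(w',l_p)\sim(w'',l_p)$ follows by stacking the two capping discs into one, via the cocycle property of the gluing formula \eqref{zindexformula2}, using that the sum of two trivial spin structures is again trivial.

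The main obstacle will be the careful sign bookkeeping. Specifically, one must verify that the orientation of $\det(\bar{\partial}_{S_4})$ produced by Lemma \ref{lem:se11} from the trivial spin structure is precisely compatible with the gluing conventions at the four corners of $[0,1]^2$ where $P_{spin}(L_0)$, $P_{spin}(L_1)$, and $P_{spin}(\lambda_{l_0})$ are glued via $\iota_0, \iota_1, \widetilde{\sigma}$. This is exactly the compatibility built into Definition \ref{def:spingamma}, so once the spin data is tracked consistently at all four corners, the requisite cancellation of signs is automatic; had we used $\Gamma$-equivalence instead of spin-$\Gamma$-equivalence, this cancellation would fail by a sign, explaining why the finer equivalence relation is essential.
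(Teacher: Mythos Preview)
Your proof is correct and follows essentially the same route as the paper: form the difference loop $\lambda \sharp \lambda'_-$ (the paper writes it as $\lambda_{w'\star\overline{w}}$), cap it by a disc, apply the gluing formula \eqref{zindexformula2}, and then use Lemma~\ref{lem:se11} with the trivial spin structure to cancel the $\wedge^{top}$ factor. The paper stops there, whereas you additionally check independence of the capping disc and transitivity; this is a welcome addition, though one small imprecision is that the gluing marked point should be taken on the finite (semicircular) part of $\partial Z_-$ rather than ``over $p$'' at the infinite end, so the cancelling factor is $\wedge^{top}\lambda_z$ for some finite $z\in\partial Z_-$ rather than literally $\wedge^{top}T_pL_0$ --- but since the same factor appears in both \eqref{zindexformula2} and Lemma~\ref{lem:se11}, this does not affect the argument.
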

\begin{proof}
Let two different representatives $(w',l_p)$ and $(w,l_p)$ represent the same spin-$\Gamma$-equivalence class.
Regarding $w$ and $w'$ as paths in $\Omega(L_0,L_1;l_0)$,
$w'$ is homotopic to $w' \star \OL{w} \star w$.
Thus, $w'$ can be thought of as the concatenation of $w' \star \OL{w}$ and $w$.
Let $\lambda_{w'\star \OL{w}}$ be the associated Lagrangian bundle data
of $w' \star \OL{w}$ along the boundary.
Note that by the definition of spin $\Gamma$-equivalences, $w' \star \OL{w}$
has energy zero, and $\mu(\lambda_{w'\star \OL{w}})=0$. Furthermore, the associated canonical spin structure is trivial.

To relate $\Theta_{(w,l_p)}^-$ and $\Theta_{(w',l_p)}^-$, 
we consider gluing of $\dbar_{\lambda_{w'\star \OL{w}}, D^2}$ to $\dbar_{\lambda_w,Z_-}$ at a point. (See Figure \ref{cylori}.) Here, the domain of the $\bar{\partial}$-operator associated with $\lambda_{w'\star \OL{w}}$ is replaced by $D^2$ (instead of a rectangle) to use gluing theorems in Subsection \ref{subsec:Gluingthm} more conveniently. 
\begin{figure}[h]
\begin{center}
\includegraphics[height=1.5in]{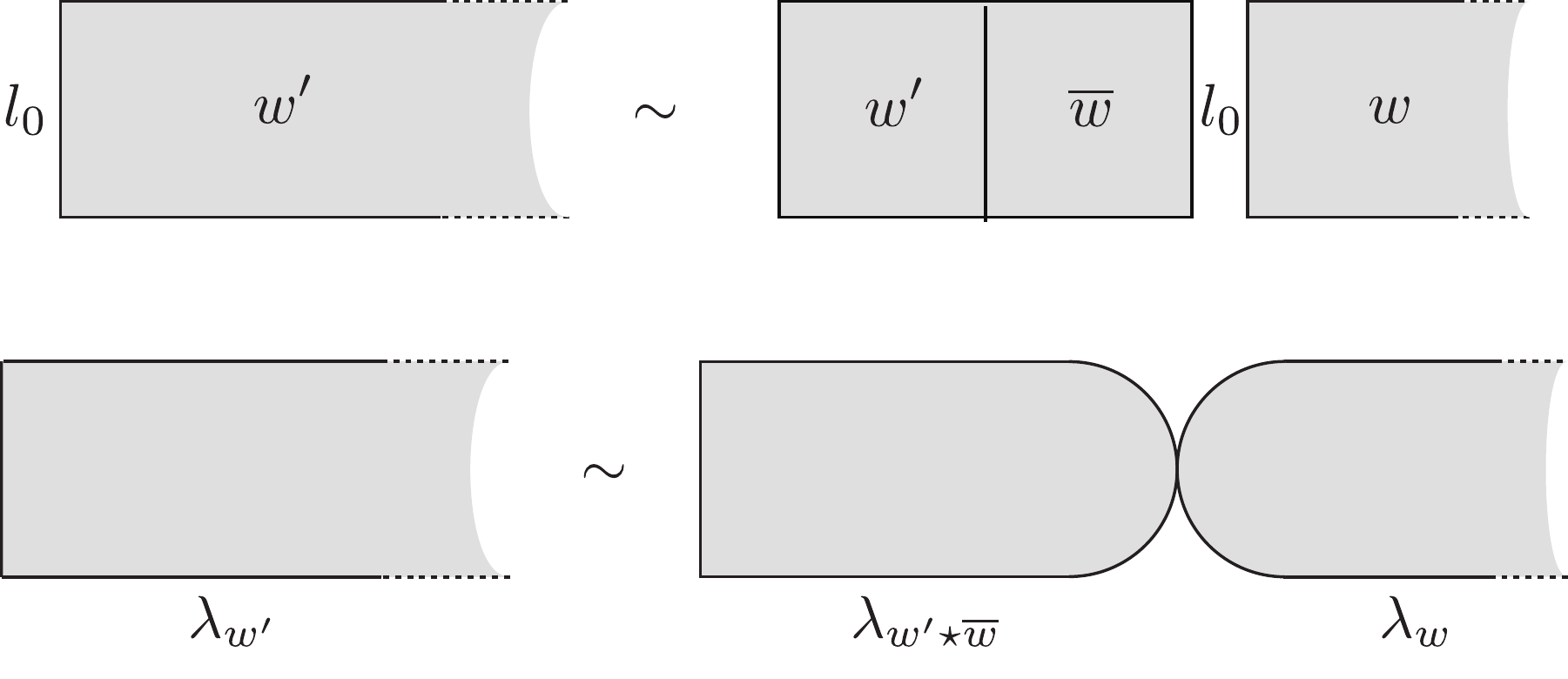}
\caption{Gluing of $\dbar_{\lambda_{w'\star \OL{w}}, D^2}$ and $\dbar_{\lambda_w,Z_-}$}\label{cylori}
\end{center}
\end{figure}
The index formula \eqref{zindexformula2} tells us that
\begin{equation}\label{eq:iden1}
{\rm{det}} (\bar{\partial}_{\lambda_w, Z_-}) \otimes {\rm{det}} (
\dbar_{\lambda_{w'\star \OL{w}}, D^2})
\cong {\rm{det}} (\bar{\partial}_{\lambda_{w'}, Z_-}) \otimes \wedge^{top} \lambda_z
\end{equation}
for some $z \in \partial Z_-$. 
From Lemma \ref{lem:se11} with the trivial spin structure on $\lambda_{w'\star \OL{w}}$, we have an identification 
\begin{equation}\label{eq:iden2}
{\rm{det}} (\bar{\partial}_{\lambda_{w'\star \OL{w}}, D^2}) \cong \wedge^{top} \lambda_z.\end{equation}
Combining \eqref{eq:iden1} and \eqref{eq:iden2}, we get the canonical identification
$${\rm{det}} (\bar{\partial}_{\lambda_w, Z_-}) \cong
{\rm{det}} (\bar{\partial}_{\lambda_{w'}, Z_-}),$$
which is denoted by $\Theta_{[w,l_p]}^-$.
\end{proof}

Now, we explore the group action on the orientation space $\Theta_{[w,l_p]}^-$ associated
to $[w,l_p] \in Cr(L_0,L_1;l_0)^{sp}$. It will be soon revealed that the group action below is only well-defined if the spin profile of $L_0$ and $L_1$ are the same.

\begin{definition}\label{def:aact}
Given $g\in G_\alpha$, take $w_g$ satisfying 
$\CA(w_g, g(l_0)) = \mu(w_g, g(l_0))=0$.
Then,  the action on Floer complex $$G_\alpha \times CF_{R,l_0}^{sp,*}(L_0,L_1) \mapsto CF_{R,l_0}^{sp,*}(L_0,L_1) $$
is defined by the linear map $ \Phi^{\Theta}_g$ on generators
\begin{equation}\label{action1}
 \Phi^{\Theta}_g:\Theta_{[w,l_p]}^- \mapsto \Theta_{[w_g 
\star g(w),l_{g(p)}]}^-.
\end{equation}
$\Phi^{\Theta}_g$ is defined as a composition
\begin{equation}\label{action2}
\Theta_{[w,l_p]}^- \;\;\stackrel{A^{\Theta}_g}{\to}\;\; \Theta_{[g(w),l_{g(p)}]}^-\;\;
\stackrel{\textrm{gluing}}{\to} \;\;\Theta_{[w_g \star g(w),l_{g(p)}]}^-
\stackrel{(-1)^{sp(w_g)}}{\to}\;\;\Theta_{[w_g \star g(w),l_{g(p)}]}^-.
\end{equation}
(See Figure \ref{GactOri}.)
\end{definition}
\begin{figure}[h]
\begin{center}
\includegraphics[height=3in]{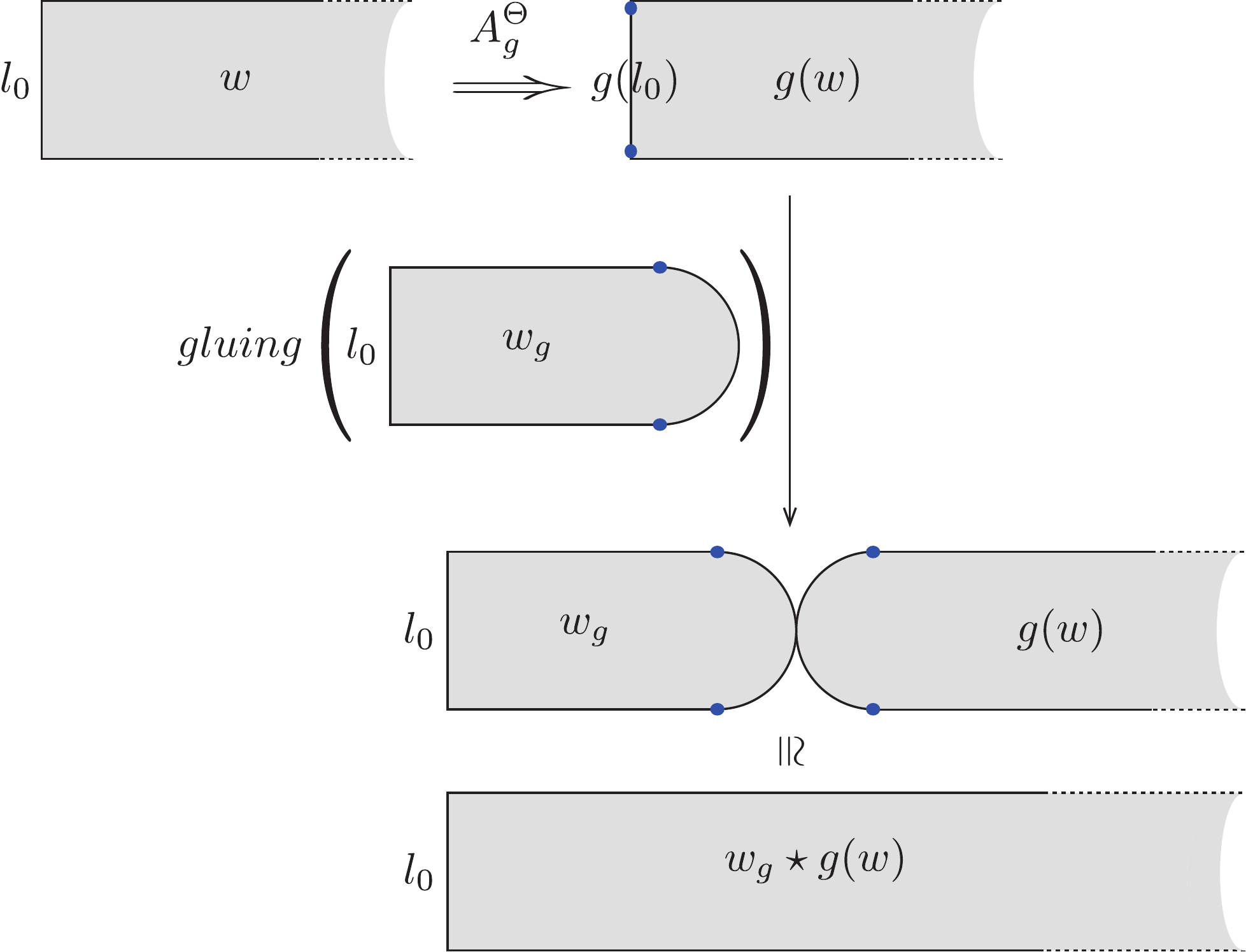}
\caption{First two steps of $\Phi^{\Theta}_g$}\label{GactOri}
\end{center}
\end{figure}
The action defined is independent of choice of $w_g$. Observe that we multiply $(-1)^{sp(w_g)}$ if $w_g$ has a non-trivial spin structure.

Let us closely look into each part of the map \eqref{action2}. We only explain the
case when $G_\alpha$ is orientation preserving. One can proceed similarly
for the other case, which we omit. \\

\noindent{\bf (1)} The first map $A^{\Theta}_g$ is the map given by the naive $G$-action.
Namely, taking $g$-action to $[w,l_p]$ and the associated Lagrangian bundle $\lambda_w$, we get a generator $[g(w), l_{g(p)}]$ and the Lagrangian bundle $\lambda_{g(w)}$ along $\partial Z_-$.
Then, we consider two Cauchy-Riemann problems on $Z_-$ with Lagrangian boundary
conditions given by $\lambda_w$ and  $\lambda_{g(w)}$ in $w^*TM$ and $g(w)^*TM$ respectively.
Note that the naive $g$-action sends the bundle pair $(w^*TM, \lambda_w)$
to $(g(w)^*TM,\lambda_{g(w)})$. This induces an action of $g$ between two
Cauchy-Riemann problems ($J$ is $G$-invariant), and in particular, one can send kernel and cokernel of 
$\dbar_{\lambda_w,Z_-}$ to those of $\dbar_{\lambda_{g(w)},Z_-}$.
They define a map $A_g^{\Theta}$ from ${\rm{det}} (\bar{\partial}_{\lambda_w, Z_-})$
to ${\rm{det}} (\bar{\partial}_{\lambda_{g(w)}, Z_-})$ by Definition \ref{def:orspace}.\\

\noindent{\bf (2)} 
The second map {\em gluing} is needed from $G$-Novikov theory, since $A_g^{\Theta}$ image  gives a path from $g(l_0)$ not $l_0$.
Hence, we restricted to the subgroup $G_\alpha$, and consider the gluing map of determinant spaces.
To define this map, we need to specify the choices of spin structures.
For this, we need to define the spin structure of $\lambda_{g(w)}$ carefully.
The desired spin structure of $\lambda_{g(w)}$ should have the following property.
We already have a  prescribed spin structure on  $\lambda_{w_g}$. Hence, if we glue  $\lambda_{w_g}$, with $\lambda_{g(w)}$,
we will obtain one spin structure on the concatenation $\lambda_{w_g \star g(w)}$. 
But $w_g \star g(w)$ also has another spin structure induced from the spin structures of $L_0$ and $L_1$.
We would like to have these two spin structures being equal to each other.

For this purpose, we define the spin structure of $\lambda_{g(w)}$ as follows.
 Along top and bottom edges of $Z_-$ we have spin structures induced from
 $P_{spin} (L_1)$, and $P_{spin} (L_0)$. Since $g(\lambda_{l_0})$ lies over the circular boundary of $Z_-$ for $g(w)$, it is natural to consider $P_{spin}(g(\lambda_{l_0}))$ along this region. Then, we glue it to $P_{spin} (L_i)$ making use of $T_{g,L_i} \circ \iota_i \circ \WT{g\sigma}^{-1}$ for $i=0,1$ as before. Observe that the circular boundary part of $Z_-$ has the same spin structure as the right vertical edge of $w_g$. What we obtain after gluing $w_g$ and $g(w)$ is indeed the canonical spin structure of $\lambda_{w_g \star g(w)}$ (produced by the spin structure of
 $L_0$, $L_1$, $P_{spin}(l_0)$ and  $\iota_i \circ \sigma^{-1}$).
This is because contributions 
$T_{g,L_i} \circ \iota_i \circ \WT{g\sigma}^{-1}$ appear twice in the gluing above (once in the spin structure of $w_g$ (right edge) and next in the spin structure of $g(w)$ (left edge)), and hence they cancel out.

We are now ready to define the second map $gluing$. $gluing$ is an isomorphism 
$$\Theta_{[g(w),l_{g(p)}]}^-\;\;
\to \;\;\Theta_{[w_g \star g(w),l_{g(p)}]}^-.$$
obtained from  the gluing of $w_g$ and $g(w)$ at a point
using \eqref{zindexformula2}.
To be specific, we take $S_4 = D^2$ (instead of $[0,1]^2$) and $F_4 = \lambda_{w_g}$,
and $S_5=Z_-$ and $F_5 = \lambda_{g(w)}$, and
$S_6=Z_-$ and $F_6= \lambda_{w_g \star g(w)}$.\\

\noindent{\bf (3)}  The last component, a multiplication by $(-1)^{sp(w_g)}$ is needed to have a well-defined group action.
Namely, we want the group action to be independent of $w_g$ chosen. In case that $sp(w_g)$ is non-trivial, 
the gluing (last step in the figure) in fact has a hidden contribution
of the sign factor $(-1)^{sp(w_g)}$  from Lemma \ref{lem:se11}:
$${\rm{det}}(\dbar_{\lambda_{w_g}, D^2}) \stackrel{(-1)^{sp(w_g)}}{\to} \Lambda^{top} F_{5,z}$$
(for some $z \in \partial S_4$). Namely, we can identify 
${\rm{det}}(\dbar_{\lambda_{w_g}, D^2})$ with ``$(-1)^{sp(w_g)}\Lambda^{top} F_{5,z}$".
Therefore, from \eqref{zindexformula2},
we obtain an canonical isomorphism
\begin{equation}
{\rm{det}}(\dbar_{\lambda_{g(w)}, Z_-}) \cong {\rm{det}}(\dbar_{\lambda_{w_g \star g(w)}, Z_-}).
\end{equation}
Hence, to cancel out this effect, we have added a multiplication by $(-1)^{sp(w_g)}$. \\

As  explained in {\bf(2)}, the resulting (glued) spin structure on $\lambda_{w_g \star g(w)}$ is the
canonical spin structure, induced from those of $L_0$, $L_1$ and $\lambda_{l_0}$.
Therefore, $\Phi^{\Theta}_g$ gives a map between orientation spaces which are determined by canonical spin structures of $[w,l_p]$ and $[w_g \star g(w),l_{g(p)}]$. 

To see that $ \Phi^{\Theta}$ defines a group action on the Floer complex, notice that
$A_{g}^\Theta \circ A_{h}^\Theta = A_{gh}^\Theta$, and both $\Phi^{\Theta}_{g} \circ  \Phi^{\Theta}_{h}$ and
$ \Phi^{\Theta}_{gh}$ are maps between orientation spaces which are determined by
canonical spin structures. The former map is twisted by $(-1)^{sp(w_g) + sp(w_h)}$
whereas the latter map is twisted by $(-1)^{sp(w_{gh})}$. From the Proposition \ref{prop:compadm}, 
two twistings coincide precisely when spin profiles of $L_0$ and $L_1$ agree with each other. 

Lastly, the $G_\alpha$-action on the Floer cochain complex is the linear extension of $G_\alpha$-action on $Cr(L_0,L_1;l_0)$ and this is compatible with what actually happens in geometry from the identity \eqref{gccg}. (See Definition \ref{def:novl01} for the precise definition of $\Lambda^R(L_0, L_1;l_0)^{sp}$.)
\begin{prop}\label{prop:galphacf}
$G_\alpha$-action on $CF_{R,l_0}^\ast(L_0,L_1)$ is $\Lambda^R(L_0, L_1;l_0)^{sp}$-linear.
\end{prop}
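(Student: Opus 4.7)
The plan is as follows. Since $\Lambda^R(L_0, L_1;l_0)^{sp}$ is (a completion of) the group ring of $\Pi(L_0, L_1;l_0)^{sp}$, it suffices to check that the $G_\alpha$-action on generators commutes with the $\Pi(L_0, L_1;l_0)^{sp}$-action. Fix $g \in G_\alpha$ with a choice of bounding surface $w_g$, and fix $c \in \Pi(L_0, L_1;l_0)^{sp}$ represented by a cylinder $C$ based at $l_0$ satisfying $I_\omega(C)=I_\mu(C)=sp(C)=0$.

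First I would compute both compositions at the level of underlying generators of the Novikov covering: unraveling the definitions (\ref{def:galphaaction}) and (\ref{action2}),
\begin{align*}
g \cdot c \cdot [w, l_p] &\;\longmapsto\; (-1)^{sp(w_g)}\,[w_g \star g(C) \star g(w),\, l_{g(p)}], \\
c \cdot g \cdot [w, l_p] &\;\longmapsto\; (-1)^{sp(w_g)}\,[C \star w_g \star g(w),\, l_{g(p)}].
\end{align*}
The difference of the two target generators is represented, as a loop in $\Omega(L_0,L_1;l_0)$, by the cylinder $w_g \star g(C) \star \overline{w_g} \star \overline{C}$, which has zero energy and zero Maslov index by $G$-invariance of $\omega$ and the Maslov class. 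To see it is spin-$\Gamma$-equivalent to the trivial loop, observe that the pre-fixed lifts $\{T_{g,L_i}\}$ were used in Section 6 precisely so that the canonical spin structure on $g(C)$ coincides with $sp(C)=0$ up to a correction concentrated at the endpoints of $g(l_0)$, which is cancelled when gluing against the matching endpoint data of $w_g$ and $\overline{w_g}$. Hence the difference cylinder carries the trivial spin structure, and Lemma \ref{cylinder} provides a canonical identification of the two orientation spaces.

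Next I would verify that the two maps into this common orientation space agree. Both compositions factor as a naive $g$-equivariant map $A^\Theta_g$ followed by a sequence of gluings (of $C$ to $w$, of $w_g$ to the result, or in the opposite order) using the formula \eqref{zindexformula2}, together with the single sign twist $(-1)^{sp(w_g)}$ arising from $w_g$ (the cylinder $C$ contributes no twist because $sp(C)=0$). The gluing isomorphisms of determinant spaces in Subsection \ref{subsec:Gluingthm} are associative and commute with disjoint gluings at distinct boundary points; since the resulting Lagrangian boundary data on $Z_-$ and their induced spin structures agree on the nose with the canonical spin structure produced from $L_0,L_1,\lambda_{l_0}$ (the analysis in step \textbf{(2)} after Definition \ref{def:aact} applies verbatim, as $sp(g(C))=0$ forces no extra sign from Lemma \ref{lem:se11}), the two orderings of the gluings yield the same isomorphism of determinant lines. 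Hence $g\circ c = c\circ g$ on $|\Theta^-_{[w,l_p]}|_R$.

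The main obstacle will be the bookkeeping of signs from Lemma \ref{lem:se11} through the iterated gluings, because each gluing of a disc to $Z_-$ can a priori contribute a sign depending on the spin structure of the attached Lagrangian loop. The point that makes everything work is precisely that $c \in \Pi(L_0,L_1;l_0)^{sp}$ (rather than only in $\Pi(L_0,L_1;l_0)$), so that all such extra signs coming from $C$ (and from its $g$-image, by the spin-structure-preserving choice of $T_{g,L_i}$) are trivial; the remaining sign $(-1)^{sp(w_g)}$ appears symmetrically on both sides. Finally, the statement extends from $\Pi(L_0,L_1;l_0)^{sp}$ to $\Lambda^R(L_0,L_1;l_0)^{sp}$ by $R$-linearity and by the finiteness condition in Definition \ref{def:novl01}, which ensures the infinite sums defining the action converge in $CF_{R,l_0}^{sp,\ast}(L_0,L_1)$.
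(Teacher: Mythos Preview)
Your overall strategy---compute both orderings, show the difference cylinder $w_g \star g(C) \star \overline{w_g} \star \overline{C}$ is spin-$\Gamma$-trivial, then match the determinant-line gluings---is exactly the argument the paper has in mind when it refers back to \eqref{gccg}. However, there is a genuine error in your setup that propagates into the sign analysis.

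You write that $c \in \Pi(L_0,L_1;l_0)^{sp}$ is ``represented by a cylinder $C$ based at $l_0$ satisfying $I_\omega(C)=I_\mu(C)=sp(C)=0$.'' This is backwards: those three vanishing conditions characterize the \emph{kernel} ${\rm Ker}(I_\omega,I_\mu,sp)$, i.e.\ the trivial element of $\Pi(L_0,L_1;l_0)^{sp}$. A generic deck transformation $c$ has arbitrary energy, Maslov index, and $sp$-value. Consequently your later claims that ``$C$ contributes no twist because $sp(C)=0$'' and ``$sp(g(C))=0$ forces no extra sign'' are unjustified.

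The repair is straightforward and is implicit in the paper's proof of \eqref{gccg}. One does not need $sp(C)=0$; one needs that $sp(g(C))=sp(C)$, which holds because the canonical spin structure on $g(C)$ is obtained from that on $C$ via the isomorphisms $T_{g,L_i}$ (this is the content of \eqref{spactiongh}). Thus the two orderings pick up the \emph{same} sign from the gluing of $C$ (respectively $g(C)$), and these signs cancel in the comparison. Likewise, the difference cylinder has trivial $sp$-value not because each piece does, but because $w_g \star g(C) \star \overline{w_g}$ and $\overline{C}$ have opposite $sp$-values (as stated in the paper's proof of the lemma containing \eqref{gccg}), regardless of what $sp(C)$ is. With this correction your argument goes through and coincides with the paper's.
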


\subsection{Orbifold Novikov ring}\label{subsec:newNov}
We  present an enlarged  Novikov ring $\Lambda_{orb}^R(L_0, L_1;l_0)$ for orbifold Floer cohomology which is an analogue of $\Lambda_{[\eta]}^{rob}$ in Subsection \ref{subsec:OrbFund}.  
Heuristically, $\Lambda_{orb}^R(L_0, L_1;l_0)^{sp}$ contains elements in $G \setminus G_\alpha$ as there exists a path from $l_0$ to $g(l_0)$ for $g \notin G_\alpha$ which gives rise to a genuine loop of paths between $[L_0/G]$ and $[L_1/G]$ which increases (or decreases) the energy non-trivially.

Consider the set of homotopy classes of ``generalized" paths in $\Omega(L_0, L_1;l_0)$ which begin at $l_0$ and end at $g(l_0)$ for $g \in G$. Then, we take the quotient of it by the set of classes which can be represented by energy, Maslov and $sp$-value zero paths. ($I_\omega$ and $I_\mu$ are extended to generalized paths in the obvious way.) Denote this quotient by $\Pi(L_0, L_1 ; l_0)_{orb}^{sp}$. In particular, $\Pi(L_0, L_1 ; l_0)_{orb}^{sp}$ contains an element represented by a path $w$ whose end point is $g(l_0)$ for $g \in G \setminus G_\alpha$. There is an exact sequence
$$1 \to \Pi(L_0, L_1 ; l_0)_{orb} \to \Pi(L_0, L_1 ; l_0)_{orb}^{sp} \to G/ G_\alpha \to 1$$
analogously to the third line of the diagram \eqref{fundgpdia}.

\begin{definition}
$\Lambda_{orb} (L_0, L_1;l_0)^{sp}$ is defined by the completion of the group ring  of $\Pi(L_0, L_1 ; l_0)_{orb}^{sp}$ over $R$.
\end{definition}

By mimicking the construction in Definition \ref{def:obnoveta}, the $G_\alpha$-invariant part $CF_{R,l_0}^\ast (L_0,L_1)^{G_\alpha}$ admits a structure of $\Lambda_{orb} (L_0, L_1;l_0)^{sp}$-module.  Note that $\Lambda_{orb} (L_0, L_1;l_0)^{sp}$ has the $\ZZ$-grading induced by $I_\mu$ and there is a natural inclusion $\Lambda^R(L_0, L_1;l_0)^{sp} \to \Lambda_{orb}^R(L_0, L_1;l_0)^{sp}$ which fits into the diagram
\begin{equation*}
\xymatrix{ \Lambda^R(L_0, L_1;l_0)^{sp} \ar[r] \ar[d] & \Lambda_{nov} (R)\\
 \Lambda_{orb}^R(L_0, L_1;l_0)^{sp} \ar[ur] & }
\end{equation*} 
where $\Lambda_{orb}^R(L_0, L_1;l_0)^{sp} \to \Lambda_{nov}$ is obtained by taking the energy and the Maslov index, and with an additional multiplication by $(-1)^{sp}$. We omit further details as this new coefficient ring will not be visible if we use the universal Novikov field.  From now on, we will only use $\Lambda_{nov} (R)$ as our coefficient.


\section{Equivariant Transversality}\label{sec:Ku}
Lagrangian Floer theory is built upon subtle data of $J$-holomorphic discs and strips. In order to
have good moduli spaces of such maps, an appropriate perturbation scheme is required. Furthermore, we need equivariant transversality for the sake of group actions on Floer theory, and
 it is difficult to achieve both equivariance and transversality at once using simple methods.

We have shown that the algebraic technique due to Seidel can be hired
to overcome this problem in section \ref{sec:eqfuex} for exact symplectic manifolds and Lagrangian submanifolds. Note that the bubbling off of discs does not occur in this case.
Seidel developed a domain-dependent perturbation scheme for $J$-holomorphic strips and polygons which gives rise to the desired $\AI$-relations.

For the general case, however, disc bubblings do occur and the domain dependent perturbation is not good enough. Fukaya and Ono \cite{FO} introduced a kind of an abstract perturbation scheme so called Kuranishi structure.
Fukaya-Oh-Ohta-Ono \cite{FOOO} have developed it further to achieve transversality.
Note that Kuranishi structure is, in fact, a tool designed for attaining equivariant transversality more systematically with help of multi-sections. Hence, adding a finite group action to Kuranishi structure is not presumed to cause more difficulty. We will explain how to achieve equivariant transversality in this setup.

\subsection{Brief Review of Kuranishi structure}
We refer readers to \cite{FO}, \cite [Appendix A]{FOOO}, \cite{FOOO3} for the
definition and properties of Kuranishi structure. What we will carry out in this section is roughly as follows. Suppose a space $\CM$ admits a Kuranishi structure and a $G$-action. We shall show that there is an induced Kuranishi structure on $\CM/G$ whose multi-valued perturbation gives rise to $G$-equivariant multi-sections on $\CM$ if chosen suitably.

Let us explain in more detail. First we briefly explain the terms.
\begin{definition}\cite[A1.1]{FOOO}
A Kuranishi neighborhood of a point $p$ in a compact metrizable space $X$ is a quintuple $(V_p, E_p, \Gamma_p, \psi_p, s_p)$ such that
\begin{enumerate}
\item $V_p$ is a finite dimensional smooth manifold (possibly with boundary
or corner);
\item $E_p$ is a finite dimensional real vector space;
\item $\Gamma_p$ is a finite group acting smoothly and effectively on $V_p$ and linearly on $E_p$;
\item $s_p$ is a $\Gamma_p$-equivariant map $V_p \to E_p$;
\item $\psi_p:s_p^{-1}(0)/\Gamma_p \to U_p \subset X$ is a homeomorphism
to a neighborhood $U_p$ of $p \in X$.
\end{enumerate}
\end{definition}

Given a pair of neighborhoods $(V_p, E_p, \Gamma_p, \psi_p, s_p)$
and $(V_q, E_q, \Gamma_q, \psi_q, s_q)$ of $p \in X$ and $q \in 
\psi_p(s_p^{-1}(0)/\Gamma_p)$, a coordinate change \cite[A1.3]{FOOO}
is denoted as $(\hat{\phi}_{pq},, \phi_{pq}, h_{pq})$. Here,
$h_{pq}:\Gamma_q \to \Gamma_p$ is an injective homomorphism, and $\phi_{pq}:V_{pq} \to V_p$ is an $h_{pq}$-equivariant
smooth embedding from $\Gamma_q$-invariant open neighborhood $V_{pq}$ of the
origin in $V_q$ to $V_p$, and $\hat{\phi}_{pq}$ is a compatible equivariant
embedding of vector bundles $E_q \times V_{pq} \to E_p \times V_p$.
These are required to satisfy additional properties which we omit.
Note that a coordinate change may exist only in one direction.

\begin{definition}\cite[A1.5]{FOOO}
A Kuranishi structure on $X$ assigns a Kuranishi neighborhood 
$(V_p, E_p, \Gamma_p, \psi_p, s_p)$ for each $p \in X$ and
a coordinate change $(\hat{\phi}_{pq}, \phi_{pq},h_{pq})$ for
each $q \in 
\psi_p(s_p^{-1}(0)/\Gamma_p)$ such that
\begin{enumerate}
\item $\dim V_p - \textrm{rank}\;  E_p$ is independent of $p$, called virtual
dimension of $X$;
\item For $r \in \psi_q(s_q^{-1}(0)\cap V_{pq}/\Gamma_q),
q \in \psi_p(s_p^{-1}(0)/\Gamma_p)$, there exists $\gamma_{pqr} \in \Gamma_p$
satisfying
$$h_{pq}\circ h_{qr} =\gamma_{pqr} \cdot h_{pr} \cdot \gamma_{pqr}^{-1},\;\;
\phi_{pq}\circ \phi_{qr} = \gamma_{pqr} \cdot \phi_{pr},\;\;
\hat{\phi}_{pq}\circ \hat{\phi}_{qr} = \gamma_{pqr} \cdot \hat{\phi}_{pr}.$$
\end{enumerate}
\end{definition}

If trying to perturb $\{s_p\}$ to make it transverse to zero section, one
needs to construct perturbations inductively. Since a coordinate change may
exist only in one direction, a clever choice of subcollection of Kuranishi neighborhoods is required.
This subcollection should be ordered for inductive perturbations, and cover $X$. Such a choice
is called a {\em good coordinate system}. (See \cite[A.1.11]{FOOO}.)
One can always choose a good coordinate system, whose detailed construction
has been given in part II of \cite{FOOO3}. The proof goes by the induction on the dimension of $V_p$, and in each dimension, one glues several Kuranishi neighborhoods to obtain bigger Kuranishi neighborhoods and finally get a good coordinate system.
There is a notion of tangent bundle of a space $X$ with a Kuranishi structure, 
and also a notion of orientations on it, which we omit. 

We now recall multi-sections of a Kuranishi structure. 
For  $(V_p, E_p, \Gamma_p, \psi_p, s_p)$, consider the product of  $l$ copies of $E_p$,
denoted as $E_p^l$, and endow it with the action of
symmetric group $S_l$. Denote the quotient space by $\mathcal S^l(E_p):=E_p^l/S_l$,
which has an induced action of $\Gamma_p$.
There exists a $\Gamma_p$-equivariant map
$$
tm_m: \mathcal S^l(E_p)
\to \mathcal S^{lm}(E_p),
$$
which sends $[a_1,\ldots,a_l]$ to
$$
[\,\underbrace {a_1,\ldots,a_1}_{\text{$m$ copies}},\ldots,
\underbrace {a_l,\ldots,a_l}_{\text{$m$ copies}}].
$$
\begin{definition}\cite[A1.19]{FOOO}
An $n$-multisection $s$ of $\pi : E_p \times V_p \to V_p$ is a $\Gamma_p$-equivariant
map $V \to \mathcal S^n(E_p)$. It is said to be {\em liftable} if there exist $\WT{s}=(\WT{s}_1,\cdots
\WT{s}_n):V_p \to E_p^n$ such that its composition with $\pi:E_p^n \to \mathcal S^n(E_p)$ is $s$.
Here $\WT{s}$ needs not be $\Gamma_p$-equivariant. Each $\WT{s}_i$ is called a branch of $s$.
\end{definition}
Liftable multisections will be considered always.
Given an $n$-multi-section, we obtain an $nm$-multi-section by composing it with $tm_m$ map.
An $n$-multisection $s$ is equivalent to an $m$-multisection $s'$ if their induced $mn$-multisections
are the same. We identify equivalent multi-sections from now on. A lifted multisection is said to be transversal to zero if each of its branches is
transversal to the zero section. Compatibilities of multi-sections can be handled in the same way as in \cite{FOOO} and we omit the details.

A family of multisections $s_\epsilon$ is said to converge to a Kuranishi map $s=\{ s_p \}$ as $\epsilon \to 0$ if there exists $n$ such that $s_\epsilon$ is represented by an $n$-multisection $s_\epsilon^n$ and $s_\epsilon^n$ converges to a representative of $s$. 
\begin{lemma}\cite[A1.23]{FOOO}\label{lem:multisecch}
Suppose that a good coordinate system of the Kuranishi structure over $X$ is given, and that the Kuranishi structure has a tangent bundle. 
Then, there exists a family of multisections $\{ s'_{p,\epsilon} \}$ such that it converges to $\{ s_p \}_{p \in P}$ and $s'_{p, \epsilon}$'s are transversal to $0$ for all $\epsilon >0$.
 \end{lemma}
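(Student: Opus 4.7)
The plan is to prove Lemma~\ref{lem:multisecch} by induction along the partial order of a good coordinate system, using the $\Gamma_p$-averaging trick to convert ordinary perturbations into equivariant multi-sections. Fix a good coordinate system $\{(V_{p_i}, E_{p_i}, \Gamma_{p_i}, \psi_{p_i}, s_{p_i})\}_{i=1}^N$ for $X$, with the ordering supplied by its definition, so that coordinate changes $(\hat{\phi}_{p_i p_j}, \phi_{p_i p_j}, h_{p_i p_j})$ only go from smaller index to larger index (or equivalently, from lower stratum to higher). Choose a partition of unity $\{\chi_i\}$ subordinate to the good coordinate system.

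I would construct the perturbed multi-sections $s'_{p_i,\epsilon}$ by induction on $i$. For the base case, pick any smooth (not necessarily $\Gamma_{p_1}$-equivariant) section $\sigma_{1,\epsilon}:V_{p_1}\to E_{p_1}$ which is a small generic perturbation of $s_{p_1}$, transverse to the zero section. The formula
\begin{equation*}
s'_{p_1,\epsilon} \;:=\; \bigl[\,\gamma_1\cdot\sigma_{1,\epsilon},\; \gamma_2\cdot\sigma_{1,\epsilon},\;\ldots,\; \gamma_{|\Gamma_{p_1}|}\cdot\sigma_{1,\epsilon}\,\bigr]
\end{equation*}
then defines a liftable $|\Gamma_{p_1}|$-multi-section which is tautologically $\Gamma_{p_1}$-equivariant, and each branch is transverse to zero by genericity of $\sigma_{1,\epsilon}$. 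For the inductive step, suppose $s'_{p_j,\epsilon}$ has been constructed for all $j<i$ and that these satisfy the compatibility condition dictated by the coordinate changes $(\hat{\phi}_{p_i p_j},\phi_{p_i p_j},h_{p_i p_j})$ on the overlap $V_{p_i p_j}\subset V_{p_j}$. On a $\Gamma_{p_i}$-invariant neighborhood of $\bigcup_{j<i}\phi_{p_i p_j}(V_{p_i p_j})$ inside $V_{p_i}$, the multi-section $s'_{p_i,\epsilon}$ is already forced by the induction hypothesis via $\hat{\phi}_{p_i p_j}$. Using a $\Gamma_{p_i}$-invariant cutoff, extend this prescribed multi-section to a multi-section on all of $V_{p_i}$ which equals $s_{p_i}$ far from the overlap region, then add a small generic perturbation supported in the complementary region and apply the $\Gamma_{p_i}$-averaging construction above to that perturbation to keep it $\Gamma_{p_i}$-equivariant as a multi-section. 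The transverse-tangent-bundle hypothesis is used here to make sense of ``small generic perturbation'' in a parametrized way.

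The main obstacle, and the place where one really has to work, is verifying that the inductively constructed $s'_{p_i,\epsilon}$ remain simultaneously (i) $\Gamma_{p_i}$-equivariant as multi-sections, (ii) compatible with every coordinate change $\hat{\phi}_{p_i p_j}$ for $j<i$ in the sense that $s'_{p_i,\epsilon}\circ\phi_{p_i p_j} = \hat{\phi}_{p_i p_j}\circ s'_{p_j,\epsilon}$ as equivalence classes of multi-sections, and (iii) transverse to the zero section of $E_{p_i}\to V_{p_i}$ on each branch. Conditions (i) and (ii) are in tension with (iii) because the prescribed values on overlaps are not free parameters; this is precisely why one needs a good coordinate system rather than merely a Kuranishi structure, and why liftable multi-sections rather than single-valued sections are used. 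The resolution is that the transverse-tangent-bundle condition guarantees that the linearization of $s_{p_i}$ along $\phi_{p_i p_j}(V_{p_i p_j})$ transverse to that submanifold is surjective onto the quotient bundle $E_{p_i}/\hat{\phi}_{p_i p_j}(E_{p_j})$, so a generic perturbation in directions transverse to the overlap achieves transversality without disturbing the compatibility relations.

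Finally, $s'_{p_i,\epsilon}\to s_{p_i}$ in $C^0$ as $\epsilon\to 0$ by construction, uniformly in $i$, so the family $\{s'_{p_i,\epsilon}\}$ converges to the original Kuranishi map $\{s_{p_i}\}$. Since each branch of every $s'_{p_i,\epsilon}$ is transverse to zero, this produces the desired family of multi-sections. The same inductive scheme will be used in later sections to equip the moduli spaces of $J$-holomorphic discs and strips with $G$-equivariant Kuranishi perturbations, which is the point of recalling this lemma here.
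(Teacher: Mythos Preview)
The paper does not give its own proof of this lemma; it is quoted verbatim as \cite[A1.23]{FOOO} and treated as a black box, with the subsequent subsection devoted instead to the $G$-equivariant refinement (Lemmas~\ref{lem:fukaya} and the one following it). So there is no ``paper's proof'' to compare against here.

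That said, your sketch is essentially the standard argument from \cite[Appendix A.1]{FOOO}: induct along the partial order of the good coordinate system, at each stage extend the already-prescribed multi-section over the overlaps using the tangent bundle condition (which gives normal surjectivity onto $E_{p_i}/\hat{\phi}_{p_ip_j}(E_{p_j})$), then $\Gamma_{p_i}$-average a generic single-valued perturbation to obtain an equivariant multi-section with transverse branches. Your identification of the key tension between compatibility and transversality, and its resolution via the tangent bundle hypothesis, is exactly right. One small point: the partial order on a good coordinate system need not be a total order indexed $1,\ldots,N$, so the induction is really over the poset $(\mathfrak{P},\leq)$ rather than a linear sequence; this does not affect the argument but is worth stating correctly.
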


\subsection{Kuranishi structure for equivariant transversality}
A notion of group actions on a space with Kuranishi structure is introduced in \cite[A1.3]{FOOO}.
\begin{definition}\label{def:aukr}
 Let $X$ be a space with Kuranishi structure.
 A homeomorphism $\phi:X \to X$ is called an automorphism of Kuranishi structure if the following holds:
 For $p \in X$ and $p'=\phi(p)$, there exist  Kuranishi neighborhoods
 $(V_p, E_p, \Gamma_p, \psi_p, s_p), (V_{p'}, E_{p'}, \Gamma_{p'}, \psi_{p'}, s_{p'})$ of $p$ and $p'$
 with a group isomorphism $\rho_p:\Gamma_p \to \Gamma_{p'}$, a $\rho_p$-equivariant diffeomorphism $\phi_p:V_p \to V_{p'}$ and a $\rho_p$-equivariant bundle isomorphism $\WH{\phi}_p$ which 
 covers $\phi_p$. They are required to satisfy 
 $s_{p'} \circ \phi_p = \WH{\phi}_p \circ s_p$ and 
 $\psi_{p'} \circ \UL{\phi}_p = \phi \circ \psi_p$ for an induced homeomorphism
 $\UL{\phi}_p:s_p^{-1}(0)/\Gamma_p \to s_{p'}^{-1}(0)/\Gamma_p$ from $\phi_p$.
 
 These data should be compatible with the coordinate changes of Kuranishi structure as in \cite[A1.47]{FOOO}.
 \end{definition}
 We write $((\rho_p,\phi_p,\WH{\phi}_p),\phi)$ for an automorphism of Kuranishi structure.
  An automorphism $((\rho_p,\phi_p,\WH{\phi}_p),\phi)$  is said to be conjugate to 
 $((\rho_p',\phi_p',\WH{\phi}_p'),\phi')$ if $\phi=\phi'$ and if there exists $\gamma_p \in \Gamma_{\psi(p)}$  for each $p$ satisfying 
 $$\rho_p' = \gamma_p \cdot \rho_p \cdot \gamma_p^{-1}, \;\; \phi_p' = \gamma_p \cdot \phi_p, \;\; \WH{\phi}_p' = \gamma_p \cdot \WH{\phi}_p.$$
 
 \begin{definition}\label{def:gpkr}
 Consider an action of a finite group $G$ on $X$ and suppose $X$ is compact.
It is called an action on a space with Kuranishi structure if
each $g\in G$-action from $X$ to itself lifts to an automorphism $\phi_*$ of the Kuranishi structure, and
the composition $g_*$ and $h_*$ is conjugate to $(gh)_*$.
\end{definition}

The following lemma of \cite{FOOO} is essential in our application.
\begin{lemma}\cite[A1.49]{FOOO}\label{lemxg}
If a finite group $G$ acts on a space $X$  with Kuranishi structure, then the quotient space $X/G$ has a Kuranishi structure. 

If $X$ has a tangent bundle and the action preserves it, then the quotient space has a tangent bundle. If $X$ is oriented and the action preserves the orientation, then the quotient space has an orientation.
\end{lemma}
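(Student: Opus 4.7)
The plan is to construct, for each orbit $[p] \in X/G$, a Kuranishi neighborhood by fusing the original Kuranishi data at a representative $p \in X$ with the local isotropy from $G_p := \{g \in G : g \cdot p = p\}$, and then to upgrade the existing coordinate changes on $X$ to coordinate changes on $X/G$. Since the full quotient is covered by $G$-orbits of the original neighborhoods, passage from $X$ to $X/G$ will only require enlarging each local stabilizer from $\Gamma_p$ to a larger group $\Gamma_p'$ fitting into a sequence $1 \to \Gamma_p \to \Gamma_p' \to G_p \to 1$.

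First I would build $\Gamma_p'$. Using finiteness of $G_p$ and the equivariance in Definition \ref{def:aukr}, I can shrink $V_p$ (and replace it by a $G_p$-saturated open subset) so that for every $g \in G_p$ the lift $(\rho_p^g, \phi_p^g, \WH{\phi}_p^g)$ is an automorphism of the \emph{same} Kuranishi neighborhood $(V_p, E_p, \Gamma_p, \psi_p, s_p)$. By Definition \ref{def:gpkr}, the composite $g_* h_*$ is conjugate to $(gh)_*$ by some $\gamma_p^{g,h} \in \Gamma_p$, and associativity of composition forces $\{\gamma_p^{g,h}\}$ to be a normalized $2$-cocycle for $G_p$ with coefficients in $\Gamma_p$ (with twisted action $\gamma \mapsto \rho_p^g(\gamma)$). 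Let $\Gamma_p'$ be the group extension defined by this cocycle. By construction $\Gamma_p'$ acts smoothly on $V_p$ (an element $(\gamma, g)$ acts as $\gamma \cdot \phi_p^g$) and linearly on $E_p$, the Kuranishi map $s_p$ is automatically $\Gamma_p'$-equivariant, and $\psi_p$ descends to a homeomorphism
$$\psi_p' : s_p^{-1}(0)/\Gamma_p' \;\cong\; (s_p^{-1}(0)/\Gamma_p)/G_p \;\cong\; U_p/G_p \hookrightarrow X/G.$$
After shrinking $U_p$ so that $G \cdot U_p$ only identifies points within a single $G_p$-orbit, the quintuple $(V_p, E_p, \Gamma_p', \psi_p', s_p)$ is a Kuranishi neighborhood of $[p]$.

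Next I would produce coordinate changes. Given $[q] \in \psi_p'(s_p^{-1}(0)/\Gamma_p')$, choose a representative $q \in U_p$; the existing coordinate change $(\hat{\phi}_{pq}, \phi_{pq}, h_{pq})$ between the original Kuranishi neighborhoods is available, and the compatibility axiom of Definition \ref{def:aukr} (spelled out in \cite[A1.47]{FOOO}) guarantees that the $G_q$-automorphism data at $q$ is compatible with $(\hat{\phi}_{pq}, \phi_{pq}, h_{pq})$. This compatibility is exactly what I need to extend $h_{pq}$ to an injection $h_{pq}' : \Gamma_q' \to \Gamma_p'$: for an element $(\gamma, g) \in \Gamma_q'$, send it to $(h_{pq}(\gamma) \cdot \delta_{pq}^g,\, g)$ where $\delta_{pq}^g \in \Gamma_p$ is the element measuring the failure of $\phi_{pq}$ to strictly intertwine $\phi_q^g$ and $\phi_p^g$. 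Keeping $\phi_{pq}, \hat{\phi}_{pq}$ unchanged, $h_{pq}'$-equivariance follows directly, and the compositional cocycles $\gamma_{pqr}$ lie in $\Gamma_p \subset \Gamma_p'$ so they transfer verbatim.

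The claims on tangent bundles and orientations then follow by transport of structure: the tangent bundle of $(V_p, E_p, \Gamma_p, s_p)$ carries a $\Gamma_p'$-action via the linearizations of $\phi_p^g$ and $\WH{\phi}_p^g$, which by hypothesis preserves the tangent bundle isomorphism, and similarly for orientation. I expect the main obstacle to be verifying that $\{\gamma_p^{g,h}\}$ actually defines a group-theoretic $2$-cocycle (rather than merely a family of conjugating elements), and that the resulting extension $\Gamma_p'$ is independent, up to canonical isomorphism, of the choices of representatives in each conjugacy class $(\rho_p^g, \phi_p^g, \WH{\phi}_p^g)$; different choices produce cohomologous cocycles and isomorphic extensions, but this bookkeeping, together with the $G$-equivariance of the previously constructed coordinate changes, is the most delicate part of the argument.
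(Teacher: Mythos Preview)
Your approach is essentially the same as the paper's: both build the Kuranishi neighborhood of $[p]$ by enlarging $\Gamma_p$ to an extension $1 \to \Gamma_p \to \Gamma_p' \to G_p \to 1$ and keeping $(V_p, E_p, s_p)$ unchanged. The only real difference is in how the extension is realized. You construct $\Gamma_p'$ abstractly from the $2$-cocycle $\{\gamma_p^{g,h}\}$; the paper instead assumes (after adding a finite-dimensional representation to $V_p$ and $E_p$ if necessary) that the $G_p$-action on $V_p$ is effective, and then simply takes $\Gamma_{[p]}$ to be the subgroup of $\mathit{Diff}(V_p)$ generated by $\Gamma_p$ and the lifts $\{\phi_p^g : g \in G_p\}$. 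This geometric realization sidesteps the cocycle bookkeeping you flagged as the main obstacle, and it automatically yields an effective action of $\Gamma_{[p]}$ on $V_p$, which the Kuranishi-neighborhood axioms require; your cocycle construction does not guarantee effectiveness without the same adding-a-representation trick, so you should note that step. Beyond this, your treatment of coordinate changes and of the tangent/orientation statements goes further than the paper's sketch (which stops at the neighborhood construction) and is correct in outline.
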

\begin{proof}
We only give a brief sketch, here.
Let $g \in G$ and $p \in X$. We put $G_p= \{ g \in G| g\cdot p = p \}$.
Take a Kuranishi neighborhood of $p$
$$(V_p, E_p, \Gamma_p, \psi_p, s_p)$$
such that $V_p$ is $G_p$-invariant.
We want to extend $\Gamma_p$ using $G_p$ to obtain another finite group $\Gamma_{[p]}$
which acts on $V_p$. Then, $\Gamma_{[p]}$ will serve as a local group for a Kuranishi neighborhood of $[p]$ in $X/G$.
We assume that the $G_p$-action on $V_p$ is effective, and otherwise, it can be made effective
by adding a finite dimensional representation to $V_p$ and $E_p$.

Both $G_p$ and $\Gamma_p$ can be considered as a subgroup of  the group of diffeomorphisms of $V_p$.
The extension $\Gamma_{[p]}$ is the group generated by these two subgroups $G_p, \Gamma_p$ in ${\it Diff}(V_p)$. 
From Definition \ref{def:gpkr}, there exist $ \gamma_{g_1,g_2} \in \Gamma_p$ for $g_1$ and $g_2$ in $G_p$ 
such that $$g_1 \circ g_2 = \gamma_{g_1,g_2} \circ (g_1g_2).$$
Also from Definition \ref{def:aukr}, we have
$$g \circ \gamma = \rho_{g,p}(\gamma) \circ g$$
for $g \in G_p$ and $\gamma \in \Gamma_p$.
This implies that the extension group $\Gamma_{[p]}$ satisfies the following exact sequence
$$1 \to \Gamma_p \to \Gamma_{[p]} \to G_p \to 1.$$
The $\Gamma_{[p]}$-action lifts to an action on $E_p \times V_p$, and $\psi_p:s_p^{-1}(0) /\Gamma_p \to X$
induces $\psi_{[p]}:s_p^{-1}(0) /\Gamma_{[p]} \to X/G$. Thus, we get a Kuranishi neighborhood
$(V_p, E_p, \Gamma_{[p]}, \psi_{[p]}, s_p)$ of $[p] \in X/G$.
\end{proof}

This lemma will be used to find a compatible good coordinate system and equivariant sections thereof.

First, let us consider a good coordinate system on $X$ given by a partially ordered
set $(\frak{P}, \leq)$, which parametrizes a collection of Kuranishi neighborhoods covering $X$. (Technically, we require that this system additionally satisfy properties in \cite[Definition 5.3]{FOOO3}.)
Namely, given $\frak{p} \in \frak{P}$  we have a corresponding Kuranishi neighborhood $(V_{\frak p}, E_{\frak p}, \Gamma_{\frak p}, \psi_{\frak p}, s_{\frak p})$, and partial order describes directions of
embeddings.

We say that a good coordinate system on $X$ is compatible with the $G$-action on $X$ if
the following holds:
\begin{enumerate}
\item For $\frak{p} \in \frak{P}$ and  $g\in G$, 
there exist $\frak{p}' \in \frak{P}$ such that $g_*$ sends the Kuranishi neighborhood for $ \frak{p}$
to that of $\frak{p}'$ (this defines a map $g_* :\frak{P} \to \frak{P}$ such that $\frak{p'} = g_\ast (\frak{p})$);
\item The map $g_*$ preserves the partial order of $\frak{P}$.
\end{enumerate}

We owe the following lemma to Kenji Fukaya.
\begin{lemma}\label{lem:fukaya}
Let $X$ be a space with Kuranishi structure together with a $G$-action on it.
Then, there exist a good coordinate system compatible with the $G$-action on $X$.
\end{lemma}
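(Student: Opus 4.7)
The plan is to reduce the assertion to the known (non-equivariant) existence result for good coordinate systems by first descending to the quotient and then lifting back.

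First, apply Lemma \ref{lemxg} to the $G$-space $X$ to equip $X/G$ with an induced Kuranishi structure. Since $X/G$ is compact and metrizable, the general existence theorem for good coordinate systems (Part II of \cite{FOOO3}) yields a partially ordered set $(\OL{\frak P}, \leq)$ and a good coordinate system $\{(V_{\OL{\frak p}}, E_{\OL{\frak p}}, \Gamma_{\OL{\frak p}}, \psi_{\OL{\frak p}}, s_{\OL{\frak p}})\}_{\OL{\frak p} \in \OL{\frak P}}$ on $X/G$. Recall from the proof of Lemma \ref{lemxg} that each such chart arises from an underlying chart $(V_p, E_p, \Gamma_p, \psi_p, s_p)$ at a representative $p \in X$, via the extension $1 \to \Gamma_p \to \Gamma_{[p]} \to G_p \to 1$, with $V_{\OL{\frak p}} = V_p$ and $E_{\OL{\frak p}} = E_p$ but with the enlarged local group $\Gamma_{[p]} = \Gamma_{\OL{\frak p}}$.

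Next, I would lift this system to $X$. For each $\OL{\frak p} \in \OL{\frak P}$, choose a representative $p = p(\OL{\frak p})$ as above and define the index set
\[
\frak P := \{(\OL{\frak p}, [g]) : \OL{\frak p} \in \OL{\frak P},\ [g] \in G/G_{p(\OL{\frak p})}\}
\]
with the $G$-action $h \cdot (\OL{\frak p}, [g]) := (\OL{\frak p}, [hg])$. To $(\OL{\frak p}, [g])$ I would associate the Kuranishi chart obtained by transporting $(V_p, E_p, \Gamma_p, \psi_p, s_p)$ by $g$, noting that this is well-defined modulo $G_p$ since elements of $G_p$ act as automorphisms of the chart at $p$ (up to conjugation by $\Gamma_p$, compatibly with Definition \ref{def:aukr}). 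This gives a $G$-equivariant assignment of Kuranishi charts to $\frak P$ covering $X$, since the original system covers $X/G$.

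For the partial order, I would declare $(\OL{\frak p}, [g]) \leq (\OL{\frak q}, [g'])$ iff $\OL{\frak p} \leq \OL{\frak q}$ in $\OL{\frak P}$ and the image of the underlying chart at $(\OL{\frak p}, [g])$ is contained in that at $(\OL{\frak q}, [g'])$. The coordinate change $(\hat\phi_{\OL{\frak q}\OL{\frak p}}, \phi_{\OL{\frak q}\OL{\frak p}}, h_{\OL{\frak q}\OL{\frak p}})$ on $X/G$, which is $\Gamma_{\OL{\frak q}}$-equivariant, restricts to a $\Gamma_{q}$-equivariant coordinate change between the chosen representatives on $X$, and then transports by $G$ to coordinate changes for all $(\OL{\frak p},[g]) \leq (\OL{\frak q},[g'])$. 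By construction the $G$-action on $\frak P$ preserves the partial order, since it only shifts the coset labels and leaves the $\OL{\frak P}$-component fixed.

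The main obstacle will be the third step: matching the possibly many lifts of a single coordinate change on $X/G$ to $G$-compatible coordinate changes on $X$. Concretely, the embedding $\phi_{\OL{\frak q}\OL{\frak p}}$ uses the enlarged group $\Gamma_{\OL{\frak q}} = \Gamma_{q} \rtimes G_q$, and a careless choice of lift to $X$ will not be $\Gamma_q$-equivariant nor commute with transport by $G$. This can be overcome by exploiting the freedom in the proof of Lemma \ref{lemxg} to choose, for each pair $\OL{\frak p} \leq \OL{\frak q}$, representatives $p \in p(\OL{\frak p})$ and $q \in p(\OL{\frak q})$ in compatible positions, so that the embedding already descends from one representative to another; the compatibility cocycles $\gamma_{\OL{\frak p}\OL{\frak q}\OL{\frak r}}$ from the good coordinate system on $X/G$ then automatically give the required cocycles on $X$ after restricting to the kernel $\Gamma_p \subset \Gamma_{[p]}$. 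Tangent bundle and orientation compatibility pass through by the last sentence of Lemma \ref{lemxg}. This yields the desired $G$-compatible good coordinate system.
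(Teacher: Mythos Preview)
Your approach is essentially the same as the paper's: descend to $X/G$ via Lemma~\ref{lemxg}, invoke the non-equivariant existence of a good coordinate system there, and pull the result back to $X$. The paper's proof is considerably terser than yours and phrases the last step slightly differently: rather than lifting the \emph{finished} charts of the good coordinate system on $X/G$ (which, after the gluing in \cite{FOOO3}, need not literally be of the form $(V_p,E_p,\Gamma_{[p]},\psi_{[p]},s_p)$ for a single $p$ as you assume), it mirrors the inductive gluing procedure itself on $X$, performing the same identifications $G$-equivariantly. This sidesteps exactly the ``main obstacle'' you flag about matching lifts of coordinate changes, since the coordinate changes on $X$ are built in parallel with those on $X/G$ rather than reconstructed after the fact.
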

\begin{proof}
A good coordinate system is chosen by the induction on the dimension of $V_p$ and by gluing several Kuranishi neighborhoods (in order to get a bigger one satisfying desired properties).
From the previous lemma, $X/G$ is also a space with Kuranishi structure, and hence admits a good coordinate system. Then, we glue the corresponding charts in $X$ in the same way we construct the good coordinate system for $X/G$. This produces a desired good coordinate system on $X$ which is compatible with the $G$-action.
\end{proof}

We now explain how to choose multi-sections.
If $\Psi$ is an automorphism of a space with Kuranishi structure $\Psi$ and $s$ is a multi-section, then we can pull back $s$ by $\Psi$ in an obvious way. Write the pulled-back multi-section by $\Psi^*s$.
\begin{definition}
A multisection  $s$ is said to be $G$-equivariant if
for each $g \in G$ the corresponding automorphism of Kuranishi structure $g_*$
sends $s$ to itself.
\end{definition}
If a multisection $s$ is $G$-equivariant, the zero set of $s$ admits the obvious $G$-action.
Equivariant multi-sections can be constructed from a multi-section for $X/G$.
Virtual fundamental chains obtained in this way admit natural $G$-actions, and evaluation maps become $G$-equivariant as well.
 \begin{lemma}
Let $X$ be 
a space with Kuranishi structure equipped with $G$-action and fix a $G$-compatible good coordinate system.
Then the family of multisection $\{ s'_{p,\epsilon} \}$ in Lemma \ref{lem:multisecch} can be chosen to be $G$-equivariant.
\end{lemma}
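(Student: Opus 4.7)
The plan is to construct the $G$-equivariant multisections by descent to the quotient $X/G$ followed by pullback, rather than trying to build equivariance directly by induction on $\frak{P}$. By Lemma \ref{lemxg}, $X/G$ inherits a Kuranishi structure whose chart over $[p] \in X/G$ is given by $(V_p, E_p, \Gamma_{[p]}, \psi_{[p]}, s_p)$, where $\Gamma_{[p]}$ is the extension of the $G$-stabilizer $G_p$ by $\Gamma_p$. Moreover, since the good coordinate system $(\frak{P}, \leq)$ on $X$ is $G$-compatible, $G$ acts on $\frak{P}$ by order-preserving bijections, and the quotient $\frak{P}/G$ parametrizes a good coordinate system on $X/G$ whose charts are precisely the enlarged-group charts just described (glued in exactly the same way as the ones in $\frak{P}$).

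First, I would apply Lemma \ref{lem:multisecch} to $X/G$ with this good coordinate system to obtain a family of transversal multisections $\{\bar{s}_{[\frak{p}],\epsilon}\}$, each $\bar{s}_{[\frak{p}],\epsilon} : V_\frak{p} \to \mathcal{S}^n(E_\frak{p})$ being $\Gamma_{[\frak{p}]}$-equivariant and converging to $s_\frak{p}$. Next, I would define the desired multisection on $X$ by taking $s'_{\frak{p},\epsilon}$ to be the same map $\bar{s}_{[\frak{p}],\epsilon}$, but now viewed as a multisection on the original chart $(V_\frak{p}, E_\frak{p}, \Gamma_\frak{p}, \psi_\frak{p}, s_\frak{p})$ of $X$. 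Since $\Gamma_\frak{p} \subset \Gamma_{[\frak{p}]}$, this is automatically $\Gamma_\frak{p}$-equivariant; since the underlying manifolds and obstruction bundles are unchanged, transversality of each branch is preserved; and since the coordinate changes on $X/G$ are the same $(\hat{\phi}_{\frak{p}\frak{q}}, \phi_{\frak{p}\frak{q}}, h_{\frak{p}\frak{q}})$ as on $X$ (with only the local groups enlarged), compatibility with coordinate changes on $X$ follows from compatibility on $X/G$.

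The $G$-equivariance is then built in by construction. For $g \in G$ and $\frak{q} = g_\ast(\frak{p})$, both charts $V_\frak{p}$ and $V_\frak{q}$ cover the same chart on $X/G$, and the identification between them induced by $g_\ast$ is realized by an element of $\Gamma_{[\frak{p}]}/\Gamma_\frak{p} = G_\frak{p}$ (or, for $g$ not in the stabilizer, by the very identification $V_\frak{p} = V_{g_\ast(\frak{p})}$ used to define the $X/G$-chart). Hence the $\Gamma_{[\frak{p}]}$-equivariance of $\bar{s}_{[\frak{p}],\epsilon}$ specializes exactly to the identity $g_\ast^\ast s'_{\frak{q},\epsilon} = s'_{\frak{p},\epsilon}$, which is the required $G$-equivariance in the sense of Definition \ref{def:aukr} (with the conjugation ambiguity in Definition \ref{def:gpkr} absorbed by the freedom in choosing the lifts $(\rho_p, \phi_p, \WH{\phi}_p)$).

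The main obstacle I expect is bookkeeping rather than content: one must verify that the good coordinate system on $X/G$ produced from $\frak{P}/G$ really does satisfy the enhanced axioms of \cite[Definition 5.3]{FOOO3} (e.g.\ that the enlarged local groups $\Gamma_{[\frak{p}]}$ fit coherently across the coordinate changes, and that the tangent-bundle/orientation data descend when $G$ preserves them, which is the content of the second half of Lemma \ref{lemxg}). Once this is in place, the lemma follows from Lemma \ref{lem:multisecch} applied to $X/G$ with no further perturbation argument needed on $X$.
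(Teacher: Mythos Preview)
Your proposal is correct and follows essentially the same approach as the paper: descend to $X/G$, apply Lemma \ref{lem:multisecch} there to obtain $\Gamma_{[p]}$-equivariant multisections, and then regard these same maps as $\Gamma_p$-equivariant multisections on the charts of $X$, with $G$-equivariance inherited from the $\Gamma_{[p]}$-equivariance. Your writeup is in fact more careful about the bookkeeping (compatibility with coordinate changes, the role of $\frak{P}/G$, and the conjugation ambiguity) than the paper's proof, which simply asserts the local construction.
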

\begin{proof}
This is again done by choosing a multi-section for $X/G$ first. We examine how to do it in local charts. Let $(V_p, E_p, \Gamma_{[p]}, \psi_{[p]}, s_p)$ be a Kuranishi neighborhood for $[p] \in X/G$. Given a multi-section $s_{p', \epsilon} : V_p \to \mathcal S^n(E_p)$ (which is $\Gamma_{[p]}$-equivariant by definition), the same multi-section $s_{p', \epsilon}$ can be regarded as a multi-section for the Kuranishi neighborhood $(V_p, E_p, \Gamma_p, \psi_p, s_p)$ of $p \in X$ since it is $\Gamma_p$-equivariant also.
The $\Gamma_{[p]}$-equivariance implies that the multi-section $s_{p', \epsilon}$ is
$G_p$-equivariant.
\end{proof}

We finally deal with transversality of evaluation maps from moduli spaces. We want to choose multi-sections suitably so that evaluation maps become submersive. Fukaya, Oh, Ohta and Ono \cite{FOOOT2} defined Lagrangian Floer theory on de Rham model using continuous families of multi-sections, and constructed smooth correspondences using such gadgets. 
It is straightforward to modify the construction of smooth correspondences by using a continuous family of multi-sections which fits into our $G$-equivariant setting.
Let us illustrate it for the local chart, and leave details to the reader.

Consider a chart  
$(V_p, E_p, \Gamma_p, \psi_p, s_p)$ from Lemma \ref{lemxg} with $\Gamma_{[p]}$ as before. Let $f_p:V_p \to M$ be a submersion. A vector space $W_p$ is chosen to have sufficiently large dimension so that there exists a surjective bundle map 
$Sur:W_p \times V_p \to E_p.$
The extended space $W_p$ is given the trivial $\Gamma_{[p]}$-action.
We put
$$\mathfrak{s}_p^{(1)} (w,x) = Sur (w,x) + s_p (x).$$
It is no longer $\Gamma_{[p]}$-equivariant, so we take (family of) multi-sections 
\begin{equation}\label{mult}
\mathfrak{s}_p(w,x) :=\mathfrak{s}_p^{(2)} (w,x) = [\gamma_1\mathfrak{s}_p^{(1)} (w,x), \cdots, \gamma_g \mathfrak{s}_p^{(1)} (w,x) ]
\end{equation} 
where $\Gamma_{[p]} = \{\gamma_1, \cdots, \gamma_g\}$.
Then, $\mathfrak{s}_p$ is transversal to $0$ and the restriction of
$f_p \circ \pi_p : W_p \times V_p \to M$
to $\cup_i \{ (w,x) \, | \, \mathfrak{s}_{p,i} (w,x) = 0 \}$ is a submersion.
For $g \notin G_p$, we can choose $W_p=W_{g(p)}$ and make the related construction done in $g$-equivariant way.

Recall from \cite{FOOOT2} that smooth correspondence is defined locally as follows.
Let $\theta_p$ be a compactly supported smooth differential form on $V_p$, which is $\Gamma_{p}$-invariant. We put smooth measure $\omega_p$ on $W_{p}$ so that
$\int_{W_p} \omega_p=1$. 
(For $g \notin G_p$, we take $\omega_{g p} = (g^{-1} )^\ast \omega_p$.)
\begin{definition}\label{localcorres}
We define
\begin{equation}
\begin{array}{rl}
&\left(  ( V_{p},E_{p},\Gamma_{p}, \psi_{p},s_{p}), (W_p, \omega_p), \mathfrak{s}_{p}, f_p  \right)_\ast (\theta_p) \\
&:=\dfrac{1}{|\Gamma_p|} \sum_{j=1}^{l} \dfrac{1}{l} \left(f_p \circ \pi_p|_{\mathfrak{s}_{p,j }^{-1} (0)} \right)_! (\pi_p^\ast \theta_p \wedge \omega_p)|_{\mathfrak{s}_{p,j }^{-1} (0)}
\end{array}
\end{equation}
where $l$ is the number of branches of $\mathfrak{s}_p$. This is a $G_p$-equivariant map.
\end{definition}
We can put various $W_p$'s compatibly in the good coordinate system by using almost the same technique as in \cite{FOOOT2}.

\section{Group actions on $\AI$-algebra of a Lagrangian Submanifold}
Let $L$ be a (relatively) spin Lagrangian submanifold in a symplectic manifold $(M,\omega)$.
The Floer cochain complex between $L$ and itself has a structure of an $\AI$-algebra as observed in \cite{FOOO}. If the finite group $G$ acts on $M$ and preserves $L$,
then we can construct a group action on the $\AI$-algebra of $L$ which is compatible with $\AI$-operations. The construction does not involve spin profiles and $G$-Novikov theory. We only need  equivariant transversality in the previous section 
with a little bit of algebraic manipulation. We will mainly discuss the latter in this section.

The actual construction of $\AI$-algebra involves a long sequence of technical constructions. We do not reproduce the whole steps here, but only indicate how to adapt constructions in \cite{FOOO} in our setting, omitting most of the steps which can be done by straightforward modification of theirs.
We remark that the ($m_1$-)cohomology of the $\AI$-algebra is the Floer cohomology $HF(L,L)$, and in particular, spin profiles automatically match. This provides a explanation on why the spin profile condition is not necessary in this case. Moreover, we do not have to reduce the $G$-action to the action of the energy zero subgroup $G_\alpha$. The reason behind it will be discussed in Section \ref{sec:12}.


Suppose that a finite group $G$ acts on $M$ effectively, preserving the symplectic form $\omega$, almost complex structure $J$, a spin Lagrangian submanifold $L$ and its isomorphism class of spin structure. Hence, $G$ acts on the moduli spaces of stable $J$-holomorphic discs bounding $L$,  and the local Kuranishi model for a stable map is identical under group action.

As long as two spin structures
$(A_{g-1})^*P_{spin}(L)$ and  $P_{spin}(L)$ are isomorphic, the induced orientations
on the moduli spaces of discs are the same, and the choice of the particular
isomorphism $A_g^{spin}$ do {\em not}  play a role, here.
Thus, the following
proposition follows from the construction of \cite{FOOO} and $G$-Kuranishi structure built in the previous section. Let $\beta \in H_2(M,L;\Z)$, and $k \geq 1$.
\begin{prop}\cite[Theorem 2.1.29]{FOOO}
The moduli space $\CM_{k,l}^{main}(L, J, \beta)$ of stable $J$-holomorphic discs with boundary on $L$ with $k$ boundary marked points (cyclically ordered) and $l$ interior marked points has a Kuranishi structure with an action of $G$ in the sense of Definition \ref{def:gpkr}.
We can choose family of $G$-equivariant multisections $\mathfrak{s}$ for which the moduli space has a virtual fundamental chain of
dimension $m + \mu_L(\beta) + k +2l -3$. 
\end{prop}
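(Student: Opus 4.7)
The plan is to piggyback on the construction of the Kuranishi structure on $\CM_{k,l}^{main}(L,J,\beta)$ in \cite{FOOO}, and to upgrade it with a $G$-action using the hypotheses on $(M,\omega,J,L,P_{spin}(L))$ and the results of the preceding section. First I would recall that since $J$ is $G$-invariant, $L$ is $G$-preserved, and marked points are cyclically (resp.\ freely) ordered on the boundary (resp.\ interior), each $g\in G$ defines a homeomorphism of $\CM_{k,l}^{main}(L,J,\beta)$ by post-composition $u\mapsto g\circ u$ (note that $g$ may change the class $\beta$ to $g_\ast\beta$, but since the statement is for a fixed $\beta$ we view $g$ as a map $\CM^{main}_{k,l}(L,J,\beta)\to\CM^{main}_{k,l}(L,J,g_\ast\beta)$; the $G$-action is on the disjoint union, and invariance of the symplectic area implies it preserves each piece with $g_\ast\beta=\beta$). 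The case $g_\ast\beta\ne \beta$ is handled by relabeling as in the preceding subsection on disconnected Lagrangians.

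Next I would verify the axioms of Definition~\ref{def:gpkr}. For each stable map $u\in \CM^{main}_{k,l}(L,J,\beta)$, the FOOO local Kuranishi chart $(V_u,E_u,\Gamma_u,\psi_u,s_u)$ is built from deformations of the domain and map, perturbations of the $\dbar_J$-equation cut out by an obstruction bundle, and the automorphism group of the stable map. Because $J$, $\omega$ and the spin structure on $L$ are $G$-invariant (the latter up to isomorphism, which is all that is needed for local orientability), the $g$-action sends $u$ to $g\cdot u$ and identifies the corresponding linearized operators, obstruction bundles and automorphism groups via an isomorphism $\rho_u:\Gamma_u\to \Gamma_{g\cdot u}$, a $\rho_u$-equivariant diffeomorphism $\phi_u:V_u\to V_{g\cdot u}$, and a compatible bundle isomorphism $\WH{\phi}_u:E_u\to E_{g\cdot u}$, satisfying $s_{g\cdot u}\circ\phi_u = \WH{\phi}_u\circ s_u$ and $\psi_{g\cdot u}\circ\UL{\phi}_u = g\circ\psi_u$. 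Compatibility with coordinate changes and conjugacy of the compositions $g_\ast\circ h_\ast$ with $(gh)_\ast$ follows from naturality of the gluing construction under diffeomorphisms of the domain and of the target. This exhibits the Kuranishi structure as carrying a $G$-action in the sense of Definition~\ref{def:gpkr}.

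Having established the $G$-action on the Kuranishi structure, I would invoke Lemma~\ref{lem:fukaya} to choose a $G$-compatible good coordinate system, then apply the equivariant multisection construction of Section~\ref{sec:Ku} (with the family-of-multisections refinement \eqref{mult} needed to make evaluation maps submersive) to obtain a $G$-equivariant transverse multisection $\mathfrak{s}$. Orientability of the fiber product giving the virtual fundamental chain comes from the spin structure on $L$ as in \cite[Chapter~8]{FOOO}, and the numerical dimension formula $m+\mu_L(\beta)+k+2l-3$ is the usual index computation for stable $J$-holomorphic discs with $k$ boundary and $l$ interior marked points; these are unaffected by the presence of the $G$-action.

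The main obstacle is purely technical and has already been addressed in Section~\ref{sec:Ku}: namely, producing a good coordinate system compatible with the $G$-action and $G$-equivariant transverse multisections simultaneously. Once Lemma~\ref{lem:fukaya} and the equivariant version of \cite[Lemma A1.23]{FOOO} are in place, the rest of the proof is a routine adaptation of \cite[Theorem~2.1.29]{FOOO}, so I would not reproduce the full inductive gluing argument but rather refer to \cite{FOOO,FOOO3} and point out at each step that the constructions can be carried out $G$-equivariantly thanks to the preceding section.
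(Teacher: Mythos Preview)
Your proposal is correct and follows essentially the same approach as the paper. In fact, the paper's own treatment is even more terse than yours: it simply states that the proposition ``follows from the construction of \cite{FOOO} and $G$-Kuranishi structure built in the previous section,'' without spelling out the verification of Definition~\ref{def:gpkr} or the invocation of Lemma~\ref{lem:fukaya} that you outline.
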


Let $\Phi_{\beta,k}:=(ev_0,\cdots, ev_k):\CM_{k+1}(L,\beta) \mapsto (L)^{\times (k+1)}$ be the boundary evaluation map. Then, the $g$-action induces an isomorphism
$(A_g)_*:\CM_{k+1}(L,\beta) \to \CM_{k+1}(L,\beta)$
with commuting diagram
\begin{equation}\label{gactionoricomm}
\xymatrix{ \CM_{k+1}(L,\beta) \ar[d]_{(A_g)_*} \ar[rr]^{\Phi_{\beta,k}}&& L \times \cdots \times L \ar[d]_{A_g} \\
 \CM_{k+1}(L,\beta)  \ar[rr]^{\Phi_{\beta,k}}&& L \times \cdots \times L }
\end{equation}
where $A_g$ in the right column is the diagonal action of $g$.

For convenience, we work on the de Rham model of an $\AI$-algebra, whose construction is described in \cite{FOOOT2}.
A de Rham model $\AI$-algebra has been extensively used in the toric setting recently, where Lagrangian submanifolds are $T^n$-orbits.  $T^n$-equivariant evaluation
maps are automatically submersive, and hence was directly used to define the push-forward of differential forms.

Consider the de Rham complex $\Omega(L)$ of $L$. A multi-linear map
$$m_{k,\beta} : \overbrace{\Omega(L) \otimes \cdots \otimes \Omega(L)}^{k} \to \Omega(L)$$
is defined as follows:
$$m_{k,\beta} (\rho_1, \cdots, \rho_k) = (ev_0)_! (ev_1, \cdots, ev_k)^\ast (\rho_1, \cdots, \rho_k).$$
where $(ev_0)_!$ is an integration along fibers if $ev_0$ is a submersion. In general, it is the $G$-equivariant smooth correspondence described in the previous section.
(See the diagram below.)
\begin{equation}
\xymatrix{
 L && \ar[ll]_{ev_0} \mathcal{M}_{k+1}^{main} (L;\beta)^{\mathfrak{s}} \ar[rr]^{\Phi_{\beta,k}} & &  {\overbrace{L\times \cdots \times L}^{k}} }
\end{equation}

We put
$$m_k  = \sum_{\beta} T^{\omega (\beta) } m_{k,\beta} \quad (\mbox{for} \,\, k \neq 1), \qquad m_1(\rho_1) = (-1)^{\deg \rho_1+ n+1}d\rho_1 +  \sum_{\beta} T^{\omega (\beta) } m_{1,\beta}(\rho_1).$$
As shown in \cite{FOOOT2}, $\{m_k\}_{k \geq 0}$ defines an $A_\infty$-structure on $\Omega(L)$. i.e. the family of operations satisfying
\begin{equation}
\sum_{k_1 + k_2 = k+1} \sum_i (-1)^\ast \, m_{k_1} \left(\rho_1, \cdots, m_{k_2} (\rho_i, \cdots, \rho_{i+k_2 -1} ) , \cdots, \rho_k \right) = 0
\end{equation}
where $\ast = |\rho_1| + \cdots + |\rho_{i-1}| +i-1$.

Note that if $E$ and $M$ are manifolds with $G$-actions, and if $f:E \to M$ is a $G$-equivariant  submersion,
then the integration along fiber $f_!$ is well-defined on the cochain level, and is $G$-equivariant. Therefore, we obtain the following lemma.
\begin{lemma}
The operations $\{m_k\}$ are $G$-equivariant in the sense that
\begin{equation}\label{eq:grho}
m_{k,\beta} ( g\cdot \rho_1 , \cdots, g\cdot \rho_k) = g\cdot m_{k,\beta} (\rho_1, \cdots \rho_k).
\end{equation}
where $g \cdot \rho = (g^{-1})^* \rho$
\end{lemma}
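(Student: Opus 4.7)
The plan is to deduce the lemma from two ingredients already in place: the $G$-equivariance of the moduli spaces and evaluation maps expressed by the commutative square \eqref{gactionoricomm}, and the naturality of the smooth correspondence of Definition \ref{localcorres} under $G$-equivariant maps between $G$-manifolds. No spin-profile bookkeeping is required because the orientations of the Kuranishi model depend only on the isomorphism class $[(A_{g^{-1}})^\ast P_{spin}(L)] = [P_{spin}(L)]$, not on a specific lift $A_g^{spin}$.

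First I would record the naturality of pullback. From \eqref{gactionoricomm} we have $ev_i \circ (A_g)_\ast = A_g \circ ev_i$ on the perturbed moduli $\mathcal{M}_{k+1}^{main}(L,\beta)^{\mathfrak s}$, so for each $\rho_i \in \Omega(L)$,
\[
(A_g)_\ast^{\ast}\bigl(ev_i^\ast \rho_i\bigr) \;=\; ev_i^\ast\bigl(A_g^{\ast}\rho_i\bigr) \;=\; ev_i^\ast\bigl(g^{-1}\cdot \rho_i\bigr),
\]
and hence $(A_g)_\ast^{\ast}\bigl(ev_1^\ast \rho_1 \wedge \cdots \wedge ev_k^\ast \rho_k\bigr) = ev_1^\ast(g^{-1}\cdot \rho_1)\wedge \cdots \wedge ev_k^\ast(g^{-1}\cdot \rho_k)$. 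Next I would show that the push-forward $(ev_0)_!$ commutes with the $G$-action in the sense $(g^{-1})^\ast \circ (ev_0)_! = (ev_0)_! \circ (A_g)_\ast^\ast$. Locally, $(ev_0)_!$ is the smooth correspondence of Definition \ref{localcorres}, built out of the family $(\mathfrak{s}_p, W_p, \omega_p)$; all of these have been chosen $G$-equivariantly in Section \ref{sec:Ku} (with $\omega_{g\cdot p} = (g^{-1})^\ast \omega_p$ and $\mathfrak{s}_{g\cdot p}$ obtained from $\mathfrak{s}_p$ by the $g$-action on branches). In each local chart the averaging over $\Gamma_{[p]}$ and over branches remains intact under $g$, and the change-of-variables formula for fiber integration along the $G$-equivariant submersion $f_p \circ \pi_p$ yields the required identity. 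Combining the two naturality statements gives $m_{k,g_\ast \beta}(g\cdot \rho_1,\ldots,g\cdot \rho_k) = g\cdot m_{k,\beta}(\rho_1,\ldots,\rho_k)$ on each homology summand. The linear part $(-1)^{\deg\rho_1 + n+1}d\rho_1$ of $m_1$ is automatically equivariant because $d$ commutes with $(g^{-1})^\ast$. Finally, summing over $\beta$ with weights $T^{\omega(\beta)}$ and using $G$-invariance of $\omega$ so that $\omega(g_\ast \beta) = \omega(\beta)$, I get the claimed equivariance $m_k(g\cdot \rho_1,\ldots,g\cdot \rho_k) = g\cdot m_k(\rho_1,\ldots,\rho_k)$.

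The main obstacle lies not in the algebra above but in checking that the local correspondence in Definition \ref{localcorres} really transforms covariantly under $G$. This forces one to pre-arrange the continuous family data $(W_p,\omega_p)$, the perturbation $\mathfrak s_p$, and the partition-of-unity style gluing from \cite{FOOOT2} to all be $G$-compatible simultaneously, which was the point of Lemma \ref{lem:fukaya} and the equivariant family of multi-sections in Section \ref{sec:Ku}. Once this equivariant framework is set up, the proof reduces to unwinding definitions as above. A minor subtlety is that $(A_g)_\ast$ a priori maps $\mathcal{M}_{k+1}^{main}(L,\beta)$ to $\mathcal{M}_{k+1}^{main}(L, g_\ast \beta)$, so the equivariance stated at a single $\beta$ should be read either for $G$-invariant classes or, preferably, after assembling the full series $m_k = \sum_\beta T^{\omega(\beta)} m_{k,\beta}$ where the weight-preserving permutation $\beta \mapsto g_\ast \beta$ is absorbed.
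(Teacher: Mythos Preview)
Your proposal is correct and follows the same approach as the paper, which in fact gives only a one-line justification immediately before the lemma: ``if $f:E\to M$ is a $G$-equivariant submersion, then the integration along fiber $f_!$ is well-defined on the cochain level, and is $G$-equivariant.'' Your detailed unpacking of this via \eqref{gactionoricomm} and Definition~\ref{localcorres} is exactly what the paper leaves implicit, and your remark that $(A_g)_\ast$ really lands in $\mathcal{M}_{k+1}^{main}(L,g_\ast\beta)$ (so that the identity at fixed $\beta$ should be read after summing, or for $G$-invariant $\beta$) is a legitimate subtlety the paper's diagram \eqref{gactionoricomm} glosses over.
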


Due to the problem ``running out of Kuranishi neighborhood" \cite[Section 7.2]{FOOO}, 
one first construct a $G$-equivariant $A_N$-algebra of the
Lagrangian submanifold $L$ for a large $N$. Then, the desired $A_\infty$-algebra is obtained by further applying algebraic formalism as follows. (See \cite[Theorem 7.2.27]{FOOO} for more details.) 

In what follows, we will use the (tensor) coalgebra language which much reduces the notational complexity.
$m_k$ gives rise to the coderivation $\hat{m_k}$, and $\hat{d} := \sum_{k=0}^\infty \hat{m_k}$ on the tensor coalgebra $T \Omega(L)$.
We write $\bx = x_1\otimes  \cdots \otimes x_k$, and $g(\bx) = g(x_1) \otimes \cdots \otimes g(x_k)$.
An $\AI$-homomorphism $f$ induces a cohomomorphism $\hat{f}$ between coalgebras.
One can average an $\AI$-homomorphism to make it $G$-equivariant in the following way.
\begin{lemma}\label{avgAhom}
Let $\mathfrak{f} = \{f_i\}$ be an $A_\infty$ homomorphism between two $A_\infty$ algebras $C_1$ and $C_2$ with $G$-actions. The average $\hat{\mathfrak{f}}_{avg}$ is defined by
$$\hat{\mathfrak{f}}_{avg} ({\bx}) := \frac{1}{|G|} \sum_{g} g \hat{\mathfrak{f}} (g^{-1} {\bx}).$$
Then, $\hat{\mathfrak{f}}_{avg}$ gives a $G$-equivariant $A_\infty$ homomorphism.
\end{lemma}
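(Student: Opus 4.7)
The plan is to verify two claims about $\hat{\mathfrak{f}}_{avg}$: first, that it satisfies the $\AI$-homomorphism relation $\hat{d}_2 \circ \hat{\mathfrak{f}}_{avg} = \hat{\mathfrak{f}}_{avg} \circ \hat{d}_1$, and second, that it is $G$-equivariant. Both parts rely on the hypothesis that each $g \in G$ acts as a strict $\AI$-automorphism on $C_1$ and $C_2$ (the content of \eqref{eq:grho}), which implies that the diagonal $G$-action on each bar coalgebra $TC_i$ commutes with the associated codifferential $\hat{d}_i$.

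For the $\AI$ relation, the key step is to note that for any fixed $g \in G$, the conjugate $\hat{\mathfrak{f}}^{(g)} := g \circ \hat{\mathfrak{f}} \circ g^{-1}$ is itself a cohomomorphism commuting with the codifferentials: since $g \circ \hat{d}_i = \hat{d}_i \circ g$ on $TC_i$,
\begin{equation*}
\hat{d}_2 \circ \hat{\mathfrak{f}}^{(g)} \;=\; g \circ \hat{d}_2 \circ \hat{\mathfrak{f}} \circ g^{-1} \;=\; g \circ \hat{\mathfrak{f}} \circ \hat{d}_1 \circ g^{-1} \;=\; \hat{\mathfrak{f}}^{(g)} \circ \hat{d}_1 .
\end{equation*}
The identity is linear in $\hat{\mathfrak{f}}^{(g)}$, so it is preserved by taking the average, giving the desired relation for $\hat{\mathfrak{f}}_{avg} = \frac{1}{|G|}\sum_g \hat{\mathfrak{f}}^{(g)}$.

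$G$-equivariance then follows by the standard left-translation trick: for $h \in G$,
\begin{equation*}
h \cdot \hat{\mathfrak{f}}_{avg}(h^{-1} \bx) \;=\; \frac{1}{|G|} \sum_{g \in G} (hg) \cdot \hat{\mathfrak{f}}\!\left((hg)^{-1} \bx\right) \;=\; \frac{1}{|G|} \sum_{g' \in G} g' \cdot \hat{\mathfrak{f}}\!\left((g')^{-1} \bx\right) \;=\; \hat{\mathfrak{f}}_{avg}(\bx),
\end{equation*}
after the substitution $g' = hg$, using invariance of the sum over $G$ under left translation.

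The main subtle point I anticipate is that $\hat{\mathfrak{f}}_{avg}$ is a priori only a linear map of bar complexes commuting with codifferentials: it need not be a coalgebra cohomomorphism of $TC_1 \to TC_2$, because the cohomomorphism condition $\Delta \circ \hat{f} = (\hat{f} \otimes \hat{f}) \circ \Delta$ is quadratic in $\hat{f}$ and is not preserved by taking convex combinations. If a genuine $\AI$-homomorphism in the cohomomorphism sense is required, the remedy is to extract the averaged components $(f_{avg})_i := \pi_1 \circ \hat{\mathfrak{f}}_{avg}\big|_{C_1^{\otimes i}}$ and then define $\hat{\mathfrak{f}}_{avg}$ to be the unique cohomomorphism determined by these components. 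The $\AI$-equations for $\{(f_{avg})_i\}$ are then verified by projecting the codifferential identity above to the first tensor factor and using the fact that the conjugates $\mathfrak{f}^{(g)}$ are themselves $\AI$-homomorphisms; this bookkeeping is the principal technical obstacle in the argument, but it is routine once the codifferential-commutation has been established.
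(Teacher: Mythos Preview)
Your verification of $\hat d_2 \circ \hat{\mathfrak f}_{avg} = \hat{\mathfrak f}_{avg} \circ \hat d_1$ and of $G$-equivariance by reindexing is correct and is exactly the paper's (much terser) argument.

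You are also right to flag the coalgebra-map issue, which the paper does not address: the condition $\Delta\hat f=(\hat f\otimes\hat f)\Delta$ is quadratic, so the literal average is only a $G$-equivariant chain map of bar complexes, not a cohomomorphism.

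However, your proposed remedy does not go through. Projecting the identity $\hat d_2\hat{\mathfrak f}_{avg}=\hat{\mathfrak f}_{avg}\hat d_1$ to $C_2$ uses $\pi_r\hat{\mathfrak f}_{avg}=\frac{1}{|G|}\sum_g f^{(g)}_{n_1}\otimes\cdots\otimes f^{(g)}_{n_r}$, so the left side becomes the \emph{diagonal} sum
\[
\frac{1}{|G|}\sum_{g}\sum_{r,\,n_i} m_r\bigl(f^{(g)}_{n_1}\otimes\cdots\otimes f^{(g)}_{n_r}\bigr).
\]
By contrast, the $\AI$-relations for the averaged components $(f_{avg})_k=\frac{1}{|G|}\sum_g f^{(g)}_k$ require the full sum over tuples
\[
\sum_{r,\,n_i}\frac{1}{|G|^r}\sum_{g_1,\ldots,g_r} m_r\bigl(f^{(g_1)}_{n_1}\otimes\cdots\otimes f^{(g_r)}_{n_r}\bigr).
\]
These disagree already at $r=2$: for two $\AI$-morphisms $f,f'$ the discrepancy in the length-$2$ relation is $\tfrac14\,m_2\bigl((f_1-f'_1)\otimes(f_1-f'_1)\bigr)$, which has no reason to vanish. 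Hence the averaged components do not in general satisfy the $\AI$-equations, and the codifferential identity for the linear average cannot be promoted to one for the cohomomorphism they generate. You have correctly located a genuine subtlety, but the ``routine bookkeeping'' you anticipate fails for the same quadratic reason that created the problem in the first place.
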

\begin{proof}
$\hat{\mathfrak{f}}_{avg}$ is $G$-equivariant by construction.
We need to prove that  $\hat{\mathfrak{f}}_{avg} \circ \hat{d} = \hat{d} \circ \hat{\mathfrak{f}}_{avg}$.
From \eqref{eq:grho}, $\hat{d}$ commutes with the $g$-action. Also, $\hat{d}$ commutes with $\hat{\mathfrak{f}}$ since
$f$ is an $\AI$-homomorphism. Hence, the lemma follows.
\end{proof}

\begin{remark}
Averaging of $\AI$-operations of an non-$G$-equivariant $\AI$-algebra does not produce
an $\AI$-algebra. 
\end{remark}

The lemma below follows immediately.
\begin{lemma}\label{compavg}
$(\hat{\mathfrak{h}} \circ \hat{\mathfrak{f}})_{avg} = \hat{\mathfrak{h}}_{avg} \circ \hat{\mathfrak{f}}$ and $(\hat{\mathfrak{f}} \circ \hat{\mathfrak{h}})_{avg} = \hat{\mathfrak{f}} \circ \hat{\mathfrak{h}}_{avg}$ provided that $\mathfrak{f}$ is $G$-equivariant (i.e. $\hat{\mathfrak{f}}_{avg} = \hat{\mathfrak{f}}$).
\end{lemma}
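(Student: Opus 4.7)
The plan is to verify both identities by direct computation, simply unpacking the definition of $(\cdot)_{avg}$ and pushing the group elements through the equivariant homomorphism $\hat{\mathfrak{f}}$. This is a calculation rather than a geometric argument, so no moduli space considerations are needed; the only input is the hypothesis $\hat{\mathfrak{f}}(g\bx) = g\,\hat{\mathfrak{f}}(\bx)$ for all $g \in G$ (equivalently $g^{-1}\hat{\mathfrak{f}}(g\bx) = \hat{\mathfrak{f}}(\bx)$), which is the coalgebra form of $G$-equivariance recorded in Lemma \ref{avgAhom}.

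For the first identity, I would write out
\[
(\hat{\mathfrak{h}} \circ \hat{\mathfrak{f}})_{avg}(\bx) = \frac{1}{|G|}\sum_{g \in G} g\,\hat{\mathfrak{h}}\bigl(\hat{\mathfrak{f}}(g^{-1}\bx)\bigr)
\]
and then apply the equivariance of $\mathfrak{f}$ inside the argument of $\hat{\mathfrak{h}}$ to replace $\hat{\mathfrak{f}}(g^{-1}\bx)$ with $g^{-1}\hat{\mathfrak{f}}(\bx)$. What remains,
\[
\frac{1}{|G|}\sum_{g \in G} g\,\hat{\mathfrak{h}}\bigl(g^{-1}\hat{\mathfrak{f}}(\bx)\bigr),
\]
is by definition $\hat{\mathfrak{h}}_{avg}$ evaluated at $\hat{\mathfrak{f}}(\bx)$, yielding $\hat{\mathfrak{h}}_{avg} \circ \hat{\mathfrak{f}}$.

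For the second identity, the symmetric manipulation works: expand
\[
(\hat{\mathfrak{f}} \circ \hat{\mathfrak{h}})_{avg}(\bx) = \frac{1}{|G|}\sum_{g \in G} g\,\hat{\mathfrak{f}}\bigl(\hat{\mathfrak{h}}(g^{-1}\bx)\bigr),
\]
and this time use equivariance in the form $g\,\hat{\mathfrak{f}}(\mathbf{y}) = \hat{\mathfrak{f}}(g\,\mathbf{y})$ to pull $g$ inside $\hat{\mathfrak{f}}$. The sum becomes $\hat{\mathfrak{f}}\bigl(\tfrac{1}{|G|}\sum_g g\,\hat{\mathfrak{h}}(g^{-1}\bx)\bigr) = \hat{\mathfrak{f}}\bigl(\hat{\mathfrak{h}}_{avg}(\bx)\bigr)$, using linearity of $\hat{\mathfrak{f}}$ on the cofree coalgebra.

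There is essentially no obstacle: the statement is a bookkeeping lemma on how the averaging operation interacts with composition, and the only subtlety to flag is that pulling $g$ through $\hat{\mathfrak{f}}$ requires $G$-equivariance of every component $f_k$ of $\mathfrak{f}$, which is exactly the hypothesis $\hat{\mathfrak{f}}_{avg} = \hat{\mathfrak{f}}$. The lemma will then be invoked in the subsequent algebraic construction to pass from a $G$-equivariant $A_N$-algebra to a $G$-equivariant $A_\infty$-algebra via averaged homotopies.
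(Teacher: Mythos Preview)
Your proof is correct and is precisely the direct verification the paper has in mind: the paper gives no proof at all, writing only ``The lemma below follows immediately.'' Your computation---pushing $g$ through $\hat{\mathfrak{f}}$ via equivariance and then recognizing the definition of $\hat{\mathfrak{h}}_{avg}$---is exactly that immediate check spelled out.
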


We introduce the notion of $A_\infty$-homotopies in the existence of the group action.
Let $(C[1], \{m_k\})$ be an $A_\infty$-algebra over $R$ on which a finite group $G$ acts and suppose that each $m_k$ is $G$-equivariant. ($[1]$ means the degree shifting.)
The $\AI$-homotopy is defined by employing a model of $C \times [0,1]$. When $R \supset \R$, the $A_\infty$-algebra $C[1] \otimes R[t,dt]$ can be used as a model. (See \cite{Fuk2} for extended $\AI$-operations on $C[1] \otimes R[t,dt]$.) A $G$-action on $C[1] \otimes R[t,dt]$ is defined by giving a trivial
$G$-action on $R[t,dt]$ factor. i.e. the $G$-action on $C[1] \otimes R [t,dt]$ is $R[t,dt]$-linear. It is easy to see that $A_\infty$ operations on $C[1] \otimes R[t,dt]$ are $G$-equivariant.

Recall that we have an $A_\infty$ homomorphism ${\rm Eval}_{t=t_0} : C[1] \otimes R[t,dt] \to C[1]$ given by
$${\rm Eval}_{t=t_0} (P(t) + Q(t) dt) = P(t_0).$$
All other higher components of ${\rm Eval}_{t=t_0}$ are defined to be zero. ${\rm Eval}_{t=t_0}$ is obviously $G$-equivariant and so is $\hat{{\rm Eval}}_{t=t_0}$.

\begin{definition}
Two ($G$-equivariant) $A_\infty$ homomorphisms $\phi, \phi' : C \to C'$ are said to be ($G$-)homotopic to each other, if there exists a ($G$-equivariant) $A_\infty$ homomorphism $H : C \to C' \otimes R[t,dt]$ such that ${\rm Eval}_{t=0} \circ H = \phi$ and ${\rm Eval}_{t=1} \circ H = \phi'$.
\end{definition}

\begin{lemma}\label{equivhpty}
Suppose that $C$ and $C'$ are both equipped with $G$-equivariant $A_\infty$ structures and $\phi : C \to C'$ is $G$-equivariant. If $\phi$ and $\phi'$ are homotopic, then so are $\phi$ and $\phi'_{avg}$. Moreover, one can take a $G$-equivariant homotopy between $\phi$ and $\phi'_{avg}$.
\end{lemma}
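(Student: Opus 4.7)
The plan is to construct the desired $G$-equivariant homotopy by simply averaging the given homotopy between $\phi$ and $\phi'$ and then invoking the two preceding lemmas.

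First, I would start with the assumed homotopy, an $A_\infty$-homomorphism
$H : C \to C' \otimes R[t, dt]$ satisfying ${\rm Eval}_{t=0} \circ H = \phi$ and ${\rm Eval}_{t=1} \circ H = \phi'$. Since $C' \otimes R[t, dt]$ has been set up to carry a $G$-equivariant $A_\infty$-structure (with the trivial action on $R[t,dt]$), the averaging construction of Lemma \ref{avgAhom} applies to $H$ and produces a $G$-equivariant $A_\infty$-homomorphism $H_{avg} : C \to C' \otimes R[t, dt]$.

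Next, I would compute the two endpoints of $H_{avg}$ using Lemma \ref{compavg}. Recall that ${\rm Eval}_{t=t_0}$ is $G$-equivariant (the $G$-action on the $R[t,dt]$ factor is trivial), so the second formula of Lemma \ref{compavg} with $\mathfrak{f} = {\rm Eval}_{t=t_0}$ and $\mathfrak{h} = H$ gives
\begin{equation*}
{\rm Eval}_{t=t_0} \circ H_{avg} = ({\rm Eval}_{t=t_0} \circ H)_{avg}.
\end{equation*}
Specializing at $t_0 = 0$, this yields ${\rm Eval}_{t=0} \circ H_{avg} = \phi_{avg} = \phi$, where the last equality uses that $\phi$ is already $G$-equivariant so its average equals itself. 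At $t_0 = 1$, it yields ${\rm Eval}_{t=1} \circ H_{avg} = \phi'_{avg}$. Thus $H_{avg}$ is a $G$-equivariant $A_\infty$-homotopy from $\phi$ to $\phi'_{avg}$, which is exactly what the lemma claims.

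There is essentially no hard step here: once Lemmas \ref{avgAhom} and \ref{compavg} are in place, the only thing to check carefully is that $R[t,dt]$ carries the trivial $G$-action so that ${\rm Eval}_{t=t_0}$ is $G$-equivariant, and that the averaging operator commutes with composition in the required one-sided sense. Both are formal and already recorded above. The proof is therefore just the display above together with one sentence observing that a $G$-equivariant homomorphism is fixed by averaging.
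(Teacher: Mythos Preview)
Your proposal is correct and follows essentially the same approach as the paper: average the given homotopy $H$ to obtain $H_{avg}$, then use the $G$-equivariance of ${\rm Eval}_{t=t_0}$ together with Lemma~\ref{compavg} to identify the endpoints as $\phi_{avg}=\phi$ and $\phi'_{avg}$. The paper's proof is just a terser version of exactly this argument, written in the ``hat'' coalgebra notation.
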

\begin{proof}
Let $H$ be a homotopy between $\phi$ and $\phi'$. Then, by taking ``hat" and averaging the equations ${\rm Eval}_{t=0} \circ H = \phi$ and ${\rm Eval}_{t=1} \circ H = \phi'$, we get
$$\hat{{\rm Eval}}_{t=0} \circ \hat{H}_{avg} = \hat{\phi} \qquad \hat{{\rm Eval}}_{t=1} \circ \hat{H}_{avg} = \hat{\phi}'_{avg}.$$
\end{proof}
We are now ready to state the $G$-equivariant version of the $A_\infty$-Whitehead theorem.
\begin{theorem}
Let $\mathfrak{f}: C_1 \to C_2$ be a $G$-equivariant weak homotopy equivalence between two filtered $A_\infty$ algebras $C_1$ and $C_2$ with $G$-actions. Then, there exists a quasi-inverse $\mathfrak{h} : C_2 \to C_1$ of $\mathfrak{f}$ which is also $G$-equivariant.
\end{theorem}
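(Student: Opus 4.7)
The plan is to bootstrap from the non-equivariant $A_\infty$-Whitehead theorem by averaging an arbitrary quasi-inverse over $G$. Since $\mathfrak{f}$ is a weak homotopy equivalence of filtered $A_\infty$-algebras, the standard $A_\infty$-Whitehead theorem (forgetting the $G$-action) produces a quasi-inverse $\mathfrak{h}_0 : C_2 \to C_1$ together with homotopies $\mathfrak{h}_0 \circ \mathfrak{f} \sim \mathrm{id}_{C_1}$ and $\mathfrak{f} \circ \mathfrak{h}_0 \sim \mathrm{id}_{C_2}$. There is no reason $\mathfrak{h}_0$ itself should be $G$-equivariant, so the natural fix is to set
\[
\mathfrak{h} := (\mathfrak{h}_0)_{avg},
\]
which is well-defined over coefficients containing $\mathbb{Q}$ (available to us) and which is automatically a $G$-equivariant $A_\infty$-homomorphism by Lemma \ref{avgAhom}.

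It then remains to check that averaging does not spoil the quasi-inverse property. This is where the two preparatory lemmas become decisive. By Lemma \ref{compavg}, the $G$-equivariance of $\mathfrak{f}$ yields
\[
(\mathfrak{h}_0 \circ \mathfrak{f})_{avg} = (\mathfrak{h}_0)_{avg} \circ \mathfrak{f} = \mathfrak{h} \circ \mathfrak{f},
\qquad
(\mathfrak{f} \circ \mathfrak{h}_0)_{avg} = \mathfrak{f} \circ (\mathfrak{h}_0)_{avg} = \mathfrak{f} \circ \mathfrak{h}.
\]
Now apply Lemma \ref{equivhpty} with $\phi = \mathrm{id}_{C_1}$ (which is trivially $G$-equivariant) and $\phi' = \mathfrak{h}_0 \circ \mathfrak{f}$: since $\mathrm{id}_{C_1}$ and $\mathfrak{h}_0 \circ \mathfrak{f}$ are homotopic, so are $\mathrm{id}_{C_1}$ and $(\mathfrak{h}_0 \circ \mathfrak{f})_{avg} = \mathfrak{h}\circ\mathfrak{f}$, and moreover the homotopy can be chosen $G$-equivariant. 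Running the same argument with $\phi = \mathrm{id}_{C_2}$ and $\phi' = \mathfrak{f} \circ \mathfrak{h}_0$ gives a $G$-equivariant homotopy between $\mathrm{id}_{C_2}$ and $\mathfrak{f}\circ\mathfrak{h}$. Thus $\mathfrak{h}$ is a $G$-equivariant quasi-inverse of $\mathfrak{f}$.

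There is not really a serious obstacle here: the whole argument is an algebraic shuffle once Lemmas \ref{avgAhom}, \ref{compavg}, and \ref{equivhpty} are available. The only points worth emphasizing in the write-up are (i) that the averaging makes sense because the ground ring contains $\mathbb{Q}$ (needed throughout the paper for multi-sections anyway), and (ii) that the $G$-equivariance of $\mathfrak{f}$ is crucial for pulling the averaging past $\mathfrak{f}$ in Lemma \ref{compavg}; without it, $(\mathfrak{h}_0\circ\mathfrak{f})_{avg}$ would not equal $\mathfrak{h}\circ\mathfrak{f}$ on the nose and the argument would collapse. No new geometric input is required, only the formal machinery already established.
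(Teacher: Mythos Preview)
Your proposal is correct and follows essentially the same approach as the paper: obtain a (non-equivariant) quasi-inverse from the usual $A_\infty$-Whitehead theorem, average it via Lemma \ref{avgAhom}, then use Lemma \ref{compavg} (with the $G$-equivariance of $\mathfrak{f}$) and Lemma \ref{equivhpty} (with $\phi = \mathrm{id}$) to upgrade the homotopies. The paper's proof is slightly terser but identical in content.
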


\begin{proof}
Usual Whitehead theorem for $A_\infty$-algebras in \cite{FOOO} produces a quasi-inverse $\tilde{\mathfrak{h}}$ of $\mathfrak{f}$. By averaging $\mathfrak{h}$ as in Definition \ref{avgAhom}, we obtain the $A_\infty$ homomorphism $\mathfrak{h}_{avg}$ which is $G$-equivariant. 
Since $\mathfrak{f}$ is $G$-equivariant, we may apply Lemma \ref{compavg} so that $(\hat{\mathfrak{h}} \circ \hat{\mathfrak{f}})_{avg} = \hat{\mathfrak{h}}_{avg} \circ \hat{\mathfrak{f}}$ and $(\hat{\mathfrak{f}} \circ \hat{\mathfrak{h}})_{avg} = \hat{\mathfrak{f}} \circ \hat{\mathfrak{h}}_{avg}$.
Note that $\hat{id}_{avg} = \hat{id}$. Then, by Lemma \ref{equivhpty}, we get $G$-equivariant homotopies
$$ \hat{\mathfrak{h}}_{avg} \circ \hat{\mathfrak{f}} \simeq \hat{id}_{C_1}, \qquad 
 \hat{\mathfrak{f}} \circ \hat{\mathfrak{h}}_{avg} \simeq \hat{id}_{C_2}.$$
\end{proof}
In order to construct $\AI$-algebra from $A_N$ (or $A_{n,K}$-algebra in the filtered case) the following
theorem is essential.
\begin{theorem} (c.f. \cite[Theorem 7.2.72]{FOOO})
Let $C_1$ be a filtered $A_{n,K}$ algebra and $C_2$ a filtered $A_{n',K'}$ algebra with $(n,K) < (n',K')$, both of which are assumed to be  $G$-equivariant.  Let $\mathfrak{h} : C_1 \to C_2$ be a $G$-equivariant filtered $A_{n,K}$ homomorphism which gives rise to a filtered $A_{n,K}$ homotopy equivalence.

Then, there exist a $G$-equivariant filtered $A_{n',K'}$ algebra structure on $C_1$ extending the given $A_{n,K}$ algebra structure and a $G$-equivariant filtered $A_{n',K'}$ homotopy equivalence $C_1 \to C_2$ extending the given filtered $A_{n,K}$ homotopy equivalence $\mathfrak{h}$.
\begin{equation*}
\xymatrix{
C_1^{(n',K')} \ar[drr]_{A_{n',K'}} &&\\
&&C_2\\
C_1^{(n,K)} \ar[uu] \ar[urr]^{A_{n,K}}_{\mathfrak{h} : \simeq}  && }
\end{equation*}
\end{theorem}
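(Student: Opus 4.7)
The plan is to adapt the non-equivariant proof of Theorem 7.2.72 in FOOO and exploit the fact that $|G|$ is invertible in our coefficient ring $R \supset \mathbb{Q}$. The key observation is that although $A_\infty$-structures are not linear objects, the set of extensions of a fixed lower-order structure forms an affine space over an appropriate obstruction (cochain) group, so one can meaningfully average a $G$-orbit of extensions to obtain a $G$-invariant (hence $G$-equivariant) one. I will proceed by induction along the partial order used in FOOO, extending one step at a time from $(n,K)$ up to $(n',K')$.

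For the inductive step, suppose we have already produced a $G$-equivariant filtered $A_{n'',K''}$-algebra structure $\{m_k\}$ on $C_1$ and a $G$-equivariant $A_{n'',K''}$-homomorphism $\mathfrak{h}'' : C_1 \to C_2$ extending $\mathfrak{h}$, with $(n,K) \leq (n'',K'') < (n',K')$. Apply the non-equivariant Theorem 7.2.72 of FOOO to obtain some (not necessarily $G$-equivariant) extension $\{m_k^{\mathrm{ext}}\}$ one step further, together with an extended homomorphism $\tilde{\mathfrak{h}}$. Because $\{m_k\}$ and $\mathfrak{h}''$ are already $G$-equivariant, for every $g \in G$ the pulled-back extension $g^*\{m_k^{\mathrm{ext}}\}$ is another legitimate extension of the same lower structure. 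Two such extensions differ by a cocycle in the relevant Hochschild-type cochain complex (the obstruction theory underlying FOOO's proof), and the set of extensions is an affine torsor over this linear space. Hence the average
\[
m_k^{G} \;:=\; \frac{1}{|G|}\sum_{g \in G} g^* m_k^{\mathrm{ext}}
\]
is a well-defined $G$-invariant element of the same affine space, and therefore yields a $G$-equivariant extension of $\{m_k\}$ to the next order. The same averaging argument applied to $\tilde{\mathfrak{h}}$, treating it as an element of the affine space of $A_{n'',K''+}$-homomorphisms extending $\mathfrak{h}''$ over the newly-constructed $\{m_k^G\}$, yields a $G$-equivariant extension $\mathfrak{h}^G$. (If $\tilde{\mathfrak{h}}$ was built over $g^*\{m_k^{\mathrm{ext}}\}$ rather than over $\{m_k^G\}$, one first moves it onto $\{m_k^G\}$ via the standard affine comparison and then averages; this is harmless because all intermediate extensions are $A_\infty$-homotopic by the earlier Lemma and Lemma \ref{equivhpty}, and $G$-equivariant homotopies can be produced by the averaging already established in Lemma \ref{avgAhom}.)

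Iterating this step finitely many times along the chosen partial order carries the structure from $(n,K)$ up to $(n',K')$, producing the desired $G$-equivariant filtered $A_{n',K'}$-algebra structure on $C_1$ together with a $G$-equivariant filtered $A_{n',K'}$-homomorphism $C_1 \to C_2$ extending $\mathfrak{h}$. That this extended homomorphism is still a filtered $A_{n',K'}$-homotopy equivalence follows from the non-equivariant Whitehead-type result applied to the underlying (non-equivariant) structures, since $\tilde{\mathfrak{h}}$ was a homotopy equivalence and averaging only changes it within its homotopy class.

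The main obstacle is the verification that averaging really lands in the space of extensions rather than destroying the $A_\infty$-relations at the new level. This is precisely where the affine/torsor structure of extensions is essential: one must check that the relevant $A_\infty$-relations are affine-linear in the top-order piece, so that a convex combination of solutions is again a solution. This is a routine but careful matter of expanding the $A_\infty$-relations and isolating the top-degree component, exactly analogous to FOOO's obstruction-theoretic analysis; combined with $|G|^{-1} \in R$, it legitimizes the averaging at every inductive stage. All subsidiary compatibilities — preservation of the filtration, of the gapped structure, and of the lower-order $G$-equivariant data — follow automatically because averaging commutes with the relevant forgetful maps.
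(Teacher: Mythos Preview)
Your argument is correct and rests on the same obstruction-theoretic backbone as the paper's, but the two take complementary perspectives. The paper does not first produce a non-equivariant extension and then average; instead it observes that because the lower $A_N$-structure on $C_1$ is already $G$-equivariant, the obstruction cocycle for $m_{N+1}$ automatically lies in the subcomplex $(\Hom_G(B_N C[1], C[1]),\delta)$. The $G$-equivariant $A_N$-homotopy equivalence $\mathfrak{h}$ carries this class to the corresponding one in $C_2$, where it vanishes in \emph{equivariant} cohomology because $C_2$ already possesses a $G$-equivariant $m_{N+1}^{C_2}$; hence one may choose an equivariant $m_{N+1}$ directly, and similarly for the extended homomorphism. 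Your averaging route is the mirror image of this: you use the affine structure of the solution set in $\Hom$ and then project to $\Hom_G$ via $\frac{1}{|G|}\sum_g g^*$. The paper's route is slightly cleaner for the homomorphism step, since by staying inside $\Hom_G$ one never has to transport $\tilde{\mathfrak{h}}$ from $(C_1, m^{\mathrm{ext}})$ to $(C_1, m^G)$ before averaging---a step you correctly flag but treat loosely. To make that step airtight in your framework, note that once $m^G$ is fixed (and $G$-equivariant) one may re-invoke the non-equivariant FOOO argument to produce some $A_{N+1}$-homomorphism over $m^G$, and then apply Lemma~\ref{avgAhom} verbatim, since now both source and target carry $G$-equivariant structures.
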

\begin{proof}
We just mention how to modify the original proof of \cite{FOOO}, which deals with the obstruction class of extending $A_{n,K}$ structure to the next level. Its $G$-equivariant version will be explained only for the unfiltered case for simplicity.
Given an $A_N$ structure with $\{m_k\}_{k=1}^N$, the subsequent operation $m_{N+1}$ should be
defined to satisfy the
next $\AI$-equation 
\begin{equation}\label{eqobstnk}
m_1 \circ m_{N+1}(\bx) + m_{N+1} \circ m_1(\bx) + \sum_{k_1 + k_2=N+1} (-1)^{\deg' \bx_1} m_{k_1}(\bx_1, m_{k_2}(\bx_2),\bx_3)=0.
\end{equation}
Thus, $m_{N+1}$ may be regarded as a $\delta$-cocycle in the cochain complex $$(Hom(B_NC[1], C[1]), \delta = [m_1,\cdot]).$$
In addition, if $A_N$ algebra is $G$-equivariant, the left hand side of 
\eqref{eqobstnk} defines a $G$-equivariant map from $B_NC[1]$ to $C[1]$, and hence
 lies in the subcomplex $(Hom_G(B_NC[1], C[1]), \delta)$. 
 
 Note that $G$-equivariant $A_N$ homotopy equivalence $\mathfrak{h}$ preserve the obstruction class $[m_{N+1}]$, and that the obstruction vanishes if the target of $\mathfrak{h}$ has a $G$-equivariant $A_{N'}$ structure with $N' > N$.
Therefore, we can choose a $G$-equivariant $m_{N+1}$ which satisfies \eqref{eqobstnk} for $C_1$. In a similar manner, the original construction
 of \cite{FOOO} can be modified to include the $G$-equivariance as above.
\end{proof}
Following the rest of \cite{FOOOT2}, we obtain
\begin{theorem}
There exist a $G$-equivariant $\AI$-algebra $\Omega(L) \widehat{\otimes} \Lambda_{nov}$ of a spin Lagrangian submanifold $L$, and
its $G$-invariant part again becomes an $\AI$-algebra, which will be called
the $\AI$-algebra of $[L/G]$ in $[M/G]$.
\end{theorem}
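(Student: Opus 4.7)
The plan is to assemble the $G$-equivariant $\AI$-structure from the ingredients already developed in the excerpt, and then to observe that $G$-equivariance automatically passes the structure to the $G$-invariant subspace. First, for each $\beta \in H_2(M,L;\Z)$ and each $k \geq 0$, I would invoke the cited Proposition that equips $\CM_{k+1,0}^{main}(L,J,\beta)$ with a $G$-equivariant Kuranishi structure in the sense of Definition \ref{def:gpkr}. Applying Lemma \ref{lem:fukaya} I choose a good coordinate system compatible with the $G$-action, and then select a $G$-equivariant continuous family of multisections $\mathfrak{s}$ together with $G$-compatible auxiliary data $(W_p, \omega_p)$ as in Definition \ref{localcorres}, using the averaging device \eqref{mult} to make each local Sur-stabilization $\Gamma_{[p]}$-equivariant. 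This makes the boundary evaluation maps into $G$-equivariant weak submersions in the sense required for smooth correspondence, so the formula $m_{k,\beta}(\rho_1,\ldots,\rho_k) := (ev_0)_!\, (ev_1,\ldots,ev_k)^{\ast}(\rho_1 \otimes \cdots \otimes \rho_k)$ is well defined and, by the commutativity of diagram \eqref{gactionoricomm} together with the naturality of fiber integration under equivariant submersions, satisfies $m_{k,\beta}(g\cdot\rho_1,\ldots,g\cdot\rho_k) = g\cdot m_{k,\beta}(\rho_1,\ldots,\rho_k)$.

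The second step is to package these into an $\AI$-structure on $\Omega(L)\widehat{\otimes}\Lambda_{nov}$. Because of the ``running out of Kuranishi neighborhoods'' issue, the direct construction only yields a $G$-equivariant filtered $A_{n,K}$-algebra for each finite $(n,K)$, together with a $G$-equivariant filtered $A_{n,K}$-homotopy equivalence to the candidate structure. I would then run the inductive extension as in the cited analogue of \cite[Theorem 7.2.72]{FOOO}, proven $G$-equivariantly just above: at each stage the obstruction to extending $\{m_k\}_{k \leq N}$ to $m_{N+1}$ is a class in $H^\ast(Hom(B_NC[1],C[1]), [m_1,\cdot])$, and since all lower $m_k$ are $G$-equivariant the obstruction cocycle lies in the $G$-equivariant subcomplex $Hom_G(B_NC[1],C[1])$; the existence of the $G$-equivariant $A_{n',K'}$ comparison map with $(n',K')>(n,K)$ shows the obstruction class vanishes there, hence vanishes in the $G$-equivariant subcomplex, and so $m_{N+1}$ can be chosen $G$-equivariantly. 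Iterating produces a $G$-equivariant filtered $\AI$-structure on $\Omega(L)\widehat{\otimes}\Lambda_{nov}$.

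For the second half of the statement, the $G$-invariance of each $m_k$ shows that $m_k$ restricts to $\bigl(\Omega(L)\widehat{\otimes}\Lambda_{nov}\bigr)^{G}$: if $\rho_1,\ldots,\rho_k$ are $G$-invariant then so is $m_k(\rho_1,\ldots,\rho_k)$, because $g\cdot m_k(\rho_1,\ldots,\rho_k)=m_k(g\rho_1,\ldots,g\rho_k)=m_k(\rho_1,\ldots,\rho_k)$. All $\AI$-relations are inherited from the ambient algebra, so the invariant part is itself a filtered $\AI$-algebra, which we name the $\AI$-algebra of $[L/G]$ in $[M/G]$.

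The main obstacle, as in the non-equivariant story, is the inductive extension from $A_{n,K}$ to the full $\AI$-structure: one must keep track of the obstruction cocycles at every stage and verify that they indeed land in the $G$-fixed subcomplex. The key input that makes this work is the $G$-equivariance of both the multisections (Lemma \ref{lem:fukaya} and its companion on $G$-equivariant multisections) and of the comparison homomorphism at each finite level, together with the averaging Lemmas \ref{avgAhom}--\ref{equivhpty} which guarantee that any homotopy between the two $\AI$-structures can be replaced by a $G$-equivariant homotopy without changing the homotopy class. Once these equivariance-preserving constructions are in place, the proof of the theorem reduces to an essentially mechanical lift of the arguments in \cite{FOOO} and \cite{FOOOT2} to the $G$-equivariant setting.
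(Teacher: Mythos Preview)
Your proposal is correct and follows essentially the same route as the paper: the paper builds the $G$-equivariant $\AI$-structure exactly by combining the $G$-equivariant Kuranishi perturbations (Section \ref{sec:Ku}), the $G$-equivariance of the resulting $m_{k,\beta}$, the averaging Lemmas \ref{avgAhom}--\ref{equivhpty}, and the $G$-equivariant extension theorem (the analogue of \cite[Theorem 7.2.72]{FOOO}), and then simply notes that the $G$-invariant part inherits the $\AI$-structure by restriction. The paper's proof is literally ``Following the rest of \cite{FOOOT2}, we obtain'' the theorem, so your more explicit assembly of these ingredients is a faithful expansion of the intended argument.
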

The $G$-equivariance implies that If $\rho_1, \cdots, \rho_k$ are all $G$-invariant, then so is $m_{k,\beta} (\rho_1, \cdots, \rho_k)$. Therefore, we obtain an operation
$$m_{k,\beta}| : \overbrace{\Omega(L)^G \otimes \cdots \otimes \Omega(L)^G}^{k} \to\Omega(L)^G $$
simply by restricting $m_{k,\beta}$ to the set of $G$-invariant differential forms on $L$. This gives rise to  a well-defined $A_\infty$-structure on 
$\Omega(L)^G$. 

Within this setting, we may define $G$-bounding cochain as a bounding cochain which is $G$-invariant. $G$-bounding cochains define boundary deformations
of $\AI$-algebra of $[L/G]$.
\begin{remark}
The average of a bounding cochain does not become a $G$-bounding cochain.
\end{remark}

The $\AI$-algebra of $[L/G]$ constructed here is only concerned with smooth $J$-holomorphic
discs with boundary on $L$. In order to obtain richer orbifold theory, one should additionally consider orbi-discs or bulk deformations by twisted sectors, which are introduced by the first author and Poddar \cite{CP} in the case of toric orbifolds.

\section{$G$-equivariant $A_\infty$-bimodule $CF^\ast_{R, l_0}(L_0, L_1)$}\label{sec:12}
We consider equivariant $A_\infty$-bimodules and consider Lagrangian Floer homology
of a pair. We first review the $\AI$-bimodule structure briefly. Recall that a boundary operator $\delta : CF_{R,l_0}^\ast(L_0,L_1)\to CF_{R,l_0}^\ast(L_0,L_1)$ was defined in \cite{FOOO} by
\begin{equation}\label{eqfldiff}
\delta ([w, l_p]) = \sum_{ \mu ([w',l_q]) - \mu ([w,l_p]) =1 } \# \mathcal{M} (L_1, L_0 ; [w, l_p ], [w',l_q]) \,\, [w', l_q].
\end{equation}
Here, $\mathcal{M} (L_1, L_0 ; [w,l_p], [w',l_q])$ is the moduli space of $J$-holomorphic strips
$u$ such that $[w \star u, l_q] = [w',l_q]$. The orientations of these moduli spaces are already well-studied by \cite{FOOO}.
\begin{theorem}\label{ori}  \cite[8.1.14]{FOOO} Suppose that a pair of Lagrangian submanifolds $(L_0, L_1)$ is (relatively) spin. Then for any $p,q \in L_0 \cap L_1$, the moduli space $\mathcal{M} (L_1, L_0 ; [w,l_p], [w',l_q])$ of connecting orbits in Lagrangian intersection Floer cohomology is orientable. Furthermore, orientations on $\Theta_{[w,l_p]}^-$ and $\Theta_{[w',l_q]}^-$  and related spin structures canonically determine the orientation on $\mathcal{M} (p,q)$.
\end{theorem}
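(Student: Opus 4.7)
The plan is to derive orientability of $\mathcal{M}(L_1,L_0;[w,l_p],[w',l_q])$ and the canonical orientation directly from the gluing formulas of Subsection \ref{subsec:Gluingthm}. At a regular point $u$ of the moduli space, the tangent space is identified with the kernel of the linearized Cauchy--Riemann operator $\bar{\partial}_{\lambda_u,S_2}$ on the strip $S_2=\R\times[0,1]$ with Lagrangian boundary data $\lambda_u$ prescribed by $u$, so $\wedge^{top}T_u\mathcal{M}\cong \det(\bar{\partial}_{\lambda_u,S_2})$, and orientability of $\mathcal{M}$ reduces to global triviality of the determinant line bundle $\det\to\mathcal{M}$. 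At non-regular points one replaces the kernel by the Kuranishi model of Section \ref{sec:Ku} and the same argument applies in families.

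First I would assemble the gluing. Since $u$ satisfies $[w\star u,l_q]=[w',l_q]$, the glued Lagrangian sub-bundle $\lambda_{w,l_0}\sharp\lambda_u$ on $\partial Z_-$ agrees, as a Lagrangian sub-bundle with its canonical spin structure fixed at the vertices by $\iota_i\circ\widetilde{\sigma}^{-1}$, with $\lambda_{w',l_0}$. Applying the first gluing formula of Subsection \ref{subsec:Gluingthm} to $S_1=Z_-$ carrying $\lambda_{w,l_0}$ and $S_3=Z_-$ carrying $\lambda_{w',l_0}$ then produces
\begin{equation*}
\Theta^-_{[w,l_p]} \otimes \det(\bar{\partial}_{\lambda_u,S_2}) \cong \Theta^-_{[w',l_q]},
\end{equation*}
equivalently $\det(\bar{\partial}_{\lambda_u,S_2})\cong (\Theta^-_{[w,l_p]})^\vee\otimes \Theta^-_{[w',l_q]}$. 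The right-hand side is a fixed one-dimensional real vector space independent of $u$, so any choice of orientations on the two orientation spaces canonically pins down an orientation of $\det(\bar{\partial}_{\lambda_u,S_2})$, and hence of $T_u\mathcal{M}$.

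To promote this to a global orientation, I would check that the identification is continuous in $u$. This follows from standard continuity of the determinant line bundle over families of Fredholm operators, together with the fact that the spin data on $\partial S_2$ is determined once and for all by the pre-fixed spin structures of $L_0$, $L_1$ and the trivialization $\widetilde{\sigma}$ of $P_{spin}(\lambda_{l_0})$ of Subsection \ref{oriline}; no auxiliary choice is made as $u$ varies. Hence the isomorphism trivializes $\det\to\mathcal{M}$ globally, yielding both orientability and the canonical orientation.

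The main obstacle will be making the gluing isomorphism canonical up to positive scalar, which is where the (relative) spin hypothesis really intervenes. One must check that the spin structures assigned to the vertical edges $\lambda_{l_0}$ on either side of the concatenation match, so that their contributions cancel in the gluing; this is guaranteed by the canonical conventions of Subsection \ref{oriline}. Lemma \ref{lem:se11} is the other key input, converting pin/spin data on Lagrangian loops into a canonical orientation of the associated determinant line. In the (relatively) spin case, a common background class $V$ ensures the spin structures on $\lambda_u$ extend consistently from $L_0$ and $L_1$; without it, the isomorphism would be defined only up to a sign and $\det\to\mathcal{M}$ could be twisted, destroying orientability.
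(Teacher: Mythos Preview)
The paper does not prove this statement; it is quoted from \cite[8.1.14]{FOOO} and used as a black box. Your sketch is precisely the standard argument behind that result, and in fact the paper has already set up all the ingredients you invoke: the gluing isomorphism $\Theta^-_{[w,l_p]}\otimes\det(\bar\partial_{\lambda_u,S_2})\cong\Theta^-_{[w',l_q]}$ is exactly the first gluing formula of Subsection~\ref{subsec:Gluingthm} (the Floer analogue of \eqref{eq:moropq}), and the canonical spin conventions of Subsection~\ref{oriline} are what make it well-defined up to positive scalar. So your approach is correct and coincides with the intended one.
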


In general, disc bubbling phenomenon produce non-trivial $m_0$ term, and hence $\delta^2 \neq 0$.
The general algebraic structure one obtains is in fact $\AI$-bimodule over $\AI$-algebras of  $L_0$ and $L_1$.
Recall that $\AI$-bimodule $M$ over $\AI$-algebras $C_1,C_2$ are given by
sequence of maps $$\{ n_{k_1,k_2}: B_{k_1}C_1 \otimes M \otimes B_{k_2}C_2 \to M\}_{k_1,k_2 \geq 0},$$
satisfying an $\AI$-bimodule equation.
Such an $\AI$-bimodule is called $G$-equivariant, if $G$ acts on $M$ and $C_i$ linearly and each $n_{k_1,k_2}$ satisfies 
$$g \cdot n_{k_1,k_2}(a_1,\cdots, a_{k_1}, \xi, b_1,\cdots, b_{k_2}) = 
n_{k_1,k_2}(g \cdot a_1,\cdots, g \cdot a_{k_1}, g \cdot \xi, g \cdot b_1,\cdots, g \cdot b_{k_2}).$$

Now, suppose $L_0$ and $L_1$ admit $G$-actions inherited from the one on the ambient symplectic manifold. Assume further that the spin structures on $L_0$ and $L_i$ are $G$-invariant. i.e. they are isomorphic to their pull-back under any element of $G$. Then, the spin bundle  on $L_i$ gives a spin profile $s_i \in H^2 (G; \ZZ /2)$ for $i=0,1$. Assume $s_0=s_1$.

\begin{remark}
Since we will only consider the action of the energy zero subgroup $G_\alpha$ of $G$ for $L_0$ and $L_1$, the condition $s_0=s_1$ can be weakened to the agreement of $s_0$ and $s_1$ when restricted to $H^2 (G_\alpha ; \ZZ /2)$.
\end{remark}

As studied in Section \ref{sec:Ku}, the union of all moduli spaces $\mathcal{M} (L_1, L_0 ; [w,l_p], [w',l_q])$ with a fixed symplectic energy, has a Kuranishi structure with a $G_\alpha$-action. Therefore, $\delta=n_{0,0}$ is $G_\alpha$-equivariant and the homology of
$\big(CF_{R,l_0}^\ast(L_0,L_1), \delta \big)$ admits a natural $G_\alpha$-action if $\delta^2=0$.

\begin{definition}\label{def:flwithout}
With the setting as above, the $G_\alpha$-invariant part of the homology is called the Lagrangian Floer cohomology of the pair $([L_0/G], [L_1/G];l_0)$ and will be written by 
$$HF^\ast_{R,l_0}([L_0/G], [L_1/G]).$$ We take the sum over all possible $l_0$ to define Lagrangian Floer cohomology of the
pair $([L_0/G], [L_1/G])$.
\end{definition}

\begin{remark}
If $G \neq G_\alpha$, we introduced orbifold Novikov ring $\Lambda_{nov}^{orb}$ in Subsection \ref{subsec:newNov},
and explained that $\Lambda_{nov}^{orb}$ is not  visible if we use the universal Novikov field.
\end{remark}

As in the previous section, it is not hard to adapt the construction of \cite{FOOO} (or \cite{FOOOT2} for the de Rham version) to define $G_\alpha$-equivariant $\AI$-bimodules for the pair $(L_0, L_1)$. Again using equivariant Kuranishi perturbations, and adapting the construction of \cite{FOOO} as in the previous section,  one can prove
\begin{prop}\label{prop:12.4}
We have a $G_\alpha$-equivariant filtered $\AI$-bimodule  $(CF_{R,l_0}^\ast(L_0,L_1), \{n_{k_1,k_2}\})$
with $n_{0,0} = \delta$ over $G_\alpha$-equivariant $\AI$-algebras of $L_0$ and $L_1$.
\end{prop}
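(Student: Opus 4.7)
The plan is to assemble this essentially by combining three ingredients already developed in the paper: (i) the $G_\alpha$-action on the orientation spaces of generators of $CF_{R,l_0}^{\ast}(L_0,L_1)$ from Section \ref{GactFloerNov}, (ii) the $G$-equivariant Kuranishi perturbation machinery of Section \ref{sec:Ku}, and (iii) the $A_\infty$-algebraic averaging / Whitehead technology developed in the previous section. First I would set up, for each pair of homotopy classes $([w,l_p],[w',l_q])$ and each tuple $(k_1,k_2)$, the moduli space $\mathcal{M}_{k_1,k_2}(L_1,L_0;[w,l_p],[w',l_q])$ of $J$-holomorphic strips with $k_1$ (resp.\ $k_2$) ordered boundary marked points mapping to $L_0$ (resp.\ $L_1$). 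The boundary evaluation map
\begin{equation*}
(ev^+_0,\ldots,ev^+_{k_2},ev,ev^-_0,\ldots,ev^-_{k_1}):\mathcal{M}_{k_1,k_2}(\cdots) \to (L_1)^{k_2}\times CF_{R,l_0}^\ast \times (L_0)^{k_1}
\end{equation*}
will be used as a smooth correspondence to define $n_{k_1,k_2}$, exactly as in the standard construction of filtered $\AI$-bimodules in \cite{FOOO}, except that everything is done on the Novikov cover and with orientation spaces attached to the generators as in Section \ref{sec:FNtheory}.

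The next step is to verify that each $\mathcal{M}_{k_1,k_2}(L_1,L_0;[w,l_p],[w',l_q])$ carries a $G_\alpha$-action in the sense of Definition \ref{def:gpkr}. For $g\in G_\alpha$, the naive $g$-action on a strip $u$ gives a strip $g(u)$ with boundary on $L_0,L_1$, and the identification of $[g(w)\star g(u),l_{g(q)}]$ with $[w_g\star g(w)\star g(u),l_{g(q)}]$ via gluing the chosen $w_g$ of energy/Maslov zero is exactly the $G_\alpha$-action we defined on generators in \eqref{def:galphaaction}. Then I would invoke Lemma \ref{lem:fukaya} to choose a $G_\alpha$-compatible good coordinate system, and the subsequent lemma to pick a $G_\alpha$-equivariant continuous family of multisections making all the relevant evaluation maps submersive in the de Rham smooth correspondence sense of Definition \ref{localcorres}. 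Integration along fiber through $ev$ then produces operations $n_{k_1,k_2}$ which are strictly $G_\alpha$-equivariant on the chain level — the equivariance is the commutativity of the diagram \eqref{gactionoricomm} applied to strip moduli, combined with the fact that the signs picked up on orientation spaces via the $sp(w_g)$-twists are compatible with composition precisely by Proposition \ref{prop:compadm}, which uses the assumption $\mathrm{spf}_{L_0}=\mathrm{spf}_{L_1}$ (or their agreement on $G_\alpha$).

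For the $\AI$-bimodule relations themselves, I would follow the filtered $A_{n,K}$ construction of \cite{FOOO} inductively. The bimodule equation to solve at each step $(n,K)$ is of the form $\delta_{\mathrm{Hoch}}(n_{k_1,k_2}) + \cdots = 0$ with the tail supplied by lower-order operations, and as in Section 10 this linearized obstruction lives in a Hochschild-type complex $\mathrm{Hom}(B C_0\otimes M\otimes B C_1,M)$. When all previously constructed operations are $G_\alpha$-equivariant and the $\AI$-algebras on $L_0,L_1$ are the $G$-equivariant ones built in the previous section, the obstruction cocycle lies in the $G_\alpha$-invariant subcomplex; since the target has a $G_\alpha$-equivariant $A_{n',K'}$-extension (a higher-order filtered $\AI$-bimodule exists without equivariance by \cite{FOOO}), the obstruction vanishes there, and one can then choose the extension $G_\alpha$-equivariantly by averaging as in Lemma \ref{avgAhom}. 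This yields the full filtered $\AI$-bimodule structure $\{n_{k_1,k_2}\}$ with $n_{0,0}=\delta$ as in \eqref{eqfldiff}.

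The hard part, as usual in this setup, is not the algebra but the bookkeeping of signs: one must verify that the orientation-space identifications attached to the $G_\alpha$-action — namely the composition \eqref{action2} involving the factor $(-1)^{sp(w_g)}$ — are compatible with the FOOO canonical orientations on $\mathcal{M}_{k_1,k_2}$ recalled in Theorem \ref{ori}, so that the $g$-action intertwines the evaluation maps at source and target generators with matching orientations. This reduces, after cutting the strip near the incoming and outgoing ends, to the gluing identity of Proposition \ref{prop:compadm} together with Lemma \ref{lem:se11}, and this is precisely where the hypothesis $\mathrm{spf}_{L_0}=\mathrm{spf}_{L_1}$ is used: the two potentially non-trivial spin twists at $L_0$- and $L_1$-boundary components cancel, leaving a genuine $G_\alpha$-action rather than a projective one. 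With that in hand, the equivariant Kuranishi perturbations together with equivariant homological algebra deliver the filtered $\AI$-bimodule claimed in the proposition.
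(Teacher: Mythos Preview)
Your proposal is correct and follows exactly the approach the paper indicates: the paper itself gives no detailed proof of this proposition, merely stating that one adapts the construction of \cite{FOOO} (or \cite{FOOOT2}) using the equivariant Kuranishi perturbations of Section~\ref{sec:Ku} and the $A_\infty$-algebraic machinery of the previous section. You have fleshed out precisely these steps --- the $G_\alpha$-action on strip moduli via Definition~\ref{def:gpkr}, the equivariant good coordinate system and multisections via Lemma~\ref{lem:fukaya}, the inductive $A_{n,K}$ extension with averaging, and the sign compatibility via Proposition~\ref{prop:compadm} under the hypothesis $\mathrm{spf}_{L_0}=\mathrm{spf}_{L_1}$ --- in more detail than the paper itself provides.
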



Now, let us consider the case that $L_1$ is obtained by Hamiltonian diffeomorphism of $L_0$, i.e. $L_1=\phi_{H}^1(L_0)$ for a time-dependent Hamiltonian $H$. If $H:M \times [0,1] \to \R$ is not $G$-invariant, the corresponding Hamiltonian perturbation produces a $G$-equivariant Lagrangian immersion which will be studied in \cite{CH2}. Here, we only consider $H:M \times [0,1] \to \R$ which are $G$-invariant. Here, the $G$-invariance of $H$ means that each $H_t$ for $t \in [0,1]$ is a $G$-invariant function.
This implies that $\phi_{H}^1$ is $G$-equivariant, and hence, for example, $g$-fixed points of $M$ can
only move to a $g$-fixed points by  $\phi_{H}^1$. $\phi_H^1 (L_0)$ is, then, preserved by $G$. Since $L_0$ and $\phi_H^1 (L_0)$ are equivariantly isotopic, one can identify their spin bundles and liftings of $G$-actions on spin bundles. Thus, spin profiles of $L_0$ and $\phi_H^1 (L_0)$ coincide.

The energy zero subgroup $G_\alpha \subset G$ for the pair $(L_0, \phi_H^1(L_0))$ is particularly simple. Indeed, we will show that $G_\alpha$ becomes the entire group $G$. We first choose a candidate for energy zero path for each $g \in G$ as follows.

If we define $\WT{H} : M \to \RR$ by
$$\WT{H} (x) = \int_{0 \leq t \leq 1} (\phi^t_H)^\ast H_t (x ) dt=\int_{0 \leq t \leq 1} H_t (\phi^t_H (x) ) dt,$$
then $\WT{H}$ is also $G$-invariant (since $\phi^t_H$ is $G$-equivariant.) Fix a generic point $x_0$ in $L_0$ and choose for each $g \in G$ a path $\gamma_g$ in $L_0$ from $x_0$ to $g \cdot x_0$. We take a base path $l_0$ from $L_0$ to $\phi_H^1 (L_0)$ to be $l_0 (t) := \phi^t_H (x_0)$. Then, the strip $w_g : [0,1]^2 \to M$ defined by
$$w_g (t,s) = \phi^t_H (\gamma_g (s) )$$
gives a path from $l_0$ to $g(l_0)$ in $\Omega(L_0, \phi_H^1(L_0);l_0)$. We claim that this path has zero energy.

\begin{lemma}
$w_g$ is an energy zero path.
\end{lemma}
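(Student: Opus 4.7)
The plan is to verify separately that $\mathcal{A}(w_g, g(l_0)) = 0$ and $\mu(w_g, g(l_0)) = 0$, both of which reduce to the $G$-equivariance of the Hamiltonian flow $\phi_H^t$ coming from the $G$-invariance of $H$.

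For the symplectic area, I would compute $w_g^*\omega$ directly. From $\partial_t w_g = X_{H_t}(w_g(t,s))$ and $\partial_s w_g = (d\phi_H^t)(\gamma_g'(s))$, the defining identity $\iota_{X_{H_t}}\omega = dH_t$ yields
\begin{equation*}
w_g^*\omega \;=\; dH_t\!\bigl(d\phi_H^t(\gamma_g'(s))\bigr)\, dt \wedge ds \;=\; \frac{d}{ds}\bigl[H_t(\phi_H^t(\gamma_g(s)))\bigr]\, dt \wedge ds.
\end{equation*}
Integrating first in $s$ collapses the inner integral to $H_t(\phi_H^t(g\cdot x_0)) - H_t(\phi_H^t(x_0))$. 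The $G$-equivariance of $\phi_H^t$ gives $\phi_H^t(g\cdot x_0) = g\cdot \phi_H^t(x_0)$, and $G$-invariance of $H_t$ then forces this difference to vanish identically in $t$.

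For the Maslov index, I would trivialize $w_g^*TM$ in two steps: first pull back via the Hamiltonian flow, identifying $T_{w_g(t,s)}M$ with $T_{\gamma_g(s)}M$ through $(d\phi_H^t)^{-1}$; second, trivialize $TM$ along $\gamma_g\subset L_0$ by a symplectic frame $P_{\gamma_g}(s)$ that carries $T_{x_0}L_0$ to $T_{\gamma_g(s)}L_0$. Under this trivialization, the edges $t=0$ and $t=1$ both become the constant Lagrangian $T_{x_0}L_0$; for $t=1$ one uses the identity $T_{\phi_H^1(\gamma_g(s))}L_1 = d\phi_H^1(T_{\gamma_g(s)}L_0)$ coming from $L_1=\phi_H^1(L_0)$. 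The edge $s=0$ produces the loop $\Lambda_\ell(t):=(d\phi_H^t)^{-1}(\lambda_{l_0}(t))$ in $\mathrm{Lag}(T_{x_0}M)$ based at $T_{x_0}L_0$, while the edge $s=1$ produces a loop which, by $G$-equivariance of $\phi_H^t$, equals $P_{\gamma_g}(1)^{-1}\!\circ\! dg(\Lambda_\ell)$. Since $P_{\gamma_g}(1)^{-1}\!\circ\! dg$ is a symplectic automorphism of $T_{x_0}M$ preserving $T_{x_0}L_0$, the two loops are symplectically conjugate and share the same Maslov index. Going around $\partial[0,1]^2$ counterclockwise, the two vertical edges contribute with opposite signs and the two horizontal edges (being constant) contribute $0$, so the total Maslov index is $\mu(\Lambda_\ell) - \mu(\Lambda_r) = 0$.

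The area computation is essentially automatic once the integrand is expanded; the main obstacle is the Maslov bookkeeping, which requires matching the pre-fixed spin/Lagrangian datum $\lambda_{l_0}$ with the Hamiltonian trivialization and verifying that the two vertical edges yield conjugate loops based at the same Lagrangian. The geometric content, however, is transparent: both the Hamiltonian flow and the $G$-action are symplectic and preserve $L_0$, so all relevant data remain in one symplectic conjugacy class of loops based at $T_{x_0}L_0$, and conjugation-invariance of the Maslov index forces the cancellation.
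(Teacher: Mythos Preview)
Your area computation is essentially identical to the paper's: both expand $w_g^*\omega$ via $\iota_{X_{H_t}}\omega = dH_t$, recognize the integrand as a total $s$-derivative, and then invoke $G$-invariance to kill the boundary terms. The only cosmetic difference is the order of integration—the paper packages the $t$-integral first into the auxiliary function $\widetilde{H}(x)=\int_0^1 H_t(\phi_H^t(x))\,dt$ and then observes $\widetilde{H}(g\cdot x_0)=\widetilde{H}(x_0)$, whereas you integrate in $s$ first and use $G$-equivariance of $\phi_H^t$ pointwise in $t$.

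Where you genuinely go beyond the paper is the Maslov index. The paper's proof of this lemma only checks the symplectic area, taking the vanishing of $\mu(w_g,g(l_0))$ for granted even though the definition of $G_\alpha$ (Definition~\ref{def:galpha}) requires both. Your trivialization argument—pulling back by $d\phi_H^t$ so the horizontal edges become constant Lagrangians and the two vertical edges become symplectically conjugate loops based at $T_{x_0}L_0$—is a clean way to see this, and the cancellation follows from the conjugation-invariance of the Maslov index (equivalently, the connectedness of $Sp(2n)$). So your proof is in fact more complete than the paper's on this point, at the cost of a bit of extra bookkeeping that the paper simply elides.
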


\begin{proof}
We have to show that $\int_{w_g} \omega=\int_{[0,1]^2} w_g^\ast \omega =0$. Observe that
\begin{eqnarray*}
w_g^\ast \omega \left(\frac{\partial}{\partial t}, \frac{\partial}{\partial s} \right)_{(t,s)} &=& \omega \left( (X_{H_t})_{\phi^t_H (\gamma_g(s))}, (\phi_H^t)_\ast \gamma_g'(s) \right) \\
&=& dH_t ((\phi_H^t)_\ast \gamma_g'(s))_{\phi^t_H (\gamma_g (s))} \\
&=& d ( (\phi_H^t )^\ast H_t ) (\gamma_g'(s))_{\gamma_g(s)} \\
&=& \frac{d}{ds} (\phi_H^t )^\ast H_t \, (\gamma_g(s)) 
\end{eqnarray*}
Therefore, the energy of $w_g$ is given by
\begin{eqnarray*}
\mathcal{A} (w_g) &=& \int_{0 \leq s \leq 1} \int_{0 \leq t \leq 1} \left( \frac{d}{ds} (\phi_H^t )^\ast H_t \, (\gamma_g(s)) \right ) dt ds \\
&=&  \int_{0 \leq s \leq 1}  \frac{d}{ds} \left(  \int_{0 \leq t \leq 1} (\phi_H^t )^\ast H_t \, (\gamma_g(s))  dt \right ) ds \\
&=& \int_{0 \leq s \leq 1}  \frac{d}{ds} \left( \WT{H} (\gamma_g (s) ) \right) ds \\
&=& \WT{H} (\gamma_g (1) ) - \WT{H} ( \gamma_g (0)) =0
\end{eqnarray*}
since $\WT{H}$ is $G$-invariant and $\gamma_g (1) = g \cdot \gamma_g (0)$.
\end{proof}
It directly follows from the lemma that
\begin{corollary}
With the setting as above, the energy zero subgroup for a pair $(L_0, \phi_H^1 (L_0))$ is the entire group $G$.
\end{corollary}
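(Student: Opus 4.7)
The plan is to upgrade the preceding lemma from vanishing symplectic area to the full ``energy zero'' condition of Definition \ref{def:galpha} (both $\CA$ and $\mu$ vanish), by exhibiting an explicit Lagrangian extension of the boundary framing of $w_g$ to the entire strip. Once this Maslov index vanishing is established alongside the area vanishing from the lemma, every $g \in G$ is realized as a $G_\alpha$-element; combined with the tautological inclusion $G_\alpha \subset G$, this yields $G_\alpha = G$.

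Concretely, I would choose the reference Lagrangian path along $l_0$ to be $\lambda_{l_0}(t) := (\phi_H^t)_\ast T_{x_0}L_0$, which is the natural choice in this Hamiltonian isotopy setup. By $G$-equivariance of $\phi_H^t$ (guaranteed by the $G$-invariance of $H$), the $g$-translate satisfies $g(\lambda_{l_0})(t) = (\phi_H^t)_\ast T_{g \cdot x_0}L_0$. I then define a Lagrangian sub-bundle of $w_g^\ast TM$ over the entire strip by
$$\widetilde{\Lambda}(t,s) := (\phi_H^t)_\ast T_{\gamma_g(s)}L_0 \subset T_{w_g(t,s)}M.$$
This is well-defined and Lagrangian since $\phi_H^t$ is a symplectomorphism for each $t$, and one checks directly that its restriction to the four edges of $\partial w_g$ matches the prescribed boundary data: $T_{\gamma_g(s)}L_0$ on the bottom, $(\phi_H^1)_\ast T_{\gamma_g(s)}L_0 = T_{w_g(1,s)}L_1$ on the top, $\lambda_{l_0}(t)$ on the left, and $g(\lambda_{l_0})(t)$ on the right.

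Since the boundary Lagrangian loop $\CL_{w_g}$ extends to a Lagrangian sub-bundle over the entire disk, its Maslov index must vanish, i.e.\ $\mu(w_g, g(l_0)) = 0$. Combined with $\CA(w_g, g(l_0)) = 0$ from the lemma, this witnesses $g \in G_\alpha$ for every $g \in G$, proving $G_\alpha = G$. I do not foresee any substantive obstacle: the Lagrangian extension $\widetilde{\Lambda}$ is transparent from the construction $w_g(t,s) = \phi_H^t(\gamma_g(s))$, and the remainder is formal. The only delicate point worth flagging is the compatibility with the pre-fixed choice of $\lambda_{l_0}$ from Subsection \ref{oriline}; but since $G_\alpha$ is a feature of the pair $(L_0, L_1)$ and one is free to choose $\lambda_{l_0}$ naturally in this Hamiltonian-isotopy context, the argument above applies directly.
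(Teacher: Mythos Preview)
Your proposal is correct and essentially completes the argument the paper leaves implicit: the paper states the corollary as an immediate consequence of the preceding area-vanishing lemma without separately addressing $\mu(w_g,g(l_0))=0$, whereas you supply the natural Lagrangian extension $\widetilde{\Lambda}(t,s)=(\phi_H^t)_\ast T_{\gamma_g(s)}L_0$ over the whole strip to force the Maslov index to vanish. This is exactly the intended mechanism, and your boundary checks (using $G$-equivariance of $\phi_H^t$ and $G$-invariance of $L_0$) are correct; the worry about the pre-fixed $\lambda_{l_0}$ is harmless, since changing $\lambda_{l_0}$ alters both vertical edges by $g$-related homotopies and leaves the loop's Maslov index unchanged.
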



Recall that the $\AI$-algebra is a bimodule over itself. So we can compare two $\AI$-bimodules $\Omega(L_0)$ and $CF^\ast_{R,l_0} (L_0, L_1)$ over $\AI$ algebra of $L_0$. The following proposition of \cite{FOOO} can be also proved in a $G$-equivariant setting along the same line of  their proof, and we omit the details. 
\begin{prop}
Suppose that $L_0$ is connected 
and let $L_1=\phi_H^1 (L_0)$ as above .
Then there is a $G$-equivariant filtered $\AI$-bimodule quasi-isomorphism
\begin{equation}\label{continuationG}
\Omega(L_0) \widehat{\otimes} \Lambda_{nov} \to CF_{R,l_0}^\ast(L_0,L_1) \otimes_{\Lambda (L, L_1;l_0)}\Lambda_{nov}.
 \end{equation}
\end{prop}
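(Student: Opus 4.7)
The plan is to build the quasi-isomorphism following the construction in Chapter 5 of \cite{FOOO} (the map that inverts the ``PSS''-type isomorphism between the singular/de~Rham model and the Floer model when $L_1$ is a Hamiltonian image of $L_0$), and then equip every stage of the construction with a $G$-action using the tools already assembled in the previous sections.

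First, I would fix the data making the construction $G$-compatible. Since $H$ is $G$-invariant, $\phi_H^t$ is $G$-equivariant and hence the moduli spaces of ``time-allocated'' holomorphic strips with boundary on $(L_0,\phi_H^1(L_0))$ used to define the comparison map inherit $G$-actions in the sense of Definition \ref{def:gpkr}. By Lemma \ref{lem:fukaya} and the discussion in Section \ref{sec:Ku} we may choose a $G$-compatible good coordinate system and, as in Section~10, a continuous family of $G$-equivariant multisections together with $G$-equivariant choices of $W_p$, $\omega_p$ so that the relevant evaluation maps become $G$-equivariant submersions after the de Rham push-pull construction of Definition \ref{localcorres}. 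This yields the de Rham analogue of the FOOO comparison map
\[
\mathfrak f=\{f_{k_1,k_2}\}\colon \Omega(L_0)\widehat\otimes\Lambda_{nov}\;\longrightarrow\;CF_{R,l_0}^{\ast}(L_0,L_1)\otimes_{\Lambda(L_0,L_1;l_0)}\Lambda_{nov},
\]
built from counts of holomorphic polygons with one ``pearl'' output of Floer type.

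Second, I would verify that this $\mathfrak f$ is genuinely $G$-equivariant with respect to the $G$-action on the right defined in Section \ref{GactFloerNov}. Here is where the preparation of the previous section pays off: the energy zero paths $w_g(t,s)=\phi_H^t(\gamma_g(s))$ of the corollary preceding the proposition identify the $G$-action on $CF^{\ast}_{R,l_0}(L_0,L_1)$ (Definition \ref{def:aact}) with the ``naive'' push-forward by $g$ on holomorphic strips starting from the base path $l_0(t)=\phi_H^t(x_0)$, up to the spin-sign $(-1)^{sp(w_g)}$. Because $L_0$ and $L_1=\phi_H^1(L_0)$ are equivariantly isotopic, their spin structures (with the chosen lifts $T_{g,L_i}$) are identified and in particular their spin profiles coincide, so by Proposition \ref{prop:compadm} the $sp(w_g)$ assemble to a genuine action. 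Thus the naive $g$-action on a count of holomorphic polygons matches the intrinsic $g$-action on the Floer side, giving $\mathfrak f\circ g = g\circ \mathfrak f$ on each summand. If one prefers not to check this directly for higher components, one applies Lemma \ref{avgAhom} to average $\mathfrak f$ into a $G$-equivariant $\AI$-bimodule homomorphism $\mathfrak f_{avg}$, which is homotopic to $\mathfrak f$ by Lemma \ref{equivhpty} and hence still a quasi-isomorphism.

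Third, I would promote the $\AI$-homomorphism property and the quasi-isomorphism property from the non-equivariant setting to the equivariant one. The $\AI$-bimodule relations for $\mathfrak f$ follow, as in \cite{FOOO}, from the analysis of codimension-one boundary strata of the moduli spaces used; since the $G$-equivariant Kuranishi structure and multisections from Section \ref{sec:Ku} are chosen compatibly with forgetful/gluing maps, the same identification of boundary strata is $G$-equivariant and so the standard Stokes-type argument in the de Rham model gives a $G$-equivariant filtered $\AI$-bimodule homomorphism. For the quasi-isomorphism property I would run the usual energy-filtration / spectral sequence argument: the $T=0$ (classical) part of $\mathfrak f$ is the PSS-type map which, on cohomology, is the identification of $H^{\ast}(L_0)\otimes\Lambda_{nov}$ with $HF^{\ast}(L_0,\phi_H^1(L_0))\otimes\Lambda_{nov}$, and this is a $G$-equivariant isomorphism because every object involved is $G$-invariant.

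The main obstacle I expect is the second step: pinning down that the count of holomorphic polygons, transported equivariantly via $g$ and then brought back to the reference path $l_0$ via $w_g$, really agrees with the action defined in Definition \ref{def:aact}, including the sign $(-1)^{sp(w_g)}$ and the gluing identification of orientation spaces from Section \ref{oriline}. Once that identification is in hand (which crucially uses $\textnormal{spf}_{L_0}=\textnormal{spf}_{L_1}$ together with Proposition \ref{prop:compadm} to make the cocycle condition hold), both the $\AI$-bimodule equations and the quasi-isomorphism follow formally from their non-equivariant counterparts in \cite{FOOO}, and the averaging trick of Lemma \ref{avgAhom}--Lemma \ref{equivhpty} removes any residual ambiguity in the choice of perturbation data.
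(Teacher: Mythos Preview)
Your proposal is correct and follows essentially the same approach as the paper: the paper simply states that the proposition of \cite{FOOO} ``can be also proved in a $G$-equivariant setting along the same line of their proof'' and omits the details. Your outline --- using the $G$-equivariant Kuranishi perturbations of Section~\ref{sec:Ku}, the energy-zero strips $w_g$ to match the $G$-action on the Floer side, and the averaging Lemmas~\ref{avgAhom}--\ref{equivhpty} as a safety net --- is exactly the intended elaboration, and in fact supplies more detail than the paper itself.
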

In particular, this gives the following Lagrangian intersection theoretic results.
If the $G$-invariant $\AI$-algebra of $L_0$ has a non-vanishing homology,
$L_0$ and $\phi_{H}^1 (L_0)$ have  a non-trivial 
intersection for any time-dependent Hamiltonian isotopy $\phi_H$.

We remark that Seidel already observed in $\Z/2$-action of exact case that
such a Lagrangian Piunikhin-Salamon-Schwarz morphism exists and becomes an isomorphism,
as well as Hamiltonian invariance of equivariant Lagrangian Floer homology. (see the last paragraph of section (14b) of \cite{Se}).\\

\section{Equivariant flat vector bundles on $G$-invariant Lagrangians}\label{sec: orbibundle}
In this section, we explain how to add $G$-equivariant flat vector bundles to the
theory. In homological mirror symmetry, one needs to consider rank 1 flat unitary bundles on a Lagrangian submanifold. With a finite group action on the Lagrangian submanifold, it is natural to introduce $G$-equivariant structures on such line bundles. 
We also observe that we can define a natural action of a character group $\hat{G}$ (which is the dual group $G^*$ for abelian $G$)  on both equivariant and orbifolded Fukaya categories by twisting $G$-equivariant structures on flat
vector bundles. Such character group action comes into play in $G$-equivariant homological mirror symmetry, which will be studied in the next section.  We remark that such character group action will be also discussed in our joint work
with Siu-Cheong Lau in preparation.

We will assume that  $G=G_\alpha$ for simplicity from now on.
We first review the standard way of including these flat bundles into Lagrangian Floer theory (without group actions). For a unitary bundle $U_i$ on a Lagrangian $L_i$ ($i=0,1$), the new Floer complex $CF_{l_0}( (L_0,U_0), (L_1,U_1))$ is defined by replacing each generator $[w,l_p]$ of  $CF_{\C,l_0}^\ast(L_0,L_1)$
with $[w,l_p] \otimes \Hom (U_0|_p, U_1|_p)$. The Floer differential $\delta$ in \eqref{eqfldiff}
is modified by additional contributions from holonomies. Namely, given $\lambda_p \in \Hom (U_0|_p, U_1|_p)$ and a $J$-holomorphic strip $u$ which maps $[w,l_p]$ to $[w \sharp u, l_q]$,
we get $\lambda_q \in \Hom (U_0|_q, U_1|_q)$ by composing $\lambda_p$ with holonomies around two boundary components of $u$ appropriately.

\subsection{Equivariant structures}
Consider a trivial complex vector bundle $L \times \C^n \to L$ of rank $n$ where $G$ acts on $L$. A $G$-equivariant structure on this bundle 
means the choice of a $G$-action on $\C^n$, or equivalently the choice of a representation $\theta:G \to End(\C^n)$. Such a homomorphism induces the diagonal $G$-action on $L \times \C^n$.
If $\chi:G \to U(1)$ is a character of $G$, then one can twist $\theta$ by $\chi$ and obtain $\theta^\chi:G \to End(\C^n)$ given by $\theta^\chi(g) = \chi(g)\theta(g)$.

In general,  let $U \to L$ be a possibly non-trivial vector bundle on $L$. A $G$-equivariant structure on $U$ is given by the choice of an isomorphism
\begin{equation}\label{gequivU}
\theta_g : U \stackrel{\cong}{\longrightarrow} g^\ast U
\end{equation}
for each $g \in G$, which satisfies the cocycle conditions. 

\begin{definition}
The {\em twisting of a $G$-equivariant structure} $\{\theta_g\}$ by a character $\chi:G \to U(1)$ is a $G$-equivariant structure $\{\theta_g^\chi\}$ on $U$ defined by
\begin{equation}\label{def:twist}
\theta_g^\chi = \chi(g) \cdot \theta_g.
\end{equation}
Here, $\chi(g) \cdot (-)$ is defined as a  fiberwise complex multiplication on a complex vector bundle. 
\end{definition}
This defines a character group action on the set of $G$-equivariant structures on a vector bundle on $L$.
Note that for an abelian group $G$,  the set of  $G$-equivariant structures on a complex trivial line bundle are in one-to-one correspondence with the dual group $G^{\ast}=\hat{G}=\Hom (G, U(1))$.


 

 Let $L_0$ and $L_1$ be two $G$-invariant Lagrangian submanifolds and choose $G$-equivariant flat complex vector bundles $(U_0,\theta^0)$ and $(U_1,\theta^1)$ on $L_0$ and $L_1$, respectively. 
Here we allow $U_i$ to have rank greater than $1$ since an irreducible representation of $G$ may have a dimension greater than $1$. In particular, we expect that bundles of higher ranks might play a non-trivial role for non-abelian $G$.

 We define the Floer cochain complex for the pair $(L_0, U_0)$ and $(L_1,U_1)$ similarly as above, but using orientation spaces.
\begin{equation}\label{Gbundleori}
CF( (L_0,U_0), (L_1,U_1)) := \bigoplus_{[w,l_p]} |\Theta_{[w,l_p]}^-|_\C \otimes_\C \Hom (U_0|_p, U_1|_p)
\end{equation}
The Floer differential for \eqref{Gbundleori} is defined in a standard way as explained above.

\begin{definition}\label{GactionGbundleori}
A $G$-action on \eqref{Gbundleori} is defined as follows: for $g \in G$, a linear map
$$g : \Hom (U_0|_p, U_1|_p) \to \Hom (U_0|_{g \cdot p}, U_1|_{g \cdot p})$$
is given by $\phi \mapsto \theta^1_{g} (p) \circ \phi \circ \theta^0_{g^{-1}} (g \cdot p)$. The $G$-action on the first factor of \eqref{Gbundleori} is exactly the same one as in Section \ref{GactFloerNov}.
\end{definition}

Now, we consider $G$-invariant connections on a $G$-equivariant complex vector bundle $(U,\theta)$ on $L$:
\begin{definition}\label{Gcompconn}
A connection $\nabla$ of a $G$-equivariant complex vector bundle $(U,\theta)$ on $L$ is said to be $G$-invariant, if
the pull-back connection $g^\ast \nabla$ is isomorphic to $\nabla$ via $\theta_g$. More precisely,
\begin{equation}\label{nablaGinv}
(\theta_{g})_\ast \left(E_{\nabla} \right)_{(x,v)} =\left( E_\nabla\right)_{(g \cdot x, g \cdot v )}.
\end{equation}
where $E_{\nabla}$ is the horizontal distribution associated to $\nabla$.
\end{definition}

We will only consider $G$-invariant connections from now on.
Now, consider the pair $(L_i, U_i)$ together with the choice of a $G$-invariant connection $\nabla_i$ on $U_i$ for $i=0,1$.

\begin{prop} 
The new Floer differential involving equivariant flat bundle data is $G$-equivariant. 
\end{prop}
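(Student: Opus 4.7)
The plan is to verify the identity $\delta(g \cdot (x \otimes \phi)) = g \cdot \delta(x \otimes \phi)$ for each generator $x \otimes \phi$ with $x \in |\Theta_{[w,l_p]}^-|_\C$ and $\phi \in \Hom(U_0|_p, U_1|_p)$, where the Floer differential takes the shape
$$
\delta(x \otimes \phi) = \sum_u \epsilon(u)\, c_u(x) \otimes \bigl( P^1_u \circ \phi \circ (P^0_u)^{-1} \bigr).
$$
Here $u$ runs over rigid $J$-holomorphic strips from $[w,l_p]$ to $[w',l_q]$, $c_u$ is the induced map of orientation spaces from Section \ref{GactFloerNov}, and $P^i_u$ is parallel transport of $\nabla_i$ along the boundary component $u|_{\R \times \{i\}}$. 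Since $G$ acts diagonally on the two tensor factors and since $J$ is $G$-invariant (so $u \mapsto g(u)$ is a bijection of moduli spaces from $[w,l_p] \to [w',l_q]$ to $g\cdot[w,l_p] \to g\cdot[w',l_q]$), the proof splits into checking equivariance on the orientation factor and on the $\Hom$-factor separately, then reindexing the sum.

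The orientation-factor equivariance $\sum_u \epsilon(u) c_u = \sum_{g(u)} \epsilon(g(u)) c_{g(u)}$ is precisely what was already established in Proposition \ref{prop:galphacf} and Proposition \ref{prop:12.4}: the assumption $\textnormal{spf}_{L_0} = \textnormal{spf}_{L_1}$ makes the sign factors $(-1)^{sp(w_g)}$ from the two Lagrangians cancel in the comparison of Proposition \ref{prop:compadm}, and the equivariant Kuranishi machinery of Section \ref{sec:Ku} gives a $G$-equivariant virtual count. So nothing new is required for this factor, and the proof reduces entirely to the bundle factor.

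For the $\Hom$-factor, the key input is the $G$-invariance condition \eqref{nablaGinv} on $\nabla_i$, which says $\theta^i_g$ sends horizontal distributions to horizontal distributions. I would first derive from this the intertwining relation
$$
\theta^i_g(y) \circ P^i_\gamma = P^i_{g\circ\gamma} \circ \theta^i_g(x)
$$
for any smooth path $\gamma$ in $L_i$ from $x$ to $y$, simply because horizontal lifts are mapped to horizontal lifts. Applied to $\gamma = u|_{\R\times\{i\}}$, this gives $P^i_{g(u)} = \theta^i_g(q) \circ P^i_u \circ \theta^i_g(p)^{-1}$. Combining this for $i = 0, 1$ and using the cocycle identity $\theta^0_{g^{-1}}(g \cdot p) = \theta^0_g(p)^{-1}$ (and analogously at $q$), a direct calculation gives
$$
P^1_{g(u)} \circ \bigl( \theta^1_g(p) \circ \phi \circ \theta^0_{g^{-1}}(g\cdot p) \bigr) \circ (P^0_{g(u)})^{-1} = \theta^1_g(q) \circ \bigl( P^1_u \circ \phi \circ (P^0_u)^{-1} \bigr) \circ \theta^0_{g^{-1}}(g\cdot q),
$$
which by Definition \ref{GactionGbundleori} is exactly $g \cdot \bigl( P^1_u \circ \phi \circ (P^0_u)^{-1} \bigr)$. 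Summing over $u$ on the right and reindexing via $u \mapsto g(u)$ on the left yields the desired equivariance of $\delta$.

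The main obstacle is not geometric but notational: one must carefully track the cocycle conditions for $\{\theta^i_g\}$ on both $U_0$ and $U_1$ together with the inverses that appear because $\phi$ is a \emph{Hom} rather than a section, and make sure the intertwining identities apply with the correct base points ($p$ and $q$ on one side, $g \cdot p$ and $g \cdot q$ on the other). Once this is done for $\delta = n_{0,0}$, the same argument extends verbatim to the higher $\AI$-bimodule operations $n_{k_1,k_2}$: each boundary input contributes an additional parallel-transport factor in $U_0$ or $U_1$, and the same intertwining identity propagates the equivariance through each of them, so one obtains a $G$-equivariant filtered $\AI$-bimodule structure on the enlarged complex $CF((L_0,U_0),(L_1,U_1))$.
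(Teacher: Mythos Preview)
Your proof is correct and follows essentially the same route as the paper: both reduce to the intertwining identity $\theta^i_g(q) \circ P^i_{\gamma} = P^i_{g\cdot\gamma} \circ \theta^i_g(p)$ coming from the $G$-invariance of $\nabla_i$, and then verify that $\delta \circ g$ and $g \circ \delta$ agree on the $\Hom$-factor by a direct comparison of the two compositions. Your write-up is a bit more explicit in separating the orientation factor (already handled by Propositions \ref{prop:galphacf} and \ref{prop:12.4}) from the bundle factor, and in noting the extension to the higher $n_{k_1,k_2}$, but the substance is the same.
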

\begin{proof}
Let $u$ be a holomorphic strip bounding $L_0$ and $L_1$ and let $p$ and $q$ be its end points. We denote by $\gamma_i$ the boundary component of $u$ lying in $L_i$. (See Figure \ref{equivBdiff}.)
\begin{figure}[h]
\begin{center}
\includegraphics[height=2in]{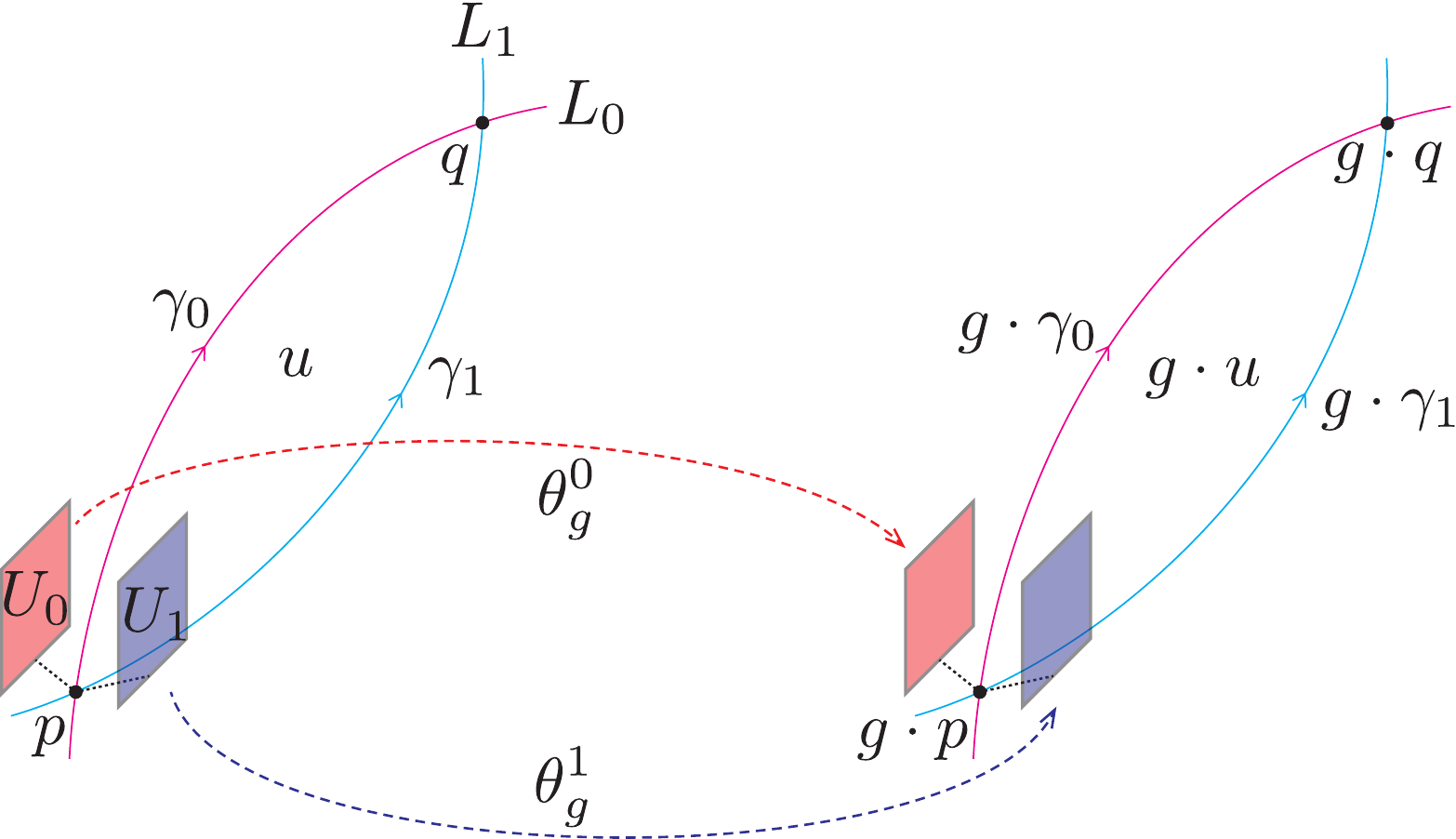}
\caption{Group actions on line bundles and the Floer differential}\label{equivBdiff}
\end{center}
\end{figure}

Take an element $\phi_p$ of $\Hom (U_0|_p, U_1|_p)$ and $g$ of $G$. 
\begin{enumerate}
\item[(i)]
We first compute $\delta \circ g$ : $g $ sends $\phi_p$ to $\theta^1_g (p) \circ \phi_p \circ \theta^0_{g^{-1}} (g \cdot p) \in \Hom (U_0|_{g \cdot p}, U_1|_{g \cdot p})$, and then, the strip $g \cdot u$ sends $\theta^1_g (p) \circ \phi_p \circ \theta^0_{g^{-1}} (g \cdot p)$ to 
\begin{equation}\label{deltacircg}
P^1_{g \cdot \gamma_1} \circ \left( \theta^1_g (p) \circ \phi_p \circ \theta^0_{g^{-1}} (g \cdot p) \right) \circ \left( P^0_{g \cdot \gamma_0} \right)^{-1}
\end{equation}
where $P_{\gamma}^i$ means the parallel transport along $\gamma$ by $\nabla_i$ for $i=0,1$.
\item[(ii)]
Secondly, let us compute $g \circ \delta$ : $u$ contributes to $\delta$ by sending $\phi_p$ to $P^1_{\gamma_1} \circ \phi_p \circ \left( P^0_{\gamma_0} \right)^{-1}$. Now, applying the $g$-action, we get
\begin{equation}\label{gcircdelta}
\theta_g^1 (q) \circ \left(P^1_{\gamma_1} \circ \phi_p \circ \left( P^0_{\gamma_0} \right)^{-1} \right) \circ \theta_{g^{-1}}^0 (g \cdot q).
\end{equation}
\end{enumerate}

We have to show that \eqref{deltacircg} and \eqref{gcircdelta} represent the same element in $\Hom (U_0|_{g \cdot q}, U_1|_{g \cdot q})$. This follows from the $G$-equivariance of $\nabla_i$,  which implies that the $G$-action on $U_i$ commutes with the parallel transport for $\nabla_i$. i.e. 
$$  P^1_{g \cdot \gamma_1} \circ  \theta^1_g (p) = \theta_g^1 (q) \circ P^1_{\gamma_1} \quad \mbox{and} \quad \theta^0_{g^{-1}} (g \cdot p) \circ \left( P^0_{g \cdot \gamma_0} \right)^{-1}= \left( P^0_{\gamma_0} \right)^{-1}  \circ \theta_{g^{-1}}^0 (g \cdot q).$$
\end{proof}

Let $\lambda$ be the action of $\CC^\ast$ on equivariant vector bundles defined by the fiberwise complex multiplication. Then, $\lambda$-action preserves connections in the sense that
$ \lambda^\ast \nabla = \nabla.$
Note that the twisting by the character group is essentially given by the complex multiplication  \eqref{def:twist}. Therefore, $G$-invariant connections still remain $G$-invariant after twisting equivariant structures on a bundle. 

\begin{remark}
For an abelian $G$, one may use a homomorphism $h^{orb} : \pi_1^{orb} ([L/G]) \to U(1)$ to classify equivariant flat line bundles on $L$ and check the above discussion more rigorously. See the related explanation after Conjecture \ref{conj:fiberpres}.
\end{remark}

\subsection{Equivariant Fukaya categories}
We summarize the construction of equivariant Fukaya categories in this section, and also  clarify once again the role of the group action and its dual group action.
We are considering the case that  a finite group $G$ effectively acts on a closed oriented  symplectic manifold $M$.
For each spin profile $s \in H^2(G, \Z/2)$, the equivariant Fukaya category $G\textrm{-}\mathcal{F}uk^s (M)$ is defined as follows.
An object of $G\textrm{-}\mathcal{F}uk^s (M)$ is given by a $G$-invariant Lagrangian submanifold $L$ whose spin profile equals $s$, together
with a $G$-equivariant flat unitary bundle $U$. For each pair  $(L_0, U_0), (L_1, U_1) \in Ob(G\textrm{-}\mathcal{F}uk^s (M))$,
we choose a reference path $l_0$ (one in each homotopy class) and we need to assume that energy zero subgroup $G_\alpha$ equals $G$ always. Then, we define the morphism to be the (completed) direct sum over $l_0$ of the Floer complex of the pair $(L_0, U_0), (L_1, U_1) $ with respect to $l_0$.
There should be $G$-equivariant $\AI$-category operations by extending the construction in this paper to the one of Fukaya \cite{Fuk}.

\begin{remark}
Note that we possibly have several equivariant Fukaya categories corresponding to spin profiles in $H^2(G,\Z/2))$, and
each $G$-invariant (spin) Lagrangian submanifold $L$ can belong to only one of them corresponding to its spin profile. 
\end{remark}
\begin{remark}
We may also consider $\alpha$-twisted $G$-equivariant vector bundles for $\alpha \in H^2(G,U(1))$. These bundles
are not $G$-equivariant, but their failures of $G$-cocycle conditions are given by $\alpha$. We remark that
there is a corresponding notion for sheaves, too (see for example \cite{E}). In fact,  from the expression \eqref{Gbundleori},
 effects of $\alpha \in H^2(G,U(1))$ and a spin profile condition in $H^2(G,\Z/2)$ on Lagrangian Floer theory
can be combined. Hence, it seems that we can enlarge the equivariant or the orbifolded fukaya category by
including such objects. We leave it for future investigation.
\end{remark}

As discussed, $G$ acts on morphisms of the equivariant Fukaya category $G$-$\mathcal{F}uk^s (M)$.
By taking $G$-invariant part of morphisms of  $G$-$\mathcal{F}uk^s (M)$, we obtain the orbifolded Fukaya category $\mathcal{F}uk_{G}^s(M)$, which still has an induced $\AI$-category structure. In fact, for the definition of orbifolded  Fukaya
category, we do not need assumptions that $G = G_\alpha$. 

Now the character group $\hat{G}$ acts on objects of $G$-$\mathcal{F}uk^s (M)$ (hence also on morphisms), by twisting equivariant structures of $G$-bundles on Lagrangians. Therefore, $\hat{G}$ acts both on the $G$-equivariant category and on the orbifolded Fukaya category.
The original Fukaya category of $M$ does not contain the data of $G$-equivariant structure of unitary bundles.
Hence, one needs to take the $\hat{G}$-invariant part of equivariant fukaya category $G$-$\mathcal{F}uk^s (M)$ to obtain
information about the Fukaya category of $M$ itself.


\section{Fukaya-Seidel category of $G$-Lefschetz fibrations}\label{sec:FS}

There is a well-known Fukaya category associated to a Lefschetz fibration $\pi:E \to S$, called the Fukaya-Seidel category of $\pi$ \cite{Se3}, \cite{Se4}.
If the Lefschetz fibration $\pi$ is $G$-invariant, i.e. if there is a $G$-action on $E$ such that
the fibration map $\pi$ is $G$-invariant, then we can define the $G$-equivariant Fukaya-Seidel category of $\pi$ and the orbifolded Fukaya-Seidel category of $\bar{\pi} : [E/G] \to S$. These will depend on the choice of 
a group cohomology class $s$ in $H^2(G,\Z/2)$.
The construction is more or less an easy modification of Seidel's construction. 

\subsection{Equivariant Fukaya-Seidel categories}
First, we review basic ingredients of Fukaya-Seidel categories. 
(See \cite{Se} for details.)
Let $\pi : E \to S$ be an exact Morse fibration where $E$ admits an exact symplectic form $\omega$ and $S$ is isomorphic to $D^2$.
We assume that $c_1(E)=0$ in order to discuss $\Z$-grading, and assume $\dim_\R(E) \geq 4$ to simplify the exposition.
Suppose there exists a $\omega$-preserving $G$-action on $E$ and $\pi$ is invariant under $G$. Then,
the symplectic connection defined by $\omega$ is $G$-invariant, and hence a parallel transport $\rho_c : E_z \to E_w$ along a curve
 $c : [a,b] \to S$  is a $G$-equivariant diffeomorphism.

Let $z_0$ be a regular value of $\pi$ and $M$ the fiber of $\pi$ over $z_0$. If $\{z_1, \cdots, z_n \} \in S$ is the set of critical values of $\pi$, we choose disjoint paths $c_i$ (except at $z_0$) from $z_0$ to $z_i$ for each $1 \leq i \leq n$. $(c_1, \cdots, c_n)$ are arranged by their cyclic order at $z_0$. See Figure \ref{distbasis} where we set $z_0=-i$.
\begin{figure}[h]
\begin{center}
\includegraphics[height=2.3in]{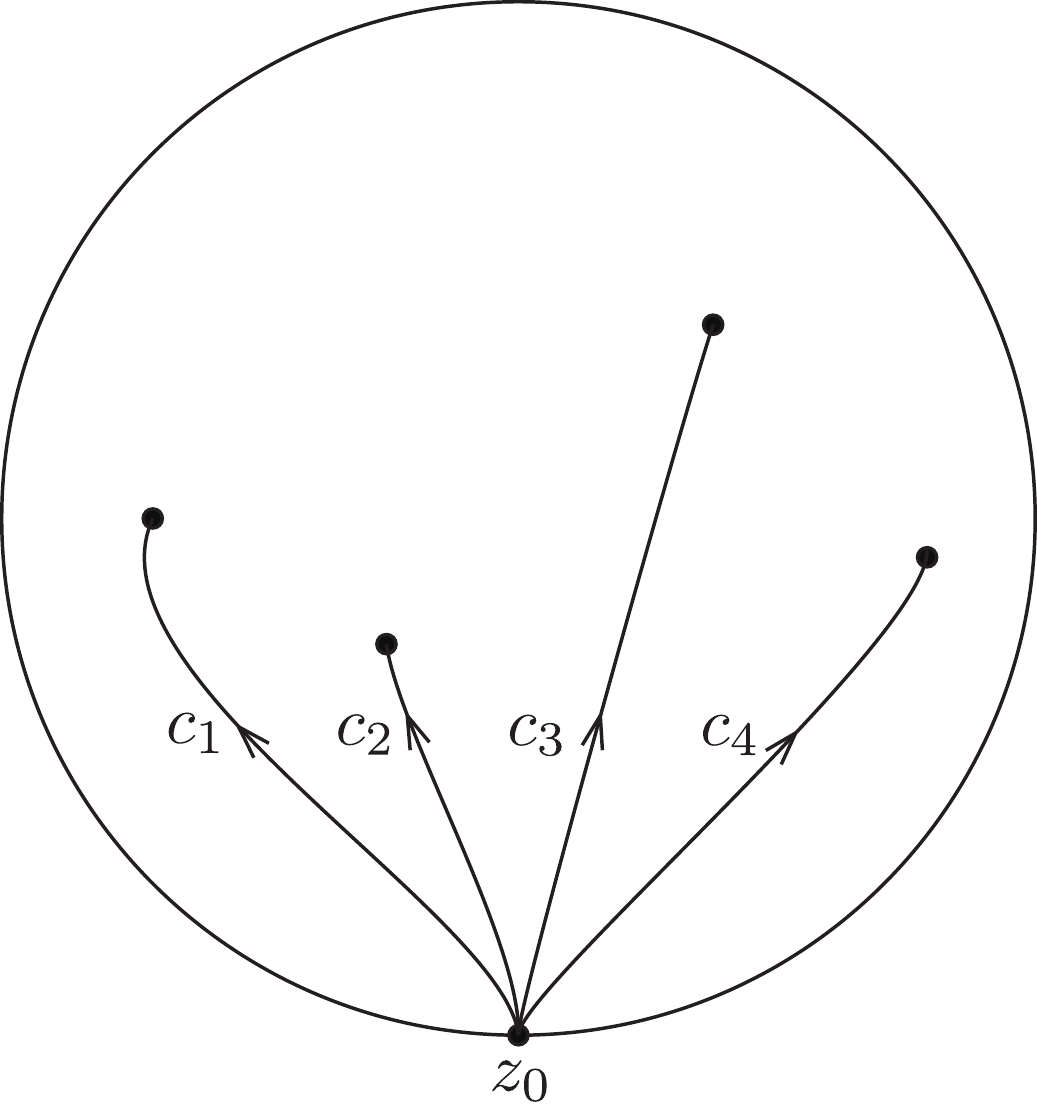}
\caption{Configuration of vanishing paths}\label{distbasis}
\end{center}
\end{figure}
There may be several different critical points over single critical value $z_i$. Denote by $a(i)$
the number of critical points over $z_i$. Then, the parallel transport along $c_i$ defines a set of vanishing cycles $\sqcup_{j=1}^{a(i)} V_{ij}$. Since $G$ acts on these $a(i)$ critical points, $G$ also acts on the set of vanishing cycles $\sqcup_{j=1}^{a(i)} V_{ij}$.
We do not need to give an ordering to the family of vanishing
cycles for $c_i$, because they are disjoint from each other, and hence Floer cohomology groups and Dehn twists between them are all trivial.

As usual, we also pay attention to the Lefschetz thimbles which will be denoted 
by $\Delta_{ij}$'s. More precisely, $\Delta_{ij}$ is the union of images of the vanishing cycle $V_{ij}$ under the parallel transport over $c_i$ so that $\partial \Delta_{ij} = V_{ij}$.
Since the symplectic connection as well as $\pi$ are $G$-invariant, 
the group $G$ acts on the union of Lefschetz thimbles over each $c_i$, and we immediately obtain the following lemma.
\begin{lemma}
For each $i$ and $g \in G$, we have $g(\Delta_{ij}) = \Delta_{ik}$ for some $k \in \{1,\cdots, a(i)\}$. Namely, an action of each $g \in G$ permutes thimbles over the given path $c_i$ (while some of thimbles can be preserved by $g$).
\end{lemma}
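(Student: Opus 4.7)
The plan is straightforward, relying entirely on the $G$-invariance of the data defining the thimbles. First I would observe that since $\pi:E\to S$ is $G$-invariant, i.e.\ $\pi\circ g = \pi$ for every $g\in G$, the induced action on the base $S$ through $\pi$ is trivial. In particular, each critical value $z_i\in S$ is fixed, each vanishing path $c_i$ is fixed pointwise as a subset of $S$, and each fiber $E_z = \pi^{-1}(z)$ is preserved by $G$. Moreover, the set of critical points of $\pi$ lying over a fixed $z_i$ is $G$-invariant, so $G$ permutes the $a(i)$ critical points over $z_i$.

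Next I would use the fact, already recorded in the paragraph preceding the lemma, that the symplectic connection determined by $\omega$ is $G$-invariant (because $\omega$ is), and hence that parallel transport is $G$-equivariant along every curve in $S$. Concretely, if $\rho_{c_i^t}: E_{z_0}\to E_{c_i(t)}$ denotes the parallel transport along the portion of $c_i$ from $z_0$ to $c_i(t)$, then for every $g\in G$ we have the intertwining relation $g\circ\rho_{c_i^t} = \rho_{c_i^t}\circ g$, since $g$ fixes $c_i$ pointwise on the base.

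Now recall that the thimble $\Delta_{ij}$ is constructed as the union, over $t$ approaching $1$, of the parallel transports along $c_i$ of the vanishing cycle $V_{ij}\subset E_{z_0}$ collapsing to the $j$-th critical point $p_{ij}$ over $z_i$. Applying $g$ and using $G$-equivariance of parallel transport, one gets
\begin{equation*}
g(\Delta_{ij}) = \bigcup_{t} g\bigl(\rho_{c_i^t}(V_{ij})\bigr) = \bigcup_{t} \rho_{c_i^t}\bigl(g(V_{ij})\bigr),
\end{equation*}
which is again a Lefschetz thimble over $c_i$, with collapsing critical point $g(p_{ij})$. Since $g(p_{ij})$ is another critical point over $z_i$, it equals $p_{ik}$ for some $k\in\{1,\dots,a(i)\}$, and therefore $g(\Delta_{ij}) = \Delta_{ik}$, which is precisely the claim.

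The only subtle point, and the one I would pay attention to, is the possibility that different critical points over the same $z_i$ share a vanishing cycle or that the assignment $j\mapsto k$ is ill-defined; both issues are ruled out because the thimbles $\Delta_{ij}$ are distinguished by their interior critical points $p_{ij}$, on which $G$ acts as a genuine permutation. Hence the lemma follows with no further analytic input.
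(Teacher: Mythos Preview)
Your proof is correct and follows exactly the reasoning the paper intends: the paper states just before the lemma that ``since the symplectic connection as well as $\pi$ are $G$-invariant, the group $G$ acts on the union of Lefschetz thimbles over each $c_i$, and we immediately obtain the following lemma,'' and gives no further argument. Your write-up is simply a careful unpacking of that sentence.
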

Gradings and spin structures on vanishing cycles are defined as follows.
As explained right after Definition \ref{def:seqvbrane}, we can equip $V_{ij}$'s with
$G$-invariant gradings. If $\dim_\R(M) \geq 4$, then the vanishing cycles are spheres of $\dim \geq 2$, which
have canonical spin structures. In case $\dim_\R(M) =2$, vanishing cycles are circles, and we choose a spin structures of each vanishing cycle to be the one obtained from the restriction of the trivialization of the tangent bundle of the corresponding Lefschetz thimble.


We remark that an object consists of a single connected component if $G$-action preserves $V_{ij}$, and has several disjoint components otherwise. For example, if there is only one critical point over the critical value $z_i$, then the vanishing cycle $V_{i1}$ is preserved by the $G$-action and is an object of the category as this critical point should be fixed by the whole group $G$.

Now, we define an equivariant (or an orbifold) version of a directed $\AI$-category $\mathcal{FS}$, so called the Fukaya-Seidel category. For each $s$ in $H^2 (G;\ZZ/2)$, we will have a category consisting of $s$-equivariant branes, which are $G$-invariant (union of) vanishing cycles with the spin profile $s$.

\begin{definition}\label{def:FS}
Consider a $G$-invariant Lefschetz fibration $\pi:M \to \C$,
and choice of paths $\{c_i\}$ and vanishing cycles $\{V_{ij}\}$ as before.
An object of both the $s$-equivariant Fukaya-Seidel category $G$-$\mathcal{FS}^s (M,\pi)$
and orbifold Fukaya-Seidel category $\mathcal{FS}^s_G (M, \pi)$ is given by
a $G$-orbit $\cup_g g \cdot V_{ij}$ for each vanishing cycle $V_{ij}$ for some $1 \leq i \leq n, 1\leq j \leq a(i)$ whose spin profile is $s$, together with a $G$-invariant grading
and $G$-equivariant flat complex vector bundle on $\cup_g g \cdot V_{ij}$. We require that this bundle extends to a $G$-equivariant flat complex vector bundle on the corresponding thimble. We denote by $\mathcal{V}_{ij}$ the $G$-orbit of vanishing cycle $\cup_g g \cdot V_{ij}$ together with these additional data.
Also, if $V_{ij}$ and $V_{ij'}$ lie in the same $G$-orbit, then we identify $\cup_g g \cdot V_{ij}$ and $\cup_g g \cdot V_{ij'}$.
\end{definition}
The objects of equivariant and orbifold categories are the same as above, but
their morphisms will be defined differently. We postpone the discussion of  adding $G$-equivariant flat complex vector bundle to the
construction in Subsection \ref{equivbundleFS}, and we first explain the construction without them.

We may suppose that all vanishing cycles (and hence, all objects in $\mathcal{C}$) are transversal to each other by choosing vanishing paths in general positions (Figure \ref{distbasis}). The cyclic order at $z_0$ gives a partial order, on the index $i$
of $V_{ij}$.
\begin{definition}\label{def:FS2}
We define morphisms between two objects $\mathcal{V}_{i_1j_1}$ and $\mathcal{V}_{i_2j_2}$ of the $s$-equivariant category $G$-$\mathcal{FS}^s (M,\pi)$ as follows.
 
For $i_1 \neq i_2$,
\begin{align*}
&hom_{G\textrm{-}\mathcal{FS}^s} (\mathcal{V}_{i_1j_1}, \mathcal{V}_{i_2j_2}) \\
&= \left\{
\begin{array}{ll}
CF^\ast (\mathcal{V}_{i_1j_1}, \mathcal{V}_{i_2j_2}) = \bigoplus_{y \in \mathcal{V}_{i_1j_1} \cap \mathcal{V}_{i_2j_2}}| o(y)|_R &  k<l\\
0 & k>l
\end{array}\right.
\end{align*}
For $i_1 = i_2$, and
two objects  $\mathcal{V}_{i_1j_1}$ and $\mathcal{V}_{i_2j_2}$ are 
in fact the same, then we define
$$hom_{G\textrm{-}\mathcal{FS}^s} (\mathcal{V}_{i_1j_1}, \mathcal{V}_{i_1j_1}) = R \cdot (id_{V_1} \oplus \cdots id_{V_k}).$$
For the remaining case, their morphisms are defined to be zero (since they do not intersect). 
\end{definition}
\begin{remark}
This $\AI$-category is not strictly but partially directed in a sense.
\end{remark}
Note that the morphism space admits a $G$-action. For $\mathcal{V}_{i_1j_1} \neq \mathcal{V}_{i_2j_2}$, the
$G$-action on their morphism is exactly the same as in Section \ref{sec:eqfuex}. If $i_1 = i_2$ and
two objects $\mathcal{V}_{i_1j_1}$ and $\mathcal{V}_{i_2j_2}$ are  the same, then the $G$-action on $R \cdot (id_{V_1} \oplus \cdots id_{V_k})$ is defined as follows.
We first fix orientations of $V_1,\cdots, V_k$, and the construction below depends on this choice.
Since $\mathcal{V}$ is a $G$-orbit, each connected component $V_i$ is mapped to $V_{j}$ for some $j$.
In such a case, we define $g$-action $R \cdot id_{V_i} \to R\cdot id_{V_j}$ to be
$(\pm 1)$, where the sign is positive if the $g$-action preserves the pre-fixed orientations, and negative otherwise.
This defines $G$-action on morphism spaces. 

In order to construct $G$-equivariant  $\AI$-operations on morphisms of  $G$-$\mathcal{FS}^s (M,\pi)$, we may adapt the
construction in the previous section.
Namely,  we consider a  $A_\infty$-category $\mathcal{D}$,
whose object is $(\mathcal{V}_{ij}, g)$ for each $i,j$ and $ g\in G$. Then the set of object of $\mathcal{D}$
admits a free $G$-action, and we can make the perturbation data $G$-equivariant.
As Seidel mentioned in \cite[Remark 6.1]{Se3}, we do not have to care about the chain complexes underlying $HF (\mathcal{V}_{ij}, \mathcal{V}_{ij})$ by the (partial) directedness, but only the above case ($k_0 < \cdots < k_d$) is needed to be considered.
Therefore, the homological perturbation lemma of Seidel (Lemma \ref{hompert}) provides $G$-equivariant $A_\infty$-operations on morphisms of $G$-$\mathcal{FS}^s (M,\pi)$. 

\begin{definition}
For the orbifold Fukaya-Seidel category,
morphisms of $\mathcal{FS}^s_G (M, \pi)$ are defined to be the $G$-invariant
part of the corresponding morphisms of  $G$-$\mathcal{FS}^s (M,\pi)$, which has an induced $A_\infty$-operations.
\end{definition}

\subsection{Equivariant flat unitary bundles for $G$-$\mathcal{FS}^s (M,\pi) $}\label{equivbundleFS}
Even though vanishing cycles $\mathcal{V}$ give rise to objects of directed $\AI$-category, one has to think in terms of respective vanishing
thimbles denoted as $\mathcal{T}$,
 which are indeed Lagrangian submanifolds in $M$. We assume here that $G$ is abelian in order to simplify the exposition. General cases can be handled as in Section \ref{sec: orbibundle}. Note that any irreducible representation of $G$ is 1-dimensional and hence, it suffices to consider line bundles only.

Consider $\mathcal{V}:=\cup_g g \cdot V_{ij}$ for each $i,j$ and the $G$-equivariant line bundle $U$ on $\mathcal{V}$ which extends to the one on the corresponding thimble $\mathcal{T}$.
Let us denote it by $U^{\mathcal{T}} \to \mathcal{T}$, which is necessarily trivial. 
Thus, $U^{\mathcal{T}} \cong \mathcal{T} \times \C$ and the $G$-equivariant structure on $U^{\mathcal{T}}$ comes up with $\chi \in Hom (G, \C^\ast)$.

Suppose that $\mathcal{V}_i$ is a vanishing cycles with a $G$-equivariant line bundle $ U^{\mathcal{T}}_i$ as above for $i=1,2$. Then, pairs $(\mathcal{V}_i, U^{\mathcal{T}}_i)$ for $i=1,2$ are objects of $G$-$\mathcal{FS}^s (M,\pi)$ and we want to define their morphism space.

If $\mathcal{V}_1 \neq \mathcal{V}_2$
We define their morphism space to be as in section \ref{sec:FS}  using $o(y)$ for each intersection points $y$ but with $\C$ coefficients. The group action on the morphism space is twisted by $\chi_1$ and $\chi_2$. i.e. $g : o (y) \to o(g \cdot y)$ is defined as the original action multiplied by $\chi_1 (g)^{-1} \chi_2 (g)$.
This is a canonical action on  $o(y) \otimes \Hom (U^{\mathcal{T}}_0|_y,U^{\mathcal{T}}_1|_y)$ consistent with \eqref{Gbundleori}. However, since $U^{\mathcal{T}}_i|_y$ is canonically isomorphic to $\CC$, we may just use $o(y)$ as above.

Let us consider the case $\mathcal{V}_1 = \mathcal{V}_2=:\mathcal{V}$. Denote the connected components of $\mathcal{V}$ as $V_1,\cdots V_k$ associated to critical points $x_1,\cdots, x_k$. Recall that $\Hom (\mathcal{V}, \mathcal{V})$ ( with $R = \C$) is given by
\begin{equation}\label{mathcalVend}
\CC \cdot (id_{V_1} \oplus \cdots \oplus id_{V_k}).
\end{equation}
The $G$-action on the above morphism space has been already described in Section 7. 
When they are equipped with $G$-equivariant line bundles,  $\Hom ( (\mathcal{V},U^{\mathcal{T}}_1), (\mathcal{V},U^{\mathcal{T}}_2))$ is
defined to be the same as \eqref{mathcalVend}, whose  $g$-action is given by the previous $g$-action defined in section \ref{sec:FS}  with additional multiplication of $\chi_1 (g)^{-1} \chi_2 (g)$.

Note that the $G$-invariant part $\Hom ( (\mathcal{V},U^{\mathcal{T}}_1), (\mathcal{V},U^{\mathcal{T}}_2))^G$
is non-trivial if $G$-action on the set of connected components of $\mathcal{V}$  is free. 

\section{Some examples of group actions and Mirror Symmetry}\label{MirrorSymm}
We discuss a few examples of well-known homological mirror symmetry, but with an additional finite group action. The
homological mirror symmetry (conjectured by Kontsevich) asserts that the derived Fukaya category of a
symplectic manifold (or the derived Fukaya-Seidel category of a  LG model) is equivalent to the derived category of coherent sheaves of the mirror complex manifold (or the matrix factorization category of the mirror LG model).
Strominger-Yau-Zaslow approach explains such a phenomenon as a correspondence between dual torus fibrations. We will be rather sketchy, as our motivation is to consider its relationship with
group actions.

Consider a Lagrangian torus fibration $\pi:X \to B$ where $\pi^{-1}(b)$ is regular torus fiber for $b \in \mathring{B}$, and assume that $\mathring{B}$ is
simply connected. Suppose that the mirror is given as
a Landau-Ginzburg (LG) model $W:Y \to \C$. Here,  $Y \to \mathring{B}$ is the dual torus fibration 
$$Y = \{(X_b, \nabla_b) : \nabla_b \in \Hom (X_b, U(1)) , b \in \mathring{B}\}$$
where $\Hom (H_1( X_b), U(1))$ is considered to be the space of flat connections on the trivial line bundle on $X_b$. 
Suppose that the mirror potential  $W : Y \to \CC$ is defined as a Lagrangian Floer potential
(see for example \cite{CO}, \cite{FOOOT1}, \cite{Au})
$$W(L_u, \nabla_u) = \sum_{\beta, \mu (\beta)=2} n_\beta (L_u) \exp \left( - \int_\beta \omega \right) {\rm hol}_{\nabla_u} (\partial \beta).$$

\begin{definition}
We say that an action of a finite group $G$ on $X$ is compatible with the fibration $\pi$ if
the group action sends torus fibers to torus fibers. 
\end{definition}
 
 We will assume that $G$ acts on $X \to B$ in a compatible way from now on. Thus 
  each $g \in G$ sends
a fiber $X_b$ into $X_{b'}$ for some $b'$ for each $b \in B$.
\begin{definition}
We define the $G$-action on $Y$ as
$$g \cdot (X_b, \nabla_b) = (X_{g \cdot b}, \, g_\ast \nabla_b ).$$
(Here, $g_\ast \nabla_b := \left( g^{-1} \right)^{\ast} \nabla_b$.) 
\end{definition}
Note that this $G$-action could be trivial even if the $G$-action on $X$ is effective
as we will see in the example of  $\CP^1$.
For a $G$-action on $X$ compatible with $\pi$, we have
an induced homomorphism $\rho: G \to Aut(B)$. This gives rise to the following exact sequence
$$ 1 \to Ker(\rho) \to G \to  G/Ker(\rho) \to 1.$$
We  may denote  $ G_F = Ker(\rho)$ and $G_B = G/Ker(\rho)$ . 
The example of $\CP^1$ below is the case when $G = G_F$. 
But if $G = G_B$, it is clear that $G$-action on $Y$ is also effective if
that on $X$ is. We will see such an example in the Section \ref{sec:CP2}, where the $G$-action  on $\CP^2$
rotates the moment triangle $B$.

It is not hard to show that $W$ becomes $G$-invariant for the induced $G$-action on $Y$.
\begin{lemma}\label{chitriv}
$W : Y \to \C$ is $G$-invariant.
\end{lemma}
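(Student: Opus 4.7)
The proof plan is to establish a bijection between the terms of $W(X_b,\nabla_b)$ and $W(g\cdot(X_b,\nabla_b))$ induced by the $G$-action on holomorphic discs, and to check that each of the three factors $n_\beta$, $\exp(-\int_\beta\omega)$, and $\mathrm{hol}_{\nabla_u}(\partial\beta)$ matches term by term.

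First, I would choose a $G$-invariant almost complex structure $J$ on $X$ tamed by $\omega$, which exists since $G$ is finite and the space of tame almost complex structures is contractible. Under this choice, for any $g\in G$, the push-forward $u\mapsto g\circ u$ gives a diffeomorphism between the moduli space $\mathcal{M}_\beta(L_u,J)$ of $J$-holomorphic discs bounding $L_u=X_b$ in class $\beta\in H_2(X,L_u)$ and the moduli space $\mathcal{M}_{g_*\beta}(g\cdot L_u,J)$ of those bounding $g\cdot L_u=X_{g\cdot b}$ in class $g_*\beta$. This bijection is compatible with evaluation maps, preserves Maslov index (since $g$ acts on the Lagrangian Grassmannian of $TX$), and is equivariant with respect to the automorphism groups of the domains, so it induces an equality of the counts $n_\beta(L_u)=n_{g_*\beta}(g\cdot L_u)$ (this uses equivariant Kuranishi perturbations as developed in Section~\ref{sec:Ku} to make the counts well-defined in a $G$-compatible way).

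Next I would verify the two analytic identities. The symplectic area is preserved because $g^*\omega=\omega$, so
\begin{equation*}
\int_{g_*\beta}\omega = \int_\beta g^*\omega = \int_\beta \omega.
\end{equation*}
For the holonomy, by definition $g_*\nabla_b = (g^{-1})^*\nabla_b$, so parallel transport for $g_*\nabla_b$ along the loop $g_*(\partial\beta)$ is identified via $g$ with parallel transport for $\nabla_b$ along $\partial\beta$; therefore
\begin{equation*}
\mathrm{hol}_{g_*\nabla_b}\bigl(\partial(g_*\beta)\bigr) = \mathrm{hol}_{g_*\nabla_b}\bigl(g_*(\partial\beta)\bigr) = \mathrm{hol}_{\nabla_b}(\partial\beta).
\end{equation*}

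Finally, reindexing the defining sum for $W(g\cdot(X_b,\nabla_b))$ via the bijection $\beta\leftrightarrow g_*\beta$ on $\{\beta:\mu(\beta)=2\}$ and substituting the three equalities above, I would obtain $W(g\cdot(X_b,\nabla_b))=W(X_b,\nabla_b)$, which is the claimed $G$-invariance. The only nontrivial point is the equality $n_\beta(L_u)=n_{g_*\beta}(g\cdot L_u)$ of the open Gromov-Witten invariants, which hinges on choosing a $G$-equivariant perturbation scheme; everything else is formal naturality of $\omega$, holonomy, and Maslov index under the symplectomorphism $g$.
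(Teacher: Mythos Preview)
Your proposal is correct and follows essentially the same approach as the paper: reindex the sum defining $W(g\cdot(X_b,\nabla_b))$ via $\beta'\leftrightarrow g_*\beta$ and check that each of the three factors $n_\beta$, $\exp(-\int_\beta\omega)$, and $\mathrm{hol}_{\nabla}(\partial\beta)$ is preserved. The paper's proof is more terse---it simply writes out the reindexed sum and invokes $G$-invariance of $\omega$---whereas you spell out the justification for $n_\beta(L_u)=n_{g_*\beta}(g\cdot L_u)$ via $G$-invariant $J$ and equivariant Kuranishi perturbations, which the paper leaves implicit.
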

\begin{proof}
It follows from the invariance of the symplectic form $\omega$ under the $G$-action:
\begin{eqnarray*}
W(X_{g \cdot b}, \, g_\ast \nabla_b) &=& \sum_{\substack{\beta' \in \pi_2 (X,X_{g \cdot b}) \\ \mu (\beta')=2}} n_{\beta'} (X_{g \cdot b}) \exp \left( - \int_{\beta'} \omega \right) {\rm hol}_{g_\ast \nabla_b} (\partial \beta') \\
&=& \sum_{\substack{ \beta \in \pi_2 (X,X_b) \\ \mu (\beta)=2}} n_{g \cdot \beta} (X_{g \cdot b}) \exp \left( - \int_{g \cdot \beta} \omega \right) {\rm hol}_{g_\ast \nabla_b} (\partial (g \cdot  \beta)) \\
&=&  \sum_{\substack{ \beta \in \pi_2 (X,X_b) \\ \mu (\beta)=2}} n_\beta (X_b) \exp \left( - \int_{\beta} \omega \right) {\rm hol}_{\nabla_b} (\partial \beta) = W(X_b, \nabla_b)
\end{eqnarray*}
\end{proof}

We denote by $s$ the spin profile of torus fibers $X_b$ for $b \in \mathring{B}$.
We can formulate the homological mirror symmetry conjecture in this setting as follows:
\begin{conjecture}
There are equivalences of derived categories,
\begin{eqnarray}\label{GHMSconj}
D^b \mathcal{F}uk_G^s (X) &\cong& D^b MF_G (W) \\
  D^b\mathcal{FS}_G^s(Y,W) &\cong&D^b Coh_G{X}
\end{eqnarray}
where $D^b \mathcal{F}uk_G^s (X)$ is the derived $s$-orbifolded Fukaya category of $X$ defined in this paper,
$D^bCoh_G(X)$  is the derived category of $G$-equivariant coherent sheaves on $X$
and $MF_G(W)$ is the $G$-equivariant matrix factorization category of $W$.
The character group $\hat{G}$ acts on both sides of \eqref{GHMSconj} by twisting
equivariant structures in a compatible way.
\end{conjecture}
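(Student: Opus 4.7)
The plan is to bootstrap from the non-equivariant homological mirror symmetry by carefully tracking $G$-equivariant structures through the SYZ transform. I would first assume (or invoke in specific cases) an equivalence $\Phi : D^b\mathcal{F}uk^s(X) \to D^b MF(W)$ on the non-equivariant side, constructed fiberwise from the family Floer functor $(L_u,\nabla_u) \mapsto (\text{matrix factorization at }(u,\nabla_u))$. The first step is to verify that $\Phi$ intertwines the naive $G$-actions on both sides: the action on $\mathcal{F}uk^s(X)$ pushes forward Lagrangians with their equivariant flat bundles, while the action on $MF(W)$ is induced from $g\cdot(X_b,\nabla_b)=(X_{g\cdot b},g_*\nabla_b)$ as in the preceding subsection, which preserves $W$ by Lemma \ref{chitriv}. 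The spin profile condition ensures (by the results of Section \ref{GactFloerNov}) that this action is honestly equivariant on the chain level rather than only up to signs.

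Second, I would promote $\Phi$ to an equivariant $\AI$-functor between the $G$-equivariant Fukaya category $G\text{-}\mathcal{F}uk^s(X)$ and the $G$-equivariant matrix factorization category by applying the Seidel-type homological perturbation argument (Lemma \ref{hompert}) to an auxiliary $\AI$-category with a free $G$-action on objects, exactly as in the proof of Definition-Theorem \ref{thm:exeqfuk}. Taking $G$-invariant morphism spaces on both sides then yields the orbifolded statement $D^b\mathcal{F}uk_G^s(X) \cong D^b MF_G(W)$. The compatibility with the $\hat{G}$-action should follow formally from the construction: twisting the $G$-equivariant structure on a flat bundle over $L_u$ by a character $\chi$ corresponds to tensoring the matrix factorization by the $1$-dimensional representation $\chi$, and both operations commute with the induced $G$-action on morphisms. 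The second equivalence $D^b\mathcal{FS}_G^s(Y,W)\cong D^b Coh_G(X)$ is handled analogously, using Section \ref{sec:FS} to equip vanishing thimbles with $G$-equivariant line bundles and matching them with $G$-equivariant skyscraper sheaves on $X$ via the SYZ dictionary.

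The main obstacle, I expect, is transversality and convergence of the family Floer construction in the $G$-equivariant setting, together with the subtle interaction between the spin profile class $s\in H^2(G,\Z/2)$ and the $\alpha$-twisted equivariant structures on the $B$-side. Concretely, the spin profile contributes sign factors $(-1)^{\mathrm{spf}(g,h)}$ on orientation spaces (Proposition \ref{prop:compadm}); under mirror symmetry these must be absorbed into an $H^2(G,U(1))$-twist of the equivariant structure on the mirror sheaves or matrix factorizations, so the correct equivalence is expected to relate $s$-twisted $A$-branes to $s$-twisted $G$-equivariant $B$-branes. Verifying that this twist is respected by $\Phi$, and that the $\hat{G}$-action on the $A$-side really matches the character-twisting action on coherent sheaves or matrix factorizations, will require a careful analysis of the Čech-type cocycles defining the equivariant structures. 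The cases of toric $X$ with toric $G\subset (S^1)^n$, where everything can be computed explicitly via moment polytopes and where $n_\beta$ is known, should serve as both a testing ground and a source of the cleanest examples, and I would carry these out first (e.g., $\CP^1$ and $\CP^2$ as in Section \ref{sec:CP2}) before attempting the general statement.
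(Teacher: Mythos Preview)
The statement you are attempting to prove is explicitly labeled a \emph{Conjecture} in the paper; there is no proof in the paper to compare against. The authors formulate this as a prediction, then provide supporting evidence in special cases (the fiberwise $G_F$-action discussion, the $\CP^1$ example, and the $\CP^2$ computations in Section~\ref{sec:CP2}), but they do not claim to establish it in general.

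Your proposal is therefore not a proof but a research program, and as such it rests on assumptions that are themselves wide open. Most seriously, you begin by ``assuming (or invoking in specific cases)'' a non-equivariant equivalence $\Phi: D^b\mathcal{F}uk^s(X)\to D^b MF(W)$ coming from a family Floer functor; this is already a major conjecture in its own right, not something you can simply invoke. The subsequent steps---promoting $\Phi$ to a $G$-equivariant $\AI$-functor via Lemma~\ref{hompert}, and matching the $\hat G$-twists---are plausible strategies once $\Phi$ exists and is known to intertwine the $G$-actions up to coherent isomorphism, but each of these is a substantial body of work rather than a lemma. Your own final paragraph correctly identifies that the spin-profile/sign issues and the transversality of the family Floer construction are genuine obstacles; these are precisely the reasons the statement is a conjecture rather than a theorem. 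The appropriate response here is to recognize the statement's status and, if you wish to contribute, to carry out the explicit verifications in the toric examples as the paper does, rather than to present a general argument that presupposes unproven equivalences.
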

In general, for a mirror pair $X, Y$, one may conjecture the equivalence of two categories $D^b \mathcal{F}uk_G^s (X)$
and $D^bCoh_G(Y)$.
Here, note that the right hand sides do not involve spin profiles. This is because sheaves on the right hand side
are expected to be obtained as certain family version of Floer homologies, and hence $G$-action is well-define for them
(without spin profiles).
Note also that the right hand side is well-defined even when $G$ acts trivially.

\subsection{Fiberwise  $G_F$-actions and toric examples}
Suppose that an abelian group $G=G_F$ acts freely on regular torus fibers. In particular, the character group $G^\ast$ is the
dual group of $G$.
Its dual torus fibration can be understood as follows. Consider
\begin{equation}\label{WTYfiber0}
 1 \to \pi_1 (X_b) \to \pi_1^{orb} ([X_b/G]) \to G \to 1
\end{equation}
and take $\Hom(-, U(1))$  to obtain the following exact sequence:
\begin{equation}\label{WTYfiber}
0 \to G^\ast = \Hom (G, U(1)) \to \WT{Y}_b:=\Hom (\pi_1^{orb} ([X_b /G]), U(1)) \to Y_b=\Hom (\pi_1 (X_b) , U(1)).
\end{equation}
If $X_b/G_F$ is still a (Lagrangian) torus, then the last map $\WT{Y_b} \to Y_b$ is surjective. This follows from the fact that the first and the second term in the exact sequence \eqref{WTYfiber0} are isomorphic to $\ZZ^n$ and $U(1)$ is divisible. 

Let us assume that $X/G_F \to B$ is again a Lagrangian torus fibration from now on. This happens for example when $X$ is a toric manifold with the action of a finite subgroup $G=G_F$ of the torus
$U(1)^n \subset (\C^*)^n$.

Since $\pi_1^{orb}([X_b/G]) = \pi_1(X_b/G)$,  the middle term $\WT{Y}_b$ gives the dual torus fiber of $X_b /G$, 
giving rise to the dual torus fibration $\WT{Y} \to \mathring{B}$.
From \eqref{WTYfiber}, it is easy to see that the dual group $G^\ast$ acts on $\WT{Y}$ and the quotient is isomorphic to the mirror manifold $Y$ of $X$. Let $p : \WT{Y} \to Y$ be the quotient map via $G^\ast$. Indeed $G^\ast$ acts on $\WT{Y}$ freely and $p:\WT{Y} \to Y$ is a covering whose deck transformation group is precisely $G^\ast$ since the first map in \eqref{WTYfiber} is injective. 

Let $\WT{W} : \WT{Y} \to \CC$ be given by the composition $W \circ p$. $\WT{W}$ is $G^\ast$-invariant by definition. We define $\WT{W}$ by the composition because smooth holomorphic discs in $[X/G]$ are
in fact obtained from smooth holomorphic discs in $X$. (See \cite{CP} for more details: we do not consider bulk-deformations by twisted sectors, here.)

The situation is summarized in the diagram below:
\begin{equation*}
\xymatrix{X \ar[d]_{G} \ar@{<.>}[rr]^{mirror} && Y \ar[r]^W & \CC \\
[X/G] \ar@{<.>}[rr]_{\quad mirror} && \WT{Y}\ar[u]^{G^\ast} \ar[ur]_{\WT{W}}&.
}
\end{equation*}
Namely, in this case, taking $G$-quotient on one side corresponds to taking a $G^\ast$-covering on the other side. We remark that such a phenomenon has been already observed for the pair of pants case by Abouzaid, Auroux, Efimov, Katzarkov and Orlov \cite{Ab}.

Now, it is natural to expect that $(\WT{Y}, \WT{W})$ is the LG mirror of $[X/G]$ in the sense of SYZ. Moreover, the $A$-model category for $X$ and the one for its quotient are expected to have a close relationship.
\begin{conjecture}\label{conj:fiberpres}
In the above setting, we have equivalences of derived categories
$$D^b\mathcal{F}uk_G^s(X) \cong D^bMF (\WT{W})$$
and
$$D^b\mathcal{F}uk(X) \cong \big( D^b\mathcal{F}uk_G^s(X) \big)^{G^\ast}.$$
\end{conjecture}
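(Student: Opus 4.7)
The plan is to treat both equivalences through SYZ duality on the torus fibrations $X \to B$ and $\WT{Y} \to B$, identifying the relevant equivariant structures. For the first equivalence $D^b\mathcal{F}uk_G^s(X)\cong D^bMF(\WT{W})$, I would define a functor on objects by sending a Lagrangian torus fiber $X_b$ equipped with a $G$-equivariant flat unitary line bundle (classified via \eqref{WTYfiber} by a point $\WT{\nabla}_b \in \WT{Y}_b$) to the matrix factorization of $\WT{W}$ stabilized at $(b, \WT{\nabla}_b)$. Following the Cho--Oh/FOOO computation of the disc potential on toric fibers and its adaptation to toric orbifolds as in \cite{CP}, the critical points of $\WT{W}$ on $\WT{Y}$ correspond precisely to fibers-with-equivariant-connection whose orbifolded self-Floer cohomology is non-trivial; these are expected to split-generate $D^b\mathcal{F}uk_G^s(X)$.

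For morphisms, I would first compute $CF^\ast_G\bigl((X_b,\WT{\nabla}),(X_b,\WT{\nabla}')\bigr)$ after taking $G_\alpha$-invariants; adapting Section~\ref{GactFloerNov} to $[X/G]$, this complex should carry a natural filtration whose associated graded is a Koszul complex whose differential is the derivative of $\WT{W}$, matching the $\Hom$ spaces on the matrix factorization side. Compatibility with $A_\infty$-operations then follows by expanding $\WT{W}$ to higher order, precisely as in the localized mirror functor construction for toric orbifolds; here Lemma~\ref{chitriv} guarantees that $\WT{W}$ is well-defined, and Proposition~\ref{prop:12.4} supplies the $G_\alpha$-equivariant bimodule structure needed to identify morphism spaces on the nose.

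For the second equivalence, the character group $G^\ast$ acts on $\mathcal{F}uk_G^s(X)$ by twisting $G$-equivariant structures on bundles via \eqref{def:twist}, and under the first equivalence this corresponds to the deck transformation action of $G^\ast$ on the covering $p:\WT{Y}\to Y$ from \eqref{WTYfiber}. Since $\WT{Y}/G^\ast = Y$ and $\WT{W} = W\circ p$, $G^\ast$-equivariant matrix factorizations of $\WT{W}$ descend to matrix factorizations of $W$ on $Y$. Combining with the (known/conjectural) homological mirror symmetry $D^b\mathcal{F}uk(X)\cong D^bMF(W)$ for the mirror pair $(X,W)$, one obtains $D^b\mathcal{F}uk(X)\cong (D^b\mathcal{F}uk_G^s(X))^{G^\ast}$, realized concretely by the functor sending a brane $(L,\nabla)$ in $X$ to the $G$-orbit $\bigsqcup_{g\in G}g(L)$ with its induced diagonal $G$-equivariant flat bundle; $G^\ast$-invariance on the orbifolded side corresponds to averaging over all character twists, which is the Peter--Weyl-type inverse of forming the diagonal orbit.

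The essential difficulty is the rigorous identification of the full $A_\infty$-structures. Even without group actions, homological mirror symmetry for a general toric $X$ with non-trivial disc potential is technically formidable, and here the orbifold setting introduces two further complications: first, achieving $G$-equivariant transversality coherently across all moduli of bounding polygons (handled in principle by Section~\ref{sec:Ku} but requiring careful inductive extensions of equivariant perturbations); second, incorporating bulk deformations by twisted sectors, which the authors themselves flag as necessary for a complete orbifold theory and which would modify both $\WT{W}$ and the matrix factorization target. A fully rigorous argument will likely require working within the enlarged framework of $G$-equivariant Lagrangian immersions promised in \cite{CH2}, together with the orbi-disc contributions of \cite{CP}.
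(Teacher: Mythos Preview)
The statement you are addressing is a \emph{conjecture}, not a theorem: the paper does not prove it and does not claim to. There is therefore no ``paper's own proof'' of the first equivalence $D^b\mathcal{F}uk_G^s(X)\cong D^bMF(\WT{W})$ to compare your sketch against; your outline via SYZ duality and localized mirror functors is a reasonable heuristic strategy, and you yourself correctly flag in your final paragraph that making it rigorous (equivariant transversality across all moduli, twisted-sector bulk deformations, split generation) is the substantive open problem.

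For the second equivalence, the paper does contain an argument, and it is essentially the same reduction you give: the paper observes that since $G^\ast$ acts freely on $\WT{Y}$, one has $MF_{G^\ast}(\WT{W})\cong MF(W)$ (citing Polishchuk--Vaintrob \cite{PV}), so that the second equivalence follows from the first together with ordinary HMS for $(X,W)$. Your version is a bit more elaborate---you describe the explicit functor sending $(L,\nabla)$ to its $G$-orbit with diagonal equivariant structure and invoke a Peter--Weyl-type averaging---but the underlying mechanism (free $G^\ast$-action on the cover $\WT{Y}\to Y$, hence descent of equivariant matrix factorizations) is the same. In short: your reduction of the second part to the first matches the paper; the first part remains conjectural in both your proposal and the paper.
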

Here, $MF_{G^\ast} (\WT{W})$ is 
the $G^\ast$-equivariant matrix factorization category. As the $G^\ast$-action on $\WT{Y}$ is free, $MF_{G^\ast} (\WT{W})$ is equivalent to $MF(W)$ where the equivalence from $MF(W)$ to $MF_{G^\ast} (\WT{W})$ is the pull-back functor by the projection $\WT{Y} \to Y$. (This is a special case of Proposition 2.2 in \cite{PV}.)
Hence the second part of the conjecture follows from the first part of the conjecture.

We can relate the $G^\ast$-action on $\WT{Y}$ with the $G^\ast$-action on $\mathcal{F}uk^s_G (X)$ in a geometric way. In view of SYZ, a point in $\WT{Y}_b$ corresponds to a torus fiber $[X_b /G]$ together with a flat line bundle over it, or equivalently a $G$-equivariant flat line bundle $(U,\theta, \nabla)$ over $X_b$. Since $G$ acts freely on $X_b$, a $G$-equivariant structure on the line bundle $U$ is unique up to isomorphism. However, if we apply the character group action of $\chi \in G^\ast$ to $(U, \theta, \nabla)$, the flat structure, or holonomy in an orbifold sense,  of $[U/G]$ changes as follows.

Consider a generalized curve $\gamma \in \pi_1^{orb} ([X_b/G])$ with $\gamma(0) = x_0$ to $\gamma(1) = g \cdot x_0$ where $x_0$ is the base point for $\pi_1 (X_b)$. After projecting down to $[X_b/G]$, $\gamma$ becomes a genuine loop $\bar{\gamma}$. Then, the holonomy along the loop $\bar{\gamma}$ in $[X_b /G]$ can be measured by the difference between the parallel transport along $\gamma$ and $\theta_g$, both of which are linear maps from $U_{x_0}$ and $U_{g \cdot x_0}$ If $(U,\theta,\nabla)$ is twisted by an element $\chi \in G^\ast$, then the identification $\theta_g$ of $U_{x_0}$ and $U_{g \cdot x_0}$ is also twisted by $\chi(g)$. i.e. $U_{x_0}$ and $U_{g \cdot x_0}$ are now identified by $\chi(g) \theta_g$. As a result, the holonomy around $\bar{\gamma}$ in $[X_b /G]$ is changed by $\chi(g)$. This explains the action of $G^\ast$ on $\WT{Y}_b$ shown in \eqref{WTYfiber}.

\subsection{Example of $\CP^1$}
We provide an example which illustrate the phenomenon discussed in the previous subsection. Equip $X:=\CP^1$ with the $\ZZ/3$-action given by $[z_0 : z_1] \to [\rho z_0 : z_1]$ where $\rho=e^{2 \pi i /3}$. It  preserves all torus fibers, and hence $G= G_F$ in this example. Associated to this action, we can construct a three-fold cover $\WT{Y}$ of the mirror $Y:= \CC^\ast$ of $X$. Identifying $\WT{Y}$ with $\CC^\ast$, the covering map is explicitly given by
$$w \in \WT{Y} \mapsto z= w^3 \in Y.$$
The $G^\ast(\cong \ZZ/3)$-action on $\WT{Y}$ \eqref{WTYfiber} sends $w$ to $\rho w$. 
\begin{remark}
The variable $w$ indeed comes from orbidiscs bounding torus fibers in $[X/G]$. In Figure \ref{CP1modZ3}, $w = T^{\omega (\beta)} hol_{\nabla} \partial \beta$. See \cite{CP} or \cite{CHL} for more details.
\end{remark}
\begin{figure}[h]
\begin{center}
\includegraphics[height=3in]{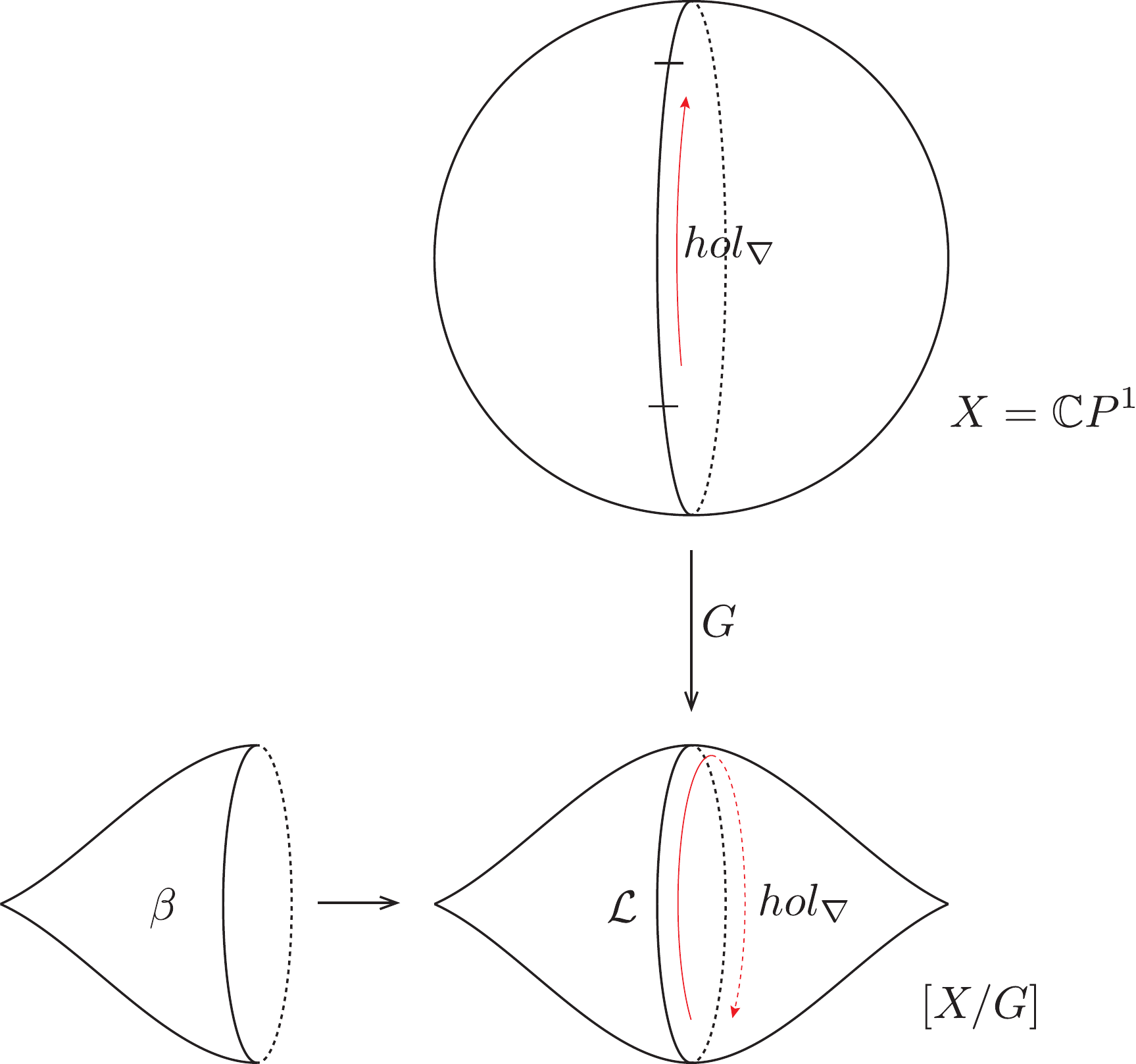}
\caption{}\label{CP1modZ3}
\end{center}
\end{figure}

Then, in terms of a coordinate $w$ on $\WT{Y}$, the pulled-back potential $\WT{W}$ can be written as
$$\WT{W} = w^3 + \frac{1}{w^3}.$$
(For simplicity, we set the K\"ahler parameter $q$ to be $1$.)
Note that $\WT{W}$ is invariant under the $G^\ast$-action. Consider the Lagrangian brane $\mathcal{L}$ supported on the balanced torus fiber with the trivial holonomy in $[X/G]$. In \cite{CHL}, it is shown that the SYZ transformation of $\mathcal{L}$ is the following matrix factorization of $\WT{W}$.
\begin{equation}\label{mirMFWTW}
(1-w) \left( \frac{1}{w^3} + \frac{1}{w^2} + \frac{1}{w}-1-w -w^2 \right) = w^3 + \frac{1}{w^3} -2
\end{equation}
Notice that $2$ is a critical value of $\WT{W}$. Now, $\rho \in G^\ast$ sends $\mathcal{L}$ to the Lagrangian brane supported on the same torus fiber but with the holonomy $\rho$. In the mirror, it sends \eqref{mirMFWTW} to 
$$(1-\rho w ) \left( \frac{1}{w^3} + \frac{1}{\rho^2 w^2} + \frac{1}{\rho w}-1-\rho w -\rho^2 w^2 \right) = w^3 + \frac{1}{w^3} -2,$$
where we used $\rho^3 =1$.

There is aa easy example of a $G^\ast$-invariant object in $MF(\WT{W})$, which can be written down only in terms of $w^3$:
\begin{equation}\label{GdualinvMF}
(1- w^3) \left( \frac{1}{w^3} -1 \right) = w^3 + \frac{1}{w^3} -2.
\end{equation}
Using $z = w^3$, \eqref{GdualinvMF} is expressed as
$$(1- z) \left(\frac{1}{z} -1 \right) = z + \frac{1}{z} -2 (=W- 2),$$
which was shown to be the mirror matrix factorization of the equator in $\CP^1$ in \cite{CL}.

\section{$\CP^2$ with a $\ZZ /3$-action}\label{sec:CP2}
In this section, we consider the projective plane $\CP^2$ equipped with the $G= \ZZ /3$-action which cyclically permutes three homogeneous coordinate of $[z_0: z_1 : z_2]$ in $\CP^2$. Note that the torus fibration $\CP^2 \to \Delta$ is compatible with  this $\ZZ / 3$-action. 
The Landau-Ginzburg mirror of $\CP^2$ is given by $\{ xyz=1 \} \subset \CC^3$ with the superpotential $W(x,y,z) = x+ y+z$. It is easily check that the mirror $\ZZ /3$-action also permutes three variable $x$, $y$, and $z$, and hence $W$ is invariant under this action. 
In what follows, we identify the mirror of $\CP^2$ with $(\CC^\ast)^2$ and use $W(x,y) = x + y + \frac{1}{xy}$ as the superpotential.

We will compare $D^b_{\ZZ /3} Coh (\CP^2)$ and $\mathcal{FS}_{\ZZ /3} (W)$ in Subsection \ref{BA}, and compare $\ZZ /3$-invariant objects in $\mathcal{F}uk (\CP^2)$ and $D^b_{sing} (W)$ in Subsection \ref{AB}.

Let us first remark on spin profiles and energy zero subgroups. For $Fuk (\CP^2)$, we will consider the Floer homology of a torus fiber with itself, and in such a case a choice of spin profile does not affect the group action.  
Indeed, spin profiles for invariant Lagrangians involved in this example are all zero since $H^2(\Z/3, \Z/2) =0$.
Also the central fiber has
a fixed point of group action.  Thus from Lemma \ref{locgpinGfl}, we have $G_\alpha =G$, and  we can define $G$-actions on Lagrangian Floer homology groups.

\subsection{Comparison of $D^b_{\ZZ /3} Coh (\CP^2)$ and $\mathcal{FS}_{\ZZ /3} (W)$}\label{BA}
The following discussion is based on the computations in \cite{A} except that we additionally consider the group action.

Before going into details, we briefly recall what equivariant sheaves are. Let $X$ be a topological space with an action of a finite group $G$.

\begin{definition}\label{Gequivsh1}
A $G$-equivariant structure on a sheaf $\mathcal{F}$ on $X$ is a choice of an isomorphism $\theta_g :  \mathcal{F} \cong g^\ast \mathcal{F}$ for each $g$ such that these isomorphisms satisfy the cocycle condition
\begin{equation}\label{cocycEquiSh}
\theta_{gh} = h^\ast \theta_g \circ \theta_h.
\end{equation}
\end{definition}
Since $\left( g^\ast \mathcal{F} \right)_x = \mathcal{F}_{g \cdot x}$ for $x \in X$, 
the $G$-equivariant structure on $\mathcal{F}$ gives an isomorphism $\theta_g (x) : \mathcal{F}_x \stackrel{\cong}{\longrightarrow} \mathcal{F}_{g \cdot x}$ for each $g \in G$ which covers the base action.
\begin{equation}\label{sheafact}
\xymatrix{\mathcal{F}_x \ar[rr]^{\theta_g (x)} \ar@{.>}[d] && \mathcal{F}_{g \cdot x} \ar@{.>}[d] \\
x \ar@{|->}[rr]^g && g \cdot x
}\qquad
\xymatrix{ \mathcal{F}_x\ar[rr]^{\theta_{gh} (x)} \ar[dr]_{\theta_h (x)} && \mathcal{F}_{(gh) \cdot x}\\
&\mathcal{F}_{h \cdot x} \ar[ur]_{\theta_g (h \cdot x)}&
}
\end{equation}
On the level of stalks, the cocycle condition \eqref{cocycEquiSh} can be understood more clearly shown in the right side diagram in \eqref{sheafact}.

Denote the category of $G$-equivariant sheaves by $Sh_G (X)$ and define

\begin{equation}\label{CohG}
Coh^G (X) := \{ (\mathcal{F},\theta) \in Sh_G (X) : \mathcal{F} \in Coh (X) \}.
\end{equation}
 
We write $D_G^b Coh (X)$ for the derived category of \eqref{CohG}. The original definition of $D^b_G Coh (X)$ is slightly different from the one given here, but it is known that they are equivalent if $G$ is finite. (See \cite{BL}.)

\noindent{\bf (i)} We first find generators $D^b_{\ZZ /3} Coh (\CC P^2)$. 
Before describing the equivariant version, we recall the following well-known fact.
\begin{prop}\label{beilinsoncot}\cite{B}
Let $\Omega=\Omega^1$ be the cotangent sheaf on $\CP^2$. Then, 
\begin{equation}\label{genCP} 
{\Omega}^2 (2) , \quad {\Omega}^1 (1), \quad {\Omega}^0 =\mathcal{O}
\end{equation}
forms a full strong exceptional collection in $D^b (Coh (\CC P^2) )$.
\end{prop}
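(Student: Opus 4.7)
The plan is to establish the two required properties—strong exceptionality and fullness—separately, following Beilinson's classical argument for $\mathbb{P}^n$ applied in the case $n=2$. Write $E_1 = \Omega^2(2)$, $E_2 = \Omega^1(1)$, $E_3 = \mathcal{O}$. I need to verify three types of vanishing: (a) $\Ext^*(E_i, E_i) = \mathbb{C}$ concentrated in degree $0$; (b) $\Ext^*(E_j, E_i) = 0$ for all $j > i$ and all $k$; (c) $\Ext^k(E_i, E_j) = 0$ for $k > 0$ whenever $i < j$ (the ``strong'' condition), plus fullness.

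For the strong exceptionality, I will reduce each of the nine bigroups $\Ext^k(E_i, E_j)$ to a computation of $H^k(\mathbb{P}^2, \Omega^p(q))$ for small $p, q$, via $\Ext^k(E_i, E_j) = H^k(\mathbb{P}^2, \mathcal{H}om(E_i, E_j))$ and the identification $\Omega^2 \cong \mathcal{O}(-3)$ (so that $E_1 = \mathcal{O}(-1)$). The key input is the Euler sequence
\begin{equation*}
0 \to \Omega^1 \to \mathcal{O}(-1)^{\oplus 3} \to \mathcal{O} \to 0
\end{equation*}
and its appropriate twists and wedge powers, together with Bott's formula $H^q(\mathbb{P}^2, \Omega^p(d))$, which gives nonzero groups only in three narrow regimes: $d > p$ (only $H^0$), $d = 0$ with $q = p$, and $d < p - 2$ (only $H^2$). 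Working through the cases, the off-diagonal Homs $\Hom(\Omega^2(2), \Omega^1(1))$ and $\Hom(\Omega^1(1), \mathcal{O})$ reduce via Euler to copies of $H^0(\mathcal{O}(1)) = \mathbb{C}^3$ (the expected dimensions for a strong exceptional collection), while all the ``wrong direction'' and higher-degree groups vanish.

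For fullness, I will invoke the Beilinson resolution of the diagonal on $\mathbb{P}^2 \times \mathbb{P}^2$,
\begin{equation*}
0 \to \Omega^2(2) \boxtimes \mathcal{O}(-2) \to \Omega^1(1) \boxtimes \mathcal{O}(-1) \to \mathcal{O} \boxtimes \mathcal{O} \to \mathcal{O}_\Delta \to 0,
\end{equation*}
obtained as the Koszul complex of the tautological section of $T_{\mathbb{P}^2}(-1) \boxtimes \mathcal{O}(1)$, whose zero locus is precisely the diagonal. For any $\mathcal{F} \in D^b\mathrm{Coh}(\mathbb{P}^2)$, the identity $\mathcal{F} \cong Rp_{1,*}(p_2^*\mathcal{F} \otimes \mathcal{O}_\Delta)$ together with the projection formula, applied term by term to this resolution, expresses $\mathcal{F}$ as a successive cone built from objects of the form
\begin{equation*}
\Omega^p(p) \otimes_{\mathbb{C}} R\Gamma(\mathbb{P}^2, \mathcal{F}(-p)), \qquad p = 0, 1, 2.
\end{equation*}
This exhibits $\mathcal{F}$ inside the triangulated subcategory generated by $\{\mathcal{O}, \Omega^1(1), \Omega^2(2)\}$, proving fullness.

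The main technical nuisance—rather than a deep obstacle—is the bookkeeping of the nine Ext computations; a few of them, notably $\Ext^*(\Omega^1(1), \Omega^1(1))$, require splicing together two long exact sequences obtained from the Euler sequence (once in each slot) before Bott's formula can be applied. A cleaner alternative is to tensor the Euler sequence with its dual and directly identify $\mathcal{E}nd(\Omega^1)$ with the kernel of $T(-1)^{\oplus 3} \to \mathcal{O}$. Either route is routine; the conceptual content is entirely contained in the Koszul resolution of the diagonal used for fullness.
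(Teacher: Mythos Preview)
Your proposal is correct and follows the classical Beilinson argument. Note, however, that the paper does not give its own proof of this proposition: it is stated with a citation to \cite{B} and used as a black box, so there is nothing to compare against. Your sketch---verifying strong exceptionality via the Euler sequence and Bott's formula, then fullness via the Koszul resolution of the diagonal $\mathcal{O}_\Delta$ on $\mathbb{P}^2 \times \mathbb{P}^2$---is exactly Beilinson's original approach and is the standard reference proof.
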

\begin{remark}
There is another well-known generators of $D^b Coh (\CP^2)$, $\{\mathcal{O},\mathcal{O}(1),\mathcal{O}(2)\}$. See \cite{B} or \cite[Theorem 2.12]{A}.
\end{remark}
It is also known from \cite{B} that
\begin{equation}\label{RHomOmega0}
R\Hom ({\Omega}^1 (1),\mathcal{O}) \cong R\Hom (  {\Omega}^2 (2),  {\Omega}(1) ) \cong  \CC \left< x\right> \oplus  \CC \left< y\right> \oplus \CC \left< z\right>,
\end{equation}
and
\begin{equation}\label{RHomOmega1}
R\Hom (\Omega^2 (2), \mathcal{O} ) \cong   \CC \left< x\wedge y\right> \oplus  \CC \left< y \wedge z\right> \oplus \CC \left< z \wedge x \right>.
\end{equation}

%
%
%
 
Note that three sheaves in Proposition \ref{beilinsoncot} are all $\ZZ /3$-invariant. We fix an equivariant structure on each of these sheaves so that the induced action on morphism spaces \eqref{RHomOmega0} and \eqref{RHomOmega1} are given by cyclic permutation on $dx, dy, dz$. Let $V_i$ ($i=0,1,2$) be the irreducible representation of $\ZZ /3$ on which the primitive generator of $\ZZ /3$ acts by $\rho^i$. Then, any $\ZZ /3$-equivariant structure on $\Omega(i)$ is isomorphic to $\Omega (i) \otimes V_j$ for some $j$. Moreover, for $i_1, i_2, j_1, j_2 \in \{0,1,2\}$ with $i_1 \geq i_2$,
$$\Hom_G (\Omega^{i_1} (i_1) \otimes V_{j_1},\Omega^{i_2} (i_2) \otimes V_{j_2} )\cong \left(\Hom (\Omega^{i_1} (i_1), \Omega^{i_2} (i_2) ) \otimes V_{j_2 - j_1} \right)^G.$$
(See the proof of \cite[Theorem 2.1]{E}.) If $i_1$ is strictly greater than $i_2$, then $\dim \Hom (\Omega^{i_1} (i_1), \Omega^{i_2} (i_2)) =3$ and $G$ permutes its basis (which are chosen in \eqref{RHomOmega0} and \eqref{RHomOmega1}). Therefore, the rank of the right hand side is $1$ regardless of $j_1$ and $j_2$. However, if $i_1 = i_2=i$, then
\begin{equation}\label{iidelta}
\Hom_G (\Omega^{i} (i) \otimes V_{j_1},\Omega^{i} (i) \otimes V_{j_2}) \cong \ V_{j_2 - j_1}^G \cong \CC^{\delta_{j_1 j_2}}.
\end{equation}

\noindent{\bf (ii)} The mirror $\Z/3$-action fixes three critical points of $W$, 
$$p_0= (1,1),\quad  p_1= (\rho, \rho),\quad p_2= (\rho^2, \rho^2),$$
where $\rho= e^{2 \pi i /3}$. They are indeed the only fixed points of this $\ZZ /3$-action. We have vanishing cycles $V_0$, $V_1$ and $V_2$ in $W^{-1} (0)$ associated to $p_0$, $p_1$, and $p_2$ respectively. Each $V_i$ is a $\ZZ /3$-invariant Lagrangian submanifold of $W^{-1} (0)$.
Since the $\ZZ /3$-action on $W^{-1} (0)$ is free, the action on $V_i$ is topologically equivalent to the one generated by the $2 \pi i /3$-rotation on the circle($\cong V_i$) and in particular, it is orientation preserving. 
We equip $V_i$'s with nontrivial spin structures (which extend to the vanishing thimbles). As mentioned, we do not need to care about spin profiles as $H^2 (\ZZ /3 , \ZZ /2)$ is zero and therefore, the group action lifts to the spin bundles.

%
%
%

By projecting to the $x$-coordinate plane (minus the origin), we see that each pair of vanishing cycles intersects at three points. We denote them as 
$$V_0 \cap V_1 = \{x_0, y_0, z_0 \},$$
$$V_1 \cap V_2 = \{x_1, y_1, z_1 \},$$
$$V_0 \cap V_2 = \{\bar{x}_0, \bar{y}_0, \bar{z}_0 \}.$$
Since the group action is free on $W^{-1} (0)$, it also freely permutes three points in $V_i \cap V_j$ for $i \neq j \in \{0,1,2\}$. From \cite{A},
\begin{lemma}\label{CP2m1}
$m_1$
is identically zero.
\end{lemma}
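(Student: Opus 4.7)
My plan is to deduce $m_1 \equiv 0$ from a combination of directedness, the $\Z/3$-symmetry acting transitively on intersection points, and a grading argument, so that there is no room for a nonzero Floer differential.

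First, by the (partially) directedness of $\mathcal{FS}_{\Z/3}(W)$ recorded in Definition \ref{def:FS2}, the only hom-spaces on which $m_1$ could be nontrivial are $\mathrm{hom}(V_i, V_j)$ for $i<j$ and $\mathrm{hom}(V_i, V_i)$. On the latter, $\mathrm{hom}(V_i,V_i) = \C \cdot \mathrm{id}_{V_i}$ by definition and $m_1(\mathrm{id}) = 0$ by unitality, so this case is immediate. I will therefore concentrate on the three complexes $CF(V_0,V_1)$, $CF(V_1,V_2)$, $CF(V_0,V_2)$, each of which is generated (over $\C$) by three transverse intersection points lying on the smooth complex curve $W^{-1}(0)$.

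The key step is to show that for $i<j$ the three generators of $CF(V_i,V_j)$ all carry the same $\Z$-degree. For this I will use the $\Z/3$-action: since $W$ and each $V_i$ is $\Z/3$-invariant, and since the gradings $\alpha^{\sharp}_i$ on the $V_i$ were chosen $\Z/3$-invariantly (as in the paragraph after Definition \ref{def:seqvbrane}), the induced action on $CF(V_i,V_j)$ is degree-preserving; combining this with the fact (noted in the paper before Lemma \ref{CP2m1}) that the $\Z/3$-action on $V_i \cap V_j$ is free and hence transitive on the three intersection points, all three generators must lie in a single degree. Since $m_1$ has degree $+1$, it vanishes for degree reasons on each complex $CF(V_i,V_j)$.

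The main obstacle is verifying cleanly that the $\Z/3$-action lifts to a degree-preserving automorphism of the Floer complex with no sign anomalies. The sign anomaly would be measured by the spin profile of the $V_i$ viewed as equivariant branes, but here $H^2(\Z/3;\Z/2) = 0$ (noted at the start of the section) and the spin bundles on the circles $V_i$ are $\Z/3$-equivariant, so by the discussion in Section \ref{sec:eqfuex} no sign obstruction arises and the equivariant structure on $CF(V_i,V_j)$ is genuinely given by grading-preserving linear isomorphisms. A supplementary geometric sanity check, which I expect will not be needed in the final write-up, is to observe that the three intersection points of $V_i$ and $V_j$ on the Riemann surface $W^{-1}(0)$ are arranged in the standard ``triangle'' pattern, so that immersed bigons cannot exist and the whole Floer cohomology is carried in a single degree, confirming $m_1 = 0$ directly.
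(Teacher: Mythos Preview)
Your argument is correct. The paper itself does not give a proof of this lemma at all: it simply records the statement as a fact taken from \cite{A} (Auroux--Katzarkov--Orlov), with the parenthetical remark that the full $A_\infty$-structure is computed there. So your proposal is not the same approach as the paper's --- the paper has no approach, only a citation.

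What you do instead is give a self-contained argument exploiting precisely the equivariant structure that is the theme of the paper: the $\Z/3$-action on $W^{-1}(0)$ is free, hence permutes the three points of each $V_i\cap V_j$ transitively; the gradings on the connected vanishing cycles are automatically $\Z/3$-invariant (as noted after Definition~\ref{def:seqvbrane}); and $H^2(\Z/3;\Z/2)=0$ kills any spin-profile obstruction. Therefore the induced action on $CF(V_i,V_j)$ is degree-preserving and transitive on generators, forcing all three generators into a single degree, whence $m_1=0$. This is a clean and genuinely different route: rather than relying on the explicit bigon count or the full $A_\infty$-computation in \cite{A}, you extract the vanishing of $m_1$ purely from symmetry and degree considerations. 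The payoff is that your argument is internal to the paper's own machinery and would survive in any analogous situation where a finite group acts transitively on the generators of a Floer complex. The citation to \cite{A}, on the other hand, gives strictly more (the full product structure), which the paper uses implicitly later when matching with the $R\Hom$ computations.
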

(For the full computation of the $A_\infty$ structure of the Fukaya-Seidel category of $W$, see \cite{A}.)


Now, we consider the equivariant vector bundles on $V_i$'s. Recall from Section \ref{sec: orbibundle} that such bundles extend to the vanishing thimbles ($\Delta_i$'s) so that the equivariant structures are determined by characters in $(\ZZ/3)^\ast = \Hom (\ZZ /3, U(1))(\cong \ZZ /3)$. Let $\chi_i$ ($i=0,1,2$) be the character that sends the primitive generator of $\ZZ /3$ to $\rho^i \in U(1)$ ($\rho = e^{ 2 \pi i / 3}$). Then, the $\ZZ /3$-equivariant Fukaya-Seidel category has $9$ objects : $(V_i, \chi_j)$, $i, j \in \{0,1,2\}$. 

One can easily check that if $i_1 \neq i_2$, the $G$-invariant part of $\Hom ((V_{i_1}, \chi_{j_1}), (V_{i_2}, \chi_{j_2}))$ is of rank $1$ ( regardless of $j_1$ and $j_2$) since the $G$-action is free.
If $i_1=i_2$ but $\chi_{j_1}\neq \chi_{j_2}$, then  the $\Z/3$-action on $\CC (\cong \CC \cdot id_{V_i})$ is nontrivial as $\chi_1 (g)^{-1} \chi_2 (g) \neq 1$ for some $g \in \Z/3$. Therefore, there can not exist any $g$-invariant element other than zero. 
Hence we have
$$\Hom ((V_{i}, \chi_{j_1}), (V_{i}, \chi_{j_2})) \cong \CC^{\delta_{j_1, j_2}}$$

\subsection{Equivariant $A$-branes in $\CP^2$ and $B$-branes in its mirror}\label{AB}

\noindent{\bf (i)} In $\CP^2$,  there are three nontrivial $A$-branes, the Clifford torus $T$, together with
three different holonomies,  $(T,(1,1))$, $(T, (\rho, \rho))$ and $(T, (\rho^2, \rho^2))$, which are known to have
non-vanishing Floer homologies \cite{C}. In fact they are expected to generate the Fukaya category of $\CP^2$ which
has been announced by Abouzaid, Fukaya, Oh, Ohta and Ono \cite{AFOOO}.

The central fiber $T$ is preserved by the $\Z/3$-action. As the values of the (Lagrangian Floer) potential function are distinct for these three $A$-branes,
Floer homologies between them are not defined, but only Floer homologies for the same pairs of objects are defined, which turn out to be isomorphic to the
singular cohomology of $T$ (as modules).
We work with $\ZZ /2$-grading, and hence both the even and the odd degree components of their endomorphism spaces have rank $2$.
Let us consider $(T, (1,1))$ only, since the other cases are almost the same. We identify $HF(T,T)$ with the space of harmonic forms on $T$, which is generated by $d \theta_0$, $d \theta_1$, $d \theta_2$ (and their products as well) with the relation $d \theta_0 + d \theta_1 + d \theta_2 =0$. The induced action on $T$ sends $d \theta_i$ to $d \theta_{i+1}$ (with $i+3 =i$). One can easily check that $1$ and $d \theta_0 \wedge d \theta_1$ is invariant under this action. However, the $G$-invariant part of $HF(T,T)$ does not have any 1-forms since the only 1-form possibly $G$-invariant is $d \theta_0 + d \theta_1 + d \theta_2(=0)$.


\noindent{\bf (ii)} Likewise, among three critical value $3$, $3\rho$, and $3 \rho^2$ of the potential $W = x+y+ \frac{1}{xy}$, we only consider the category $D^b_{sing} (W^{-1} (3))$. $W^{-1} (3)$ has a unique singular point $(x,y) = (1,1)$, which is nodal. Let $\mathcal{F}$ be the skyscraper sheaf supported at this point. Note that $\mathcal{F}$ is $\ZZ /3$-invariant. We compute $R \Hom (\mathcal{F}, \mathcal{F})$ in the split-closure of $D^b_{sing} (W^{-1} (3))$. 

After the idempotent completion (taking the split-closure), it is enough to consider the skyscraper sheaf $\mathcal{F}=\CC_0$ at the origin in ${\rm Spec}\, A$ where $A:=\CC[x,y] / xy$. Recall from the previous subsection that the action on nearby regular fibers (locally identified with $T^\ast S^1$) are induced by $2 \pi / 3$-rotations on zero sections. Thus, the continuity of the action shows that the corresponding group action on ${\rm Spec}\, A$ (near the origin) is analytically equivalent to the one given by $(x,y) \mapsto (\rho x, \rho^{-1} y)$. (See Figure \ref{egDbsing}.)

\begin{figure}[h]
\begin{center}
\includegraphics[height=1.7in]{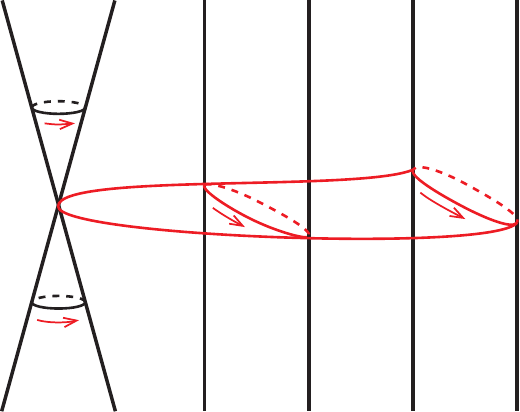}
\caption{$\ZZ / 3$-action on ${\rm Spec}\, A$}\label{egdbsing}
\end{center}
\end{figure}

Note that we have an exact triangle
$$ Ax \oplus Ay \to A \to \CC_0 \to [1]$$
consisting of $\Z /3$-invariant sheaves and that $A$ is a free module. Therefore, $\CC_0$ is isomorphic to $Ax \oplus Ay$ in the derived category of singularities. The even degree part of $R\Hom (Ax \oplus Ay, Ax \oplus Ay)$ is, then, given by
\begin{equation}\label{RHom0}
R\Hom^0 (Ax \oplus Ay, Ax \oplus Ay) = \Hom (Ax \oplus Ay, Ax \oplus Ay) \cong \CC^2.
\end{equation}
(Here, $\Hom (Ax \oplus Ay, Ax \oplus Ay)$ means the space of $A$-module homomorphisms from $Ax \oplus Ay$ to itself which do not factor through a free $A$-module.)

For $R\Hom^1 (Ax \oplus Ay, Ax \oplus Ay)=\Hom (Ax \oplus Ay, Ax \oplus Ay [1])$, we first compute $Ax [1]$ and $Ay [1]$. From the following exact triangle
$$ Ax \to A \to A/ Ax \to [1]$$
we see that $Ax[1] \cong A / Ax = \CC [y](=\mathcal{O} (y-{\rm axis}) )$. Similarly, $Ay [1] \cong \CC[x]$. Therefore,
\begin{equation}\label{RHom1}
R\Hom^1 (Ax \oplus Ay, Ax \oplus Ay)=\Hom (Ax \oplus Ay, \CC[x] \oplus \CC[y]).
\end{equation}
Any morphism in the right hand side should send $x$ to a constant multiple of $1 \in \CC[x]$, or it would factor through $A$. Likewise, the image of $y$ has to be a multiple of $1 \in \CC[y]$. Therefore, $R\Hom^1 (Ax \oplus Ay, Ax \oplus Ay) \cong \CC^2$.

Note that all sheaves involved here admit natural $\ZZ /3$-actions since they are defined by geometric terms preserved by $\ZZ/3$ ( e.g. $Ax$ is the ideal sheaf consisting of regular functions on ${\rm Spec}\,A$ which vanish on $y$-axis). These actions, of course, can be twisted further by tensoring irreducible representations of $\ZZ /3$, but we do not consider them here for simplicity. 

All elements in \eqref{RHom0} are clearly invariant under the induced action on \eqref{RHom0} so that $R\Hom^0_{\ZZ /3} (Ax \oplus Ay, Ax \oplus Ay) \cong \CC^2.$
However, $R\Hom^1_{\ZZ /3} (Ax \oplus Ay, Ax \oplus Ay) =0$ since the primitive generator of $\ZZ /3$ acts on $x \in Ax$ and $y \in Ay$ by multiplying $\rho$, while it keeps both $1 \in \CC[x]$ and $1\in \CC[y]$ invariant.

\bibliographystyle{amsalpha}

\end{document}